\newtheorem{theorem}{Theorem}[section]
\newtheorem{lemma}[theorem]{Lemma}
\newtheorem{proposition}[theorem]{Proposition}
\newtheorem{corollary}[theorem]{Corollary}
\newtheorem{remark}{Remark}[section]
\newtheorem{definition}[theorem]{Definition}
\newcommand{\C}{\mathbb{C}}
\newcommand{\N}{\mathbb{N}}
\newcommand{\R}{\mathbb{R}}
\newcommand{\bS}{\mathbb{S}}
\newcommand{\T}{\mathbb{T}}
\newcommand{\Z}{\mathbb{Z}}
\newcommand{\expig}{e^{i \theta}}
\newcommand{\bydef}{\,\stackrel{\mbox{\tiny\textnormal{\raisebox{0ex}[0ex][0ex]{def}}}}{=}\,} 
\newcommand{\bx}{\bar{x}}
\newcommand{\tx}{\tilde{x}}
\newcommand{\ta}{\tilde{a}}
\newcommand{\tp}{\tilde{p}}
\newcommand{\tb}{\tilde{b}}
\newcommand{\ba}{\bar{a}}
\newcommand{\bb}{\bar{b}}
\newcommand{\bp}{\bar{p}}
\newcommand{\bU}{\bar{U}}
\newcommand{\dagA}{A^\dagger}
\newcommand{\hv}{\widehat{v}}
\newcommand{\cB}{\mathcal{B}}
\newcommand{\cO}{\mathcal{O}}
\newcommand{\cR}{\mathcal{R}}
\newcommand{\cS}{\mathcal{S}}
\newcommand{\pp}{\tfrac{\pi}{2}}
\title{
	Global dynamics in nonconservative nonlinear Schr\"odinger equations
 }
\author[a]{	Jonathan~Jaquette}
\affil[a]{\small
	Department of Mathematics and Statistics, Boston University,
Boston, MA 02215, USA. 
}
\author[b]{	Jean-Philippe~Lessard \thanks{Email address of corresponding author:  	\texttt{jp.lessard@mcgill.ca} }}
\affil[b]{\small
		Department of Mathematics and Statistics, McGill University,
	Montreal, QC H3A 0B9, Canada
}
\author[c]{Akitoshi~Takayasu}
\affil[c]{\small
	Faculty of Engineering, Information and Systems, University of Tsukuba,
	Tsukuba, Ibaraki 305-8573, Japan
}
\begin{document}

\maketitle

\begin{abstract} 
In this paper, we study the global dynamics of a class of nonlinear Schr\"odinger equations using perturbative and non-perturbative methods. We prove the semi-global existence of solutions for initial conditions close to constant. That is, solutions will exist for all positive time or all negative time.  The existence of an open set of initial data which limits to zero in both forward and backward time is also demonstrated. This result in turn forces the non-existence of any real-analytic conserved quantities. For the quadratic case, we prove the existence of two (infinite) families of nontrivial unstable equilibria and prove the existence of heteroclinic orbits limiting to the nontrivial equilibria in backward time and to zero in forward time. By a time reversal argument, we also obtain heteroclinic orbits limiting to the nontrivial equilibria in forward time and to zero in backward time. The proofs for the quadratic equation are computer-assisted and rely on three separate ingredients: an enclosure of a local unstable manifold at the equilibria, a rigorous integration of the flow (starting from the unstable manifold) and a proof that the solution enters a validated stable set (hence showing convergence to zero).
\end{abstract}

\bigskip

{\bf Keywords : } Nonlinear Schr\"odinger equations, nonconservative equation, semi-global existence, parameterization method, stable and unstable sets, rigorous integration, homoclinic and heteroclinic orbits.

\bigskip
\bigskip
\centerline{{\bf AMS subject classifications}}
\medskip
\centerline{
	35B40,	
	35Q55  	
	37K99  	
	37M21  	
} 

%


\section{Introduction}

Consider the  class of nonlinear Schr\"odinger (NLS) equations given by
\begin{equation} \label{eq:NLS}
-i  u_t = 
\triangle u + u^{p}  f(u) 
\end{equation}
for integer $ p \geq 2$, real-analytic $f: \C\to \C$ with $ f(0) \neq 0$,  and  initial data $u(0,\cdot) : \mathbb{T}^d \to \C$.
That is, $f$ may be written as a power series in $u$ and $u^*$, its complex conjugate.%
\footnote{We have opted to use $u^*$ to denote the complex conjugate, and, following some notational conventions in computer-assisted proof, have reserved the notation $\bar{u}$ to denote a numerically computed approximate solution.}
Equation \eqref{eq:NLS} notably does not have the property of gauge invariance. 
That is to say, if $ v = e^{i \theta} u$ then 
\[
\triangle v + v^{p}  f(v) \neq 
e^{i \theta}
\big( 
\triangle u + u^{p}  f(u)
\big)
\]
for generic $ \theta \in \R$.  
Moreover \eqref{eq:NLS} does not have an obvious Hamiltonian structure  so often present in the study of nonlinear Schr\"odinger equations. 

While the local well-posedness theory for \eqref{eq:NLS} is well established,  the global well-posedness and general dynamical structure of these equations are less understood.  
In the present paper, we use perturbative and non-perturbative methods to study the dynamics of \eqref{eq:NLS}.  
For the class of equations in \eqref{eq:NLS} we prove semi-global existence of close to constant initial data (see Theorem \ref{thm:GlobalExistenceIntro}).  That is, solutions will exist for all positive time or all negative time.
Furthermore, we prove the existence of an open set of initial data which limits to $0$ in both forward and backward time (see Theorem \ref{thm:OpenHomoclinics}). This result in turn forces the non-existence of any real-analytic conserved quantities (see Theorem \ref{thm:NoConservedQuantities}) in stark contrast to the case of Hamiltonian NLS.   

We then study in depth the  case of \eqref{eq:NLS} with a  quadratic nonlinearity, 
	\begin{align} \label{eq:NLS_Quad}
-i u_t &= 
\triangle u + u^2 
\end{align}
for $x \in \T^1 = \R / \Z$.  
In Theorem \ref{thm:Equilibria} we prove  the existence of   two (infinite) families of  nontrivial equilibria. These equilibria are all unstable, and in Theorem \ref{thm:Heteroclinics} we prove the existence of heteroclinic orbits limiting to the nontrivial equilibria in backward time and to zero in forward time. 
By a time reversal argument, we also obtain heteroclinic orbits limiting  to the nontrivial equilibria in forward time and to zero in backward time. 
All of these dynamics trivially extend to \eqref{eq:NLS_Quad} posed on $\T^d$. 
 
While Hamiltonian NLS has attracted considerable study from physicists and mathematicians alike, 
non-Hamiltonian NLS has also garnered considerable interest over the past several decades. 
Beyond their intrinsic appeal, NLS with non-gauge invariant terms have been used to study asymptotic behavior about planar waves to the Gross-Pitaevskii equation \cite{gustafson2006scattering}, 
and Raman amplification in a plasma \cite{colin2009stability,hayashi2013system}.
Equations  such as \eqref{eq:NLS_Quad} also arise as toy models of the NLS with an external electric field \cite{leger2018global,leger2019_3d}. 

One notable feature of NLS without gauge invariance is that their local well-posedness theory extends to negative Sobolev spaces. 	
The landmark work  \cite{kenig1996quadratic}  studied $ \partial_t u = i \triangle u + N_j(u,u^*)$ for nonlinearities $ c_1 u^2$ and  $c_2 |u|^2$ and $c_3 (u^*)^2$ with $ c_j \in \C$. They showed that the IVP with nonlinearities  $u^2$ and $(u^*)^2$ is locally well-posed in $H^s(\R)$ for $s > -3/4$ and in $H^s(\T)$ for $ s > -1/2$. 
However for the nonlinearity  $|u|^2$ they only showed local well-posedness for $H^s(\R)$ with $ s>-1/4$. 
Indeed, the structure of the nonlinearity and not just its degree often plays an important role, an observation enounced in   \cite{staffilani1997quadratic} where the local well-posedness with nonlinearity $(u^*)^2 $ for data in $H^s(\R^2)$ with $s> -1/2$ was proven. 
Achieving a sharp result,  in \cite{bejenaru2006sharp} the nonlinearity $u^2$ is shown to be locally well-posed for initial data in $  H^s(\R)$ for $ s \geq -1$ and ill-posed for $ s <-1$. 
For further references we refer the reader to  \cite{kishimoto2019remark}, which itself studies the problem of ill-posedness for more general nonlinearities and spatial domains.  

When the spatial domain is $\R^d$, there is a considerable literature on the global existence and scattering of small initial data, for which we do not attempt to provide a comprehensive review.  
Regarding just the quadratic NLS on $\R^3$, we briefly mention the work of \cite{hayashi2000quadratic,germain2009global} and for the addition of a potential we refer the reader to  \cite{leger2018global,pusateri2020bilinear}. 

However when the spatial domain is  $\T^d$ it is harder to prove global existence, even for  Hamiltonian NLS. 
Any dynamical behaviour that can exist in the spatially homogeneous dynamics  carries over to the PDE on $\T^d$ as a complex one-dimensional subsystem. 
If the NLS is not gauge invariant, as in \eqref{eq:NLS_p}, one may obtain an explicit formula for arbitrarily small initial data which blow-ups in finite time, cf \eqref{eq:PpowerSolution}. 
In  \cite{oh2012blowup}, for example, they show finite-time blowup in $i u_t + \triangle u = \lambda |u|^p$ on $ \T^d$ for all $ 1<p<\infty$  simply based on a condition on the phase of the initial data's zero-Fourier coefficient. 
In short, to show global existence of a solution on $\T^d$, assuming smallness of the initial data is by no means sufficient. 

To obtain a global existence result, in Section \ref{sec:CenterManifold} we perform a center manifold analysis about the $0$ equilibrium to \eqref{eq:NLS}. 
The linearization about this equilibrium has one zero eigenvalue (associated with the homogeneous dynamics) and infinitely many imaginary eigenvalues. 
Unlike the gauge invariant NLS, the spatially homogeneous dynamics of \eqref{eq:NLS} does not support planar waves, but instead admits a singular foliation of homoclinic orbits (see Figure \ref{fig:HomogeneousDynamics}). 
By pairing Gr\"onwall type estimates with a detailed analysis of the homogeneous dynamics, 
we show that initial data which is sufficiently close to a constant will exist globally forward or backward in time.

Before stating our results further, let us first fix some definitions. 
For $ k = ( k_1, \dots k_d) \in \Z^d$  let $ | k| \bydef  |k_1| + \dots |k_d|$. 
We define the following norms. 

\begin{definition}
	For $\omega \in \R^n$, the torus $\T^d = \R^d / \tfrac{2 \pi}{\omega} \Z^d$, a number $\nu \ge 1$, and $u \in L^1(\T^d)$ we define a norm  
	\begin{align}
	\| u \| &= \sum_{k \in \Z^d} | 		\hat{u}(k) | \nu^{|k|},
	&
	\hat{u}(k) &\bydef \frac{1}{\mathrm{vol}(\T^d)}\int_{\T^d} u(x) e^{- i \omega k x} dx .
	\end{align}
	We define the weighted Wiener algebra $ A_\nu( \T^d) \subseteq L^1(\T^d)$ to be the set of functions $u$ with $\| u \| < \infty$. 
\end{definition}
\begin{definition} \label{def:ell_nu^1}
	Given a (multi-indices) sequence $\phi=(\phi_k)_{k \in \Z^d}$ of complex numbers, and
	for $ \nu \geq 1$ we define the analytic norm 
	\[
	\| \phi \|  \bydef   \sum_{k \in \Z^d} |\phi_k| \nu^{|k|}.
	\]
	Denote by $\ell_{\nu,d}^1$ to be the set of sequences $\phi$ with $\| \phi \|<\infty$.

\end{definition}
The Fourier transform defines an isometric isomorphism between $ A_\nu( \T^d) $ and $\ell_{\nu,d}^1$. 
We further note that $ \mathrm{Lip}(\T^d) \subseteq A_1( \T^d) \subseteq C^0 ( \T^d)$, and if $\nu>1$ then the functions in $ A_\nu( \T^d) $ are real analytic. 

We obtain a cleaner result of global existence in the case of a pure power nonlinearity $f \equiv 1$, that is the equation 
\begin{align} 
\label{eq:NLS_p}
- i u_t &= \triangle u + u^p
\end{align}
for $u(0) \in A_\nu(\T^d)$. 
In particular, we show that arbitrarily large, close to constant, initial data will exist globally forward or backwards in time, with explicit hypotheses on the initial data for which the theorem applies. 
	\begin{theorem}\label{prop:PowerGlobalExistence}
		Fix $ p \geq 2$, $d \geq 1$, $ \nu \geq 1$ and define   $ C_p = \exp \left\{ - \frac{\pi}{2} \frac{2^p-p-1}{p-1} \right\}$. 
		Let $ z_0 \in \C$ 	 and fix $ u_0 \in  A_\nu( \T^d)$
		satisfying 
		$	\| u_0 \| < C_p  |z_0| $. 
		Let $u(t)$ be the solution  to \eqref{eq:NLS_p} with initial data $ u(0,x) = z_0 + u_0(x)$ and define $ \vartheta = (p-1) Arg(z_0) \mod 2 \pi$.  
		\begin{itemize}
			\item 		If $
			0 \leq \vartheta \leq \pi $, then the solution $u(t)$ exists for all $ t \geq 0$ and $ \lim_{t \to + \infty}  u(t) =0$. 
			\item  If $ \pi \leq \vartheta \leq 2\pi $, then the solution $u(t)$ exists for all $ t \leq 0$ and $ \lim_{t \to - \infty}  u(t) =0$. 
		\end{itemize}
	
	Furthermore let $ \zeta(t)$ denote the solution to $  \dot{z} = i z^p$ with the initial condition $\zeta(0) = z_0$.  If $ 	0 \leq \vartheta \leq \pi $ (respectively if $
	\pi \leq \vartheta \leq 2\pi $) then 
	\[
	\left| \frac{\zeta(t)}{z_0} \right|  \leq  \sqrt[2 (p-1)]{ \frac{1}{1+ t^2 \big| (p-1) z_0^{p-1}  \big|^2 }  } \,, 
	\qquad \qquad 
	\| u(t) - \zeta(t) \|  \leq |z_0| \left| \frac{\zeta(t)}{z_0}\right|^p 
	\]
	for $t \geq 0$ (respectively for $t \leq0$). 
	 
	\end{theorem}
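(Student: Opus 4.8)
The plan is to treat the spatially homogeneous part of the flow \emph{exactly} and to measure everything else relative to it. Since $\triangle$ annihilates constants, a constant-in-$x$ solution of \eqref{eq:NLS_p} obeys $\dot z = i z^p$, whose solution with $\zeta(0)=z_0$ is, by separation of variables, $\zeta(t)=z_0\bigl(1-i(p-1)t\,z_0^{p-1}\bigr)^{-1/(p-1)}$. Writing $\kappa\bydef (p-1)z_0^{p-1}$ one computes $|1-i\kappa t|^2 = 1 + 2t\,\mathrm{Im}(\kappa) + t^2|\kappa|^2$ and $\mathrm{Im}(\kappa)=(p-1)|z_0|^{p-1}\sin\vartheta$. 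When $0\le\vartheta\le\pi$ this is nonnegative, so for $t\ge 0$ the factor $|1-i\kappa t|$ never vanishes and is $\ge\sqrt{1+t^2|\kappa|^2}$; hence $\zeta$ exists for all $t\ge 0$, $|\zeta(t)|$ is nonincreasing with $|\zeta(t)|\to 0$, and $|\zeta(t)/z_0|\le (1+t^2|\kappa|^2)^{-1/(2(p-1))}$, which is the stated bound. The regime $\pi\le\vartheta\le 2\pi$, $t\le 0$ is symmetric.

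Next, set $u=\zeta(t)+w$, so that $-iw_t=\triangle w + p\,\zeta(t)^{p-1}w + N(w,t)$ with $N(w,t)\bydef\sum_{j=2}^p\binom pj\zeta(t)^{p-j}w^j$. In Fourier variables the linear equation $-iw_t=\triangle w+p\zeta^{p-1}w$ decouples and multiplies the $k$-th coefficient by a unit-modulus phase times $\exp\!\bigl(ip\int_0^t\zeta(s)^{p-1}\,ds\bigr)$. The key cancellation is that, since $\dot\zeta=i\zeta^p$, one has $\tfrac{d}{dt}\log|\zeta|^2 = 2|\zeta|^{-2}\mathrm{Re}(\zeta^*\dot\zeta) = -2\,\mathrm{Im}(\zeta^{p-1})$, so $\mathrm{Re}\bigl(ip\,\zeta^{p-1}\bigr) = -p\,\mathrm{Im}(\zeta^{p-1}) = p\,\tfrac{d}{dt}\log|\zeta|$. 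Hence the linear propagator from time $s$ to time $t$ scales \emph{every} Fourier mode by the real number $\bigl(|\zeta(t)|/|\zeta(s)|\bigr)^p=|\zeta(t)/\zeta(s)|^p\le 1$, i.e.\ it is a contraction of $A_\nu(\T^d)$ for $0\le s\le t$. Isolating this precise integrating factor, rather than settling for the isometric bound from $\triangle$ alone, is the main point: it is what lets the nonlinear remainder be absorbed with the sharp threshold $C_p$ instead of merely for small data.

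Duhamel's formula with this propagator, combined with the algebra property $\|w^j\|\le\|w\|^j$ of $A_\nu(\T^d)$, gives — after dividing by $|\zeta(t)|^p$ and setting $\tilde h(t)\bydef \|w(t)\|\,|z_0|^{p-1}/|\zeta(t)|^p$, so that $\tilde h(0)=\|u_0\|/|z_0|<C_p$ and the target is $\tilde h\le 1$ —
\[
\tilde h(t)\le \tilde h(0) + |z_0|^{p-1}\int_0^t\Bigl[\bigl(1+\tilde h(s)\rho(s)\bigr)^p - 1 - p\,\tilde h(s)\rho(s)\Bigr]ds,\qquad \rho(s)\bydef|\zeta(s)/z_0|^{p-1}\le 1.
\]
On any interval where $\tilde h<1$ we have $\tilde h\rho<1$, hence $(1+\tilde h\rho)^p-1-p\tilde h\rho=\sum_{j\ge 2}\binom pj(\tilde h\rho)^j\le(2^p-p-1)\,\tilde h^2\rho^2$, and $\rho(s)^2\le(1+s^2|\kappa|^2)^{-1}$ by the first step. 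Comparing with the separable ODE $\dot y=|z_0|^{p-1}(2^p-p-1)\,y^2/(1+s^2|\kappa|^2)$, $y(0)=\tilde h(0)$, and integrating using $|\kappa|=(p-1)|z_0|^{p-1}$ and $\arctan\le\pi/2$ yields
\[
\frac{1}{\tilde h(t)}\ \ge\ \frac{1}{\tilde h(0)} - \frac{2^p-p-1}{p-1}\arctan(|\kappa|t)\ \ge\ \frac{1}{\tilde h(0)} - \frac{(2^p-p-1)\pi}{2(p-1)}\ =\ \frac{1}{\tilde h(0)} + \log C_p\ >\ \frac{1}{C_p}+\log C_p\ >\ 1,
\]
the last inequality being the elementary fact that $x-\log x>1$ for every $x>1$. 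Thus $\tilde h$ stays bounded below $1$ uniformly in $t$; since then $\|u\|\le|\zeta|+\|w\|$ remains bounded, the blow-up alternative for \eqref{eq:NLS_p} in $A_\nu(\T^d)$ together with a continuation argument upgrades this to global forward existence with $\tilde h(t)\le 1$ for all $t\ge 0$, i.e.\ $\|u(t)-\zeta(t)\|\le|z_0|\,|\zeta(t)/z_0|^p$, and $\|u(t)\|\le|\zeta(t)|+|z_0||\zeta(t)/z_0|^p\to 0$ by the first step. Finally, for $\pi\le\vartheta\le 2\pi$ note that $v(t,x)\bydef u(-t,x)^*$ solves \eqref{eq:NLS_p} whenever $u$ does; replacing the data $(z_0,u_0)$ by $(z_0^*,u_0^*)$ sends $\vartheta\mapsto 2\pi-\vartheta\in[0,\pi]$ and preserves $\|u_0\|<C_p|z_0|$, so the three steps above applied to the conjugated data and then reflected in time produce the backward-existence statement, with $\zeta$ the corresponding solution of $\dot z=iz^p$.
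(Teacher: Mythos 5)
Your proof is correct, and while the underlying blowup around the homogeneous solution $\zeta$ is the same, the route through the estimates is genuinely different. The paper (via its Theorem~\ref{thm:HomoclinicBlowup}) substitutes $a(t)=\iota^0(\zeta(t))+\zeta(t)^p\tilde a(t)$, so the $p\zeta^{p-1}$ linear modulation is cancelled algebraically inside the change of variables and the residual propagator is the unit-modulus $e^{-ik^2\omega^2 t}$; it then \emph{linearizes} the nonlinearity via the a priori bound $\|\tilde a\|\le r$ (turning $\|\tilde a\|^m$ into $r^{m-1}\|\tilde a\|$) and closes with the ordinary linear Gr\"onwall inequality, producing the exponential closing condition $\rho_1\,e^{\pi P(r,\rho_0)/(2(p-1))}<r$. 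You instead write $u=\zeta+w$, make the same cancellation visible as an explicit integrating factor (the linearized propagator contracts every Fourier mode by $|\zeta(t)/\zeta(s)|^p$), and close with a Bihari-type \emph{nonlinear} Gr\"onwall with quadratic kernel, yielding the reciprocal bound $1/\tilde h(t)\ge 1/\tilde h(0)-\tfrac{\pi(2^p-p-1)}{2(p-1)}$. Up to the identification $w=\zeta^p\tilde u$ and $\tilde h=|z_0|^{p-1}\|\tilde a\|$, the decompositions are the same; the genuine difference is linear versus nonlinear Gr\"onwall. Notably, your Bihari step actually admits $\|u_0\|/|z_0|<\bigl(1+\tfrac{\pi(2^p-p-1)}{2(p-1)}\bigr)^{-1}$, a strictly weaker hypothesis than $<C_p=\exp\{-\tfrac{\pi(2^p-p-1)}{2(p-1)}\}$ (since $e^{-q}<1/(1+q)$ for $q>0$); you do not exploit this, falling back on $1/C_p+\log C_p>1$, but your argument implicitly shows the stated threshold is not sharp. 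The time-reversal/conjugation handling of $\pi\le\vartheta\le 2\pi$ is the paper's ``follows with parity'' made explicit, and is correct.
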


For the general nonlinearity in \eqref{eq:NLS} we obtain a similar result, albeit with a restriction on the overall norm of the initial data. 

	\begin{theorem} 
		\label{thm:GlobalExistenceIntro}
		Consider the  equation \eqref{eq:NLS}; fix $ p \geq 2$,  $ d \geq 1$, $\nu \geq 1$,  $\omega \in \R^d$, $ \T^d = \R^d / \tfrac{2 \pi}{\omega}  \Z^d$ and fix a real analytic function $ f : \C\to \C$ such that $ f(0) \neq 0$. 
		There exists a constant $ \delta  >0$ depending on $p$ and $f$, such that for all $ u_0 \in  A_\nu( \T^d)$ and   $  z_0 \in \C$ satisfying 
		\begin{align}
		| z_0| &< \delta ,& 	\| u_0 \| &< \delta | z_0|^{p-1} ,
		\end{align}
		then (depending on  $ z_0$) the solution $u(t)$ with initial data $ u(0,x) = z_0 + u_0(x)$ will exist for all $ t \geq 0$ (or $t \leq 0$) and converge to $0$ as $t \to + \infty$ ( or as $ t \to - \infty$). 
		 
		Furthermore, let $ \zeta(t)$ denote the solution to $  \dot{z} = i z^p f(z)$ with the initial condition $\zeta(0) = z_0$. Then there exists a constant $K >0$ depending on $z_0$ and $u_0$ such that 
		\[
		| \zeta(t)|  \leq K |t|^{-1/(p-1)} , 
		\qquad \qquad 
		\| u(t) - \zeta(t) \|  \leq K |t|^{-p/(p-1)} 
		\]
		as $ t \to + \infty$ (or as $ t \to - \infty$). 
\end{theorem}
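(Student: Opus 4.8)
\emph{Overall plan.} The first reduction is to the forward-time case: if $u$ solves \eqref{eq:NLS} for the nonlinearity $f$, then $\overline{u(-t,x)}$ solves it for the real-analytic nonlinearity $z\mapsto\overline{f(\bar z)}$, which is again nonzero at $0$; so it suffices to exhibit an open-up-to-boundary condition on $\mathrm{Arg}(z_0)$ that forces forward global existence and decay to $0$, the complementary condition being handled by applying the forward result to the conjugated problem. The core of the argument is a comparison of $u(t)$ with the homogeneous solution $\zeta(t)$. Write $u(t,x)=z(t)+v(t,x)$ with $z(t)=\hat u(t,0)$, $\hat v(t,0)=0$, and set $g(w)\bydef w^pf(w)$. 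The zero mode obeys $\dot z=i\,g(z)+E(t)$, where $E(t)$ is the contribution of $v$ to the mean of $u^pf(u)$; since the part of this contribution linear in $v$ has zero mean, $|E(t)|\lesssim\|v(t)\|^2$. For $k\neq0$, writing $\lambda_k\ge0$ for the eigenvalue of $-\triangle$ on $e^{i\omega kx}$, the mode $a_k(t)=\hat u(t,k)$ obeys $\dot a_k=i\big(g'(z)-\lambda_k\big)a_k+i\,Q_k$ with $Q_k$ the remainder of $g(z+v)$ superlinear in $v$, whence $\tfrac{d}{dt}|a_k|^2=-2\,\mathrm{Im}\big(g'(z)\big)|a_k|^2+2\,\mathrm{Re}\big(i\,\overline{a_k}\,Q_k\big)$. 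The structural fact behind the whole proof is that, on the admissible orbits, $z(t)$ tends to $0$ in such a way that $\mathrm{Im}\big(g'(z)\big)=p\,\mathrm{Im}\big(z^{p-1}f(0)\big)+O(|z|^p)$ is positive in the time-integrated sense, so this single quantity supplies a common time-dependent damping for every nonzero Fourier mode as well as for $z-\zeta$.

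\emph{The homogeneous dynamics.} I would analyse $\dot z=i\,g(z)$ through $w=z^{1-p}$, which gives $\dot w=-i(p-1)f\big(w^{1/(1-p)}\big)=-i(p-1)f(0)+O(|w|^{-1/(p-1)})$, so for $|z_0|$ small $w(t)$ is a small perturbation of the line $w_0-i(p-1)f(0)t$. Examining $|w_0-i(p-1)f(0)t|^2$ as a quadratic in $t$ shows that whenever $\mathrm{Re}\big(i\,\overline{w_0}\,f(0)\big)\le0$ — the claimed open-up-to-boundary condition on $\mathrm{Arg}(z_0)$, whose complement corresponds under the symmetry above to the same condition for $\overline{f(\bar z)}$, so that the two conditions together cover every $z_0$ — the modulus $|w(t)|$ is, up to a negligible perturbation, nondecreasing and eventually linearly growing for $t\ge0$; hence $\zeta$ exists on $[0,\infty)$, stays with $|\zeta(t)|\le2|z_0|$ (after shrinking $\delta$), decays to $0$, and $|\zeta(t)|=\big((p-1)|f(0)|\,|t|\big)^{-1/(p-1)}(1+o(1))$. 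Moreover $\tfrac{d}{dt}\ln|\zeta|=-\mathrm{Im}\big(\zeta^{p-1}f(0)\big)+O(|\zeta|^p)$ integrates to $\int_0^t\mathrm{Im}\big(\zeta(s)^{p-1}f(0)\big)\,ds=\ln\big(|z_0|/|\zeta(t)|\big)+O(|z_0|)$, so $p$ times this integral is the exponent of the common damping, with integrating factor $\big(|\zeta(t)|/|z_0|\big)^p$.

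\emph{The PDE bootstrap.} Since $A_\nu(\T^d)$ is a Banach algebra and $e^{i\triangle t}$ is an isometry on it, the map $u\mapsto u^pf(u)$ is locally Lipschitz on a small ball, the Cauchy problem is locally well posed in $A_\nu$, and the solution persists as long as $\|u(t)\|$ remains bounded. On the maximal forward interval I would run a continuity argument on $\rho(t)\bydef\|u(t)-\zeta(t)\|\,|z_0|^{p-1}/|\zeta(t)|^p$, which satisfies $\rho(0)=\|u_0\|/|z_0|<\delta$, the target being $\rho(t)\le1$ — note this already gives $\|u(t)\|\le|\zeta(t)|+|z_0|^{1-p}|\zeta(t)|^p\lesssim|z_0|$, keeping every estimate in the small regime. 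Splitting $u-\zeta$ into $z-\zeta$ and the mean-zero $v$, one has $\tfrac{d}{dt}|z-\zeta|^2=-2\,\mathrm{Im}\big(g'(z)\big)|z-\zeta|^2+O\big(|z-\zeta|\,\|v\|^2\big)$ (the linear-in-$v$ source having vanishing mean) and $\tfrac{d}{dt}\|v\|\le-\mathrm{Im}\big(g'(z)\big)\|v\|+C_f\,|\zeta|^{p-2}\|v\|^2$, the latter from the algebra bound on $\|g(z+v)-g(z)-g'(z)v\|$ together with $\|v\|\lesssim|\zeta|$ (itself a consequence of $\rho\le1$). Dividing out the integrating factor $\big(|\zeta(t)|/|z_0|\big)^p$ of the previous paragraph, these collapse, up to higher-order terms, to $\tfrac{d}{dt}\rho\lesssim C_f\,|z_0|^{1-p}|\zeta(t)|^{2p-2}\,\rho$; since $\int_0^\infty|z_0|^{1-p}|\zeta(s)|^{2p-2}\,ds$ is finite and bounded by $\tfrac{\pi}{2(p-1)}$ (attained, in the pure-power case, so that with $C_f=2^p-p-1=\sum_{j\ge2}\binom{p}{j}$ one recovers the exponent $\tfrac{\pi}{2}\tfrac{2^p-p-1}{p-1}$ and the sharp constant $C_p$ of Theorem \ref{prop:PowerGlobalExistence}), Gronwall gives $\rho(t)\le\rho(0)\,e^{M(p,f)}<1$ once $\delta<e^{-M(p,f)}$, closing the bootstrap. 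Hence the forward interval is $[0,\infty)$; $\|u(t)-\zeta(t)\|\le|z_0|^{1-p}|\zeta(t)|^p\lesssim|t|^{-p/(p-1)}$ and $|\zeta(t)|\lesssim|t|^{-1/(p-1)}$, so $u(t)\to0$ with the stated asymptotics.

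\emph{The main obstacle.} The crux is that $z(t)$ decays only polynomially, so the damping it induces on every other mode is merely $O(|t|^{-1})$: each Gronwall step must be carried out against this slowly decaying but still integrable rate, and it is essential to divide out the precise integrating factor $\big(|\zeta(t)|/|z_0|\big)^p$ before estimating, since a crude Gronwall against the bound $|\zeta|^{p-1}$ alone would produce a diverging prefactor. Compounding this, the estimates for $z-\zeta$ and for $v$ are genuinely coupled — each source is quadratic in $\|v\|$ and each damping is $\mathrm{Im}(g'(z))$ — so they must be closed simultaneously through the single quantity $\rho$, and the resulting constants inevitably degenerate as $|z_0|\to0$, consistent with the $z_0$-dependence of $K$ in the statement. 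Two smaller technical points: $f$ is a power series in both $u$ and $u^*$, so the linearisation of $g$ controlling the nonzero modes is only $\R$-linear (harmless for the $\|\cdot\|$-estimates, but to be respected when differentiating $|a_k|^2$); and near the boundary of the admissible range of $\mathrm{Arg}(z_0)$ one must take the admissible set slightly inside the critical half-plane so that $|\zeta(t)|\le2|z_0|$ holds throughout $[0,\infty)$.
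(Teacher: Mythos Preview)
Your approach is correct and proves the theorem, but it takes a genuinely different route from the paper. The paper's key device is the change of variables $u = \zeta(t) + \zeta(t)^p\,\tilde{u}$ (Proposition~\ref{prop:GeneralBlowUp}), where $\zeta$ is the exact homogeneous solution. This single substitution accomplishes in one stroke what you do in several: it cancels the linear-in-$\tilde u$ coupling (the $g'(\zeta)$ term you extract by hand) and builds in the integrating factor $\zeta(t)^p$ that you identify and divide out explicitly. The resulting equation has the clean form $\partial_t\tilde{u} = i\triangle\tilde{u} + ih(\zeta,\tilde{u})$ with $|h|\lesssim|\zeta|^p|\tilde{u}|$, so a single Gr\"onwall against $\int_0^\infty|\zeta|^p\,dt<\infty$ (Proposition~\ref{prop:Integrability}, a polar-coordinate analysis of the ODE $\dot z=iz^pf(z)$) closes the bootstrap immediately---no need to split into zero mode versus mean-zero part or to track $z-\zeta$ separately.

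Your route, by contrast, keeps the natural decomposition $u=z+v$ with $z=\hat u(\cdot,0)$; since this $z$ does not solve the homogeneous ODE, you must control $z-\zeta$ and $v$ simultaneously through the coupled system you describe. This is more laborious but arguably more transparent about the mechanism: it makes explicit that the common damping rate on every mode is $-\mathrm{Im}\,g'(z)\sim -p\,\mathrm{Im}(z^{p-1}f(0))$ and that its time integral is exactly $p\ln(|z_0|/|\zeta(t)|)+O(|z_0|)$. One point to tighten in your write-up: you evaluate the damping at $z$ but compute the integrating factor from $\zeta$; the discrepancy $g'(z)-g'(\zeta)=O(|\zeta|^{p-2}|z-\zeta|)$ integrates to a bounded quantity under your bootstrap $\rho\le1$, but this deserves an explicit line. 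Your worry about the boundary of the admissible sector is also handled more cleanly in the paper's Proposition~\ref{prop:Integrability}, which shows via the polar dynamics that the full closed sector $B_0(\rho_0)=\{\dot r\le0\}$ is forward-invariant with $|\zeta(t)|\le|z_0|$ throughout, and that boundary points are swept into the interior in time $O(|z_0|^{1-p})$---so no shrinking of the sector is needed.
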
 

Moreover, we are able to prove the existence of an open set of initial conditions which are homoclinic to the zero equilibrium. 
 Recall, a solution $u: \R\to A_\nu(\T^d)$ is said to be a {\em homoclinic orbit} to an equilibrium $ \tilde{u} \in A_\nu(\T^d)$ if $ \lim_{ t \to \pm \infty} u = \tilde{u}$. 
 Similarly, a solution $u: \R\to A_\nu(\T^d)$  is said to be a {\em heteroclinic orbit} between equilibria $ \tilde{u}, \tilde{w} \in A_\nu(\T^d)$ if $ \lim_{t \to -\infty} u(t) = \tilde{u}$ and $ \lim_{t \to +\infty} u(t) = \tilde{w}$.

\begin{theorem}  \label{thm:OpenHomoclinics} 
There exists an open set of initial data in   $U \subseteq  A_\nu( \T^d) $ of  homoclinic orbits limiting  to $u\equiv 0$ for the equation \eqref{eq:NLS}.  
\end{theorem}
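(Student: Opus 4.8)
The plan is to reduce the theorem to an elementary fact about the spatially homogeneous equation $\dot z = i z^p f(z)$ and then to promote its homoclinic orbits to homoclinic orbits of the full equation~\eqref{eq:NLS} by running the mechanism behind Theorem~\ref{thm:GlobalExistenceIntro} in forward and in backward time at once. First I would analyze the homogeneous flow near $z=0$. Since $f$ is real analytic with $f(0)\neq 0$, there the equation is $\dot z = i f(0) z^p\bigl(1+O(z)\bigr)$, and the substitution $w = z^{1-p}$ turns it into $\dot w = -(p-1) i f(0)\bigl(1+O(z)\bigr)$, so that $w(t) = z_0^{1-p} - (p-1) i f(0)\,t + (\text{lower order})$ along the orbit through $z_0$. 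This shows that there are a radius $r_0>0$ and an open set $\Omega\subseteq\{0<|z_0|<r_0\}$ — the $z_0$ whose argument stays bounded away from the $2(p-1)$ exceptional rays on which $z_0^{1-p}\in\R\cdot i f(0)$ — such that for every $z_0\in\Omega$ the orbit $\zeta$ of $\dot z = i z^p f(z)$ with $\zeta(0)=z_0$ exists for all $t\in\R$, stays inside $\{|z|<r_0\}$, and satisfies $|\zeta(t)| = O\bigl(|t|^{-1/(p-1)}\bigr)$, hence $\zeta(t)\to 0$, as $t\to\pm\infty$. In other words every such $\zeta$ is already a homoclinic orbit of the homogeneous dynamics — the ``singular foliation of homoclinic orbits'' mentioned in the introduction.

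Next I would fix $z_0\in\Omega$ with $|z_0|<\delta$ (the constant of Theorem~\ref{thm:GlobalExistenceIntro}) together with a mean-zero $u_0\in A_\nu(\T^d)$ satisfying $\|u_0\|<\delta|z_0|^{p-1}$, and run the argument of Theorem~\ref{thm:GlobalExistenceIntro} on the initial datum $z_0+u_0$. The forward/backward dichotomy in that theorem is governed by whether the homogeneous orbit through $z_0$ decays in forward or in backward time; the proof is a Gr\"onwall-type comparison of $u(t)$ with $\zeta(t)$ that goes through along any time direction in which $\zeta$ exists, stays small and decays. For $z_0\in\Omega$ both time directions qualify, so the comparison can be carried out for $t\geq 0$ and for $t\leq 0$ at once; equivalently, one performs the forward comparison for~\eqref{eq:NLS} and then again for the time-reversed equation, which is once more of the form~\eqref{eq:NLS} with $f$ replaced by $\tilde f(w)=\overline{f(\bar w)}$ (still $\tilde f(0)\neq 0$) and whose homogeneous orbit through $\bar z_0$ is $\overline{\zeta(-\,\cdot\,)}$, again homoclinic. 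Either way this produces a solution $u$ of~\eqref{eq:NLS} with $u(0,x)=z_0+u_0(x)$ that is globally defined and satisfies $\|u(t)-\zeta(t)\| = O\bigl(|t|^{-p/(p-1)}\bigr)$ as $t\to\pm\infty$, hence $u(t)\to 0$ as $t\to\pm\infty$; that is, $u$ is a homoclinic orbit to $0$.

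It then remains to check that the set of such initial data is open. Let $U$ be the set of $v_0\in A_\nu(\T^d)$ whose mean $\hat v_0(0)$ lies in $\Omega\cap\{|z_0|<\delta\}$ and whose mean-zero part obeys $\|v_0-\hat v_0(0)\|<\delta\,|\hat v_0(0)|^{p-1}$. Since $v_0\mapsto\hat v_0(0)$ and $v_0\mapsto v_0-\hat v_0(0)$ are bounded linear maps on $A_\nu(\T^d)$, since $\Omega$ and $\{|z_0|<\delta\}$ are open in $\C$, and since $(z_0,u_0)\mapsto\delta|z_0|^{p-1}-\|u_0\|$ is continuous, $U$ is open in $A_\nu(\T^d)$, and it is nonempty because it contains every constant $z_0\in\Omega$ with $|z_0|<\delta$. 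By the previous paragraph every $v_0\in U$ is the initial condition of a homoclinic orbit to $0$, which is the assertion of the theorem.

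The step I expect to be the main obstacle is the promotion carried out in the second paragraph: turning the one-sided ``semi-global'' conclusion of Theorem~\ref{thm:GlobalExistenceIntro} into a genuinely two-sided homoclinic statement. One must verify not only that for $z_0\in\Omega$ the homogeneous orbit decays in both time directions, but that it remains — for all $t\in\R$, not merely on a half-line — inside the neighborhood of $0$ where the close-to-constant estimates of Theorem~\ref{thm:GlobalExistenceIntro} are valid; this is precisely why $\Omega$ is taken with a margin away from the exceptional rays and why the explicit decay rate $|\zeta(t)|=O(|t|^{-1/(p-1)})$ is recorded above. After that, the only remaining technicality is the bookkeeping of the two-sided Gr\"onwall constants (the $K$ of Theorem~\ref{thm:GlobalExistenceIntro} now also depending on the distance of $z_0$ to the exceptional rays); the homogeneous analysis and the openness argument are routine.
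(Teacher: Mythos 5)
Your proposal is correct and reaches the conclusion by a closely related but somewhat differently organized argument. You first isolate an open set $\Omega\subset\C$ of means $z_0$ --- those with $|z_0|$ small and argument bounded away from the $2(p-1)$ exceptional rays on which $z_0^{1-p}\in i f(0)\,\R$ --- for which the homogeneous orbit is a genuine two-sided homoclinic, and then you run the Gr\"onwall comparison behind Theorem~\ref{thm:GlobalExistenceIntro} in both time directions. As you correctly flag, the one real piece of work is that a point $z_0\in\Omega$ need not lie in the sector $B_0(\rho_0)$: the homogeneous orbit may first grow in modulus before entering the stable sector, so you need the two-sided version $\int_{\R}|z(t)|^p\,dt<\infty$ of Proposition~\ref{prop:Integrability} (with a constant depending on the angular margin), not just the forward half-line estimate proved there. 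The paper avoids that extension entirely by working only with $z_0$ on the codimension-one set $\cR(\rho_0)=\{\dot r(0,z_0)=0\}$, where Theorem~\ref{thm:HomoclinicBlowupGeneral} applies verbatim in both time directions, and then noting that the intersection $\cB(\rho_0,\rho_1)\cap\cB'(\rho_0',\rho_1')$ nevertheless contains a ball around $\iota^0(z_0)$ for every $z_0\in\cR\setminus\{0\}$: this is because the representation $v=\phi+\iota^0(z_0)$ does not require $\phi$ to be mean-zero, so the mean of $v$ may be freely re-allocated between $z_0$ and $\phi$ while keeping $z_0$ on $\cR$. Both routes close the argument; yours costs a modest reworking of the integrability estimate but makes the openness of the set of means explicit, while the paper's uses the nonuniqueness of the decomposition to conjure openness from a measure-zero set of means, at the price of an openness step that can look, on a hasty first reading, like it is asserting something false.
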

As a consequence of Theorem \ref{thm:OpenHomoclinics}, any analytic functional which is preserved under \eqref{eq:NLS} must be globally constant, as they would necessarily be constant on a nontrivial open set $U$. For example, if $H: A_\nu(\T^d) \to \R$ is defined as 
\[
 H(u) = \mathrm{Re} \left\{ \int_{\T^d} h( u, u^*, \dots ,\nabla^n u ,\nabla^n u^*  ) \right\}
\]
  for polynomial $h: \C^{2n} \to \C$ and $ \nu >1$, and $H$ satisfies 
\[
H(u(t)) \geq H(u(t+s)) , \qquad \qquad s \geq 0, \; t \in \R
\]
for all solutions $ u$ to \eqref{eq:NLS}, then $H$ is necessarily constant. 
That is to say, there are neither  conserved quantities nor Lyapunov functionals. 
As  the functions in $ A_\nu(\T^d)$ are themselves analytic for $ \nu >1$, we obtain the following theorem.  

\begin{theorem} \label{thm:NoConservedQuantities} 
	If $ X \subseteq \{u: \T^d \to \C \}$ is a Banach space and $C^\omega(\T^d , \C) \hookrightarrow X$ is a dense, continuous embedding, 
	then the only real-analytic functionals 
	$F: X \to \R$  respecting an inequality under \eqref{eq:NLS}  are constant.
\end{theorem}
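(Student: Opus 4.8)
The plan is to leverage Theorem~\ref{thm:OpenHomoclinics} together with the identity principle for real-analytic maps on Banach spaces. Throughout, fix $\nu>1$, so that every element of $A_\nu(\T^d)$ is real analytic and $A_\nu(\T^d)\subseteq C^\omega(\T^d,\C)$. Since the natural topology on $C^\omega(\T^d,\C)$ is the locally convex inductive limit of the spaces $A_\mu(\T^d)$ as $\mu\downarrow 1$, the hypothesis that $C^\omega(\T^d,\C)\hookrightarrow X$ is a continuous embedding entails, in particular, that the inclusion $\iota\colon A_\nu(\T^d)\hookrightarrow X$ is a continuous (hence, being linear, real-analytic) injection. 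By Theorem~\ref{thm:OpenHomoclinics} there is a nonempty open set $U\subseteq A_\nu(\T^d)$ consisting of initial data whose solutions are homoclinic to $u\equiv 0$. Finally, the phrase ``$F$ respects an inequality under \eqref{eq:NLS}'' will be read as: after possibly replacing $F$ by $-F$, one has $F(u(t))\ge F(u(t+s))$ for all $s\ge 0$, all $t\in\R$, and all solutions $u$ of \eqref{eq:NLS} (the case of a conserved $F$ being the instance of equality).

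The first step is to show $F$ is constant on $U$. Let $u_0\in U$ and let $u\colon\R\to A_\nu(\T^d)$ be the corresponding homoclinic orbit, so $u(t)\to 0$ in $A_\nu(\T^d)$ as $t\to\pm\infty$. Applying the continuous embedding $\iota$ gives $u(t)\to 0$ in $X$, and since a real-analytic functional is in particular continuous, the scalar function $g(t)\bydef F(u(t))$ satisfies $g(t)\to F(0)$ both as $t\to+\infty$ and as $t\to-\infty$. On the other hand $g$ is nonincreasing on $\R$ by the inequality hypothesis. A nonincreasing real function whose limits at $+\infty$ and $-\infty$ coincide is constant; hence $g\equiv F(0)$, and in particular $F(u_0)=F(0)$. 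As $u_0\in U$ was arbitrary, $F\equiv F(0)$ on $U$.

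Next I propagate this to all of $A_\nu(\T^d)$ and then to $X$. The composition $F\circ\iota\colon A_\nu(\T^d)\to\R$ is real analytic, being the composition of the continuous linear map $\iota$ with the real-analytic $F$; it is defined on the connected open set $A_\nu(\T^d)$ and is constant on the nonempty open subset $U$. By the identity theorem for real-analytic maps between real Banach spaces, $F\circ\iota$ is constant, equal to $F(0)$, on all of $A_\nu(\T^d)$. To pass to $X$, note that every trigonometric polynomial lies in $A_\nu(\T^d)$ and that, by truncating Fourier series, trigonometric polynomials are dense in each $A_\mu(\T^d)$ and hence in $C^\omega(\T^d,\C)$; since $C^\omega(\T^d,\C)$ is dense in $X$ and $\iota$ is continuous, the trigonometric polynomials---and a fortiori $\iota\big(A_\nu(\T^d)\big)$---are dense in $X$. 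As $F$ is continuous on $X$ and equals $F(0)$ on this dense set, $F\equiv F(0)$ on $X$, which is the claim.

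The substantive content is carried by Theorem~\ref{thm:OpenHomoclinics}; granting that, the points requiring care are (i) the precise interpretation of ``real-analytic functional respecting an inequality,'' (ii) the infinite-dimensional identity principle, together with the verification that $F$ restricts to a genuinely real-analytic function on the Banach space $A_\nu(\T^d)$, and (iii) the density of $A_\nu(\T^d)$ in $X$, which is exactly where the hypothesis of a dense continuous embedding of $C^\omega(\T^d,\C)$ is used. None of these is deep, so I expect the main obstacle to be expository: stating the hypotheses on $F$ and the topology on $C^\omega(\T^d,\C)$ cleanly enough that the monotone-function observation and the identity principle apply verbatim.
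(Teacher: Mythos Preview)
Your proof is correct and follows essentially the same route as the paper: invoke Theorem~\ref{thm:OpenHomoclinics} to obtain an open set of homoclinics, observe that $F$ must equal $F(0)$ on that set, apply the identity principle for analytic functions, and pass to $X$ by density. The only differences are expository: you work in the Banach space $A_\nu(\T^d)$ rather than directly in $C^\omega(\T^d,\C)$, which makes the identity-principle step cleaner, and you spell out the monotone-function argument showing $F(u_0)=F(0)$ for $u_0\in U$, which the paper leaves implicit.
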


Our  results may be contrasted with  Hamiltonian NLS, which enjoys conservation of energy, mass, and momentum in various scenarios.  Note also that the cubic NLS in 1D is integrable, with infinitely many conserved quantities. 
There is a rich literature going back two centuries on integrable systems (see survey papers  \cite{kozlov1983integrability,ramani1989painleve,kruskal1997analytic}), and proofs for the non-existence of additional conserved quantities are perpetually harder than proofs which confirm that a specific quantity is conserved. 
For NLS equation without a gauge invariant nonlinearity, there are few results on the existence or non-existence of conserved quantities. 
For the nonlinearity $|u|^p$   conservation of mass fails (there are solutions whose $L^2$ norms blow up in finite time \cite{ikeda2013small}) and it is mentioned in \cite{ikeda2015some} that  conservation of energy is suspected not to hold, however it is unclear if other quantities may be conserved. 
In a system of NLS without gauge invariance, conservation laws are shown to hold only for a specific choice of parameters \cite{hayashi2013system}.   

%

We further note that there exist homoclinic orbits to \eqref{eq:NLS_p} which grow to an arbitrarily large size. 
That is, for any $K \gg1$, there is an open set of global solutions satisfying $ \| u(0)\|  > K$ and $\lim_{t \to \pm \infty} \| u(t)\| =0$. 
This can be seen as a direct corollary to  Theorem~\ref{prop:PowerGlobalExistence} by taking $ z_0 \in \R$.  
Alternatively, it can be seen to follow from Theorem \ref{thm:OpenHomoclinics} by rescaling solutions to \eqref{eq:NLS_p}; if $u_1(t,x)$ solves \eqref{eq:NLS_p} then so does 
\begin{align} \label{eq:RescalingSolutions}
u_n(t,x) &\bydef  n^{2/(p-1)} u_1 \left(n^2t,nx\right) ,\qquad \qquad \forall n \in \Z .
\end{align}
This arbitrarily large finite growth and decay  is not entirely surprising as it largely shadows a phenomenon already present in the spatially homogeneous dynamics. 
It seems a pertinent question indeed to ask what dynamical behavior can \eqref{eq:NLS} exhibit which shadows neither the homogeneous dynamics nor the linear dynamics? 
To narrow our focus, we   restrict our attention to the quadratic NLS in \eqref{eq:NLS_Quad} posed on $\T^1$. 

As is often the case in the study of dynamics, one may turn to numerical investigations for inspiration. 
The first numerical work studying \eqref{eq:NLS_Quad} we are aware of is  \cite{Cho2016}, which was motivated by studying the singularities of the  nonlinear heat equation $u_t = \triangle u + u^2$ in the complex plane of time. 
Equation \eqref{eq:NLS_Quad} results as a limiting case of purely imaginary time. 
After many simulations, Cho et al. reported that generic real initial data appears to converges to zero at a rate of $\cO(1/|t|)$, and conjectured that solutions to \eqref{eq:NLS_Quad}  with \emph{real} initial data, large or small, will exist globally in time.  
For close to constant real initial data, this convergence rate and global existence  is indeed confirmed by our Theorem \ref{prop:PowerGlobalExistence}.

By employing  computer-assisted proofs, we are able to rigorously establish nonperturbative dynamical features of the quadratic NLS without gauge invariance \eqref{eq:NLS_Quad}. 
Following the prime directive for understanding  global dynamics of dynamical systems, in this paper we study equilibria, their local stability, and the connecting orbits between them.  
By using rigorous numerics to prove the existence of a finite number of equilibria and applying the rescaling in \eqref{eq:RescalingSolutions}, we are able to establish the existence of two infinite families of equilibria.  
\begin{theorem}  \label{thm:Equilibria}
	There exist two analytic, spectrally unstable equilibria  $u_1^i$ and $u_1^{ii}$ to \eqref{eq:NLS_Quad}. 
	By the rescaling in \eqref{eq:RescalingSolutions}, these equilibria (and their complex conjugates) generate infinite families of spectrally  unstable equilibria $\{u_n^{i}\}_{n \in \N}$ and $\{u_n^{ii}\}_{n \in \N}$.  
\end{theorem}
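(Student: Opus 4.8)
The plan is to split the statement into two parts: (i) the rigorous existence of two specific analytic, spectrally unstable equilibria $u_1^i$ and $u_1^{ii}$, and (ii) the propagation of these (and their complex conjugates) into infinite families via the scaling symmetry \eqref{eq:RescalingSolutions}. Part (ii) is essentially bookkeeping once part (i) is in hand, so the bulk of the work is a computer-assisted proof for part (i). First I would set up the equilibrium equation: writing $u(x) = \sum_{k \in \Z} a_k e^{i \omega k x}$ with $\omega = 2\pi$ on $\T^1 = \R/\Z$, the equation $\triangle u + u^2 = 0$ becomes the nonlinear algebraic system $-\omega^2 k^2 a_k + (a * a)_k = 0$ for all $k \in \Z$, where $*$ denotes the convolution (Cauchy product) of Fourier coefficients. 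This is a zero-finding problem $F(a) = 0$ on the Banach space $\ell^1_{\nu,1}$, and it is amenable to the standard radii-polynomial / Newton-Kantorovich approach used throughout computer-assisted proofs in this setting: compute a numerical approximate solution $\bar a$ (a truncated Fourier series) for each of the two branches, build an approximate inverse $A$ of $DF(\bar a)$, and verify the contraction bounds $Y_0 \ge \|A F(\bar a)\|$, $Z_0 \ge \|I - A A^\dagger\|$, $Z_1 \ge \|A(DF(\bar a) - A^\dagger)\|$, and a quadratic bound $Z_2$ controlling $\|A(DF(b) - DF(\bar a))\|$ on a ball of radius $r$; then the radii polynomial $p(r) = Z_2 r^2 - (1 - Z_0 - Z_1) r + Y_0$ having a negative value for some $r > 0$ yields a unique true solution $\tilde a$ within distance $r$ of $\bar a$. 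Since $\nu > 1$ can be taken in the weight, the resulting $\tilde u \in A_\nu(\T^1)$ is automatically real analytic, giving the analyticity claim.

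For spectral instability I would analyze the linearization of \eqref{eq:NLS_Quad} about the equilibrium $u_1^i$ (resp. $u_1^{ii}$). Linearizing $-i v_t = \triangle v + 2 u_1^i v$ and splitting into real and imaginary parts, $v = v_1 + i v_2$, gives a real first-order system $\partial_t (v_1, v_2) = L (v_1, v_2)$ with $L = \begin{pmatrix} 0 & -(\triangle + 2 u_1^i) \\ \triangle + 2 u_1^i & 0 \end{pmatrix}$ acting on pairs of Fourier sequences. To exhibit an eigenvalue with positive real part I would again use rigorous numerics: locate numerically an unstable eigenpair $(\lambda, \psi)$ of the (truncated) operator, then set up an augmented zero-finding problem for the pair $(\lambda, \psi)$ together with a normalization condition, and apply the radii-polynomial argument once more to validate a true eigenpair with $\mathrm{Re}\,\lambda > 0$ and $\psi \ne 0$; the tail of $L$ is diagonally dominated by $\triangle$ so the operator is a compact perturbation of something with controllable spectrum, which makes the enclosure of the finitely many unstable eigenvalues tractable. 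This establishes "spectrally unstable" for $u_1^i$ and $u_1^{ii}$.

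For the infinite families, I would observe that the scaling $u_n(t,x) = n^{2} u_1(n^2 t, n x)$ from \eqref{eq:RescalingSolutions} (here $p = 2$, so $2/(p-1) = 2$) carries solutions of \eqref{eq:NLS_Quad} to solutions, and in particular carries equilibria (time-independent solutions) to equilibria: if $u_1^i(x)$ is $\Z$-periodic then $u_n^i(x) := n^2 u_1^i(n x)$ is also $\Z$-periodic (indeed $1/n$-periodic) and solves $\triangle u + u^2 = 0$. These are genuinely distinct for distinct $n$ (different $L^\infty$ norms, or different minimal periods), and similarly $\overline{u_1^i(x)}$, $\overline{u_1^{ii}(x)}$ are equilibria since \eqref{eq:NLS_Quad} is invariant under $u \mapsto u^*$ combined with $t \mapsto -t$, and for equilibria the time reversal is vacuous — so complex conjugation alone maps equilibria to equilibria. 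Spectral instability is preserved under the scaling: conjugating $L$ by the corresponding rescaling on function space multiplies the spectrum by $n^2$, so an unstable eigenvalue $\lambda$ for $u_1^i$ produces the unstable eigenvalue $n^2 \lambda$ for $u_n^i$. This yields the two infinite families $\{u_n^i\}$ and $\{u_n^{ii}\}$.

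The main obstacle I anticipate is not conceptual but computational: obtaining sharp enough bounds $Y_0, Z_0, Z_1, Z_2$ — and, more delicately, the eigenvalue enclosures — so that the radii polynomials actually admit a valid radius $r$. The equilibrium problem has a quadratic nonlinearity, which keeps the $Z_2$ estimate clean (the map $a \mapsto a * a$ has constant second derivative, so $Z_2$ is an exact algebra-norm bound on the Cauchy product), but the eigenvalue problem requires simultaneously controlling the equilibrium enclosure error as it feeds into the linearized operator, and ensuring the numerically-found unstable eigenvalue is isolated from the rest of the (infinite) spectrum with a verified gap. Choosing the truncation dimension and the weight $\nu$ to balance the decay of the tail against the conditioning of the finite-dimensional Newton problem is where the real effort lies.
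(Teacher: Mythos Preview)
Your proposal is essentially the paper's approach: a Newton--Kantorovich / radii-polynomial argument on the Fourier side for the equilibrium, a validated eigenpair for spectral instability, and the scaling symmetry for the infinite families. Two points are worth flagging.

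First, the real/imaginary splitting you write for the linearization is not correct as stated: the equilibria $u_1^i$, $u_1^{ii}$ are genuinely complex-valued (the figures show nontrivial real and imaginary parts), so writing $L = \begin{pmatrix} 0 & -(\triangle + 2 u_1^i) \\ \triangle + 2 u_1^i & 0 \end{pmatrix}$ as a ``real first-order system'' does not make sense; the off-diagonal blocks are complex and there are missing diagonal terms coming from $\mathrm{Im}\,u_1^i$. The paper sidesteps this by working directly with the complex eigenvalue equation $i(-k^2\omega^2 b_k + 2(\tilde a * b)_k) - \lambda b_k = 0$ on $\ell_\nu^1$, which is cleaner and is what your subsequent description (``augmented zero-finding problem for $(\lambda,\psi)$ with a normalization condition'') actually amounts to anyway.

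Second, a structural difference: rather than validating the equilibrium first and then the eigenpair, the paper couples them into a \emph{single} zero-finding problem $F(\lambda,a,b)=0$ consisting of the equilibrium equation, the eigenvalue equation, and a scalar phase condition $\eta(b)=0$, and runs the radii polynomial once on the product space $\C \times \ell_\nu^1 \times \ell_\nu^1$. This automatically propagates the equilibrium enclosure error into the eigenvalue validation without a separate bookkeeping step, which is exactly the issue you identify as delicate in your final paragraph. Your two-stage version would also work, but the coupled formulation is tidier. On analyticity, the paper actually runs the proof at $\nu=1$ and then invokes continuity of the radii-polynomial coefficients in $\nu$ to push to some $\tilde\nu>1$; your direct choice of $\nu>1$ is equally valid.
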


\begin{figure}[h]
	\centering
	\includegraphics[width = .48 \textwidth]{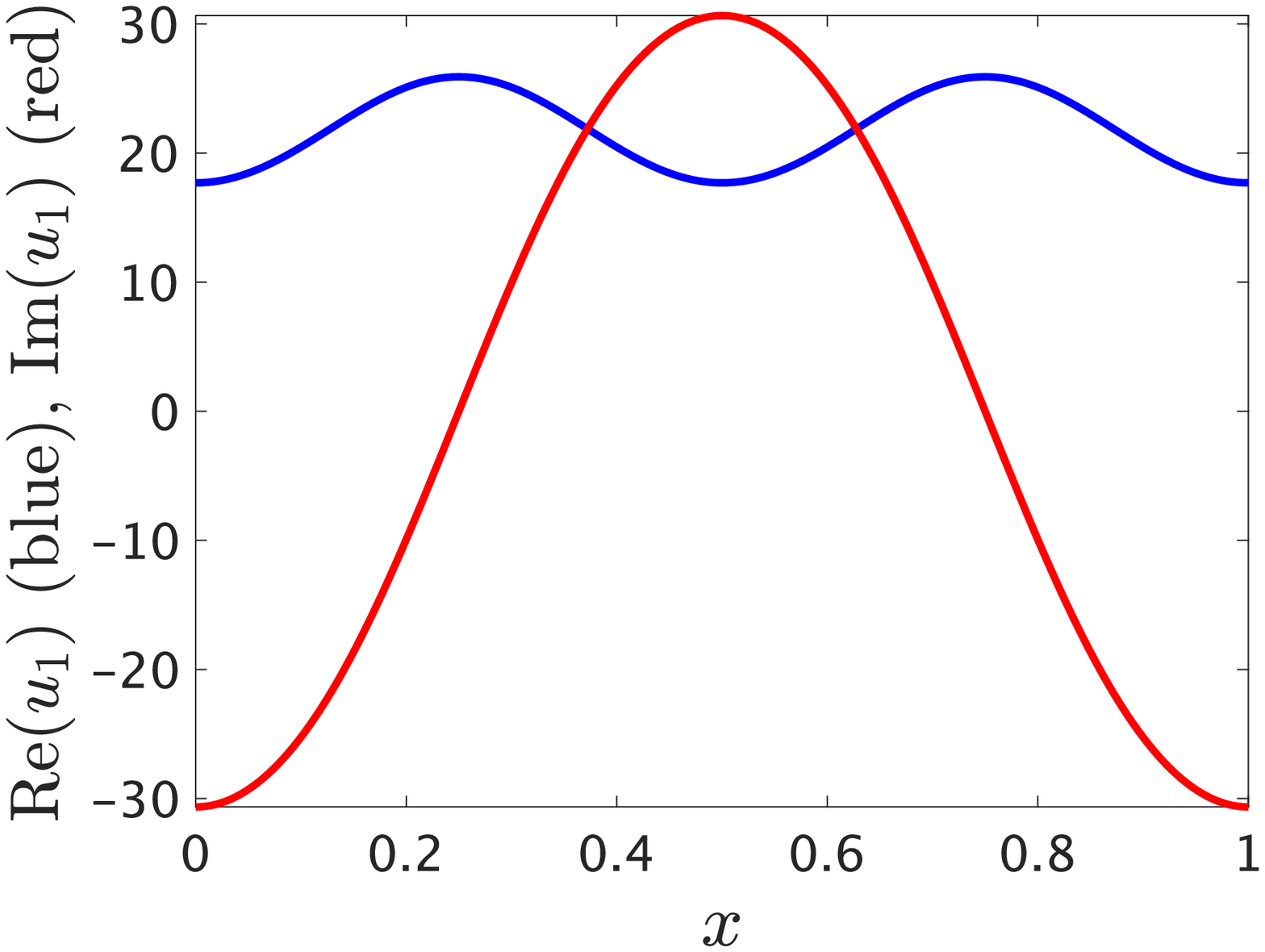}
	\includegraphics[width = .48 \textwidth]{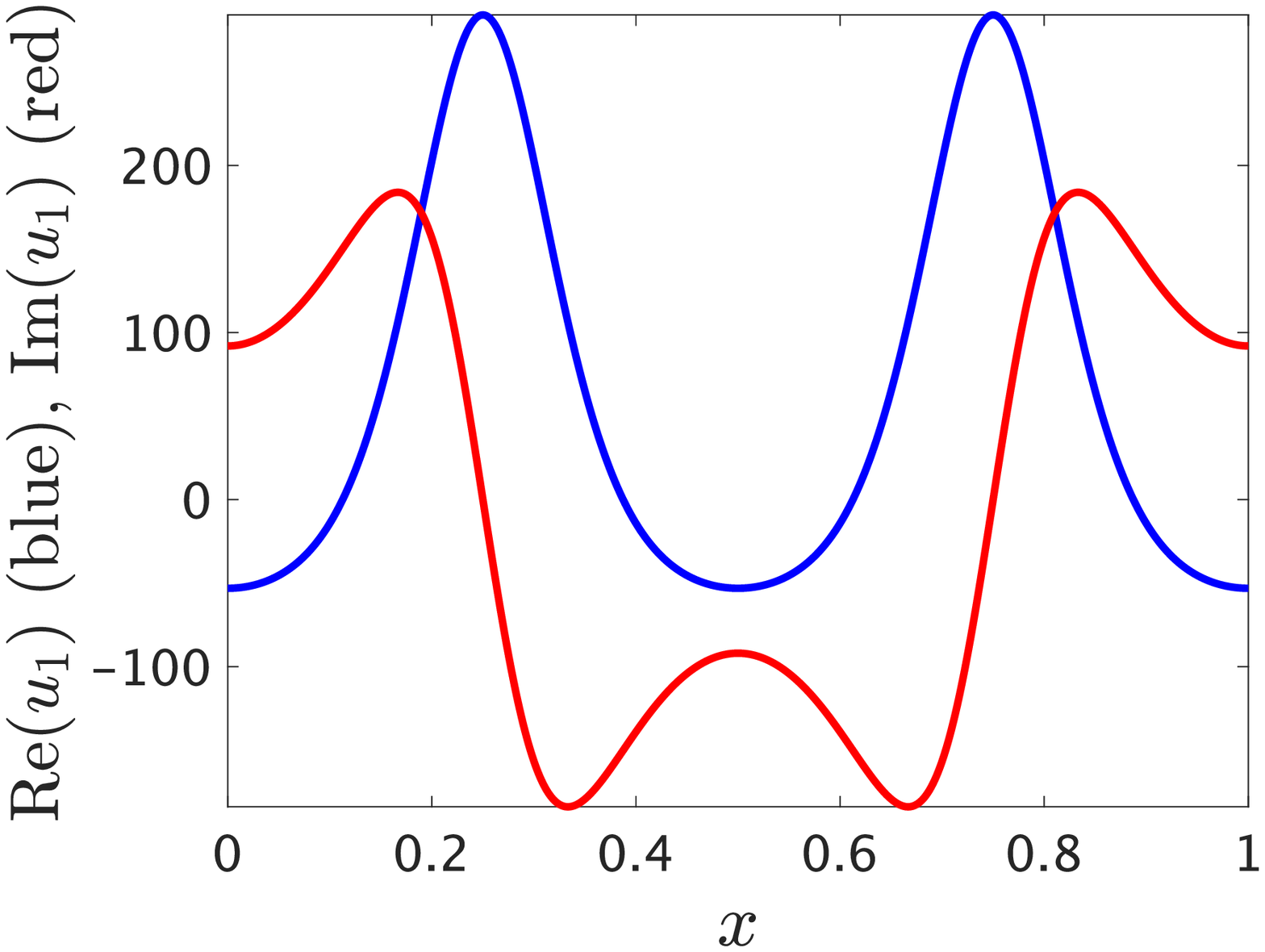}
	\caption{Profiles of two different equilibria: $u^i_1(x)$ (left), $u^{ii}_1(x)$ (right), the existence of which are established via computer-assisted proof.}\label{fig:equilibria}
\end{figure}

We are not aware of prior work demonstrating non-trivial equilibria to NLS without gauge invariance. 
In one sense, these equilibria could be thought of  a special case of standing waves of the form $u(t,x)=e^{i \omega t} \phi(x)$ wherein $ \omega = 0$. 
The existence and stability of standing waves for general $ \omega \in \R $ is well studied in the literature of nonlinear Schr\"odinger equations. 
The existence of such profile solutions can be found by solving an integrable Hamiltonian with two degrees of freedom \cite{bridges1994instability}, and for the stability of spatially periodic $\phi$ we refer to \cite{gallay2007orbital}. 
However, for our non-gauge invariant NLS, it turns out that equilibria are the only standing wave profiles we may expect to find. 
\begin{proposition} \label{prop:NoSeparableSolutions}
	Suppose that $p \geq 2$ and  $u(t,x) = \psi(t) \phi(x)$ is a solution to \eqref{eq:NLS_p}. Then either $ \psi(t)$ or $ \phi(x)$ is a constant function. 
\end{proposition}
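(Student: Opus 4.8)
The plan is to substitute the separable ansatz $u(t,x) = \psi(t)\phi(x)$ directly into \eqref{eq:NLS_p} and reduce the resulting functional equation to an ODE for $\psi$ alone. Plugging in, one gets $-i \dot\psi(t)\phi(x) = \psi(t)\triangle\phi(x) + \psi(t)^p\phi(x)^p$. Assuming neither factor is identically constant (which we wish to rule out), we may divide by $\psi(t)\phi(x)$ on an open set where both are nonzero, obtaining
\[
-i\,\frac{\dot\psi(t)}{\psi(t)} = \frac{\triangle\phi(x)}{\phi(x)} + \psi(t)^{p-1}\phi(x)^{p-1}.
\]
The left-hand side depends only on $t$; on the right-hand side the term $\triangle\phi/\phi$ depends only on $x$, so the mixed term $\psi(t)^{p-1}\phi(x)^{p-1}$ must be the only obstruction to separating variables. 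Differentiating in $t$ and in $x$, or simply comparing: if $\phi$ is nonconstant then $\phi^{p-1}$ is nonconstant, so the only way the equation can hold for all $(t,x)$ in an open set is that $\psi^{p-1}$ is constant, forcing $\psi$ constant (up to the $(p-1)$-th roots of unity, still a constant function of $t$). Symmetrically, if $\psi$ is nonconstant then $\phi$ must be constant.

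More carefully, I would argue as follows. Suppose $\psi$ is nonconstant. Pick $t_1, t_2$ with $\psi(t_1)^{p-1} \neq \psi(t_2)^{p-1}$ (possible since $\psi$ nonconstant analytic/smooth means $\psi^{p-1}$ is nonconstant). Subtracting the two instances of \eqref{eq:NLS_p} at these times, after dividing through by $\phi$ where it is nonzero, yields an identity of the form $c_1\phi(x)^{p-1} = c_2$ with $c_1 \neq 0$, hence $\phi^{p-1}$ is constant on the open set where $\phi \neq 0$; since $\phi$ is continuous on $\T^d$ this forces $\phi$ to be constant on all of $\T^d$. The symmetric argument, subtracting instances at two spatial points, handles the case where $\phi$ is nonconstant and concludes $\psi$ is constant. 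One should also dispose of the degenerate situation where $\phi$ vanishes on an open set: if $\phi \equiv 0$ on an open subset of $\T^d$, then since $\phi \in A_\nu(\T^d)$ with $\nu \geq 1$ is at least continuous (and real analytic for $\nu > 1$, though we only need continuity plus the PDE to propagate zeros via uniqueness), one shows $\phi \equiv 0$, making $\phi$ the constant function $0$; the analogous statement for $\psi$ vanishing on a $t$-interval follows from uniqueness of the ODE $\dot z = iz^p f(z)$ at the relevant scalar level, or simply from $u \equiv 0$.

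The main obstacle is purely bookkeeping about where the factors vanish: the clean division by $\psi(t)\phi(x)$ is only valid away from their zero sets, so the argument must be localized to an open set where both are nonzero and then the conclusions ($\phi^{p-1}$ or $\psi^{p-1}$ constant) propagated globally using continuity on the compact torus for $\phi$ and the ODE structure for $\psi$. None of this is deep, so the proof should be short; the only real care needed is to make sure the "subtract two instances" step is carried out on a genuinely open set and that the resulting constancy of $\phi^{p-1}$ (a $(p-1)$-valued constraint) does in fact force $\phi$ itself constant, which follows because a continuous function on a connected domain taking finitely many values is constant.
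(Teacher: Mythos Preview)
Your argument is correct and reaches the same conclusion, but the paper takes a slightly different mechanical route. After substituting the ansatz, the paper first disposes of the case where $\psi$ has a zero (then $\psi\equiv 0$ by uniqueness) and otherwise writes $\psi(t)=e^{\theta(t)}$; this device makes $\psi$ nowhere vanishing from the outset, so no localization to $\{\psi\neq 0\}$ is needed. From the identity $\triangle\phi(x)=\phi(x)\bigl(-i\theta'(t)+e^{(p-1)\theta(t)}\phi^{p-1}(x)\bigr)$, the paper then \emph{differentiates} in $t$ (the left side is $t$-independent) and subsequently in $x$, arriving at $\theta'(t)\,\phi^{p-2}(x)\,\partial_x\phi(x)=0$, whence either $\theta'\equiv 0$ or $\partial_x(\phi^{p-1})\equiv 0$.

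So the paper uses derivatives where you use finite differences at two times $t_1,t_2$ (or two points $x_1,x_2$). Both are valid; your version is arguably more elementary since it needs no smoothness of $\psi$ beyond continuity, but it carries the bookkeeping burden you flagged about zero sets of $\phi$ and $\psi$. The paper's $e^{\theta}$ trick absorbs that bookkeeping on the $\psi$ side in one line. Note also that your ``symmetric argument'' is logically redundant: once you have shown ``$\psi$ nonconstant $\Rightarrow$ $\phi$ constant'', the contrapositive is exactly ``$\phi$ nonconstant $\Rightarrow$ $\psi$ constant''.
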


The existence of these  unstable equilibria established by Theorem \ref{thm:Equilibria} reveals a hidden complexity in the global dynamics of \eqref{eq:NLS_Quad}, and presents an obstacle in the path of proving  global well-posedness.  In particular, any proof of global existence must contend with the possibility that solutions limit not to the zero equilibria, but instead to one of these equilibria, or even some other nontrivial invariant set.   
In Theorem \ref{thm:Heteroclinics} we prove just that, establishing the existence of several heteroclinic orbits to \eqref{eq:NLS_Quad} limiting from the nontrivial equilibria and to the zero solution, cf Figures \ref{fig:CO_from_minus_1}, \ref{fig:CO_from_1}, and  \ref{fig:CO_pt2_from_1}. 
Using the fact that  if $u(t)$ is a solution to \eqref{eq:NLS_Quad} then so too is $ (u(-t))^*$, we are able to establish the existence of heteroclinic orbits to \eqref{eq:NLS_Quad} limiting from the zero solution and to nontrivial equilibria.

%
%
%
%

\begin{figure}[htbp]
	\centering
	\includegraphics[width = .95 \textwidth]{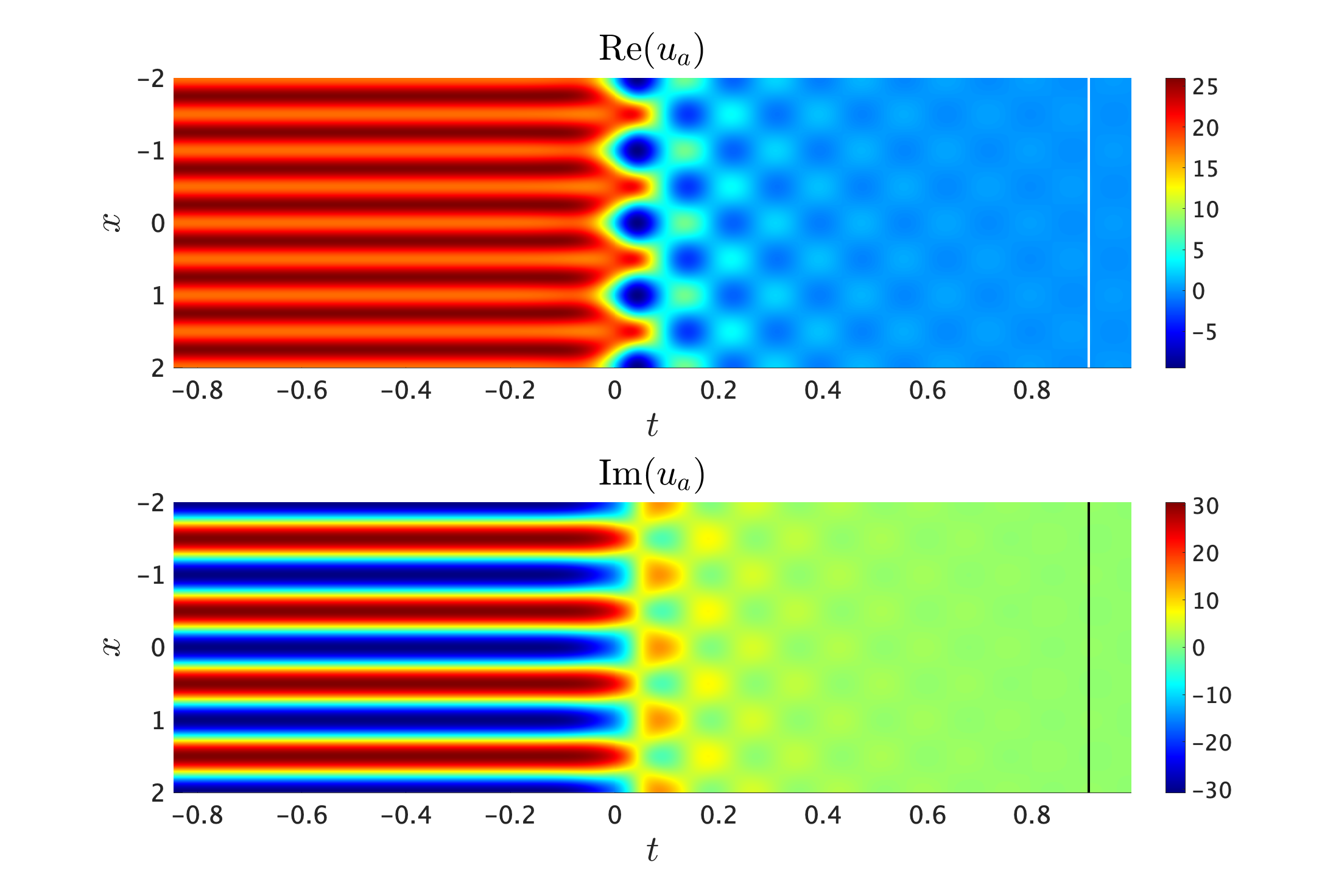}\\[-4pt]
	($a$) The heteroclinic solution $u_a$: connection from $u^i_1(x)$ to $0$.
	\includegraphics[width = .95 \textwidth]{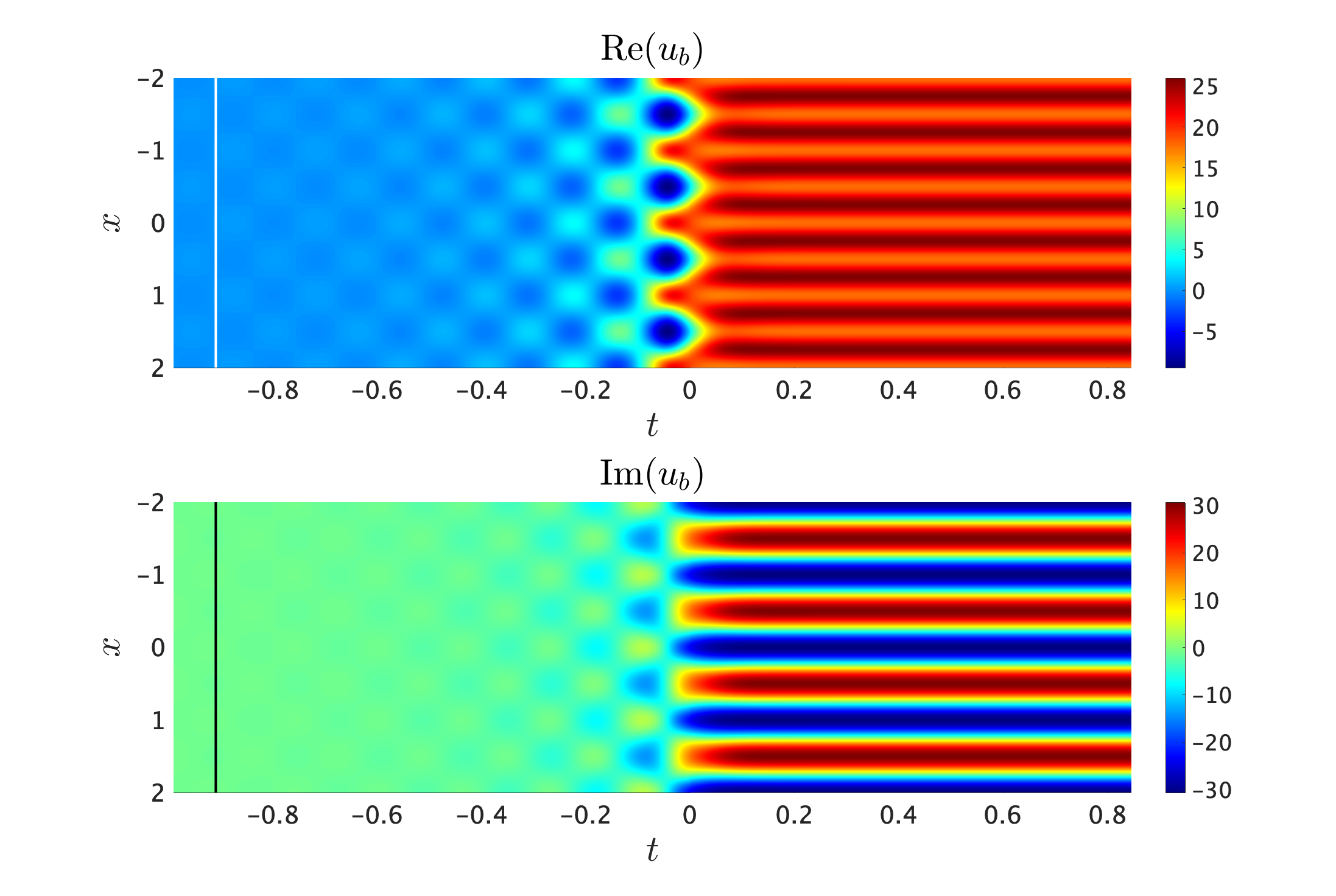}\\[-4pt]
	($b$) The heteroclinic solution $u_b$: connection from $0$ to $u^i_1(x)$.
	\caption{Extracted part of heteroclinic orbits of \eqref{eq:NLS_Quad} between the first equilibrium $u^i_1(x)$ and the zero function, validated with a computer-assisted proof. Here the $x$-variable is extended to the range $x\in [-2,2]$ in order to understand the time evolution of the solution.}\label{fig:CO_from_minus_1}
\end{figure}
 
 \begin{theorem}\label{thm:Heteroclinics}
 	Let $u$ be any one of the equilibria in Theorem \ref{thm:Equilibria}. 
 	There exist heteroclinic orbits  $u_a$ and $u_b$ to \eqref{eq:NLS_Quad} such that 
 	\begin{align*}
 		\lim_{t \to -\infty} u_a(t) &= u,&
 		\lim_{t \to +\infty} u_a(t) &= 0,
 		& & &
 		\lim_{t \to -\infty} u_b(t) &= 0,
		&
		\lim_{t \to +\infty} u_b(t) &=  u,
 	\end{align*}
 	converging exponentially fast to $u$ and algebraically fast to $0$. 
 \end{theorem}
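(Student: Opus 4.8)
The plan is to establish Theorem~\ref{thm:Heteroclinics} via a computer-assisted proof that stitches together three validated pieces, exactly as advertised in the abstract: a rigorous enclosure of the local unstable manifold at the equilibrium, a rigorous integration of the flow away from that manifold, and a proof that the integrated solution enters a ``validated stable set'' of the origin from which convergence to $0$ is guaranteed. First I would fix one of the equilibria $u=u_n^i$ or $u_n^{ii}$ from Theorem~\ref{thm:Equilibria}; by the rescaling \eqref{eq:RescalingSolutions} it suffices to treat $u_1^i$ and $u_1^{ii}$, since the rescaling maps heteroclinics to heteroclinics. Working in Fourier coordinates so that \eqref{eq:NLS_Quad} becomes a (complex) lattice ODE $\dot a_k = i(-\omega^2 k^2 a_k + (a*a)_k)$ on $\ell^1_{\nu,1}$, I would first compute the linearization at $u$, verify spectral instability (already part of Theorem~\ref{thm:Equilibria}), and then apply the parameterization method: seek an analytic chart $P:B\subseteq\C^m\to\ell^1_{\nu,1}$ (where $m$ is the dimension of the unstable eigenspace) conjugating the restricted flow to its linear part, solve the invariance equation $DP(\theta)\Lambda\theta = F(P(\theta))$ by a power series in $\theta$, and close a Newton--Kantorovich / radii-polynomial argument to obtain a $C^0$ ball around the truncated series containing a true unstable manifold with explicit error bounds.

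Next I would pick a point $y_0 = P(\theta_0)$ on the boundary of the validated unstable chart and rigorously integrate \eqref{eq:NLS_Quad} forward in time starting from $y_0$, using a validated integrator for the semilinear Fourier lattice (Taylor-model / Chebyshev time-stepping with rigorous tail control on the high Fourier modes, in the spirit of the rigorous-integration technology the authors reference). This produces a tube of solutions enclosing the true trajectory through $y_0$ over a finite time interval $[0,T]$. The target is to land this tube inside a set $S$ around $0$ on which one can \emph{prove} forward convergence to $0$ at an algebraic rate; the natural candidate for $S$ is the basin captured by the estimates behind Theorem~\ref{prop:PowerGlobalExistence} / Theorem~\ref{thm:GlobalExistenceIntro} (i.e.\ $|z_0|<\delta$ together with $\|u_0\|<\delta|z_0|^{p-1}$ and a phase condition $0\le\vartheta\le\pi$), or a slightly enlarged ``stable set'' tailored so the Gr\"onwall-type estimates still yield $\|u(t)\|\to 0$ with $\|u(t)-\zeta(t)\|=O(|t|^{-p/(p-1)})$. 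Verifying the rigorous inclusion $\{u(T): u(0)\in\text{tube}\}\subseteq S$ — including checking the phase condition on the zero-mode, which is delicate because $\zeta(t)\to 0$ and its argument is what must stay in the right half — is the crux, so I would build the validated stable set with enough slack (e.g.\ a cone condition on $(z_0,u_0)$) to absorb the integration error.

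Assembling the pieces gives $u_a$: for $t\le 0$ it is the backward trajectory lying on the unstable manifold, hence $u_a(t)\to u$ exponentially (rate governed by the unstable eigenvalues of $DF(u)$, finite and explicit from the parameterization); for $t\ge 0$ it follows the integrated tube into $S$ and then, by the invoked global-existence theorem, exists for all $t\ge0$ with $u_a(t)\to 0$ algebraically. Finally, $u_b$ is obtained for free from the time-reversal symmetry noted just before the theorem: if $u(t)$ solves \eqref{eq:NLS_Quad} then so does $(u(-t))^*$, and since each equilibrium class is closed under $u\mapsto u^*$, the orbit $u_b(t):=(u_a(-t))^*$ satisfies $u_b(t)\to 0$ as $t\to-\infty$ (algebraically) and $u_b(t)\to u$ as $t\to+\infty$ (exponentially). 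The main obstacle I anticipate is the middle step: controlling the accumulated enclosure error through the rigorous integration so that the tube genuinely fits inside a stable set whose defining inequalities — especially the phase constraint on the zero Fourier mode — are strict, which may force either a carefully chosen exit point $\theta_0$ on the unstable chart, multi-step re-validation, or an a priori enlarged stable neighborhood proven stable by a dedicated Lyapunov-type argument.
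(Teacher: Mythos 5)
Your proposal follows exactly the paper's three-step strategy: (i) validated parameterization of a one-dimensional local unstable manifold via the invariance equation and a Newton--Kantorovich/radii-polynomial argument (Section~\ref{sec:UnstableManifold}), (ii) a validated Chebyshev-in-time / Fourier-in-space integrator propagating a point such as $P(\pm 1)$ forward (Section~\ref{sec:Integrator}), and (iii) a check that the resulting enclosure lands inside the explicit stable sector $\cB(\rho_0,\rho_1)$ of Theorem~\ref{thm:HomoclinicBlowup}, which is precisely the ``cone condition on $(z_0,u_0)$ with a phase constraint on the zero mode'' you anticipate. This gives $u_a$ with exponential convergence to $u$ backward (from the parameterization and the validated unstable eigenvalue) and algebraic convergence to $0$ forward (from the rate bounds in Theorem~\ref{thm:HomoclinicBlowup}).

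There is one genuine flaw in your final step. You define $u_b(t):=(u_a(-t))^*$ and assert $u_b(t)\to u$ as $t\to+\infty$. But $u_a(-t)\to u$ as $t\to+\infty$, so in fact $(u_a(-t))^*\to u^*$, not $u$, and the two nontrivial equilibria in Theorem~\ref{thm:Equilibria} are genuinely complex, so $u^*\ne u$. Appealing to ``the equilibrium class being closed under conjugation'' does not save this: the statement of Theorem~\ref{thm:Heteroclinics} requires $u_a$ and $u_b$ to connect $0$ to the \emph{same} $u$. The correct route, and the one the paper takes, is to run a second, independent CAP producing a heteroclinic $u_{a^*}$ from $u^*$ to $0$, and then set $u_b(t):=(u_{a^*}(-t))^*$, which does converge to $(u^*)^*=u$ as $t\to+\infty$. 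Concretely this doubles the computational work (one validated unstable manifold and one validated integration for $u$, and another pair for $u^*$) — see the rows $u^i_1$ and $(u^i_1)^*$, $u^{ii}_1$ and $(u^{ii}_1)^*$ in Table~\ref{tab:Table}. With that correction your argument coincides with the paper's proof.
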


These results may also be seen in parallel to the prodigious line of inquiry seeking to answer one of Bourgain's open problems\cite{bourgain2000problems}:  Do there exist global solutions to the cubic defocusing NLS on $\T^2$ which actually achieve unbounded growth in the higher Sobolev norms in infinite time? 
In the past decade, there have been beautiful results working towards answering this question, in particular showing arbitrarily large finite growth. 
While earlier work \cite{kuksin1997oscillations} had shown arbitrarily large growth for sufficiently large initial data, 
such growth is achieved in \cite{colliander2010transfer,guardia2015growth} for small initial data: for any $s>1$, $\delta >0$ and $ K \gg 1$ there is $0<T<K^c$ and initial data for which $\| u(0)\|_{H^s} <\delta$ and $ \| u(T)\| > K$ for some $c>0$. 
In \cite{guardia2016growth} this result was extended to more general NLS nonlinearities, albeit with an exponential upper bound on the growth time.
We also mention the work \cite{hani2014long} which demonstrates trajectories with unbounded $H^s$ norms in an NLS with a specially prepared cubic type nonlinearity. 

We also mention that Bourgain's paper politely sets numerics at arm's length apart from  pure mathematics. 
\begin{quote}
	\emph{Some of my coworkers believe today's availability of powerful computational means is partly responsible for a declining interest in the often difficult rigorous work.  [...] 
[E]vidence of certain phenomena gathered from extensive computation is often received by the pure mathematician with certain scepticism or dismissed as unreliable. 
		At this point, there does not seem to be such a thing as a truly certified numerical PDE experiment. } \cite{bourgain2000problems}
\end{quote}
While constructive computer-assisted proofs (CAPs) of existence of solutions of PDEs (i.e. truly certified numerical methods) began appearing as early as the turn of the 1990s, they remained rather isolated from mainstream mathematics at the time, were often published in specialized scientific computing journals or in Japanese journals (where most of the original pioneering work of M.T. Nakao appeared). It is therefore not surprising that Bourgain was not aware of this new field. 
Over the intervening years, however, the field of validated numerics and CAPs for PDEs have made incredible strides.  

In fact, the history of CAPs in nonlinear equations began long before the applications to PDEs. Indeed, in the 1960s, functional analytic methods of computer-assisted proof already existed exemplified by the work of Cesari on Galerkin projections for periodic solutions \cite{MR151678,MR173839}. In the field of dynamical systems, important open problems were settled with computer-assisted proofs, e.g. the universality of the Feigenbaum constant \cite{feigenbaum} and the existence of the strange attractor in the Lorenz system \cite{lorenz} (i.e.\ Smale's 14th problem). Other prominent examples outside dynamics are the proofs of the four-colour theorem \cite{fourcolor} and Kepler's densest sphere packing problem \cite{kepler}. We refer the interested to reader to the expository works \cite{MR1420838,MR1849323,Plu01,MR2652784,MR2807595,MR3444942,jay_konstantin_survey} 
and the references therein, for a more complete overview of the field of rigorously verified numerics. 

More specifically in the field of PDEs, the first CAPs may have appeared in the work of M.T. Nakao when he used the computer together with Schauder's fixed point theorem to prove the existence of weak solutions for linear elliptic second order BVPs \cite{MR944817} and nonlinear parabolic equations \cite{MR1146987}. Following this approach, a priori error estimates for finite element approximations in Sobolev spaces were used to prove the existence of solutions of elliptic problems \cite{MR2161437}. Independently, Plum proposed a method based on explicit Sobolev embeddings and eigenvalue bounds to prove existence of solutions of nonlinear elliptic BVPs \cite{MR1151060,MR1182440,MR2019251}. 
A method based on a computational version of the Banach fixed point theorem (a Newton-Kantorovich type theorem) was introduced in \cite{MR1639986} to prove existence and local uniqueness of solutions of BVPs. Newton-Kantorovich type theorems were then used in the study of equilibria \cite{MR2718657,MR3904424,MR3770054} and periodic orbits \cite{AriKoc10,FigLla17,GamLes17,MR2049869} and the ill-posed Boussinesq equation \cite{CasGamLes18}. 
Topological methods (e.g. Conley index, self-consistent a priori bounds) were also introduced to obtain CAPs in parabolic PDEs \cite{MR1838755,MR2049869,MR2136516,MR2329522}. 
We finally mention the work~\cite{CasCorGom16}, where the existence of a family of uniformly rotating solutions of the inviscid surface quasi-geostrophic equation is shown with computer-assistance. While a complete review of the field of rigorously verified numerics in PDEs falls outside the scope of the paper, we refer to the recent book \cite{MR3971222} and the review paper \cite{MR3990999} for more details.

The paper is organized as follows. 
In Section \ref{sec:CenterManifold} we study the general equation \eqref{eq:NLS}, proving 
Theorems \ref{prop:PowerGlobalExistence}--\ref{thm:NoConservedQuantities} and Proposition \ref{prop:NoSeparableSolutions}.
The remainder of the paper studies equation  \eqref{eq:NLS_Quad}, using computer-assisted proofs in particular. 
The proof of Theorem \ref{thm:Equilibria} uses standard methods of verified numerics and is left to Appendix \ref{sec:eigenpairs}. 
Section \ref{sec:UnstableManifold} develops a validated approximation of the (strong) unstable manifolds of these nontrivial equilibria, with rigorous \emph{a posteriori} error bounds. 
In Section \ref{sec:Integrator} we describe our rigorous numerical integrator for solving the initial value problem in \eqref{eq:NLS_Quad}. These results are all combined in Section \ref{sec:FinalTheorem} where we prove Theorem \ref{thm:Heteroclinics}. 
We note that the codes used to produce our computer-assisted proofs are freely available from \cite{bib:codes}, and some of the relevant computational details are listed in Appendix \ref{sec:ComputationalTable}. 
Finally in Section \ref{sec:OpenQuestions} we list several open questions.  


\section{Semi-global existence of close to constant initial data}
\label{sec:CenterManifold}

Let us begin by considering \eqref{eq:NLS_p}, a special case of \eqref{eq:NLS} where $ f(u)=1$, repeated  below  
\begin{align*} 
u_t &= i
\left( 
\triangle u + u^p 
\right)
\end{align*}
for integer $p \geq 2$ and  initial data $u(0,x) : \mathbb{T}^d \to \C$ 
with periodic boundary conditions $ \T^d = \R^d / \tfrac{2 \pi}{\omega} \Z^d$ for $\omega \in \R^d$. 
This has an equilibrium at $ u \equiv 0$ with eigenvalues $-i k^2 \omega ^2$  for $ k \in \Z^d$. 
In particular there is a  single 0-eigenvalue with complex multiplicity 1. 
This eigenvalue is associated with the spatially homogeneous dynamics, an invariant subsystem with internal dynamics given by  
\begin{align}\label{eq:NLS_hom}
	\dot{z} &= i z^p
\end{align}
The phase diagrams of these dynamics are reminiscent of the electric field of a dipole or multipole, see Figure 
\ref{fig:HomogeneousDynamics}, and we refer the reader to \cite{dumortier2006qualitative} for the qualitative dynamics of this normal form.
\begin{figure}[h]
	\centering
	\includegraphics[width=0.32\linewidth]{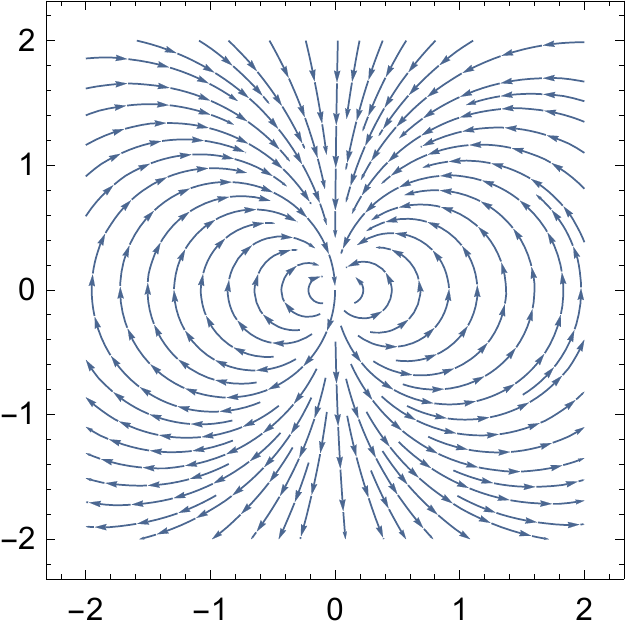} 
	\includegraphics[width=0.32\linewidth]{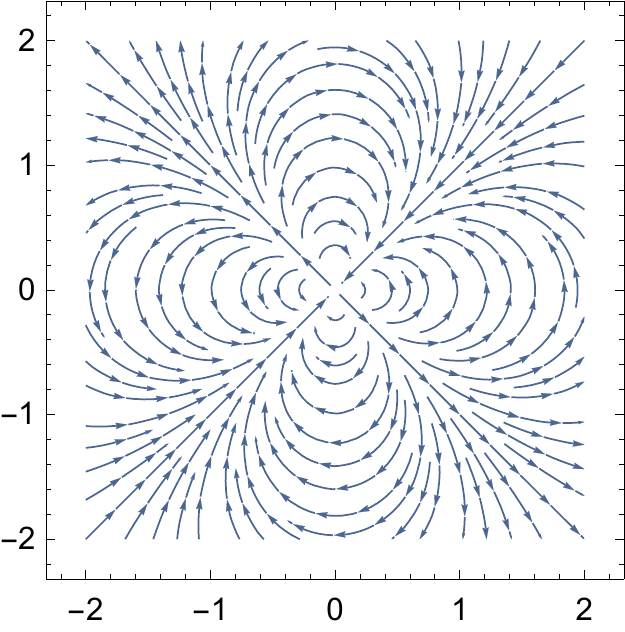}
	\includegraphics[width=0.32\linewidth]{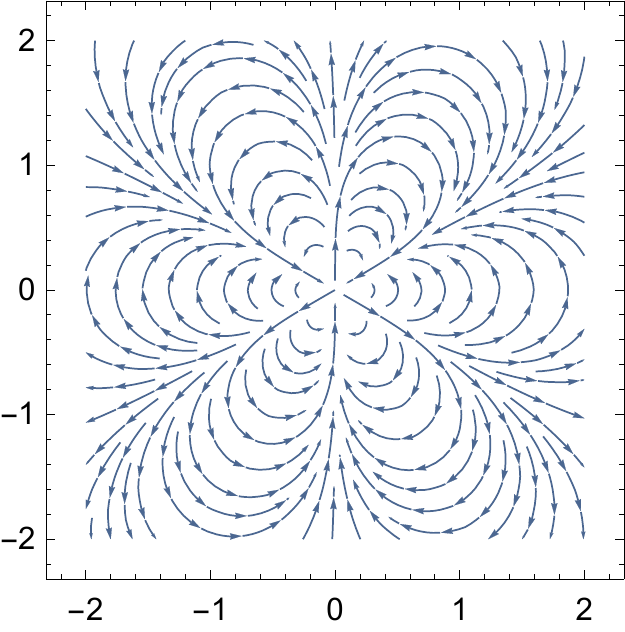}  
	\caption{
		Homogeneous dynamics for \eqref{eq:NLS_hom} with  $ p = 2,3,4$. 
	}
	\label{fig:HomogeneousDynamics}
\end{figure}

Note that \eqref{eq:NLS_hom} is a separable ordinary  differential equation, which we may solve exactly. 
Indeed,  for the branch $ 1 = \sqrt[p-1]{1}$ of $ z \mapsto z^{1/(p-1)}\in \C $, the function $ \zeta: \R_+ \to \C$ defined below solves \eqref{eq:NLS_hom} with initial condition $\zeta(0) = z_0$.   
\begin{align} \label{eq:PpowerSolution}
	\zeta(t) &= \frac{  z_0 }{(1 -i (p-1) z_0^{p-1} t)^{1/(p-1)}}.
\end{align}
We use this solution, and more generally the homogeneous solutions to \eqref{eq:NLS}, to provide a non-autonomous blow up of the spatio-temporal dynamics about the equilibrium. 

%
%
%
%
%

\begin{proposition} \label{prop:GeneralBlowUp}
 Suppose that $z:\R_+ \to \C$ solves  $ z' = i z^p f(z)$ and $z(0)\neq0$ for $ f : \C\to \C$ real analytic.  
 There exists a real-analytic function $h:\C^2 \to \C$ such that if  $ u$ solves \eqref{eq:NLS}  and $ \tilde{u}$ is defined by $ u = z(t) + z(t)^p \tilde{u}$, then 
 $\tilde{u}$ satisfies 
\begin{align}\label{eq:GeneralBlowUp}
	\partial_t  \tilde{u} = i \triangle \tilde{u} + i  h(z(t),\tilde{u}).
\end{align}
Furthermore there exist constants $ \delta , K >0$ such that $ | h(x,y)| \leq K |x^p y| $ whenever $ |x|,|y| < \delta$.  
\end{proposition}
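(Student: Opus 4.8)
The plan is to substitute the ansatz $u = z(t) + z(t)^p\tilde u$ into \eqref{eq:NLS}, use the ODE $z' = iz^pf(z)$ to cancel the $\tilde u$-independent terms, read off $h$ from what remains, and then establish the bound by a short estimate. First note that $z(t)\neq 0$ throughout the interval of existence of $z$: the map $\zeta\mapsto\zeta^pf(\zeta)$ is real-analytic, hence locally Lipschitz, with $0$ an equilibrium, so if $z$ vanished somewhere then $z\equiv 0$ by uniqueness, contradicting $z(0)\neq 0$. Hence $z(t)^p\neq 0$ and $\tilde u := (u - z(t))/z(t)^p$ is an admissible change of dependent variable. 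Since $z$ is $x$-independent, $\triangle u = z^p\triangle\tilde u$ and $\partial_t u = z' + pz^{p-1}z'\tilde u + z^p\partial_t\tilde u$; inserting this into $\partial_t u = i(\triangle u + u^pf(u))$, using $z' = iz^pf(z)$, and dividing by $z^p$ produces \eqref{eq:GeneralBlowUp} with
\[
h(x,y) = (1+x^{p-1}y)^p f(x+x^py) - f(x) - p x^{p-1} f(x) y .
\]
Equivalently $x^ph(x,y) = g(x+x^py) - g(x) - px^{p-1}g(x)y$ for $g(\zeta) := \zeta^pf(\zeta)$, which exhibits that the spurious factor $x^{-p}$ cancels because $g$ vanishes to order $p$ at $0$. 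Since $f$ is real-analytic on $\C$ and $(1+x^{p-1}y)^p$ and $x+x^py$ are polynomials, $h$ is real-analytic on $\C^2$.

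For the bound, observe $h(x,0) = f(x)-f(x) = 0$, so with $\partial_y,\partial_{\bar y}$ the Wirtinger derivatives in the second slot,
\[
h(x,y) = y\int_0^1(\partial_y h)(x,sy)\,ds + \bar y\int_0^1(\partial_{\bar y}h)(x,sy)\,ds ,
\]
and it suffices to bound $|\partial_y h|$ and $|\partial_{\bar y}h|$ by $C|x|^p$ on a small bidisc. Choose $\delta>0$ so small that $f,\partial f,\bar\partial f$ are bounded by a constant on $\{|\zeta|\le 2\delta\}$ and that $|x|,|y|\le\delta$ forces $|x+x^py|\le 2\delta$. Differentiating the formula for $h$ gives $\partial_{\bar y}h = (1+x^{p-1}y)^p\,\bar x^p\,(\bar\partial f)(x+x^py)$, which carries an explicit $\bar x^p$ and is thus $\le C|x|^p$, and $\partial_y h = x^p(1+x^{p-1}y)^p(\partial f)(x+x^py) + p\,x^{p-1}B(x,y)$ with $B(x,y) := (1+x^{p-1}y)^{p-1}f(x+x^py) - f(x)$; the first summand again has an explicit $x^p$, while $B$ is real-analytic and vanishes identically on $\{x=0\}$ (there $x^{p-1}y=0$ and $x+x^py=0$ since $p\ge 2$), so integrating along $s\mapsto B(sx,y)$ yields $|B(x,y)|\le C'|x|$ on the bidisc, whence $|p\,x^{p-1}B(x,y)|\le pC'|x|^p$. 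Collecting these gives $|h(x,y)|\le K|x|^p|y| = K|x^py|$ for $|x|,|y|<\delta$, with $K$ and $\delta$ depending only on $p$ and $f$.

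The substitution is routine bookkeeping and the real-analyticity is immediate, so the only step needing any care is the final estimate — precisely, extracting the ``extra'' power of $x$ from $p\,x^{p-1}\big[(1+x^{p-1}y)^{p-1}f(x+x^py)-f(x)\big]$. I expect this to be the sole (and mild) obstacle; since $f$ is merely real-analytic rather than holomorphic, I would handle it via the Wirtinger-derivative / mean-value argument above rather than by expanding $f$ in its power series about $0$ and summing term by term (which is feasible but fussier about the convergence of the resulting series).
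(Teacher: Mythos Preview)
Your proof is correct and follows essentially the same route as the paper: substitute the ansatz, use $z'=iz^pf(z)$ to cancel, divide by $z^p$, and read off $h$; your closed formula $h(x,y)=(1+x^{p-1}y)^p f(x+x^py)-f(x)-px^{p-1}f(x)y$ agrees with the paper's, which writes it as $(1+px^{p-1}y)\big(f(x+x^py)-f(x)\big)+\sum_{m=2}^p\binom{p}{m}x^{m(p-1)}y^m f(x+x^py)$ after a binomial expansion. The only methodological difference is in the final estimate: the paper expands $f$ in its real-analytic power series (in $z$ and $z^*$) to exhibit explicitly that every term of $h_1=f(x+x^py)-f(x)$ carries a factor $x^p y$ or its conjugate, whereas you use a Wirtinger-derivative / mean-value argument to extract the factors of $|y|$ and $|x|^p$; both are valid and yield the same bound.
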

\begin{proof}
Taking the time derivative of $u$, we obtain 
	\begin{align*}
	u_t 
	&= z'(t) + p z'(t) z(t)^{p-1} \tilde{u} + z(t)^{p} \tilde{u}_t \\
	&= i z(t)^p f(z) +i p z(t)^{2p-1} f(z) \tilde{u} + z(t)^p \tilde{u}_t.
	\end{align*}
	We expand $u^p f(u)$ from the RHS of \eqref{eq:NLS}, writing $ z=z(t)$, 
	\begin{align}
		u^p f(u) &= (z+ z^p \tilde{u})^p f(z + z^p \tilde{u}) \nonumber \\
		& = \left(
		z^p + p z^{2p-1} \tilde{u} + \sum_{m=2}^p {p\choose m} z^{p +m(p-1)} \tilde{u}^m
		\right)
		f(z+ z^p \tilde{u}). \label{eq:ExpansionUpFu}
	\end{align}
	We may write $ f(z+ z^p \tilde{u}) - f(z) =  h_1(z,\tilde{u})$ for a real-analytic function $ h_1 : \C^2 \to \C$  defined as 
	\[
	h_1(z,\tilde{u}) 
	\bydef 
	\sum_{m+n \geq 1}^\infty \frac{\partial_{z}^m  \partial_{z^*}^nf(z)}{m!n!} 
	\big(z^p \tilde{u}\big)^m \left((z^p \tilde{u})^*\right)^n .
	\]
	Note that there exist constants $ \delta_1 , K_1 >0$ such that $ | h_1(x,y)| \leq K_1 |x^p y| $ whenever $ |x|,|y| < \delta_1$.  
	We may continue expanding $ u^p f(u)$ from \eqref{eq:ExpansionUpFu} as  below  
	\begin{align}
 		u^p f(u) &=  \left(
 		z^p + p z^{2p-1} \tilde{u} 
 		\right)
\left(  		f(z ) +  h_1(z,\tilde{u})  \right)
 		+ 
z^{p} \left( \sum_{m=2}^p {p\choose m} z^{m(p-1)} \tilde{u}^m \right) 		 		f(z+ z^p \tilde{u}).
	\end{align}
	Equating $ u_t = i (\triangle u + u^p f(u))$ and canceling terms we obtain
	\begin{align}
		z^p \tilde{u}_t &= i z^p \triangle \tilde{u} + 
		i
		\left(
		z^p + p z^{2p-1} \tilde{u} 
		\right) h_1(z,\tilde{u}) 
		+ 
		i
		z^{p} \left( \sum_{m=2}^p {p\choose m} z^{m(p-1)} \tilde{u}^m \right) 		 		f(z+ z^p \tilde{u}).
	\end{align}
	Define the real analytic function $h:\C^2 \to \C$ by 
	\[
	h(z,
	\tilde{u})
	\bydef
	\left(
	1 + p z^{p-1} \tilde{u} 
	\right)
	h_1(z,\tilde{u}) 
	+ 
 \left( \sum_{m=2}^{p} {p\choose m} z^{m(p-1)} \tilde{u}^m \right) 		 		f(z+ z^p \tilde{u}).
	\]
	Dividing through by $ z^p$ we obtain 
	\[
	\tilde{u}_t = i \triangle \tilde{u} + i
	 h(z,\tilde{u})
	 .
	\]
	Note further that 
	 there exist constants $ \delta , K >0$ such that $ | h(x,y)| \leq K |x^p y| $ whenever $ |x|,|y| < \delta$.  
\end{proof}

	\begin{remark} \label{rem:Blowup}
		If  $f(u)= 1$ as in \eqref{eq:NLS_p}, then $h_1=0$ and  we have  
		\begin{align} \label{eq:BlowUp}
h(\zeta(t),\tilde{u}) &= 
 \sum_{m=2}^{p}   {p\choose m} \zeta(t)^{(p-1)m} \tilde{u}^m
 .
		\end{align}
		
	\end{remark}


To perform our analysis, we   work with functions with Fourier coefficients in $\ell_{\nu,d}^1$, see Definition \ref{def:ell_nu^1}.  
That is, we define $ a : \R\to \ell_{\nu,d}^1$ to be the Fourier coefficients of $ u$ according to 
	\begin{align}\label{eq:FourierSeries}
u(t, x ) \bydef \sum_{k \in \Z^d} a_k(t) e^{ i k \omega x}.
	\end{align}
	Associated with the multiplication of two functions $u,w:\T^d \to \C$ is the discrete convolution $ * : \ell_{\nu,d}^1 \times \ell_{\nu,d}^1 \to \ell_{\nu,d}^1$. 
	That  is, given two sequences $b=(b_k)_{k \in \Z^d  },c=(c_k)_{k \in \Z^d} \in \ell_{\nu,d}^1$, their discrete convolution $b*c=((b*c)_k )_{k \in \Z^d}$ is given component-wise by
	\[
	(b*c)_k = \sum_{k_1+k_2 = k \atop k_1,k_2 \in \Z^d} b_{k_1} c_{k_2}.
	\]
	We note that $ \ell_{\nu,d}^1$ is a   Banach algebra; if $ b , c \in \ell_{\nu,d}^1$ then $ \|b*c \| \leq \|b \| \|c\|$.  
	Moreover, for $ b \in \ell_{\nu,d}^1$ we recursively define $ b^1 \bydef b$ and $ b^{n} \bydef b^{n-1} * b$. 
	In this manner, if  $u_0:\T^d \to \C$ has Fourier coefficients $ b \in \ell_{\nu,d}^1$,  and $ f(z) = \sum_{n \in \N} d_n z^n$ is an analytic function with coefficients $d_n\in \C$, $n \geq 0$,  then 
	\[
	f(u_0(x)) 
	= \sum_{k \in \Z^d} \tilde{f}(b)_k e^{i \omega k x} , \qquad \qquad \tilde{f}(b) \bydef \sum_{n \in \N} d_n  b^n 
	\] 
Similarly, we are able to naturally extend a real  analytic function $ f: \C \to \C$ to a real analytic function $ \tilde{f} : \ell_{\nu,d}^1 \to \ell_{\nu,d}^1$.  
When it is notationally convenient (ie always) we will write $\tilde{f}$ as $f$.  

Furthermore, let us define the inclusion map $ \iota^0: \C\to \ell_{\nu,d}^1$ onto the zero\textsuperscript{th} Fourier coefficient; for $ z \in \C$  define $( \iota^0(z))_k $ as equal to $z$ if $k = 0$, and $0$ otherwise.

\begin{remark} \label{rem:FourierDifferentialEquation}
 Suppose $u$ is a solution to \eqref{eq:NLS} with time varying Fourier coefficients $a(t) \in \ell_{\nu,d}^1$. 
 Then for $z(t)$ as in Proposition \ref{prop:GeneralBlowUp} and $ \tilde{a}(t) \in \ell_{\nu,d}^1 $ denoting the time varying Fourier coefficients of $ \tilde{u}$ with $ a(t) =\iota^0(z(t)) + z(t)^p \tilde{a}(t)$, it follows that 
 	\[
 ( \partial_t \tilde{a})_k = - i k^2 \omega^2 \tilde{a} + i
 h(z(t),\tilde{a}),
  \qquad \qquad k \in \Z^d .
 \]
\end{remark}

To illustrate our approach for proving global existence, and to obtain sharper results, we first treat the case of equation \eqref{eq:NLS_p}. 
Note in Figure \ref{fig:HomogeneousDynamics} that each equilibrium has ``stable sectors' of solutions approaching zero with monotonically decreasing norm. 
From the explicit solution $ \zeta(t)$ in \eqref{eq:PpowerSolution} with $ z_0 = r_0 e^{i \theta_0}$ we may write 
\begin{align}\label{eq:EpsilonEstimate}
	|\zeta(t)|^{p-1} =  
	\left| 
	\frac{ z_0^{p-1} }{1 -i (p-1) z_0^{p-1} t} 
	\right| 
	=
	\left| 
	\frac{ r_0^{p-1} }{
	1 + t (p-1) r_0^{p-1} \big(
	\sin (p-1) \theta_0 - i \cos (p-1) \theta_0 
	\big)
} 
	\right| .
\end{align}
Hence, if $ \sin (p-1) \theta_0 \geq 0$ then $|\zeta(t)|$ monotonically decreases. 
We define these stable regions   below. 

\begin{definition}
	For any $ \rho_0 > 0$ define the region 
	\begin{align}\label{eq:InvariantBall}
	B(\rho_0) \bydef 
	\{
	z \in \C :  
	0 \leq (p-1) Arg(z) \mod 2 \pi \leq \pi ;
	| z| \leq \rho_0
	\} .
	\end{align}
\end{definition}

For the nonlinearity given in \eqref{eq:NLS_p}, we may obtain an explicit region within which points converge to $0$.

\begin{theorem}\label{thm:HomoclinicBlowup} 
	Consider \eqref{eq:NLS_p} with  $ p \geq 2$. 
	Fix $ 0 <  \rho_0, \rho_1$ and define the set 
	\begin{align} \label{eq:BallOfFunction}
	\cB(\rho_0,\rho_1) &\bydef 
	\left\{
 \phi + 	\iota^0(z_0)  \in \ell_{\nu,d}^1 \; 
	:\; 
	z_0 \in B(\rho_0)  \subseteq \C;
	\| \phi \|  \leq \rho_1 |z_0|^p
	\right\}.
	\end{align}
	Define $ P(r,\rho_0)$ as
	\begin{align} \label{eq:Pdef}
	P(r,\rho_0) &\bydef 
	\sum_{m=2}^{p}   {p\choose m}
	\left(
	r \rho_0^{p-1}
	\right)^{m-1} 
.
	\end{align}
	If there exists some $ r >0$ such that 
	\begin{align} \label{eq:RadiiExponential}
	\rho_1  \exp \left\{  \frac{\pi}{2} \frac{ P(r,\rho_0)}{(p-1)} \right\} < r,
	\end{align}
	then solutions of points  $ a(0) = 	  \phi + 	\iota^0(z_0)\in \cB(\rho_0,\rho_1) $ under \eqref{eq:NLS_p} will exist for all positive time,   converge to zero, and satisfy $ \| a(t) - \iota^0(\zeta(t)) \| \leq r | \zeta(t)|^p$ for $\zeta(t)$ in \eqref{eq:PpowerSolution}.
\end{theorem}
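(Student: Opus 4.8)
The plan is to run the pure-power blow-up of Proposition~\ref{prop:GeneralBlowUp} (in the form of Remark~\ref{rem:Blowup}) in Fourier space and then close a Gr\"onwall/bootstrap estimate on $\|\tilde a(t)\|$. Fix $a(0)=\phi+\iota^0(z_0)\in\cB(\rho_0,\rho_1)$; if $z_0=0$ then $\phi=0$, $a(0)=0$, and the claim is vacuous, so assume $z_0\neq0$. Local well-posedness gives a unique mild solution $a(\cdot)$ on a maximal interval $[0,T_{\max})$. Since $z_0\in B(\rho_0)$, the discussion after \eqref{eq:EpsilonEstimate} shows $|\zeta(t)|$ is finite and nonzero for all $t\ge0$, monotonically decreasing (hence $|\zeta(t)|\le|z_0|\le\rho_0$), and tends to $0$; thus on $[0,T_{\max})$ the substitution $a(t)=\iota^0(\zeta(t))+\zeta(t)^p\tilde a(t)$ of Remark~\ref{rem:FourierDifferentialEquation} is legitimate, yielding $(\partial_t\tilde a)_k=-ik^2\omega^2\tilde a_k+i\,h(\zeta(t),\tilde a)_k$ with $h(\zeta(t),\tilde a)=\sum_{m=2}^p\binom pm\zeta(t)^{(p-1)m}\tilde a^{m}$ ($\tilde a^m$ the $m$-fold convolution power), and $\|\tilde a(0)\|=\|\phi\|/|z_0|^p\le\rho_1$. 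Since the free evolution $e^{-ik^2\omega^2t}$ acts as an isometry on $\ell_{\nu,d}^1$, Duhamel's formula gives the integral inequality $\|\tilde a(t)\|\le\|\tilde a(0)\|+\int_0^t\|h(\zeta(s),\tilde a(s))\|\,ds$ on $[0,T_{\max})$.

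The crux is the following a priori estimate. Suppose $\|\tilde a(s)\|\le r$ on $[0,T]$. The lowest power of $\zeta$ occurring in $h$ is $\zeta^{2(p-1)}$, so using $|\zeta(s)|\le\rho_0$ in the surplus factors $|\zeta(s)|^{(p-1)(m-2)}$ and $\|\tilde a(s)\|\le r$ in the surplus factors $\|\tilde a(s)\|^{m-2}$, one obtains
\[
\|h(\zeta(s),\tilde a(s))\|\ \le\ |\zeta(s)|^{2(p-1)}\,\|\tilde a(s)\|\sum_{m=2}^p\binom pm\rho_0^{(p-1)(m-2)}r^{m-1}\ =\ \frac{|\zeta(s)|^{2(p-1)}}{\rho_0^{\,p-1}}\,P(r,\rho_0)\,\|\tilde a(s)\|,
\]
the last equality being the elementary identity $\sum_{m=2}^p\binom pm\rho_0^{(p-1)(m-2)}r^{m-1}=\rho_0^{-(p-1)}P(r,\rho_0)$. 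Gr\"onwall's inequality then gives $\|\tilde a(t)\|\le\|\tilde a(0)\|\exp\!\big(\rho_0^{-(p-1)}P(r,\rho_0)\int_0^t|\zeta(s)|^{2(p-1)}\,ds\big)$. For the time integral, differentiating $\zeta^{1-p}$ along $\dot\zeta=i\zeta^p$ gives $\zeta(t)^{1-p}=z_0^{1-p}-i(p-1)t$, hence $|\zeta(t)|^{2(p-1)}=|z_0^{1-p}-i(p-1)t|^{-2}$; writing $z_0^{1-p}=a+ib$ the integral becomes $\tfrac1{p-1}\int_0^\infty(a^2+(s-b)^2)^{-1}\,ds$, and the angular constraint defining $B(\rho_0)$ is precisely $\mathrm{Im}(z_0^{p-1})\ge0$, i.e.\ $b\le0$. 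Since $a^2+b^2=|z_0|^{2(1-p)}$, an elementary evaluation then yields $\int_0^\infty|\zeta(s)|^{2(p-1)}\,ds\le\frac{\pi}{2(p-1)}|z_0|^{p-1}\le\frac{\pi\rho_0^{\,p-1}}{2(p-1)}$. Substituting back, $\|\tilde a(t)\|\le\rho_1\exp\!\big(\tfrac{\pi}{2}\tfrac{P(r,\rho_0)}{p-1}\big)$, which by hypothesis \eqref{eq:RadiiExponential} is strictly less than $r$.

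The theorem now follows by a standard continuation argument. Put $T^*=\sup\{T<T_{\max}:\|\tilde a(s)\|\le r\text{ on }[0,T]\}$; then $T^*>0$ since $\|\tilde a(0)\|\le\rho_1<r$. On $[0,T^*)$ the estimate above gives $\|\tilde a(t)\|\le\rho_1\exp(\tfrac\pi2 P(r,\rho_0)/(p-1))<r$ strictly, so by continuity $T^*$ cannot be $<T_{\max}$; hence $\|\tilde a(t)\|\le r$ on all of $[0,T_{\max})$, and therefore $\|a(t)\|\le|\zeta(t)|+r|\zeta(t)|^p\le\rho_0+r\rho_0^p$ is uniformly bounded, so the blow-up alternative forces $T_{\max}=\infty$. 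Finally $\|a(t)-\iota^0(\zeta(t))\|=|\zeta(t)|^p\|\tilde a(t)\|\le r|\zeta(t)|^p$, and since $|\zeta(t)|\to0$ this also yields $\|a(t)\|\to0$.

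The one genuinely delicate point is the time-integral step. One must factor $|\zeta(s)|^{2(p-1)}$ out of $h$ rather than $|\zeta(s)|^{p-1}$: the latter choice is tempting because it leaves a single clean power of $\|\tilde a\|$, but $\int_0^\infty|\zeta(s)|^{p-1}\,ds=+\infty$, so that route makes the bootstrap fail. With the correct power the integral is finite precisely because the angular restriction in the definition of $B(\rho_0)$ places $z_0^{1-p}$ in the closed lower half-plane, and one must verify that the elementary evaluation, together with the combinatorial identity above, collapses exactly to the constant $\tfrac\pi2\,P(r,\rho_0)/(p-1)$ appearing in \eqref{eq:RadiiExponential}. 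Everything else --- the isometry of the free evolution, Gr\"onwall's lemma, and the continuation/blow-up alternative --- is routine.
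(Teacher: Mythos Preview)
Your proof is correct and mirrors the paper's argument almost exactly: the same blow-up $a=\iota^0(\zeta)+\zeta^p\tilde a$, the same bootstrap $\|\tilde a\|\le r$, and the same Gr\"onwall closure via $\int_0^\infty|\zeta|^{2(p-1)}\,dt\le\tfrac{\pi}{2(p-1)}\rho_0^{\,p-1}$. Two minor remarks: in your prose ``surplus factors $\|\tilde a(s)\|^{m-2}$'' should read $\|\tilde a(s)\|^{m-1}$ (your displayed formula is right), and for the time integral the paper takes the slightly simpler route of first majorizing $|\zeta(t)|^{2(p-1)}$ pointwise by its worst-case value on $\partial B(\rho_0)$ (equation \eqref{eq:zetaBound}) and then integrating to an arctangent, which avoids the implicit inequality $\arctan x\le\tfrac{\pi x}{2\sqrt{1+x^2}}$ hidden in your ``elementary evaluation''.
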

\begin{proof}
	
	Using the Fourier ansatz as discussed in Remark \ref{rem:FourierDifferentialEquation} and using the explicit form of $h$ given in  Remark \ref{rem:Blowup},  we obtain a differential equation on the Fourier coefficients. 
	\begin{align}  
	\partial_t \tilde{a}_k &= 
	- i k^2 \omega^2 \tilde{a}_k + i \sum_{m=2}^{p}   {p\choose m} \zeta(t)^{(p-1)m} ( \tilde{a}^m)_k ,
	\label{eq:FourierODE}
	\end{align}
	for $\zeta$ defined in \eqref{eq:PpowerSolution}.
		Define $ \tilde{\phi} = z_0^{-p} \phi$. 
	By variation of constants 
	we obtain:  
	\[
	\tilde{a}_k(t) = 
	e^{-i k^2 \omega^2 t} \tilde{\phi}_k
	+ \int_0^t  e^{-i k^2 \omega^2 (t-s)} 
	i \sum_{m=2}^{p}   {p\choose m} \zeta(s)^{(p-1)m}
	( \tilde{a}^{m}  )_k  ds .
	\]
	Note from  \eqref{eq:EpsilonEstimate} that for $ z_0 \in B_0(\rho_0)$ we have 	$ | \zeta(t)|^{m(p-1)} \leq \rho_0^{(m-2)(p-1)} |\zeta(t)|^{2(p-1)}$ for $ m \geq 2$. 
	Moreover, for $ Arg(z_0)=\theta_0$ 
	 the norm of $ \zeta(t)$ is maximized when $0 = \sin (p-1) \theta_0 $,  hence 
\begin{align} \label{eq:zetaBound}
		|\zeta(t)|^{2(p-1)} 
	\leq  \frac{|z_0^{p-1}|^2 }{1 + |(p-1) z_0^{p-1}|^2 t^2} .
\end{align}
	For $ r \geq \| \tilde{\phi} \| $ define $ T = \sup  \{ t  \geq 0 :  \|\tilde{a}(t)\| \leq r \}$. 
	Taking norms in the variation of constants formula  and applying Banach algebra estimates, we obtain  for $ t \in [ 0, T]$ that 
	\begin{align*}
	\| \tilde{a}(t) \| & \leq \| \tilde{\phi}\| + \int_0^t  \sum_{m=2}^{p}   {p\choose m} |\zeta(s)|^{m(p-1)} \|\tilde{a}(s)\|^{m}    ds \\
	& \leq \| \tilde{\phi}\| + \int_0^t  \sum_{m=2}^{p}   {p\choose m} |\zeta(s)|^{2(p-1)}
	\rho_0^{(m-2)(p-1)}  \|\tilde{a}(s)\|r^{m-1}   ds .
	\end{align*}
	Then for $P(r,\rho_0)$ defined in \eqref{eq:Pdef} we have 
	\begin{align*}
	\| \tilde{a}(t) \| &\leq  \int_0^t  \rho_0^{-(p-1) }
	P(r,\rho_0) |\zeta(s)|^{2(p-1)} \|\tilde{a}(s)\| ds .
	\end{align*} 
	In anticipation of applying Gr\"onwall's inequality we compute 
	\begin{align*}
	\int_0^t  \rho_0^{-(p-1) }
	P(r,\rho_0) |\zeta(s)|^{2(p-1)}  d s 
	&\leq \frac{ P(r,\rho_0)}{\rho_0^{p-1}}  \int_0^t  \frac{|z_0^{p-1}|^2 }{1 + |(p-1) z_0^{p-1}|^2 s^2} d s \\
	&\leq \frac{ P(r,\rho_0)}{(p-1)}     \arctan( |(p-1)z_0^{p-1}| t )   \\
	& < \frac{\pi}{2} \frac{  P(r,\rho_0)}{(p-1)}   .
	\end{align*}
	Thus from Gr\"onwall's inequality we obtain:
	\[
	\| \tilde{a}(t) \| < \| \tilde{\phi}\|  \exp \left\{  \frac{\pi}{2} \frac{  P(r,\rho_0)}{(p-1)} \right\} 
	\leq
	\rho_1 \exp \left\{  \frac{\pi}{2} \frac{  P(r,\rho_0)}{(p-1)} \right\} 
	\qquad  \qquad \forall t \in [0,T].
	\]
	Assuming that \eqref{eq:RadiiExponential} holds, then $ \| \tilde{a}(t) \| < r$ for all $ t \in [0,T]$, whence $ T = + \infty$.  
	Hence solutions $u$ to \eqref{eq:NLS_p}  with Fourier coefficients $ a(t) = \iota^0(\zeta(t)) + \zeta(t)^p \tilde{a}(t)$ exist for all positive time, and its Fourier coefficients satisfy $\| a(t) - \iota^0(\zeta(t))\| \leq r |\zeta(t)|^p $, with $ \zeta \to 0 $ as $ t \to \infty$.

\end{proof}

\begin{proof}[Proof of Theorem \ref{prop:PowerGlobalExistence}] 
	Letting $\phi$ denote the Fourier coefficients of $ u_0$. The result follows from Theorem \ref{thm:HomoclinicBlowup} with a particular choice of constants. 
	Note that as $ \| \phi  \| < C_p |z_0|$, we may choose some $\rho_1 < C_p |z_0|^{-(p-1)}$ such that $\| \phi   \| \leq \rho_1 | z_0|^p$.
	Fix $ \rho_0 = |z_0|$ and for $ \vartheta =  Arg(z_0) \mod 2 \pi$ suppose $ 0 \leq \vartheta \leq \pi$, hence  $ \iota^0 (z_0) + \phi \in \cB(\rho_0,\rho_1)$. 
	If we choose $ r = 1/\rho_0^{p-1}$ then $ P(r,\rho_0) = P(1,1) = 2^p - (p+1)$.
	Thus   condition 
	\eqref{eq:RadiiExponential} reduces to  
	\[
\rho_1 < 
	 \exp \left\{ - \frac{\pi}{2} \frac{P(1,1)}{p-1} \right\} r = \frac{C_p}{\rho_0^{p-1}}
	\] 
	which is indeed satisfied for our choice of $\rho_1$. By Theorem \ref{thm:HomoclinicBlowup} the Fourier coefficients $a(t)$ of $ u(t)$ exist for all $t\geq 0$  and $ \lim_{t \to + \infty} a(t) = 0$. 
	The bound on the rate of decay follows from the estimate in \eqref{eq:zetaBound}, combined with the estimate $\| a(t) - \iota^0(\zeta(t)) \| \leq r | \zeta(t)|^p $ from Theorem \ref{thm:HomoclinicBlowup}. 
	The result for $ \pi \leq \vartheta \leq 2 \pi$ follows with parity. 
	
\end{proof}
\begin{remark} \label{rem:StableSetP2}
	In the case $p=2$, then inequality  \eqref{eq:RadiiExponential} becomes $	\rho_1  \exp \left\{  \tfrac{\pi}{2} \rho_0 r \right\} < r$. 
\end{remark}

To generalize this to nonlinearities of the form in  \eqref{eq:NLS}, it is generally not possible to write down explicit solutions to the homogeneous equation $z'=iz^p f(z)$. 
Instead, we use the estimate below, which shows  that solutions in the stable sectors converge to zero sufficiently fast.

\begin{proposition} \label{prop:Integrability}
For real analytic $ f: \C \to \C$ with $ f(0) \neq 0$ let the flow $ z(t, z_0) = r(t,z_0) e^{i \theta(t,z_0)}$ denote the solution to the differential equation
 $ \dot{z} = i z^p f(z)$ with initial condition $ z_0$. Define the set 
	\begin{align}\label{eq:GeneralStableSector}
		B_0(\rho_0) &\bydef
	\left\{ 
	z_0 \in \C: 
	|z_0|\leq \rho_0, \, \tfrac{d}{dt} r(0,z_0) \leq 0
	\right\}	.
	\end{align} 
	For sufficiently small  $ \rho_0 > 0$, there exists a constant $ C >0$ such that all 
	solutions $ z(t)$  
	with initial condition $ z(0) = z_0 \in 		B_0(\rho_0)$ satisfy 
	\begin{align} \label{eq:pIntegrability}
	\int_0^\infty  
	|z(t)|^p dt \leq C 	|z_0| .
	\end{align}
\end{proposition}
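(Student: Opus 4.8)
The plan is to pass to the reciprocal-power variable $w = z^{1-p}$, which linearizes the leading-order dynamics, and then to exploit the stable-sector hypothesis to show that $|w(t)|$ grows at least linearly in $t$; since $|z(t)|^p = |w(t)|^{-p/(p-1)}$ with exponent $p/(p-1) > 1$, this gives an integrable tail with a bound proportional to $|z_0|$. Since $f(0) \neq 0$, I will fix $\rho_0$ small enough that $f$ and its first-order derivatives are bounded on $\{|z| \le \rho_0\}$ and $|f(z)| \ge \tfrac12 |f(0)| > 0$ there; the case $z_0 = 0$ being trivial, I assume $z_0 \neq 0$, so $|w(0)| = |z_0|^{-(p-1)}$.

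First I would differentiate $w = z^{1-p}$ along the flow, using $\dot z = i z^p f(z)$, to obtain
\[
\dot w = -i(p-1) f(z),
\]
so $\dot w$ has magnitude pinched between positive constants depending only on $p$ and $f$. Because $|z|^2 = |w|^{-2/(p-1)}$ is strictly decreasing in $|w|^2$, the condition $\tfrac{d}{dt} r(0,z_0) \le 0$ defining $B_0(\rho_0)$ is equivalent to $\tfrac{d}{dt}|w|^2 \ge 0$ at $t=0$; writing $F(t) := \mathrm{Re}\big(\overline{w(t)}\,\dot w(t)\big)$ this is just $F(0) \ge 0$, and $\tfrac{d}{dt}|w|^2 = 2F$.

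Next — and this is the heart of the argument — I would show that $F$ is increasing along the flow. Differentiating, and using $\overline{\dot w}\,\dot w = (p-1)^2 |f(z)|^2$,
\[
\dot F = (p-1)^2 |f(z)|^2 + \mathrm{Re}\big( \overline{w}\cdot(-i(p-1))\tfrac{d}{dt} f(z(t)) \big),
\]
and I would bound $\big|\tfrac{d}{dt} f(z(t))\big| \le C_2 |z|^p$ (chain rule together with $\dot z = iz^pf(z)$, with $C_2$ depending only on $p,f$). The crucial algebraic identity $|w|\,|z|^p = |z|$ then makes the remainder term $O(|z|) = O(\rho_0)$, while the first term is $\ge (p-1)^2 (\tfrac12 |f(0)|)^2$; so for $\rho_0$ small, $\dot F \ge c_0 > 0$ on $\{|z| \le \rho_0\}$, with $c_0$ depending only on $p,f$. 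Combined with $F(0) \ge 0$ this yields $F(t) \ge c_0 t$, hence $\tfrac{d}{dt}|w|^2 \ge 2c_0 t$ and $|w(t)|^2 \ge |w(0)|^2 + c_0 t^2$. In particular $|w|$ is non-decreasing, so $|z(t)| \le |z_0| \le \rho_0$; a standard continuity/bootstrap argument then shows the orbit never leaves $\{|z| \le \rho_0\}$, hence exists for all $t \ge 0$ (bounded orbit) and never reaches $0$ (uniqueness), so the preceding estimates are valid on all of $[0,\infty)$.

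Finally, from $|w(t)| \ge \max(|w(0)|, \sqrt{c_0}\,t) \ge \tfrac12 (|w(0)| + \sqrt{c_0}\,t)$ I would conclude $|z(t)|^p \le 2^{p/(p-1)} (|w(0)| + \sqrt{c_0}\,t)^{-p/(p-1)}$ and integrate:
\[
\int_0^\infty |z(t)|^p\,dt \le \frac{2^{p/(p-1)}(p-1)}{\sqrt{c_0}}\,|w(0)|^{-1/(p-1)} = C\,|z_0|,
\]
with $C = 2^{p/(p-1)}(p-1)/\sqrt{c_0}$ depending only on $p$ and $f$. The hard part is the monotonicity of $F$: the sign information is available only at $t=0$ and must be propagated forward, which works precisely because the dominant contribution to $\dot F$ is the manifestly positive $(p-1)^2|f(z)|^2$ and because the identity $|w|\,|z|^p = |z|$ keeps the correction genuinely $O(\rho_0)$ despite $|w(0)|$ being large (of size $|z_0|^{-(p-1)}$). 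The change of variables, the bootstrap that the orbit stays in $\{|z|\le\rho_0\}$, and the final one-variable integral are all routine.
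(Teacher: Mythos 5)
Your proof is correct as far as I can tell, and it takes a genuinely different route from the one in the paper. The paper works directly in polar coordinates: it writes down $\dot r$ and $\dot\theta$ explicitly in terms of the power-series coefficients of $f$, splits the stable sector $B_0(\rho_0)$ into a subsector $B_1$ near the angles where $\dot r$ is comparable to $-r^p$ and the remainder $B_2$ where $\dot r$ may be only weakly negative, uses the implicit function theorem to locate the curve $\{\dot r = 0\}$, proves that orbits starting in $B_2$ cross into $B_1$ within a time $T = T_0\, r_0^{-(p-1)}$, and finally integrates $\dot r \le -\delta_2 r^p$ on $B_1$. In contrast, you pass to $w = z^{1-p}$ (a single-valued rational change of variable, since $1-p$ is a negative integer), observe that $\dot w = -i(p-1)f(z)$ is pinched between positive constants in modulus, and show that $F = \mathrm{Re}(\bar w\,\dot w) = \tfrac12\tfrac{d}{dt}|w|^2$ has $F(0)\ge 0$ (equivalent to the hypothesis $\dot r(0)\le 0$) and $\dot F \ge c_0 > 0$; the identity $|w|\,|z|^p = |z| \le \rho_0$ is exactly what makes the remainder term $\mathrm{Re}(\bar w\,\ddot w)$ small despite $|w|$ being of size $|z_0|^{-(p-1)}$. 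Both arguments ultimately produce the same qualitative fact — that $r(t)^{-(p-1)}$ grows at least linearly in $t$, so $r(t)^p$ is integrable with mass $O(|z_0|)$ — but your route avoids all the angular casework, the implicit function theorem, and the explicit sector transition-time estimate, replacing them with a single monotone quantity. Your version is cleaner and arguably more robust (it works with minimal assumptions on $f$ beyond $f(0)\ne 0$ and boundedness near $0$), while the paper's version is more explicit and geometric, and its byproducts ($B_1$, $B_2$, the transition time $T$) are reused nearby in the proof of Theorem \ref{thm:OpenHomoclinics}. One small point worth making explicit when writing this up: the bootstrap keeping $|z(t)|\le\rho_0$ and the positivity of $F$ depend on each other, so one should phrase it as usual via a maximal time $T^\star=\sup\{t : |z(s)|\le\rho_0 \text{ on } [0,t]\}$ and then show $T^\star=\infty$; you gesture at this and it goes through without difficulty since $|z(t)|$ is non-increasing as long as the estimate holds.
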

\begin{proof}
	We first write the differential equation $ \dot{z} = i z^p f(z)$ in polar coordinates. 
	Writing $z = r e^{i \theta}$, the equations $ z' = i z^p f(z)$ and $ (z^*)' = (i z^p f( z))^*$ become
	\begin{align*}
	\dot{r} e^{ i \theta } +  r \dot{\theta} e^{i \theta}
	 &= i r^p e^{i p \theta } f( r e^{i \theta} ) 
	 \\
	\dot{r} e^{- i \theta } -  r \dot{\theta} e^{-i \theta} 
	&= -i r^p e^{-i p \theta } (f( r e^{i \theta} ))^* .
	\end{align*}
	From this we obtain expressions for $ \dot{r} $ and $\dot{\theta}$ below
	\begin{align*}
	\dot{r} &= \frac{i r^p}{2} \left( e^{i (p-1) \theta} f(r e^{i \theta })
	-
	e^{-i (p-1) \theta} (f( r e^{i \theta} ))^*  
	\right)\\
	\dot{\theta} &= \frac{r^{p-1}}{2} \left( e^{i (p-1) \theta} f(r e^{i \theta })
	+
	e^{-i (p-1) \theta} (f( r e^{i \theta} ))^*  
	\right).
	\end{align*}
	As a real analytic function, we may write $f$ as a convergent power series $ f(z) = \sum_{m,n=0}^\infty b_{m,n} z^{m} (z^*)^n$	 for coefficients $ \{ b_{m,n}\}_{m,n\in\N}$, thus obtaining 
	\begin{align*}
	\dot{r} &= - r^p \sum_{m,n=0}^\infty  r^{m+n} \left(\frac{ b_{m,n} e^{i (m-n+p-1) \theta} 
		-
b_{m,n}^*
		e^{-i (m-n+ p-1) \theta} }{2i}
	\right)\\
	\dot{\theta} &= r^{p-1}
	\sum_{m,n=0}^\infty  r^{m+n}
	\left( \frac{b_{m,n} e^{i (m-n+p-1) \theta} 
		+
		b^*_{m,n}
		e^{-i (m-n+ p-1) \theta} }{2}
	\right).
	\end{align*}
	Let us write $ b_{m,n} = R_{m,n} e^{ i \vartheta_{m,n}}$. 
	Using the identities  for $\sin$ and $\cos$ we obtain 
	\begin{align}
	\dot{r} &= - r^p \sum_{m,n=0}^\infty  r^{m+n} R_{m,n}
		\sin \big(  (m-n+p-1) \theta + \vartheta_{m,n} \big) 
		\label{eq:RdifferentialEquation}
	\\
	\dot{\theta} &= r^{p-1}
	\sum_{m,n=0}^\infty  r^{m+n}  
	R_{m,n}
	\cos \big(  (m-n+p-1) \theta + \vartheta_{m,n}  \big) .
		\label{eq:ThetadifferentialEquation}
	\end{align}
	Thus, we have a nice expression for the dynamics on the space $ \R\times \bS^1$. 
	Let us define the two spaces $B_1,B_2$ such that $B_1 \cup B_2 = B_0(\rho_0)$ under the identification $ \C \equiv \R_+ \times \bS^1$, see Figure \ref{fig:Blowup}.  
	\begin{align*}
	B_1 &\bydef \left\{
	(r,\theta) \in \R\times \bS^1 : r \leq \rho_0 , \;
	\left|
 (p-1) \theta + \vartheta_{0,0}
 - \pp \mod 2 \pi \right|  \leq \tfrac{\pi}{4}  
	\right\} \\
	B_2 &\bydef \left\{
	(r,\theta) \in \R\times \bS^1 : r \leq \rho_0 ,\; 
		\left|
	(p-1) \theta + \vartheta_{0,0}
	- \pp \mod 2 \pi \right|  \geq \tfrac{\pi}{4}  
	,\;  \dot{r} \leq 0
	\right\} .
	\end{align*}

	\begin{figure}[h]
		\centering
		\includegraphics[width=1\linewidth]{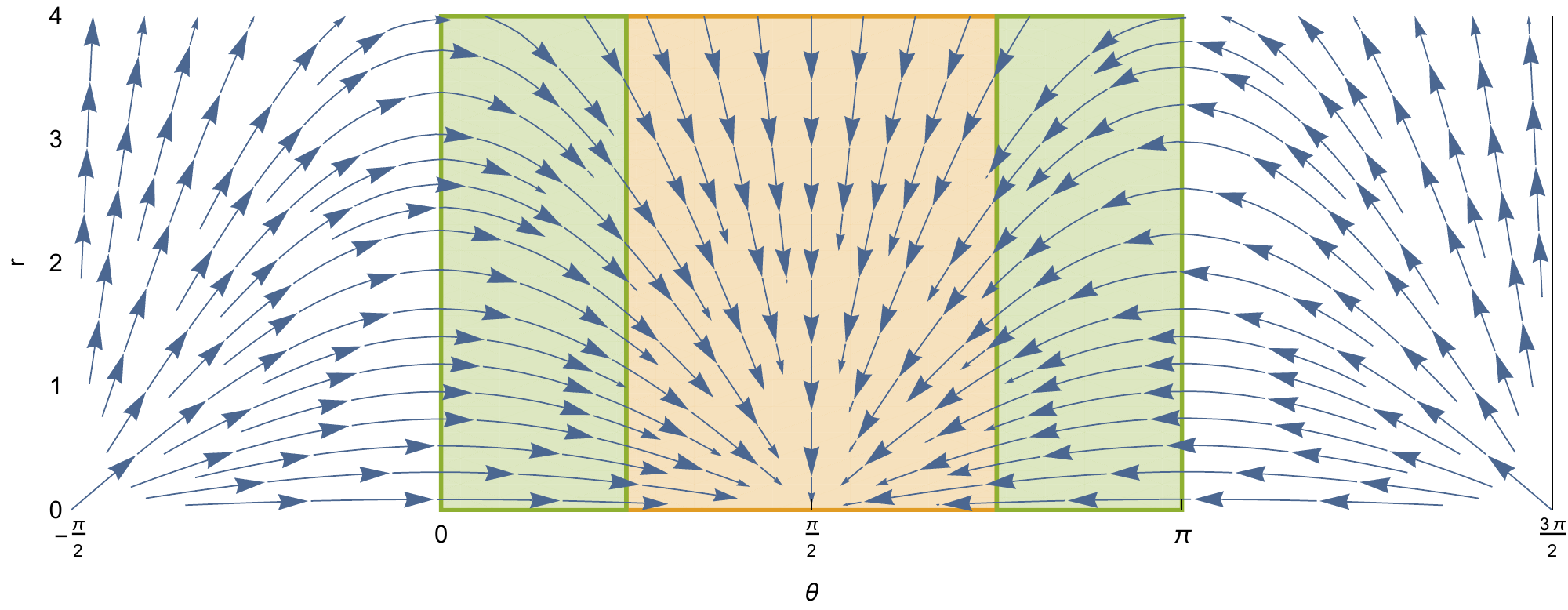} 
		\caption{The blown-up dynamics on $ \R_+ \times \bS^1$ for $p=2$ and  $ \vartheta_{0,0}= 0$ whence \eqref{eq:NLS_hom} becomes $ \dot{z} = i z^2$. 
			The set $ B_1$ is represented by the inner orange region, and the set $B_2$ is represented by the outer green region. 
		}
		\label{fig:Blowup}
	\end{figure}

	We show that for $ \rho_0$ small enough, every initial condition $ (r_0,\theta_0) \in B_2$  will enter $B_1$ within time $ T = \cO( |r_0|^{-(p-1)})$. 
To do so, let us define the set of points in $\R_+\times \bS^1$ with zero radial derivative, 
\begin{align}
\label{eq:ZeroRadialDerivative}
	\mathcal{R} = \cR(\rho_0)= \{ (r,\theta ) \in \R_+ \times  \bS^1 : \dot{r}=0, r \leq \rho_0 \}.
\end{align}
As $f(0) \neq 0$ and thereby $ R_{0,0} \neq 0$, by the implicit function theorem $\cR$  may  be locally written as the union of $2(p-1)$ smooth curves $ (r,\Theta_j(r))$ with   $\Theta_{j}(0)  =  \frac{j \pi - \vartheta_{0,0}}{p-1}$ for $1 \leq j \leq 2(p-1)$. 
Hence, there exists some $ \rho_0$ sufficiently small   and finite  $ \eta = \Theta'(0) + \cO(\rho_0)$  such that $ B_2 \subseteq B_2'$ where 	
\[
		B_2' \bydef \left\{
(r,\theta) \in \R\times \bS^1 : r \leq \rho_0 ,\;
\tfrac{\pi}{4} \leq 
\left|
(p-1) \theta + \vartheta_{0,0}
- \pp \mod 2 \pi \right|  \leq \tfrac{\pi}{2} + \eta r
\right\} .
\]
	Recall from \eqref{eq:RdifferentialEquation} that 
	\begin{align}
	\label{eq:rDotApprox}
	\dot{r} & =\,\,
	-r^p 
	R_{0,0} \sin( (p-1) \theta + \vartheta_{0,0} ) + \cO( |r|^{p+1})  \\
		\label{eq:thetaDotApprox}
	\dot{\theta} & =
	r^{p-1} 
	R_{0,0} \cos( (p-1) \theta + \vartheta_{0,0}  ) + \cO(|r|^p)
	\end{align}
	and note that for $(r,\theta) \in B_2'$ and $\rho_0$ sufficiently small we have 
	\begin{align*}
	\sin \left( (p-1) \theta + \vartheta_{0,0}\right) &\in \left\{\;\;\;
	\cos(\varphi) : \varphi \in [-\pp-\eta \rho_0,-\tfrac{\pi}{4}]
	\cup[ \tfrac{\pi}{4},\tfrac{\pi}{2} + \eta \rho_0]
	\right\} \subseteq  
	\left[ 
	-\sin( \eta \rho_0) , \tfrac{1}{\sqrt{2}}
	\right]
	\\
	\cos \left( (p-1) \theta + \vartheta_{0,0}\right) &\in \left\{
	-\sin(\varphi) : \varphi \in [-\pp-\eta \rho_0,-\tfrac{\pi}{4}]
	\cup[ \tfrac{\pi}{4},\tfrac{\pi}{2} + \eta \rho_0]
	\right\}
	\subseteq 
	\left[\tfrac{1}{\sqrt{2}} ,1 \right] \cup 
	\left[\tfrac{-1}{\sqrt{2}} ,-1 \right].
	\end{align*}
	To bound the higher order terms in \eqref{eq:rDotApprox} and \eqref{eq:thetaDotApprox} we  define
	 $$K= K(\rho_0) \bydef \sum_{m+n\geq 1}^\infty R_{m,n} \rho_0^{m+n}.$$ 
	 Note that  $ K \to 0$ as $ \rho_0 \to 0$.  
	 Computing from \eqref{eq:RdifferentialEquation} we thus obtain a lower bound on $ \dot{r}$ below
	\begin{align*}
\dot{r} 
&\geq 
	-r^p
\left(
\frac{R_{0,0}}{\sqrt{2}} + K 
\right) .
	\end{align*}
 Define  $\delta_1 = \left(
 \frac{R_{0,0}}{\sqrt{2}} + K  
 \right)$.
By solving the differential equation $ r' = -\delta_1  r^p  $ we obtain a lower bound on $ r(t)$ as below  
	\begin{align*}
	r(t) &\geq \frac{r_0}{\left(
		1 + r_0^{p-1} \delta_1 t 
		\right)^{1/p-1}} .
	\end{align*}

We can now estimate how long it takes points in $B_2$ to enter $B_1$. 
	Without loss of generality suppose that $ (r,\theta)$ is in the left half of $B_2$, that is $  - \eta \rho_0 \leq (p-1) \theta + \vartheta_{0,0}   \leq \tfrac{\pi}{4}$, 
		whereby 
	$ \cos( (p-1) \theta + \vartheta_{0,0}  ) \geq 1/\sqrt{2}   $.  
	We estimate how fast $ \theta $ increases, with the goal of estimating when $ (p-1) \theta (t) + \vartheta_{0,0} \geq \tfrac{\pi}{4}$. 
	Suppose $ \rho_0 >0$ is  small enough so that $K <  \frac{R_{0,0}}{\sqrt{2}} $ and define $\delta_2 = \left(
	\frac{R_{0,0}}{\sqrt{2}} - K  
	\right)$, whence   $ \delta_2 >0$.
It follows that 
	\begin{align*}
	\dot{\theta} 
	&\geq 
	r^{p-1} 
	R_{0,0} \cos( (p-1) \theta + \vartheta_{0,0}  ) - r^{p-1} K \\
	&\geq 
	 r(t)^{p-1} \left(
	\frac{R_{0,0}}{\sqrt{2}} - K 
	\right) \\
	&\geq 
	\frac{r_0^{p-1} \delta_2 }{1+r_0^{p-1} \delta_1 t}.
	\end{align*}
Integrating,  we obtain the estimate 
	\begin{align*}
	\theta(t) &\geq 
	\theta_0 + \int_0^t
	\frac{r_0^{p-1} \delta_2 }{1+r_0^{p-1} \delta_1 s} ds 
	= \theta_0 + \frac{\delta_2}{\delta_1} \log \left(
	1+ r_0^{p-1} \delta_1 t
	\right).
	\end{align*}
		We wish to know when   solutions of initial conditions  $(r,\theta) \in B_2$ will enter into region $B_1$, that is, a value of $T$ for which 
	$ (p-1) \theta(T) + \vartheta_{0,0} 
	\geq \frac{\pi}{4} 
	$.
	This occurs when  $  \theta(t)-\theta_0 \geq \frac{\frac{\pi}{4} + \eta \rho_0}{p-1}$, thus we obtain the following 
		\begin{align*}
	T \bydef \frac{\exp \big\{ 
		\frac{\delta_1(\frac{\pi}{4} + \eta \rho_0) }{\delta_1(p-1)}
		 \big\}-1}{  r_0^{p-1} \delta_2 } 
	= \frac{T_0}{r_0^{p-1}}. 
	\end{align*}
	for $T_0>0$ appropriately defined. 
	
	Since $ \dot{r} \leq 0$ for points in $B_2$, then $ r(T) \leq r_0$. 
	For points in $B_1$, 
	we have that $\dot{r} \leq - r^p \left( \frac{R_{0,0} }{\sqrt{2}} - K\right) = - \delta_2 r^p$. 
	Again, by solving this differential equation we obtain the estimate 
	\begin{align} \label{eq:zetaBound2}
	r(t) & \leq 
	\begin{cases}
	r_0 & 0 \leq t \leq T \\
	\frac{r_0}{\left(1 + r_0^{p-1} \delta_2 (t-T)\right)^{1/p-1}} 
	&
	T \leq t .
	\end{cases}
	\end{align}
	
	Integrating the $p$-th power, we obtain:
	\begin{align*}
	\int_0^\infty 
	|z(t)|^p dt &\leq
	r_0^p T + \int_T^\infty 
	\frac{r_0^p}{\left(1 + r_0^{p-1} \delta_2 (t-T)\right)^{p/p-1}} dt
	= r_0 \left( T_0  + \frac{p -1}{\delta_2 }\right).
	\end{align*}
	Hence, for $C =  T_0  + \frac{p -1}{\delta_2 }$ 
	we obtain the inequality in \eqref{eq:pIntegrability}.

\end{proof}

\begin{theorem} \label{thm:HomoclinicBlowupGeneral}   
	Consider \eqref{eq:NLS} with  $ p \geq 2$ and  
	define $B_0(\rho_0)$ as in \eqref{eq:GeneralStableSector} 
	and define 
	\begin{align} \label{eq:BallOfFunction2}
	\cB(\rho_0,\rho_1) &\bydef 
	\left\{
	 \phi + \iota^0 (z_0) \in \ell_{\nu,d}^1 \; 
	:\; 
	z_0 \in B_0(\rho_0)  \subseteq \C;
	\| \phi \|  \leq \rho_1 |z_0|^p
	\right\}.
	\end{align}
	There exists some $  \rho_0 , \rho_1 >0 $ such that  solutions of \eqref{eq:NLS} whose initial data has Fourier coefficients in $\cB(\rho_0,\rho_1) $   will exist for all positive time and  converge to zero as $t \to \infty$.

\end{theorem}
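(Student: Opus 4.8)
The plan is to mimic the proof of Theorem~\ref{thm:HomoclinicBlowup}, with the single modification that the explicit $\arctan$-bound on $\int_0^t|\zeta(s)|^{2(p-1)}\,ds$ — which is no longer available, since $z'=iz^pf(z)$ cannot be solved in closed form — is replaced by the integrability estimate of Proposition~\ref{prop:Integrability}. Given an initial datum with Fourier coefficients $a(0)=\phi+\iota^0(z_0)\in\cB(\rho_0,\rho_1)$ (one may assume $z_0\neq0$, since $z_0=0$ forces $\phi=0$ and $a(0)=0$), I would let $z(t)=z(t,z_0)$ solve $z'=iz^pf(z)$ and, following Proposition~\ref{prop:GeneralBlowUp} and Remark~\ref{rem:FourierDifferentialEquation}, write $a(t)=\iota^0(z(t))+z(t)^p\tilde a(t)$, so that
\begin{align}\label{eq:genFourierODE}
(\partial_t\tilde a)_k=-ik^2\omega^2\tilde a_k+i\,h(z(t),\tilde a)_k,\qquad k\in\Z^d,
\end{align}
with $\tilde a(0)=\tilde\phi\bydef z_0^{-p}\phi$ and $\|\tilde\phi\|=|z_0|^{-p}\|\phi\|\le\rho_1$. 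Here $h$ is the real-analytic map of Proposition~\ref{prop:GeneralBlowUp}, extended to $\ell_{\nu,d}^1$ in its second argument through the Banach algebra structure; from the scalar estimate $|h(x,y)|\le K|x^py|$ one obtains, by factoring $h(x,y)=x^py\,g(x,y)$ with $g$ analytic near the origin and using $\|b*c\|\le\|b\|\,\|c\|$, constants $K,\delta>0$ with $\|h(z,\psi)\|\le K|z|^p\|\psi\|$ whenever $|z|<\delta$ and $\|\psi\|<\delta$. By variation of constants applied to \eqref{eq:genFourierODE}, as in the proof of Theorem~\ref{thm:HomoclinicBlowup},
\begin{align}\label{eq:genVOC}
\tilde a_k(t)=e^{-ik^2\omega^2t}\tilde\phi_k+\int_0^t e^{-ik^2\omega^2(t-s)}\,i\,h(z(s),\tilde a(s))_k\,ds.
\end{align}

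Next I would fix $\rho_0>0$ small enough that $\rho_0<\delta$ and that Proposition~\ref{prop:Integrability} applies, yielding a constant $C=C(\rho_0)>0$ with $\int_0^\infty|z(s)|^p\,ds\le C|z_0|$; by \eqref{eq:zetaBound2} we additionally have $|z(s)|\le|z_0|\le\rho_0$ for all $s\ge0$ and $z(s)\to0$. Define the bootstrap time $T^*\bydef\sup\{t\ge0:\|\tilde a(s)\|<\delta\text{ for all }s\in[0,t]\}$. For $t\in[0,T^*)$ the bound on $h$ applies in \eqref{eq:genVOC}, and taking $\|\cdot\|$-norms (using $|e^{-ik^2\omega^2\cdot}|=1$ and Tonelli) gives
\begin{align}\label{eq:genGronwallIneq}
\|\tilde a(t)\|\le\|\tilde\phi\|+\int_0^t K|z(s)|^p\,\|\tilde a(s)\|\,ds.
\end{align}
Applying Gr\"onwall's inequality to \eqref{eq:genGronwallIneq}, together with $\int_0^t|z(s)|^p\,ds\le C\rho_0$, yields
\begin{align}\label{eq:genBound}
\|\tilde a(t)\|\le\|\tilde\phi\|\,e^{KC\rho_0}\le\rho_1\,e^{KC\rho_0},\qquad t\in[0,T^*).
\end{align}
With $\rho_0$ now fixed, I would choose $\rho_1>0$ so small that $\rho_1 e^{KC\rho_0}<\delta$ — possible because $K$ is a fixed constant and $C=C(\rho_0)$ is a fixed finite number. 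Then \eqref{eq:genBound} forces $\|\tilde a(t)\|<\delta$ strictly on all of $[0,T^*)$, so the standard continuation argument (the linear propagator $e^{-ik^2\omega^2t}$ being an isometric group on $\ell_{\nu,d}^1$ and $h(z(t),\cdot)$ being locally Lipschitz) gives $T^*=+\infty$: the solution $\tilde a(t)$, and hence $a(t)=\iota^0(z(t))+z(t)^p\tilde a(t)$, exists for all $t\ge0$. Finally $\|a(t)\|\le|z(t)|+|z(t)|^p\|\tilde a(t)\|\le|z(t)|+\delta|z(t)|^p\to0$ as $t\to\infty$, since $z(t)\to0$; this proves the theorem.

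The only genuinely nontrivial input — the decay and integrability of the homogeneous flow, which in Theorem~\ref{thm:HomoclinicBlowup} came for free from the explicit formula for $\zeta$ — has already been isolated as Proposition~\ref{prop:Integrability}, so I expect the remaining work to be mostly bookkeeping. The delicate points are: choosing the constants in the right order ($\rho_0$ first, then $\rho_1$), which succeeds precisely because $C(\rho_0)$ stays bounded as $\rho_0\to0$; and verifying the Banach-algebra estimate $\|h(z,\psi)\|\le K|z|^p\|\psi\|$ for the genuinely nonlinear analytic $h$, which in turn relies on the a priori smallness $\|\tilde a\|<\delta$ supplied by the continuation argument. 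A minor additional point is justifying that the two-variable analytic $h$ indeed extends to $\ell_{\nu,d}^1$ in its second slot with the stated bound, but this is routine once $\ell_{\nu,d}^1$ is known to be a Banach algebra.
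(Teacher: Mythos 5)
Your proof is correct and follows essentially the same route as the paper's: blow up via $a(t)=\iota^0(z(t))+z(t)^p\tilde a(t)$, use Proposition~\ref{prop:GeneralBlowUp} and Remark~\ref{rem:FourierDifferentialEquation}, then run variation of constants plus Gr\"onwall with the integrability bound of Proposition~\ref{prop:Integrability}, choosing $\rho_0$ first and then $\rho_1$. Your bookkeeping is in fact a touch cleaner: the paper carries an extraneous factor of $r$ inside the Gr\"onwall exponent and then states the smallness condition on $\rho_1$ as $\rho_1<\exp\{-CrK\rho_0\}$ rather than $\rho_1<r\exp\{-CrK\rho_0\}$, which you sidestep by bootstrapping directly against $\delta$; the underlying argument is the same.
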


\begin{proof}

Let us consider $\rho_0, \rho_1 >0$ and reserve the freedom to take these constants   sufficiently small later in the proof. 
For $z_0 \in B_0(\rho_0)$  consider $ \phi + \iota^0 ( z_0)  \in \cB(\rho_0,\rho_1)$, and let $ a(t) \in \ell_{\nu,d}^1$ denote the time varying Fourier coefficients of the solution of $\eqref{eq:NLS}$ with Fourier coefficients having the initial condition  $a(0) = \phi + \iota^0(z_0)$. 
	Define $ z(t) : \R_+ \to  \C$ to be the solution of $ z' = i z^pf(z)$ with initial condition $ z_0$. 
		Define time varying Fourier coefficients $\tilde{a}(t) \in \ell_{\nu,d}^1$ by $ a(t) = z(t) + z(t)^p \tilde{a}(t)$, and define 	$ \tilde{\phi} = z_0^{-p} \phi_k$  whereby $ \tilde{a}(0) = \tilde{\phi}$ and $ \|\tilde{\phi}\| \leq \rho_1$.  
	By Proposition \ref{prop:GeneralBlowUp} and Remark \ref{rem:FourierDifferentialEquation} there exists an analytic function $h$ such that the solution of $\tilde{a}$ satisfies 
\[
		\tilde{a}_t = i \triangle \tilde{a} + i
h(z,\tilde{a}).
\] 
Moreover,  there exists some $\delta,K>0$ such that $\|h(z,\tilde{a})\|< \|z^p \tilde{a} \| K$ for all $ |z(t)|,\| \tilde{a}\| \leq \delta$.  
Fix $  \rho_0 <  \delta/2$, and also sufficiently small such that Proposition \ref{prop:Integrability} applies.  
 Further note that $| z(t)| \leq \rho_0$ is monotonically decreasing in $t$.  
For any $0 <   r <  \delta$ define $ T = \sup  \{ t \geq 0 :  \|\tilde{a}(t)\|  \leq r \}$.
	By variation of constants 
	 we obtain 
	\[
	\|\tilde{a}(t)\|  \leq \| \tilde{\phi}\|  + \int_0^t  \left\|h(z(s),\tilde{a}(s)) \right\|  ds 
	\qquad \qquad \forall t \in [0,T].
	\]
Using the estimate on $h$ from  Proposition \ref{prop:GeneralBlowUp} we obtain: 
		\[
	\|\tilde{a}(t)\|  \leq \| \tilde{\phi}\|  + \int_0^t \|z(s)^p \tilde{a}(s)\|  K  ds 
	\qquad \qquad \forall t \in [0,T].
	\]
	Applying Gr\"onwall's inequality and using the estimate $ \| \tilde{a}(s)\| \leq r$ and  Proposition \ref{prop:Integrability} we have the bound 
	\[
	\|\tilde{a}(t)\| 
	\leq 
	\| \tilde{\phi}\| \exp
	 \left\{ 		\int_0^t |z(s)|^p r  K  ds 
	 \right\}
	< \| \tilde{\phi}\|  \exp \{ C r K  |z_0|\}.
	\]
	By fixing   $ \rho_1 <   \exp \{ -		C r K \rho_0\}$, it follows that $ \| \tilde{\phi}\|  \exp \{ 		C r K  |z_0|\}$ is less than $ r$, whereby $\| \tilde{a}(t)\|  <  r$ for all $ t \in [0,T]$. Hence $T = + \infty$, and the solution $u$ to \eqref{eq:NLS}  with Fourier coefficients $ a(t) = \iota^0( z(t)) + z(t)^p \tilde{a}(t)$ exists for all time, and its Fourier coefficients satisfy $\| a(t) - \iota^0(z(t))\|  \leq r |z(t)|^p $, with $ z \to 0 $ as $ t \to \infty$.

\end{proof}

\begin{proof}[Proof of Theorem \ref{thm:GlobalExistenceIntro}]
 The result for solutions converging to zero in forward time follows immediately from Theorem \ref{thm:HomoclinicBlowupGeneral}. 
To obtain the rate at which solutions decay to zero, if  $z(t)$ solves $ \dot{z} = i z^p f(z)$ then by \eqref{eq:zetaBound2}  there exists some $K>0$ such that $ |z(t)| \leq K |t|^{-1/(p-1)}$, and from the proof of Theorem \ref{thm:GlobalExistenceIntro} we have  $\| a(t) - \iota^0(z(t))\|  \leq r |z(t)|^p $ as $ t \to \infty$. 
 The result for solutions converging to zero in backward time follows with parity, and is left to the reader.   
\end{proof}
%
%


\begin{proof}[Proof of Theorem \ref{thm:OpenHomoclinics}] 
	Recall our definition of $\cR(\rho_0) \subseteq \R_+ \times \bS^1 \equiv \C$ in \eqref{eq:ZeroRadialDerivative} of points whose radial derivative is zero under the dynamics of $ z' = i z^p f(z)$.  
	Letting $\iota^0 : \C \to \ell_{\nu,d}^1$ denote the section from $\C$ to the zero-Fourier coefficient of $\ell_{\nu,d}^1$, note that $\cB(\rho_0,\rho_1)$ -- the set of points we just proved converge in norm to $0$ in forward time -- forms an open neighborhood about $\iota^0(\cR(\rho_0))$. 
	As the NLS is time reversible, it follows with parity that for $ \rho_0',\rho_1' >0$ there exists an open set $\cB'(\rho_0',\rho_1')$ about $ \iota^0(\cR(\rho_0'))$ which converges in norm to $0$ in backward time. 
	As $\cR(\rho_0) \cap \cR(\rho_0') \neq \emptyset$, then  $U = \cB(\rho_0,\rho_1) \cap \cB'(\rho_0',\rho_1')$ is a nontrivial open set comprised of solutions which are homoclinic to $0$.
\end{proof}

\begin{proof}[Proof of Theorem \ref{thm:NoConservedQuantities}]
	As $F$ is analytic on $X$ and $i_X : C^{\omega}(\T^d , \C) \hookrightarrow X$ is a continuous embedding, the induced functional $ \tilde{F} = F \circ i_X$ on $C^\omega$ is analytic. By Theorem \ref{thm:OpenHomoclinics} there exists an open set $U \subseteq C^{\omega}$ of solutions to \eqref{eq:NLS} which are homoclinic to the $0$ equilibrium. It follows that on all of $U$, the function $\tilde{F}$ is constant and equal to $\tilde{F}(0)$. Since $\tilde{F}$ is analytic it
	must be constant on all of $C^\omega$. As $C^\omega (\T^d,\C) \subseteq X $ is dense, then $F$ must be constant on all of $X$.
	 
\end{proof}

\begin{proof}[Proof of Proposition  \ref{prop:NoSeparableSolutions}]
Suppose such a solution $u(t,x) = \psi(t) \phi(x)$ exists. 	If there is a point $t_0 \in \R$ for which $\psi(t_0) = 0$, then $ \psi(t)=0$ for all $ t \in \R$. So without loss of generality let us suppose that $ \psi(t) = e^{\theta(t)}$ for a complex valued function $\theta: \R\to \C$. 
Assuming that $ e^{\theta(t)} \phi(x)$ solves \eqref{eq:NLS_p}, we obtain 
	\[
	-i 	\theta'(t) e^{\theta(t)}\phi(x) = e^{\theta(t)} \triangle \phi(x) + e^{p\theta(t)} \phi^p(x).
	\]
	Rearranging, dividing by $e^{\theta(t)}$ and collecting terms, we see that 
	\[
	\triangle \phi(x)
	= \phi(x) \left(
	-i 	\theta'(t) 
	+ e^{(p-1)\theta(t)} \phi^{p-1}(x)
	\right).
	\]
	If we take the time derivative of this expression, then both sides of the equation above must be zero, thereby 
	\[
	0
	= 
	-i 	\theta''(t) 
	+  (p-1) \theta'(t) e^{(p-1)\theta(t)} \phi^{p-1}(x)
	.
	\]
	Taking a spatial derivative, we thus obtain  
\begin{align*}
		0
		&	=  (p-1)^2 \theta'(t) e^{(p-1)\theta(t)} \phi^{p-2}(x) 
		\partial_x \phi(x)\\ 
		&	=   \theta'(t)   \phi^{p-2}(x) 
		\partial_x \phi(x)
	.
\end{align*}
	Hence, either $ \theta'(t) \equiv 0$ or $ \partial_x \phi(x) \equiv 0$. That is to say either $ \theta(t)$ or $ \phi(x)$ is constant. 
\end{proof}


\section{Unstable manifold of nontrivial equilibria} \label{sec:UnstableManifold}

In this section, we fix the dimension of the domain to be $d=1$ and we combine the Parameterization Method (e.g. see \cite{MR1976079,MR1976080,MR2177465}) and rigorous numerics to parameterize one-dimensional subsets of unstable manifolds of steady states of the NLS equation
\begin{equation} \label{eq:NLS_1d}
 u_t = i (u_{xx} + u^2).
\end{equation}

For the sake of simplicity of the presentation, denote $\ell_{\nu}^1=\ell_{\nu,1}^1$. Also denote by $a*b$ the discrete convolution of $a,b \in \ell_{\nu}^1$. Proceeding formally for the moment, solutions of \eqref{eq:NLS_1d} may be expanded in Fourier series as
\begin{equation} \label{eq:Fourier_expansion}
u(t,x) \bydef \sum_{k \in \Z} a_k(t) e^{i k \omega x},
\end{equation}
and matching like terms, this leads to the infinite system of nonlinear ODEs
\begin{equation} \label{eq:CGL_ODEs}
\dot a_k(t) = g_k(a) \bydef i \left( -k^2\omega^2 a_k+\left( a^2 \right)_k \right),
\end{equation}
where $a^2 = a*a$. 

\begin{remark}[\bf Symmetry assumption]
Throughout the rest of this section we make the assumption that the Fourier coefficients in \eqref{eq:Fourier_expansion} satisfy the symmetry
\begin{equation} \label{eq:Fourier_symmetry}
a_{-k}(t) = a_{k}(t), \quad \text{for all } k \in \Z.
\end{equation}
\end{remark}

We are interested in solutions of \eqref{eq:CGL_ODEs} whose spectral representation have decaying coefficients. So, recalling Definition~\ref{def:ell_nu^1} and the symmetry assumption \eqref{eq:Fourier_symmetry}, let
\begin{equation} \label{eq:ell_nu_one}
\ell_\nu^1 \bydef \left\{ a = (a_k)_{k \ge 0} : |a|_{\nu} \bydef |a_0| + 2 \sum_{k=1}^\infty |a_k| \nu^k  < \infty \right\}.
\end{equation}
Note that restricting to assumption \eqref{eq:Fourier_symmetry}, $\ell_\nu^1 = \ell_{\nu,d}^1$ for $d=1$.
Given two sequences $a=(a_k)_{k \ge 0},b=(b_k)_{k \ge 0} \in \ell_\nu^1$ defined in \eqref{eq:ell_nu_one}, their discrete convolution $a*b=((a*b)_k )_{k \ge 0}$ is given component-wise by
\[
(a*b)_k = \sum_{k_1+k_2 = k \atop k_1,k_2 \in \Z} a_{|k_1|} b_{|k_2|}.
\]

Denote $g(a) = (g_k(a))_{k \in \Z}$, so that \eqref{eq:CGL_ODEs} can be more densely written as $\dot a = g(a)$, which is an ODE defined on the Banach space $\ell_\nu^1$.

In Theorem \ref{thm:Equilibria} we establish via computer-assisted proof the existence of a steady state $\ta \in \ell_\nu^1$ (that is $g(\ta)=0$) and of an eigenpair $(\tilde \lambda,\tb) \in \C \times \ell_\nu^1$ satisfying $Dg(\ta)\tb=\tilde \lambda \tb$ with ${\rm Re}(\tilde \lambda)>0$.
The details of the computation of $\ta$ and $(\tilde \lambda,\tb)$ can be found in Appendix~\ref{sec:eigenpairs}. More explicitly, for all $k \in \Z$, this implies that
\begin{align} 
\label{eq:f=0_steady_state}
-k^2\omega^2 \ta_k+\left( \ta^2 \right)_k & = 0
\\
\label{eq:g=0_eigenpair}
i \left( -k^2\omega^2 \tb_k + 2 \left(\ta*\tb \right)_k\right) - \tilde \lambda \tb_k &= 0.
\end{align}
Note that for $\ta$ and $\tb$, we assume that the symmetry condition \eqref{eq:Fourier_symmetry} holds.
The (computer-assisted) approach to obtain the steady state $\ta$ and the eigenpair $(\tilde \lambda,\tb)$ is presented in Section~\ref{sec:eigenpairs}. 

From now on, we follow closely the approach presented in \cite{MR3906120}, and present a rigorous computational approach to parameterize a subset of the unstable manifold of $\ta$ given by
\[
W^u(\ta) \bydef \left\{ a_0 ~ \big| ~ \exists \text{ a solution } a(t) \text{ of  \eqref{eq:CGL_ODEs} with } a(0) = a_0 
\text{ and } \lim_{t \to -\infty} a(t) = \ta \right\}.
\]
Denote $\mathbb{D} = \{ z \in \C : |z| \le 1\}$ the closed unit disk centered at $0$ in the complex plane. We look for a function $P:\mathbb{D} \to \ell_\nu^1$ such that
\begin{equation} \label{eq:invariance_equation}
g(P(\sigma)) = \tilde \lambda \sigma DP(\sigma), \quad \text{for all }\sigma \in \mathbb{D},
\end{equation}
 subject to the first order constraints
\begin{equation} \label{eq:first_order_constraints}
P(0)=\ta \quad \text{and} \quad DP(0) = \tb.
\end{equation}
We refer to equation \eqref{eq:invariance_equation} as the {\em invariance equation} for the parameterization method.

The following result justifies the role played by the invariance equation \eqref{eq:invariance_equation}. We omit the elementary proof.

\begin{lemma}\label{lem:parameterization}
Assume that $P:\mathbb{D} \to \ell_\nu^1$ solves \eqref{eq:invariance_equation} subject to the first order constraints of \eqref{eq:first_order_constraints}. Then for every $\sigma \in \mathbb{D}$ the function
\begin{equation} \label{eq:unstable_solution}
a(t) \bydef P(e^{\tilde \lambda t} \sigma)
\end{equation}
solves the differential equation on $\ell_\nu^1$ given by $\dot a = g(a)$ on $(-\infty,0)$. Moreover, since $a(0) = P(\sigma) \in image(P)$ and $\lim_{t \to -\infty} a(t) = P(0)=\ta$, then $P$ parameterizes a local unstable sub-manifold for $\ta$, that is 
\[
image(P) = P(\mathbb{D}) \subset W^u(\ta).
\]
\end{lemma}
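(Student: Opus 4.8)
The plan is to verify directly that the curve $a(t) = P(e^{\tilde\lambda t}\sigma)$ defined in \eqref{eq:unstable_solution} satisfies $\dot a = g(a)$ by differentiating the composition and invoking the invariance equation \eqref{eq:invariance_equation}. First I would note that for fixed $\sigma \in \mathbb{D}$ and $t \in (-\infty,0)$, the point $e^{\tilde\lambda t}\sigma$ lies in $\mathbb{D}$ (since $|e^{\tilde\lambda t}| = e^{t\,{\rm Re}(\tilde\lambda)} \le 1$ for $t \le 0$, using ${\rm Re}(\tilde\lambda)>0$), so $P$ is evaluated on its domain and $a(t)$ is well-defined. Then, writing $\sigma(t) \bydef e^{\tilde\lambda t}\sigma$ so that $\dot\sigma(t) = \tilde\lambda \sigma(t)$, the chain rule gives
\[
\dot a(t) = DP(\sigma(t))\,\dot\sigma(t) = \tilde\lambda \sigma(t) \, DP(\sigma(t)) = g(P(\sigma(t))) = g(a(t)),
\]
where the third equality is exactly \eqref{eq:invariance_equation} applied at $\sigma(t) \in \mathbb{D}$. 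This establishes that $a$ solves the ODE on $(-\infty,0)$.

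Next I would address the boundary behavior. Evaluating at $t=0$ gives $a(0) = P(e^{0}\sigma) = P(\sigma)$, which lies in $image(P)$ by definition. For the limit, as $t \to -\infty$ we have $e^{\tilde\lambda t}\sigma \to 0$ in $\C$ (again since ${\rm Re}(\tilde\lambda)>0$), and since $P:\mathbb{D}\to\ell_\nu^1$ is continuous (it is analytic, being a solution of \eqref{eq:invariance_equation} with the normalization \eqref{eq:first_order_constraints}), we conclude $\lim_{t\to-\infty} a(t) = P(0) = \ta$ using the first order constraint \eqref{eq:first_order_constraints}. Combining these facts, each point $P(\sigma)$ with $\sigma \in \mathbb{D}$ is the time-$0$ value of a solution of \eqref{eq:CGL_ODEs} that converges to $\ta$ in backward time, so $P(\sigma) \in W^u(\ta)$ by the definition of the unstable manifold, hence $P(\mathbb{D}) \subset W^u(\ta)$.

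The argument is essentially a routine computation, so there is no serious obstacle; the only points requiring a modicum of care are (i) checking that the orbit of $\mathbb{D}$ under $\sigma \mapsto e^{\tilde\lambda t}\sigma$ stays inside $\mathbb{D}$ for $t\le 0$ — needed so that both the ODE verification and the continuity/limit argument make sense — and (ii) justifying the interchange of differentiation with the Fréchet derivative $DP$, which is immediate once one knows $P$ is $C^1$ (indeed analytic) on a neighborhood of $\mathbb{D}$ in the relevant topology on $\ell_\nu^1$. Since the paper already stipulates that $P$ solves the invariance equation as a genuine (analytic) map into the Banach algebra $\ell_\nu^1$, these technicalities are harmless, which is why the proof can legitimately be omitted as "elementary."
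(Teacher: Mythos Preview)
Your proof is correct and is precisely the elementary verification the paper has in mind (and omits): differentiate $a(t)=P(e^{\tilde\lambda t}\sigma)$ via the chain rule, apply the invariance equation \eqref{eq:invariance_equation}, and use ${\rm Re}(\tilde\lambda)>0$ together with continuity of $P$ for the backward limit. There is nothing to add.
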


The strategy is now clear: compute a function $P:\mathbb{D} \to \ell_\nu^1$ which solves \eqref{eq:invariance_equation} subject to the first order constraints of \eqref{eq:first_order_constraints}. 
We represent $P$ as a power series
\begin{equation} \label{eq:P_power_series}
P(\sigma) = \sum_{m \ge 0} p_m \sigma^m, \qquad p_m \in \ell_\nu^1.
\end{equation}
To represent the coefficients of the power series \eqref{eq:P_power_series},
we define the Banach space of infinite sequences of points in $\ell_\nu^1$ given by
\begin{equation} \label{eq:X_nu}
X^\nu \bydef  \left\{ p = \{p_m\}_{m \ge 0} : p_m \in \ell_\nu^1 \text{ and } 
\| p\|_\nu \bydef \sum_{m \ge 0} |p_m|_\nu < \infty \right\}.
\end{equation}
Note that any $p \in X^\nu$ may be written as $p = (p_{k,m})_{k,m \ge 0}$ and that the norm in $X^\nu$ may be written as 
\[
\| p\|_\nu = \sum_{m \ge 0} |p_m|_\nu
= \sum_{m \ge 0} \left( |p_{0,m}| + 2 \sum_{k=1}^\infty |p_{k,m}| \nu^k \right)
= \sum_{k,m \ge 0} |p_{k,m}| \omega_k,
\]
where we define the weights $\omega=(\omega_k)_{k\geq0}$ by $\omega_0=1$ and $\omega_k = 2 \nu^k$ for $k\geq 1$. 

\begin{definition}
Given $p,q \in X^\nu$, define their {\em Taylor-Fourier product} $*_{TF} :X^\nu \times X^\nu \to X^\nu$ by
\begin{equation}
(p *_{TF} q)_m \bydef \sum_{\ell =0}^m p_\ell * q_{m-\ell},
\end{equation} 
where $*:\ell_\nu^1 \times \ell_\nu^1 \to \ell_\nu^1$ is the standard discrete convolution. 
\end{definition}

We have the following result which makes $(X^\nu,*_{TF})$ a Banach algebra.

\begin{lemma} \label{lem:TF_Banach_algebra}
Let $p,q \in X^\nu$. Then
\begin{equation} 
\|p *_{TF} q \|_\nu \le \|p \|_\nu \|q \|_\nu.
\end{equation}
\end{lemma}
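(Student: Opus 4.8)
The plan is to reduce everything to the triangle inequality together with the Banach algebra property of $(\ell_\nu^1,*)$ already recorded in the text (namely $|b*c|_\nu \le |b|_\nu\,|c|_\nu$), followed by an unconditional rearrangement of a doubly-indexed sum of nonnegative terms. Concretely, I would start from the definition of the norm on $X^\nu$ and of the Taylor--Fourier product, and estimate
\[
\|p *_{TF} q \|_\nu
= \sum_{m \ge 0} \Bigl| \sum_{\ell=0}^m p_\ell * q_{m-\ell} \Bigr|_\nu
\le \sum_{m \ge 0} \sum_{\ell=0}^m \bigl| p_\ell * q_{m-\ell} \bigr|_\nu
\le \sum_{m \ge 0} \sum_{\ell=0}^m | p_\ell |_\nu \, | q_{m-\ell} |_\nu ,
\]
where the first inequality is the triangle inequality in $\ell_\nu^1$ and the second is the Banach algebra estimate on $\ell_\nu^1$ applied termwise.

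Next I would reindex the Cauchy-type double sum. Since every summand $|p_\ell|_\nu |q_j|_\nu$ is nonnegative, Tonelli's theorem (equivalently, the rearrangement theorem for nonnegative series) permits summing over the set $\{(\ell,j) : \ell,j \ge 0\}$ in any order, giving
\[
\sum_{m \ge 0} \sum_{\ell=0}^m | p_\ell |_\nu \, | q_{m-\ell} |_\nu
= \sum_{\ell \ge 0} \sum_{j \ge 0} | p_\ell |_\nu \, | q_j |_\nu
= \Bigl( \sum_{\ell \ge 0} | p_\ell |_\nu \Bigr) \Bigl( \sum_{j \ge 0} | q_j |_\nu \Bigr)
= \| p \|_\nu \, \| q \|_\nu .
\]
This chain both establishes the claimed inequality and, since the right-hand side is finite for $p,q \in X^\nu$, simultaneously certifies that $p *_{TF} q \in X^\nu$, so that $*_{TF}$ is indeed a well-defined bilinear map $X^\nu \times X^\nu \to X^\nu$ (bilinearity being immediate from the formula for $(p*_{TF}q)_m$). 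One should also remark that $|p_\ell * q_{m-\ell}|_\nu$ is itself finite for each $\ell$, which is immediate from the $\ell_\nu^1$ Banach algebra property, so the inner finite sums are unambiguous before the rearrangement step.

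There is essentially no hard part here: the only thing requiring a word of care is the interchange/reindexing of the double sum, which is legitimate precisely because all terms are nonnegative, and the appeal to the symmetric-convolution version of the $\ell_\nu^1$ Banach algebra inequality, which is exactly the estimate quoted earlier in the paper for $\ell_{\nu,d}^1$ (here $d=1$ under the symmetry assumption \eqref{eq:Fourier_symmetry}). The argument is a routine ``algebra of Cauchy products'' computation and I would present it in just a few lines.
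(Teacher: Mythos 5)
Your proof is correct and is the standard Cauchy-product argument: triangle inequality in $\ell_\nu^1$, termwise use of the $\ell_\nu^1$ Banach algebra estimate, then Tonelli/rearrangement of the nonnegative double sum. The paper states the lemma without proof, presumably viewing it as routine, and your argument is exactly the one that would be intended.
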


Proceeding formally for the moment, plugging the power series \eqref{eq:first_order_constraints} in the invariance equation \eqref{eq:invariance_equation} and imposing the first order constraints \eqref{eq:first_order_constraints} leads to look for a solution $p \in X^\nu$ of $f(p)=0$, where the map $f = (f_{k,m})_{k,m \ge 0}$ is given by
\begin{equation} \label{eq:f=0_manifold}
f_{k,m}(p) \bydef 
\begin{cases}
p_{k,0} - \ta_k, & m=0, \quad k \ge 0
\\
p_{k,1} - \tb_k, & m=1, \quad k \ge 0
\\
\left(\tilde \lambda m + i k^2 \omega^2 \right) p_{k,m} -  i (p*_{TF}p)_{k,m}, & m \ge 2, \quad k \ge 0.
\end{cases}
\end{equation}
Since $(X^\nu,*_{TF})$ is a Banach algebra (see Lemma~\ref{lem:TF_Banach_algebra}), 
then $f : X^\nu \to \tilde X^{\nu}$, where
\[
\tilde X^\nu \bydef  \left\{ p = \{p_m\}_{m \ge 0} : 
\sum_{m \ge 0} \left( |p_{0,m}| + 2 |p_{1,m}| \nu
+ 2 \sum_{k=2}^\infty |p_{k,m}| \frac{\nu^k}{|\tilde \lambda m + i k^2 \omega^2|}  \right)<\infty \right\}.
\]

The following result resumes what we have achieved so far.

\begin{lemma}
Suppose that $p = (p_{k,m})_{k,m \ge 0} \in X^\nu$ solves $f(p)=0$, and for each $m\ge0$, denote $p_m = (p_{k,m})_{k \ge 0}$. Then, for each $\sigma \in \mathbb{D}$,
\[
P(\sigma) \bydef \sum_{m \ge 0} p_m \sigma^m \in W^u(\ta).
\]
\end{lemma}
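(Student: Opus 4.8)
The plan is to show that if $p \in X^\nu$ solves $f(p)=0$, then the power series $P(\sigma) = \sum_{m \ge 0} p_m \sigma^m$ genuinely defines a function $\mathbb{D} \to \ell_\nu^1$ which solves the invariance equation \eqref{eq:invariance_equation} subject to the first order constraints \eqref{eq:first_order_constraints}, and then invoke Lemma~\ref{lem:parameterization} to conclude that $P(\mathbb{D}) \subset W^u(\ta)$.

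First I would establish convergence: since $\| p \|_\nu = \sum_{m \ge 0} |p_m|_\nu < \infty$, for any $\sigma \in \mathbb{D}$ we have $\sum_{m \ge 0} |p_m \sigma^m|_\nu \le \sum_{m \ge 0} |p_m|_\nu |\sigma|^m \le \|p\|_\nu < \infty$, so $P(\sigma)$ is a well-defined element of $\ell_\nu^1$ and the series converges absolutely and uniformly on $\mathbb{D}$; moreover term-by-term differentiation in $\sigma$ is justified on the open disk. Second, the constraints \eqref{eq:first_order_constraints} follow directly from the $m=0$ and $m=1$ cases of $f(p)=0$: $P(0) = p_0 = \ta$ and $DP(0) = p_1 = \tb$. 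Third, I would verify the invariance equation. Writing $g(P(\sigma)) = (g_k(P(\sigma)))_{k\in\Z}$ with $g_k(a) = i(-k^2\omega^2 a_k + (a^2)_k)$, one expands $P(\sigma)^2 = P(\sigma) * P(\sigma)$ coefficient-wise and uses that the $\sigma^m$-coefficient of $P*P$ is exactly $(p *_{TF} p)_m$ (this is the Banach algebra identity behind Lemma~\ref{lem:TF_Banach_algebra}); matching the coefficient of $\sigma^m$ on both sides of $g(P(\sigma)) = \tilde\lambda \sigma DP(\sigma)$ for $m \ge 2$ gives precisely $(\tilde\lambda m + ik^2\omega^2)p_{k,m} = i(p*_{TF}p)_{k,m}$, which is the $m\ge 2$ case of $f(p)=0$ rearranged. (For $m=0,1$ the invariance equation's coefficient identity is automatically consistent with the constraints, as is standard in the parameterization method and was presumably arranged in the derivation of \eqref{eq:f=0_manifold}.) Hence \eqref{eq:invariance_equation} holds for all $\sigma$ in the open disk, and by continuity (uniform convergence) on all of $\mathbb{D}$.

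The one technical point requiring care — and the main obstacle — is the interchange of the infinite Fourier sum (inside $g_k$, via the convolution) with the infinite Taylor sum in $\sigma$: one must confirm that $(P(\sigma))^2$, computed as a limit in $\ell_\nu^1$, has $\sigma$-series coefficients given by $(p*_{TF}p)_m$. This follows from absolute convergence of the double series $\sum_{m,\ell} p_\ell * p_{m-\ell}\,\sigma^m$ in $\ell_\nu^1$, which in turn is controlled by $\|p\|_\nu^2 < \infty$ using the sub-multiplicativity $|p_\ell * p_{m-\ell}|_\nu \le |p_\ell|_\nu |p_{m-\ell}|_\nu$; Fubini/Tonelli for the $\ell_\nu^1$-valued series then permits reindexing. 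Once this is in place, everything else is bookkeeping. Finally, applying Lemma~\ref{lem:parameterization} to this $P$ yields $P(\sigma) = \sum_{m\ge0} p_m \sigma^m \in \mathrm{image}(P) = P(\mathbb{D}) \subset W^u(\ta)$ for each $\sigma \in \mathbb{D}$, which is the claim.
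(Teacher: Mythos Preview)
Your proposal is correct and follows exactly the approach implicit in the paper: the paper presents this lemma as a summary (``The following result resumes what we have achieved so far'') of the formal derivation preceding it, without giving a separate proof, and your argument makes precise the steps that derivation glosses over --- convergence of $P$ on $\mathbb{D}$ via $\|p\|_\nu<\infty$, recovery of the first-order constraints from the $m=0,1$ equations, matching coefficients to obtain the invariance equation for $m\ge2$ (with the $m=0,1$ cases holding because $\ta$ is a steady state and $(\tilde\lambda,\tb)$ an eigenpair), and finally invoking Lemma~\ref{lem:parameterization}. The care you take with the Fubini/Tonelli step for the double series is exactly the rigor the paper's phrase ``proceeding formally for the moment'' defers.
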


%

The remaining part of this section introduces the method to compute rigorously an infinite dimensional vector $p \in X^\nu$ such that $f(p)=0$. This is done using a Newton-Kantorovich type theorem, which we now introduce. Before that, denote by $B_r(y) \bydef \{ q \in X^\nu : \| q - y \|_{\nu} \le r\}$ the closed ball of radius $r>0$ centered at $y \in X^{\nu}$. 

\begin{theorem}[{\bf A Newton-Kantorovich type theorem}] \label{thm:radii_polynomials}
Let $X$ and $X'$ be Banach spaces, $A^{\dagger} \in B(X,X')$ and $A \in B(X',X)$ be bounded linear operators.
Consider a point $\bp \in X$ (typically a numerical approximation) and assume that $f \colon X \to X'$ is Fr\'echet differentiable at $\bp$. Moreover, assume that $A$ is injective and that $A f \colon X \to X$.
Let $Y_0$, $Z_0$ and $Z_1$ be nonnegative constants, and a function $Z_2:(0,\infty) \to (0,\infty)$ satisfying
\begin{align}
\label{eq:general_Y_0}
\| A f(\bp) \|_X &\le Y_0
\\
\label{eq:general_Z_0}
\| I - A A^{\dagger}\|_{B(X)} &\le Z_0
\\
\label{eq:general_Z_1}
\| A[A^{\dagger} - Df(\bp)] \|_{B(X)} &\le Z_1,
\\
\label{eq:general_Z_2}
\| A[Df(c) - Df(\bp)]\|_{B(X)} &\le Z_2(r) r, \quad \text{for all } q \in B_r(\bp),
\end{align}
where $\| \cdot \|_{B(X)}$ denotes the operator norm.  Define the radii polynomial by 
\begin{equation} \label{eq:general_radii_polynomial}
p(r) \bydef Z_2(r) r^2 - ( 1 - Z_1 - Z_0) r + Y_0.
\end{equation}
If there exists $r_0>0$ such that $p(r_0)<0$, then there exists a unique $\tp \in B_{r_0}(\bp)$ such that $f(\tp) = 0$.
\end{theorem}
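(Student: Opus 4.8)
The statement to prove is a Newton--Kantorovich type theorem, so the plan is to realize it as a contraction-mapping argument applied to the Newton-like operator $T(q) \bydef q - Af(q)$ on the closed ball $B_{r_0}(\bp)$. First I would observe that because $A$ is injective, a fixed point of $T$ corresponds exactly to a zero of $f$: if $T(\tp) = \tp$ then $Af(\tp) = 0$, and injectivity of $A$ forces $f(\tp) = 0$. Conversely any zero of $f$ in the ball is a fixed point. Thus it suffices to show $T$ is a well-defined contraction on the complete metric space $B_{r_0}(\bp)$ (a closed subset of the Banach space $X$), and then invoke the Banach fixed point theorem to get existence and uniqueness of $\tp$ in $B_{r_0}(\bp)$.

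The core of the argument is the estimate, for $q \in B_{r_0}(\bp)$,
\[
\| T(q) - \bp \|_X \le \| T(q) - T(\bp) \|_X + \| T(\bp) - \bp \|_X.
\]
The second term is $\| Af(\bp) \|_X \le Y_0$ by \eqref{eq:general_Y_0}. For the first term I would write, using that $f$ is Fréchet differentiable and the mean value inequality along the segment from $\bp$ to $q$ (which lies in $B_{r_0}(\bp)$ by convexity),
\[
T(q) - T(\bp) = (q - \bp) - A\big(f(q) - f(\bp)\big) = \int_0^1 \big[I - A\,Df(\bp + t(q-\bp))\big](q-\bp)\,dt,
\]
and then split $I - A\,Df(c) = (I - AA^\dagger) + A(A^\dagger - Df(\bp)) + A(Df(\bp) - Df(c))$ for $c = \bp + t(q-\bp)$. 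Applying \eqref{eq:general_Z_0}, \eqref{eq:general_Z_1}, and \eqref{eq:general_Z_2} respectively (the last with $q$ replaced by $c \in B_{r_0}(\bp)$, noting $\|c - \bp\|_\nu \le r_0$) gives the operator-norm bound $Z_0 + Z_1 + Z_2(r_0) r_0$ on $I - A\,Df(c)$, uniformly in $t$, hence $\| T(q) - T(\bp)\|_X \le (Z_0 + Z_1 + Z_2(r_0) r_0)\, r_0$. Collecting terms,
\[
\| T(q) - \bp \|_X \le (Z_0 + Z_1 + Z_2(r_0) r_0)\, r_0 + Y_0 = Z_2(r_0) r_0^2 + (Z_0 + Z_1) r_0 + Y_0 = p(r_0) + r_0 < r_0,
\]
using the hypothesis $p(r_0) < 0$. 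This shows $T$ maps $B_{r_0}(\bp)$ into itself. An essentially identical computation, bounding $\|T(q_1) - T(q_2)\|_X$ by integrating $I - A\,Df$ along the segment from $q_2$ to $q_1$, yields the Lipschitz constant $\kappa \bydef Z_0 + Z_1 + Z_2(r_0) r_0$; from $p(r_0) < 0$ one gets $Z_2(r_0) r_0^2 < (1 - Z_0 - Z_1) r_0 - Y_0 \le (1 - Z_0 - Z_1) r_0$, so $Z_2(r_0) r_0 < 1 - Z_0 - Z_1$, i.e. $\kappa < 1$, making $T$ a contraction.

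The one genuinely delicate point — not hard, but the place where the hypotheses are actually used in a nonobvious way — is checking that $T$ is well-defined as a map $B_{r_0}(\bp) \to X$, i.e. that $Af(q) \in X$ for $q$ in the ball; this is exactly the standing assumption "$Af \colon X \to X$," which is why it is imposed separately (note $f$ itself maps into the larger space $X' = \tilde X^\nu$, so the smoothing/preconditioning role of $A$ is essential). Granting that, the remaining verification that $T$ is Fréchet differentiable / continuous on the ball follows from differentiability of $f$ at $\bp$ together with the uniform bound from \eqref{eq:general_Z_2}. Once $T\colon B_{r_0}(\bp) \to B_{r_0}(\bp)$ is a contraction on a nonempty closed subset of a Banach space, the Banach fixed point theorem delivers a unique fixed point $\tp \in B_{r_0}(\bp)$, and by the injectivity argument above this $\tp$ is the unique zero of $f$ in $B_{r_0}(\bp)$, completing the proof.
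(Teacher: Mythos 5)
Your proof is correct and is the canonical contraction-mapping argument that underlies this class of Newton--Kantorovich theorems; the paper itself states Theorem~\ref{thm:radii_polynomials} as a known result and does not include a proof, so there is nothing to contrast against. You correctly identify the Newton-like operator $T(q) = q - Af(q)$, use injectivity of $A$ to equate fixed points of $T$ with zeros of $f$, and obtain both self-mapping and contraction via the telescoping decomposition
\[
I - A\,Df(c) = (I - AA^\dagger) + A\bigl(A^\dagger - Df(\bp)\bigr) + A\bigl(Df(\bp) - Df(c)\bigr),
\]
bounded by $Z_0 + Z_1 + Z_2(r_0)r_0$ on $B_{r_0}(\bp)$, and then read both conclusions off the single inequality $p(r_0)<0$.

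One technical remark. You invoke the integral representation
\[
T(q) - T(\bp) = \int_0^1 \bigl[I - A\,Df\bigl(\bp + t(q-\bp)\bigr)\bigr](q-\bp)\,dt,
\]
which requires the Fr\'echet derivative to exist along the whole segment and to be (Bochner) integrable in $t$ — slightly more than the hypothesis literally grants. The cleaner route, which you also name in passing, is the mean value inequality: if $g$ is Gateaux differentiable on a convex set then $\|g(q)-g(\bp)\| \le \sup_{t\in[0,1]}\|Dg(\bp+t(q-\bp))\|\,\|q-\bp\|$, which yields the identical bound without any integrability discussion. (To be fair, the theorem statement itself already implicitly assumes differentiability on the ball — otherwise the $Z_2$ bound on $Df(c)$ would be vacuous — so this is a gloss on the hypotheses rather than a real gap in your argument.) Beyond that, the logic is complete: $T$ maps the closed ball into itself, $T$ is a $\kappa$-contraction with $\kappa = Z_0+Z_1+Z_2(r_0)r_0 < 1$ (deduced correctly from $p(r_0)<0$ together with $Y_0\ge0$), Banach's theorem gives the unique fixed point, and injectivity of $A$ converts that into the unique zero of $f$.
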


The Banach spaces we choose to solve $f=0$ are $X=X^\nu$ and $X' = \tilde X^\nu$. An application of Theorem~\ref{thm:radii_polynomials} requires computing a numerical approximation $\tp$ of a zero of $f$ given in \eqref{eq:f=0_manifold}, which is done by introducing first a finite dimensional projection. 

Given $M \ge 2$ (the Taylor projection) and $K>0$ (the Fourier projection), we define the projections $\Pi^{(K,M)},\Pi^{(\infty)}:X^\nu \to X^\nu$ by
\[
\left( \Pi^{(K,M)}(p) \right)_{k,m} \bydef 
\begin{cases} p_{k,m},& k=0,\dots,K \text{ and } m = 0,\dots,M
\\
0, & \text{otherwise}
\end{cases}
\]
and
\[
\Pi^{(\infty)} \bydef I-\Pi^{(K,M)}.
\]
Denote by $\iota^{(K,M)}:\C^{(K+1)\times (M+1)} \to X^\nu$ the natural inclusion defined by taking a vector 
$p=(p_{k,m})_{k=0,\dots,K \atop m=0,\dots,M} \in \C^{(K+1)\times (M+1)}$ and mapping it to $\iota^{(K,M)}(p) \in X^\nu$ as
\[
\left( \iota^{(K,M)}(p) \right)_{k,m} \bydef 
\begin{cases} p_{k,m},& k=0,\dots,K \text{ and } m = 0,\dots,M
\\
0, & \text{otherwise}.
\end{cases}
\]
Define the finite dimensional projection mapping $f^{(K,M)}: \C^{(K+1)\times (M+1)} \to \C^{(K+1)\times (M+1)}$ by
\[
f^{(K,M)}(p) \bydef \Pi^{(K,M)} \left( f \left( \iota^{(K,M)}(p) \right) \right).
\]
Assume that using Newton's method, we computed $\bp \in \C^{(K+1)\times (M+1)} $ with $f^{(K,M)}(\bp) \approx 0$.
The next step required to apply Theorem~\ref{thm:radii_polynomials} is to define the operators $A,\dagA$. 
To simplify the presentation we use the notation $\bp$ to represent $\bp \in \C^{(K+1)\times (M+1)}$ and $\bp \in X^\nu$, the natural inclusion in $X^\nu$. 

Given $h \in X^\nu$, denote $h^{(K,M)} \bydef \Pi^{(K,M)} h$.
Define the operator $\dagA$ action-wise by
\begin{equation} \label{eq:op_dagA}
\left(\dagA h \right)_{k,m} = 
\begin{cases}
\left( Df^{(K,M)}(\bp) h^{(K,M)} \right)_{k,m}, & k=0,\dots,K \text{ and } m = 0 ,\dots,M
\\
h_{k,m}, & k >K \text{ and } m = 0 ,1
\\
\left( \tilde \lambda m + i k^2 \omega^2 \right) h_{k,m}, & \text{otherwise}.
\end{cases}
\end{equation}

If the projection dimensions $K,M$ are taken large enough, it is expected that the operator $\dagA$ defined in \eqref{eq:op_dagA} acts as an approximation for the Fr{\'e}chet derivative $Df(\bp)$.
 
Denote by $A^{(K,M)}$ a numerical inverse of the Jacobian matrix $Df^{(K,M)}(\bp)$, that is 
\[
\| I - A^{(K,M)} Df^{(K,M)}(\bp) \| \ll 1.
\]

Define the operator $A$ action-wise by
\begin{equation} \label{eq:op_A}
\left(A h \right)_{k,m} = 
\begin{cases}
\left( A^{(K,M)} h^{(K,M)} \right)_{k,m}, & k=0,\dots,K \text{ and } m = 0 ,\dots,M
\\
h_{k,m}, & k >K \text{ and } m = 0 ,1
\\
\left( \tilde  \lambda m + i k^2 \omega^2 \right)^{-1} h_{k,m}, & \text{otherwise}.
\end{cases}
\end{equation}

Denote
\[
\mu_{k,m} \bydef \tilde \lambda m + i k^2 \omega^2.
\]
Let us decompose the matrix $A^{(K,M)} \in M_{(K+1)(M+1)}(\C)$ block-wise as
\[
A^{(K,M)} =
\begin{pmatrix}
A^{(K,M)}_{0,0} & A^{(K,M)}_{0,1} & \cdots & A^{(K,M)}_{0,M}  \\
A^{(K,M)}_{1,0} & A^{(K,M)}_{1,1} & \cdots & A^{(K,M)}_{1,M}  \\
\vdots & \vdots & \ddots & \vdots \\
A^{(K,M)}_{M,0} & A^{(K,M)}_{M,1} & \cdots & A^{(K,M)}_{M,M}
\end{pmatrix}
\]
so that it acts on $h^{(K,M)} \bydef (h_0^{(K)},\dots,h_M^{(K)}) \in \C^{(K+1)(M+1)}$. Note that each block of $A^{(K,M)}$ is a matrix $A^{(K,M)}_{i_1,i_2} \in M_{K+1}(\C)$ acting on $h_{i_2}^{(K)} \in \C^{M+1}$.  
Thus we define $A$ as
\begin{equation} \label{eq:A}
A =
\begin{pmatrix}
A_{0,0} & A_{0,1} & \cdots & A_{0,M} & 0 & \cdots \\
A_{1,0} & A_{1,1} & \cdots & A_{1,M} & 0 & \cdots \\
\vdots & \vdots & \ddots & \vdots & 0 & \cdots \\
A_{M,0} & A_{M,1} & \cdots & A_{M,M} & 0 & \cdots \\
0 & 0 & \cdots & 0 & A_{M+1,M+1} & \cdots \\
\vdots & \vdots &  & \vdots & 0 & \ddots
\end{pmatrix},
\end{equation}
where each $A_{j,m} \in B(\ell_\nu^1)$ and where the action of each block of $A$ is finite (that is they act on $h^{(K,M)} = \Pi^{(K,M)} h$ only) except for the diagonal blocks $A_{j,j}$. 
More explicitly, for $m=0,1$,
\[
(A_{m,m} h_m)_k = 
\begin{cases}
\bigl( A_{m,m}^{(K,M)} h_m^{(K)} \bigr)_k & \quad\text{for }  k = 0,\dots,K, \\
(h_m)_k & \quad\text{for } k > K,
\end{cases}
\]
for $m=2,\dots,M$
\[
(A_{m,m} h_m)_k = 
\begin{cases}
\bigl( A_{m,m}^{(K,M)} h_m^{(K)} \bigr)_k & \quad\text{for }  k = 0,\dots,K, \\
\displaystyle \frac{1}{\mu_{k,m}(\tilde \lambda))} (h_m)_k & \quad\text{for } k > K,
\end{cases}
\]
and for $m>M$, 
\[
(A_{m,m} h_m)_k = 
\frac{1}{\mu_{k,m}(\tilde \lambda))} (h_m)_k \quad\text{for } k \ge 0.
\]
The finite non-diagonal blocks satisfy
\[
(A_{j,m} h_m)_k = 
\begin{cases}
\bigl( A_{j,m}^{(K,M)} h_m^{(K)} \bigr)_k & \quad\text{for }  k = 0,\dots,K, \\
0 & \quad\text{for } k > K,
\end{cases}
\]

Having introduced the map $f$ in \eqref{eq:f=0_manifold}, the Banach spaces $X=X^\nu$ and $X' = \tilde X^\nu$, the operators $\dagA$ and $A$ given in \eqref{eq:op_dagA} and \eqref{eq:op_A}, respectively, we are ready to compute the bounds $Y_0$, $Z_0$, $Z_1$ and $Z_2$ required to apply Theorem~\ref{thm:radii_polynomials}. Before that, let us introduce some useful elementary functional analytic tools.

\subsection{Elementary functional analytic results}


Recall the definition of the Banach space $\ell_\nu^1$ given in \eqref{eq:ell_nu_one}.

\begin{lemma}\label{l:dualbound}
The dual space $(\ell_\nu^1)^*$ is isometrically isomorphic to 
\[
\ell_{\nu^{-1}}^\infty = \left\{ c = (c_k)_{k \ge 0} : 
| c |_{\infty,\nu^{-1}} \bydef \max \left( |c_0|, \tfrac{1}{2} \sup_{k \geq 1} |c_k| \nu^{-k} \right) < \infty \right\}.
\]
For all $a \in \ell_\nu^1$ and $c \in \ell^\infty_{\nu^{-1}}$ we  have
\begin{equation}\label{e:dualbound}
\Bigl|\sum_{k \geq 0} c_k p_k \Bigr| \leq |c|_{\infty,\nu^{-1}} |a|_{\nu}.
\end{equation}
\end{lemma}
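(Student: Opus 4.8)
The plan is to invoke the standard duality between a weighted $\ell^1$ space and a weighted $\ell^\infty$ space, keeping careful track of the factor of $2$ coming from the symmetrization built into the norm in \eqref{eq:ell_nu_one}. It is convenient to record the weights $\omega_0 = 1$ and $\omega_k = 2\nu^k$ for $k \ge 1$, so that $|a|_\nu = \sum_{k \ge 0} |a_k|\,\omega_k$ and $|c|_{\infty,\nu^{-1}} = \sup_{k \ge 0} |c_k|/\omega_k$. With this notation the whole statement becomes the textbook fact $(\ell^1(\omega))^* \cong \ell^\infty(1/\omega)$, specialized to these weights.

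First I would establish the pairing bound \eqref{e:dualbound}, which is an elementary H\"older-type estimate: for $a \in \ell_\nu^1$ and $c \in \ell^\infty_{\nu^{-1}}$, write for each $k$ the factorization $|c_k a_k| = \bigl(|c_k|/\omega_k\bigr)\bigl(|a_k|\,\omega_k\bigr) \le |c|_{\infty,\nu^{-1}}\,|a_k|\,\omega_k$; summing over $k \ge 0$ and using the triangle inequality on the left-hand side gives $\bigl|\sum_{k \ge 0} c_k a_k\bigr| \le |c|_{\infty,\nu^{-1}}\,|a|_\nu$. In particular every $c \in \ell^\infty_{\nu^{-1}}$ defines a bounded linear functional $\varphi_c(a) \bydef \sum_{k \ge 0} c_k a_k$ on $\ell_\nu^1$ with $\|\varphi_c\|_{(\ell_\nu^1)^*} \le |c|_{\infty,\nu^{-1}}$, and the assignment $c \mapsto \varphi_c$ is visibly linear and injective (test against the coordinate sequences).

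Next I would show this map is onto $(\ell_\nu^1)^*$ and norm-preserving. Let $e^{(j)} \in \ell_\nu^1$ be the sequence equal to $1$ in position $j$ and $0$ elsewhere, so that $|e^{(j)}|_\nu = \omega_j$. Given $\varphi \in (\ell_\nu^1)^*$, set $c_j \bydef \varphi(e^{(j)})$; then $|c_j| \le \|\varphi\|_{(\ell_\nu^1)^*}\,|e^{(j)}|_\nu = \|\varphi\|_{(\ell_\nu^1)^*}\,\omega_j$, hence $|c|_{\infty,\nu^{-1}} = \sup_j |c_j|/\omega_j \le \|\varphi\|_{(\ell_\nu^1)^*} < \infty$, so $c \in \ell^\infty_{\nu^{-1}}$. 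Since the finitely supported sequences are dense in $\ell_\nu^1$ — because for $a \in \ell_\nu^1$ the weighted tails $2\sum_{k > N}|a_k|\nu^k$ tend to $0$ as $N \to \infty$ — and since $\varphi$ and $\varphi_c$ agree on finitely supported sequences by linearity, continuity forces $\varphi = \varphi_c$ on all of $\ell_\nu^1$. Applying the previous inequality to $\varphi = \varphi_c$ (using $\varphi_c(e^{(j)}) = c_j$) gives $|c|_{\infty,\nu^{-1}} \le \|\varphi_c\|_{(\ell_\nu^1)^*}$, which together with the bound from the second paragraph yields $\|\varphi_c\|_{(\ell_\nu^1)^*} = |c|_{\infty,\nu^{-1}}$. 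Hence $c \mapsto \varphi_c$ is the claimed isometric isomorphism.

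There is no real obstacle in this argument; the only points needing care are the bookkeeping of the weight $\omega$ (equivalently, where the factor $\tfrac12$ in the definition of $|\cdot|_{\infty,\nu^{-1}}$ comes from) and the density of finitely supported sequences, which holds precisely because $|a|_\nu < \infty$ makes the weighted tail sums converge. For the applications later in the section only the inequality \eqref{e:dualbound} is actually used, and that part is the one-line H\"older estimate above.
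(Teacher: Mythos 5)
Your proof is correct. The paper records Lemma~\ref{l:dualbound} without proof, treating it as a textbook fact about weighted sequence spaces, and your argument is exactly the standard one: after packaging the weights as $\omega_0 = 1$, $\omega_k = 2\nu^k$, the pairing bound is the pointwise H\"older estimate, and the isomorphism is obtained by testing against coordinate sequences and invoking density of finitely supported sequences. The weight bookkeeping (which is the only place a slip could occur, given the factor $2$ in the definition of $|\cdot|_\nu$ and the matching $\tfrac12$ in $|\cdot|_{\infty,\nu^{-1}}$) is handled correctly. One incidental remark: in the statement of~\eqref{e:dualbound} the symbol $p_k$ is a typo for $a_k$; your proof reads it correctly.
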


Using the bound~\eqref{e:dualbound}, we estimate the convolution
\[
  \sup_{|v|_{\nu} \leq 1} | (a \ast v)_k | =
   \sup_{|v|_{\nu} \leq 1} \left| \sum_{k' \in \mathbb{Z}} v_{|k'|} a_{|k-k'|}   \right| \leq 
   \max \left\{ |a_k| ,\sup_{k'\geq1} \frac{|a_{|k-k'|} + a_{|k+k'|}|}{2 \nu^{k'}} \right\}.
\]
Given $v = (v_k)_{k \ge 0} \in \ell_\nu^1$, define $\hv \in \ell_\nu^1$ as follows:
\[
\hv_k \bydef \begin{cases} 0 & \text{if } k \le K,\\
v_k & \text{if } k > K. \end{cases}
\]
A similar estimate as the one above leads to
\begin{corollary} \label{cor:psi_k}
\[
  \sup_{|v|_{\nu} \leq 1} | (a \ast \hv)_k |  \leq 
  \sup_{k'\geq M+1} \frac{|a_{|k-k'|} + a_{|k+k'|}|}{2 \nu^{k'}}  \bydef \Psi_k(a).
\]
Moreover, if $a \in \ell_\nu^1$ satisfies $a_k = 0$ for all $k \ge K+1$, then we obtain the computable bound
\[
\Psi_k(a) = \max_{\ell = K+1,\dots,k+K} \frac{|a_{|k-\ell|}|}{2 \nu^{\ell}},
\quad k=0,\dots,K.
\]
\end{corollary}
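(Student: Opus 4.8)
The plan is to obtain both statements directly from the dual pairing bound \eqref{e:dualbound}, carefully tracking how the cutoff in the definition of $\hv$ removes the low-frequency contributions. The first step is to rewrite the convolution in symmetrized form: since $(a \ast \hv)_k = \sum_{k_1 + k_2 = k} a_{|k_1|} \hv_{|k_2|}$ and $\hv_{|k_2|}$ vanishes whenever $|k_2| \le K$, grouping the indices $k_2 = k'$ and $k_2 = -k'$ for $k' \ge K+1$ gives
\[
(a \ast \hv)_k = \sum_{k' \ge K+1} \bigl( a_{|k - k'|} + a_{|k + k'|} \bigr)\, v_{k'}.
\]
Writing each summand as $\tfrac{|a_{|k-k'|} + a_{|k+k'|}|}{2\nu^{k'}} \cdot 2\nu^{k'} v_{k'}$ and using $\sum_{k' \ge K+1} 2\nu^{k'} |v_{k'}| \le |v|_\nu \le 1$ (from the definition \eqref{eq:ell_nu_one} of $|\cdot|_\nu$), I get $|(a \ast \hv)_k| \le \sup_{k' \ge K+1} \tfrac{|a_{|k-k'|} + a_{|k+k'|}|}{2\nu^{k'}} = \Psi_k(a)$, and taking the supremum over $|v|_\nu \le 1$ finishes the first inequality. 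This is exactly the analogue of the estimate displayed just before the corollary, with the $k'=0$ and $1 \le k' \le K$ terms dropped because $\hv$ is supported on $\{k' \ge K+1\}$.

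For the computable formula, assume now $a_k = 0$ for all $k \ge K+1$ and $0 \le k \le K$. The next step is a purely arithmetic reduction of the index set: for $k' \ge K+1$ we have $k + k' \ge K+1$, so $a_{|k+k'|} = 0$ and hence $\Psi_k(a) = \sup_{k' \ge K+1} |a_{|k-k'|}| / (2\nu^{k'})$; and a term contributes only if $|k-k'| \le K$, i.e. $k - K \le k' \le k + K$, where the lower bound is vacuous since $k \le K$. Intersecting with $k' \ge K+1$ leaves the finite range $K+1 \le k' \le k+K$, which yields the claimed maximum over $\ell = K+1,\dots,k+K$ (with the convention that the empty maximum, occurring when $k = 0$, equals $0$).

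The argument involves no real obstacle; the only point requiring attention is the bookkeeping in the symmetrization step — ensuring that the pair of indices $\{k', -k'\}$ contributes precisely $a_{|k-k'|} + a_{|k+k'|}$, and that the cutoff $K$ is propagated consistently through to the final index window $K+1 \le \ell \le k+K$. We note in passing that the supremum over $|v|_\nu \le 1$ is in fact attained in the limit by concentrating the mass of $v$ on an index nearly realizing $\Psi_k(a)$, so the bound is sharp, but only the stated inequality is needed in the sequel.
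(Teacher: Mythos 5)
Your proof is correct and fills in exactly the details the paper leaves implicit (the paper only says the corollary follows by ``a similar estimate'' to the displayed bound preceding it): the symmetrization $(a\ast\hv)_k=\sum_{k'\ge K+1}(a_{|k-k'|}+a_{|k+k'|})v_{k'}$, the H\"older-type (dual pairing) estimate against $|v|_\nu$, and the reduction to a finite index window when $a$ has finite support are all handled properly. You also correctly read the $\sup_{k'\ge M+1}$ in the statement as $\sup_{k'\ge K+1}$, which is clearly what is intended since $\hv$ is truncated at $K$ (the ``$M$'' is a typo in the paper), and your treatment of the empty maximum when $k=0$ is consistent with the paper's convention.
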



The final results of this short section concern the computation of norms of bounded linear operators defined on $\ell_\nu^1$ and on $X^\nu$, and are useful when computing the bounds $Z_0$ and $Z_2$.

\begin{lemma}\label{l:Blnu1norm}
Let $\Gamma \in B(\ell^1_{\nu})$, the space of bounded linear operators from $\ell^1_\nu$ to itself, acting as $(\Gamma a)_i =\sum_{j\geq 0} \Gamma_{i,j} a_j$ for $i=1,2$. Define the weights
$\omega=(\omega_k)_{k\geq0}$ by $\omega_0=1$ and $\omega_k = 2 \nu^k$ for $k\geq 1$. Then 
\[
   \| \Gamma \|_{B(\ell^1_\nu)} = \sup_{j \geq 0} \frac{1}{\omega_j}  \sum_{i\geq 0} | \Gamma_{i,j} | \omega_i .
\] 
\end{lemma}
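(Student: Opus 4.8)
\textbf{Proof plan for Lemma~\ref{l:Blnu1norm}.}
The statement is the standard formula for the operator norm of a matrix acting on a weighted $\ell^1$ space, so the plan is to reduce it to the unweighted case by a change of variables, or to prove it directly by the usual two-sided argument. I would carry out the direct argument. The space $\ell^1_\nu$ (with the symmetry convention) is isometrically isomorphic to the weighted space $\ell^1(\omega)$ of one-sided sequences $a=(a_k)_{k\ge 0}$ with norm $|a|_\nu = \sum_{k\ge0}|a_k|\omega_k$, where $\omega_0=1$ and $\omega_k = 2\nu^k$ for $k\ge1$. So it suffices to show that for $\Gamma\in B(\ell^1(\omega))$ acting by $(\Gamma a)_i = \sum_{j\ge0}\Gamma_{i,j}a_j$, one has $\|\Gamma\|_{B(\ell^1(\omega))} = \sup_{j\ge0} \omega_j^{-1}\sum_{i\ge0}|\Gamma_{i,j}|\omega_i$.

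First I would prove the upper bound $\|\Gamma\| \le \sup_j \omega_j^{-1}\sum_i |\Gamma_{i,j}|\omega_i =: S$. For $a\in\ell^1(\omega)$ estimate
\[
|\Gamma a|_\nu = \sum_{i\ge0} |(\Gamma a)_i|\,\omega_i
\le \sum_{i\ge0}\omega_i\sum_{j\ge0}|\Gamma_{i,j}||a_j|
= \sum_{j\ge0}|a_j|\sum_{i\ge0}|\Gamma_{i,j}|\omega_i
= \sum_{j\ge0}|a_j|\omega_j\left(\frac{1}{\omega_j}\sum_{i\ge0}|\Gamma_{i,j}|\omega_i\right)
\le S\,|a|_\nu,
\]
where the interchange of summation is justified by Tonelli (all terms nonnegative). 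This gives $\|\Gamma\|\le S$.

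For the lower bound I would test $\Gamma$ on (rescaled) unit vectors. Fix $j\ge0$ and let $e^{(j)}$ be the sequence with a $1$ in position $j$ and $0$ elsewhere; then $|e^{(j)}|_\nu=\omega_j$, and
\[
\|\Gamma\|\ge \frac{|\Gamma e^{(j)}|_\nu}{|e^{(j)}|_\nu}
= \frac{1}{\omega_j}\sum_{i\ge0}|\Gamma_{i,j}|\,\omega_i.
\]
Taking the supremum over $j\ge0$ yields $\|\Gamma\|\ge S$, and combined with the upper bound this proves $\|\Gamma\|=S$, which is exactly the claimed formula. (If some column sum is infinite, both sides are $+\infty$ and the identity still holds; since $\Gamma\in B(\ell^1_\nu)$ is assumed bounded, $S<\infty$ automatically.)

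There is no real obstacle here: the only points requiring a word of care are the justification of the Fubini/Tonelli interchange in the upper bound (handled by nonnegativity of the summands) and the observation that the supremum over the elementary basis vectors already saturates the operator norm on $\ell^1$-type spaces — a standard fact that follows because any $a$ is an $\ell^1(\omega)$-convergent combination $a=\sum_j a_j e^{(j)}$ and the norm is attained in the limit along these. Accordingly I expect the proof to be short; the lemma is essentially bookkeeping, and the statement as written (with the weights $\omega_k$ made explicit) is already in the form that makes it directly computable in the rigorous-numerics implementation.
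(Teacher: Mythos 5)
Your proof is correct and is exactly the standard two-sided argument (Tonelli for the upper bound, test on scaled basis vectors for the lower bound) that establishes the operator-norm formula on any weighted $\ell^1$ space. The paper itself omits the proof of this lemma precisely because it is elementary, so your argument fills in what the authors left to the reader, and there is no alternative route to compare against.
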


The following consequence of Lemma~\ref{l:Blnu1norm} provides an explicit bound on norms of bounded linear operators on $\ell_\nu^1$ with a specific structure, namely as in the blocks of \eqref{eq:A}.

\begin{corollary}\label{cor:OperatorNorm}
Let $\Gamma^{(K)}$ be an $(K+1) \times (K+1)$ matrix, 
$\{\gamma_k\}_{k=K+1}^{\infty}$
be a sequence of numbers with 
\[
|\gamma_k| \leq |\gamma_{K+1}|, \qquad\text{for all } k \ge K+1,
\]
and 
$\Gamma \colon \ell_{\nu}^1 \to \ell_{\nu}^1$ be the linear operator defined by
\begin{equation} \label{eq:Gamma_blocks}
\Gamma a = 
\begin{pmatrix}
\Gamma^{(K)} &  & 0   \\
  & \gamma_{K+1}  &   \\
0  &   & \gamma_{K+2} & \\  
& & & \ddots
\end{pmatrix}
\begin{pmatrix}
 a^{(K)}  \\
  a_{K+1}   \\
  a_{K+2} \\
  \vdots 
\end{pmatrix}.
\end{equation}
Here $a^{(K)} = (a_0, \ldots, a_{K})^T \in \C^{K+1}$. Then $\Gamma \in B(\ell_{\nu}^1)$ and 
\begin{equation} \label{eq:norm_B_ell_nu_1}
\| \Gamma \|_{B(\ell_{\nu}^1)} =  \max (K, |\gamma_{K+1}|),
\end{equation}
where 
\[
K \bydef \max_{0 \leq j \leq K} \frac{1}{\omega_j}\sum_{i=0}^{K} |\Gamma_{i,j}| \omega_i.
\]
\end{corollary}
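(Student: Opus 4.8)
The plan is to apply Lemma~\ref{l:Blnu1norm} directly to the block-diagonal operator $\Gamma$ of \eqref{eq:Gamma_blocks} and then evaluate the resulting column supremum explicitly, taking advantage of the sparsity of $\Gamma$ outside its leading $(K+1)\times(K+1)$ block.

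First I would record the matrix entries of $\Gamma$ with respect to the standard basis. For a column index $j \in \{0,\dots,K\}$ the only possibly nonzero entries $\Gamma_{i,j}$ occur for $i \in \{0,\dots,K\}$ and agree with the entries of $\Gamma^{(K)}$; for $j \ge K+1$ the only nonzero entry in column $j$ is the diagonal entry $\Gamma_{j,j} = \gamma_j$. Since the first $K+1$ columns contribute the finite quantity $\max_{0\le j\le K}\tfrac{1}{\omega_j}\sum_{i=0}^K|\Gamma_{i,j}|\omega_i$ and every remaining column contributes a value bounded by $|\gamma_{K+1}|$, the column supremum in Lemma~\ref{l:Blnu1norm} is finite; hence $\Gamma \in B(\ell_\nu^1)$ and the formula of that lemma applies.

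It then remains to split the supremum $\sup_{j\ge 0}\tfrac{1}{\omega_j}\sum_{i\ge 0}|\Gamma_{i,j}|\omega_i$ over the two ranges of $j$. For $0\le j\le K$ the inner sum terminates at $i=K$ and reproduces exactly the number denoted $K$ in the statement. For $j\ge K+1$ only the term $i=j$ survives, so $\tfrac{1}{\omega_j}\sum_{i\ge 0}|\Gamma_{i,j}|\omega_i = \tfrac{1}{\omega_j}|\gamma_j|\omega_j = |\gamma_j|$; invoking the hypothesis $|\gamma_k|\le|\gamma_{K+1}|$ for all $k\ge K+1$, with the bound attained at $k=K+1$, the supremum over this range equals $|\gamma_{K+1}|$. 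Taking the larger of the two contributions gives $\|\Gamma\|_{B(\ell_\nu^1)} = \max(K,|\gamma_{K+1}|)$, which is \eqref{eq:norm_B_ell_nu_1}.

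There is no genuinely hard step: the corollary is pure bookkeeping on top of Lemma~\ref{l:Blnu1norm}. The only points deserving a line of care are checking that the weights $\omega_j$ cancel exactly in each diagonal tail column (they do, since such a column carries a single entry, on the diagonal) and that the tail supremum is genuinely $|\gamma_{K+1}|$ rather than merely bounded by it --- immediate, since the value is realized at the column $j=K+1$.
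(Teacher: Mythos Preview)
Your proposal is correct and matches the paper's approach: the paper states this result as an immediate corollary of Lemma~\ref{l:Blnu1norm} without giving a proof, and your argument is precisely the direct column-by-column evaluation of that lemma's formula that the word ``corollary'' implies.
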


Using the previous result, we can computing an upper bound for the operator norm of $A$ defined in \eqref{eq:A}.

\begin{lemma} \label{lem:normA}
Let %
\begin{equation} \label{eq:normA}
\alpha_A \bydef \max \left( 
\max_{m=0,\dots,M} \sum_{j=0}^M \|A_{j,m} \|_{B(\ell_\nu^1)}
~,~
\frac{1}{\text{Re}( \tilde \lambda)(M+1)}
\right).
\end{equation}
Then $\| A \|_{B(X^\nu)} \le \alpha_A$.
\end{lemma}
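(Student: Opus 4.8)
The plan is to estimate $\|Ap\|_\nu$ directly from the block decomposition of $A$ in \eqref{eq:A}, splitting the Taylor index $m$ into the finite range $0 \le m \le M$ and the tail $m > M$. Fix $p = \{p_m\}_{m\ge0} \in X^\nu$ and write $(Ap)_m \in \ell_\nu^1$ for the $m$-th component of $Ap$. The structure of \eqref{eq:A} gives $(Ap)_m = \sum_{j=0}^M A_{m,j}p_j$ for $m \le M$, and $(Ap)_m = A_{m,m}p_m$ for $m > M$, so that
\[
\|Ap\|_\nu = \sum_{m=0}^M \Bigl| \sum_{j=0}^M A_{m,j}p_j \Bigr|_\nu + \sum_{m>M} |A_{m,m}p_m|_\nu .
\]

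For the finite part I would apply the triangle inequality in $\ell_\nu^1$ together with the operator bound $|A_{m,j}p_j|_\nu \le \|A_{m,j}\|_{B(\ell_\nu^1)}|p_j|_\nu$, and then exchange the order of summation:
\[
\sum_{m=0}^M \Bigl| \sum_{j=0}^M A_{m,j}p_j \Bigr|_\nu
\le \sum_{j=0}^M |p_j|_\nu \sum_{m=0}^M \|A_{m,j}\|_{B(\ell_\nu^1)}
\le \Bigl( \max_{j=0,\dots,M} \sum_{m=0}^M \|A_{m,j}\|_{B(\ell_\nu^1)} \Bigr) \sum_{j=0}^M |p_j|_\nu .
\]
After relabeling the dummy indices, this column-sum maximum is exactly the first term inside the outer $\max$ in \eqref{eq:normA}. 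Each $\|A_{m,j}\|_{B(\ell_\nu^1)}$ is finite — the off-diagonal blocks are finite rank, and the diagonal blocks $A_{m,m}$, $m \le M$, are covered by Corollary~\ref{cor:OperatorNorm} — so the manipulation is legitimate.

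For the tail, recall that for $m > M$ the block $A_{m,m}$ acts diagonally as $(A_{m,m}p_m)_k = \mu_{k,m}^{-1}(p_m)_k$ with $\mu_{k,m} = \tilde\lambda m + i k^2\omega^2$. The only estimate that matters is the lower bound $|\mu_{k,m}| \ge |\mathrm{Re}(\mu_{k,m})| = m\,\mathrm{Re}(\tilde\lambda) \ge (M+1)\,\mathrm{Re}(\tilde\lambda)$, valid for all $k \ge 0$ and all $m \ge M+1$ since $\omega \in \R$ and $\mathrm{Re}(\tilde\lambda) > 0$ (from the eigenpair construction). This yields $|A_{m,m}p_m|_\nu \le \frac{1}{\mathrm{Re}(\tilde\lambda)(M+1)} |p_m|_\nu$ straight from the definition of $|\cdot|_\nu$, hence $\sum_{m>M}|A_{m,m}p_m|_\nu \le \frac{1}{\mathrm{Re}(\tilde\lambda)(M+1)} \sum_{m>M}|p_m|_\nu$. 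Adding the two contributions and bounding both prefactors by $\alpha_A$ gives $\|Ap\|_\nu \le \alpha_A\bigl(\sum_{m=0}^M |p_m|_\nu + \sum_{m>M}|p_m|_\nu\bigr) = \alpha_A\|p\|_\nu$, which simultaneously shows $Ap \in X^\nu$ and the claimed bound. There is no genuine obstacle here; the only things to be careful with are the bookkeeping of which index is summed versus maximized in the block column sums, and the elementary lower bound on $|\mu_{k,m}|$.
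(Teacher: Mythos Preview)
Your proof is correct and follows essentially the same approach as the paper: both split the Taylor index into the finite range $0 \le m \le M$ and the tail $m > M$, swap the order of summation on the finite block to extract the column-sum maximum, and use the elementary bound $|\mu_{k,m}| \ge m\,\mathrm{Re}(\tilde\lambda) \ge (M+1)\,\mathrm{Re}(\tilde\lambda)$ on the tail. The only cosmetic difference is the choice of dummy indices, which you already flag.
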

\begin{proof}
Let $h = (h_m)_{m\ge 0} \in X^\nu$ with $\| h\|_{\nu} \le 1$. Then,
\[
\| A h \|_{\nu}  = \sum_{j\ge 0} |(Ah)_j|_{\nu}  = \sum_{j\ge 0} |\sum_{m \ge 0}A_{j,m}h_m|_{\nu}  \le \sum_{j\ge 0} \sum_{m \ge 0} \|A_{j,m}\|_{B(\ell_\nu^1)} |h_m|_{\nu}  \le  \sum_{m \ge 0} \left( \sum_{j\ge 0} \|A_{j,m}\|_{B(\ell_\nu^1)} \right) |h_m|_{\nu}.
\]
For each $m\ge 0$, denote 
\[
\alpha_m \bydef \sum_{j\ge 0} \|A_{j,m}\|_{B(\ell_\nu^1)}. 
\]
Recalling \eqref{eq:A}, for $m=0,\dots,M$, note that $\alpha_m = \sum_{j = 0}^M \|A_{j,m}\|_{B(\ell_\nu^1)}$, where each $\|A_{j,m}\|_{B(\ell_\nu^1)}$ can be computed using formula \eqref{eq:norm_B_ell_nu_1}. Finally, for $m>M$, 
$\alpha_m = \sum_{j = 0}^M \|A_{j,m}\|_{B(\ell_\nu^1)}
= \|A_{m,m}\|_{B(\ell_\nu^1)}$, where
\[
\|A_{m,m}\|_{B(\ell_\nu^1)} = \sup_{|b|_{\nu} \le 1} | A_{m,m} b|_{\nu}  = \sup_{|b|_{\nu} \le 1}  \sum_{k\ge 0} \left| \frac{1}{\mu_{k,m}(\tilde \lambda))} b_k \right| \omega_k \le \frac{1}{\text{Re}(\tilde \lambda) (M+1)} \sup_{|b|_{\nu} \le 1}  \sum_{k\ge 0} \left|  b_k \right| \omega_k  \le \frac{1}{\text{Re}(\tilde \lambda) (M+1)}
\]
since for $m>M$ and $k \ge 0$,
$ |\mu_{k,m}(\tilde \lambda)| \ge \text{Re}(\tilde \lambda) m  \ge \text{Re}(\tilde \lambda) (M+1)$. \qedhere
\end{proof}

\subsection{The \boldmath $Y_0$ \unboldmath bound}

Recall that we want to compute $Y_0$ satisfying \eqref{eq:general_Y_0}, that is
\[
\| A f(\bp) \|_X \le Y_0.
\]
Recall that we obtained (via a previous computer-assisted proof) $\tilde \lambda \in \C$, $\ta \in \ell_\nu^1$ and $\tb \in \ell_\nu^1$ such that \eqref{eq:f=0_steady_state} and \eqref{eq:g=0_eigenpair} hold for all $k \in \Z$. Note that the error bounds for 
$(\tilde \lambda,\ta,\tb) \in B_{r_0}(\bar \lambda,\ba,\bb)$ for $\bar \lambda,\ba,\bb$ numerical approximations. More explicitly, letting 
\[
\delta=(\delta_\lambda,\delta_a,\delta_b) \bydef (\tilde \lambda - \bar \lambda,\ta-\ba,\tb-\bb) \in \C \times \ell_\nu^1 \times \ell_\nu^1,
\]
we get that
\[
\max \left( |\delta_\lambda|, |\delta_a|_{\nu} , |\delta_a|_{\nu} \right)  = 
\max \left( |\tilde \lambda - \bar \lambda|, | \ta-\ba|_{\nu} , |\tb-\bb|_{\nu} \right) \le r_0.
\]

Recall \eqref{eq:f=0_manifold} and notice that
\[
f_{k,m}(\bp) \bydef 
\begin{cases}
\bp_{k,0} - \ta_k, & m=0, \quad k \ge 0
\\
\bp_{k,1} - \tb_k, & m=1, \quad k \ge 0
\\
\left( \tilde \lambda m + i k^2 \omega^2 \right) \bp_{k,m} - i (\bp*_{TF}\bp)_{k,m}, & m \ge 2, \quad k \ge 0.
\end{cases}
\]
and hence $f(\bp) = \bar f(\bp) + f^{\delta}$, where
\[
\bar f_{k,m}(\bp) \bydef 
\begin{cases}
\bp_{k,0} - \ba_k, & m=0, \quad k \ge 0
\\
\bp_{k,1} - \bb_k, & m=1, \quad k \ge 0
\\
\left( \tilde \lambda m + i k^2 \omega^2 \right) \bp_{k,m} - i (\bp*_{TF}\bp)_{k,m}, & m \ge 2, \quad k \ge 0.
\end{cases}
\]
and 
\[
f^{\delta}_{k,m} \bydef 
\begin{cases}
-(\delta_a)_k, & m=0, \quad k \ge 0
\\
-(\delta_b)_k, & m=1, \quad k \ge 0
\\
0, & m \ge 2, \quad k \ge 0.
\end{cases}
\]
Denote 
\[
\bar f(\bp) = \left( \bar f_m(\bp) \right)_{m\ge 0}
\quad \text{and} \quad
f^{\delta}(\bp) =  \left( f_m^{\delta}(\bp) \right)_{m\ge 0}.
\]
Since $\bp_{m} = 0$ for all $m>M$ and $(\bp*_{TF}\bp)_{m} = 0$ for all $m > 2M$, then
\begin{align}
\nonumber
\| A \bar f(\bp) \|_{\nu} &= \sum_{j=0}^{2M} |(A \bar f(\bp))_j|_{\nu} 
= \sum_{j=0}^{M} |(A \bar f(\bp))_j|_{\nu} + \sum_{j=M+1}^{2M} |A_{j,j} \bar f_j(\bp)|_{\nu} 
\\
\nonumber
& \le \sum_{j=0}^{M} \left| 
\sum_{m=0}^{M} A_{j,m} \bar f_m \right|_{\nu} + \sum_{m=M+1}^{2M} |A_{m,m} (\bp*_{TF}\bp)_m|_{\nu} 
\\
\nonumber
& \le
\sum_{j=0}^{M} \left( 
\sum_{k \ge 0}
\left|  \sum_{m=0}^{M} (A_{j,m} \bar f_m)_k \right| \omega_k \right) + \sum_{m=M+1}^{2M} 
\sum_{k \ge 0} |(A_{m,m} (\bp*_{TF}\bp)_m)_k| \omega_k 
\\
\nonumber
& = 
\sum_{j=0}^{M} \left( 
\sum_{k = 0}^{2N}
\left|  \sum_{m=0}^{M} (A_{j,m} \bar f_m)_k \right| \omega_k \right) + \sum_{m=M+1}^{2M} 
\sum_{k = 0}^{2N} | \frac{1}{\mu_{k,m}(\tilde \lambda))} (\bp*_{TF}\bp)_{k,m} | \omega_k 
\\
\nonumber
& = 
\sum_{j=0}^{M} \left( 
\sum_{k = 0}^{K}
\left|  \sum_{m=0}^{M} (A^{(K,M)}_{j,m} \bar f^{(K)}_m)_k \right| \omega_k \right) 
+
\sum_{j=0}^{M} \left( 
\sum_{k = K+1}^{2K}
\left|  (A_{j,j} \bar f_j)_k \right| \omega_k \right)
\\
\nonumber
& \quad 
+ \sum_{m=M+1}^{2M} 
\sum_{k = 0}^{2K} | \frac{1}{\mu_{k,m}(\tilde \lambda))} (\bp*_{TF}\bp)_{k,m} | \omega_k 
\\
\nonumber
& = 
\sum_{j=0}^{M} \left( 
\sum_{k = 0}^{K}
\left|  \sum_{m=0}^{M} \left( A^{(K,M)}_{j,m} \bar f^{(K,M)} \right)_{k,m} \right| \omega_k \right) 
+ 
\sum_{k = K+1}^{2K}
\left| (\bar f_0)_k \right| \omega_k 
+ 
\sum_{k = K+1}^{2K}
\left| (\bar f_1)_k \right| \omega_k 
\\
&   + 
\sum_{m=2}^{M}
\sum_{k = K+1}^{2K}
\left| \frac{1}{\mu_{k,m}(\tilde \lambda))} (\bar f_m)_k \right| \omega_k 
+ \hspace{-.2cm}
\sum_{m=M+1}^{2M} 
\sum_{k = 0}^{2K} | \frac{1}{\mu_{k,m}(\tilde \lambda))} (\bp*_{TF}\bp)_{k,m} | \omega_k \bydef Y_0^{(1)},
\label{eq:Y0_1}
\end{align}
which is a finite computation that can be rigorously obtained using that $|\tilde \lambda - \bar \lambda| \le r_0$ and using interval arithmetic. Now,
\begin{align}
\nonumber
\| A f^\delta \|_{\nu} & =  \sum_{j \ge 0} |(A f^\delta)_j|_{\nu} 
= \sum_{j \ge 0} \left| \sum_{m \ge 0} A_{j,m} f_m^\delta \right|_{\nu}
= \sum_{j \ge 0} \left| A_{j,0} f_0^\delta + A_{j,1} f_1^\delta \right|_{\nu}
= \sum_{j = 0}^M \left| A_{j,0} f_0^\delta + A_{j,1} f_1^\delta \right|_{\nu}
\\
\nonumber
& \le \sum_{j = 0}^M \left( \|A_{j,0}\|_{B(\ell_\nu^1)} |f_0^\delta|_{\nu} 
+ \|A_{j,1}\|_{B(\ell_\nu^1)} |f_1^\delta|_{\nu} \right)
 = \sum_{j = 0}^M \left( \|A_{j,0}\|_{B(\ell_\nu^1)} |\delta_a|_{\nu} 
+ \|A_{j,1}\|_{B(\ell_\nu^1)} |\delta_b|_{\nu} \right)
\\
\label{eq:Y0_2}
& \le \left( \sum_{j = 0}^M \left( \|A_{j,0}\|_{B(\ell_\nu^1)}  
+ \|A_{j,1}\|_{B(\ell_\nu^1)} \right) \right) r_0 \bydef Y_0^{(2)}.
\end{align}
Combining \eqref{eq:Y0_1} and \eqref{eq:Y0_2}, we finally set
\begin{equation} \label{eq:Y0_explicit}
Y_0 \bydef Y_0^{(1)} + Y_0^{(2)}.
\end{equation}

\subsection{The \boldmath $Z_0$ \unboldmath bound}

Denote
\[
B \bydef I - A \dagA.
\]
Then, using a similar approach as in the proof of Lemma~\ref{lem:normA}, letting
\begin{equation} \label{eq:Z0_explicit}
Z_0 \bydef 
\max_{m=0,\dots,M} \sum_{j=0}^M \|B_{j,m} \|_{B(\ell_\nu^1)}
\end{equation}
we get that $\|I - A \dagA\|_{B(X^\nu)}=\|B\|_{B(X^\nu)} \le Z_0$.

\subsection{The \boldmath $Z_1$ \unboldmath bound} \label{sec:Z1}


Let $h \in X^{\nu}$ and denote
\[
z = z(h) = \left( Df(\bp)-\dagA \right) h.
\]

Recalling \eqref{eq:f=0_manifold}, as
\[
f_{k,m}(p) = 
\begin{cases}
p_{k,0} - a_k, & m=0, \quad k \ge 0
\\
p_{k,1} - b_k, & m=1, \quad k \ge 0
\\
\left( \tilde \lambda m + i k^2 \omega^2 \right) p_{k,m} - i (p*_{TF}p)_{k,m}, & m \ge 2, \quad k \ge 0,
\end{cases}
\]
then we get that 
\[
z_{k,m} = 
\begin{cases}
0, & m=0, \quad k \ge 0
\\
0, & m=1, \quad k \ge 0
\\
- 2 i (\bp*_{TF} h^{(\infty)})_{k,m}, & m = 2,\dots,M, \quad k = 0,\dots,K
\\
- 2 i (\bp*_{TF} h)_{k,m}, & \text{otherwise}.
\end{cases}
\]
where 
\[
h^{(\infty)} = \left( h_0^{(\infty)},h_1^{(\infty)},\dots,h_M^{(\infty)},h_{M+1},h_{M+2},\dots \right),
\]
where for $m=0,\dots,M$
\[
\left( h^{(\infty)}_{m} \right)_k
\bydef
\begin{cases}
0, & k=0,\dots,K
\\
h_{k,m}, & k > K.
\end{cases}
\]
Now, let us compute component-wise upper bounds for $z_{k,m}$, with $m=0,\dots,M$ and $k = 0,\dots,K$. In this case, using Corollary~\ref{cor:psi_k}, we get that
\begin{align}
\nonumber
|z_{k,m}| 
& \le 2 \left| (\bp*_{TF} h^{(\infty)})_{k,m} \right| 
\\
\nonumber
& \le 2  \sum_{\ell=0}^m  \left| \left( \bp_{\ell} * h_{m-\ell}^{(\infty)} \right)_k \right| 
\\
& \le \hat z_{k,m} \bydef 2  \sum_{\ell=0}^m  \Psi_k (\bp_\ell).
\label{eq:hat z_{k,m}}
\end{align}

Denote $\tilde \lambda = \tilde \lambda_r + i \tilde \lambda_i$ so that
\[
\mu_{k,m}(\tilde \lambda) = \tilde \lambda_r m + i \left(\tilde \lambda_i m + k^2 \omega^2 \right).
\]
Assume that 
\begin{equation} \label{eq:mu_tail_bound_large_K_assumption}
\min_{ m \in \{0,\dots,M\} } \left\{ \tilde \lambda_i m + (K+1)^2 \omega^2 \right\} > 0,
\end{equation}
and in this case define 
\begin{equation} \label{eq:mu_tail_bound_large_K}
\mu^*(\tilde \lambda) \bydef \min_{ m \in \{0,\dots,M\} } \sqrt{ (\tilde \lambda_r m)^2 + \left(\tilde \lambda_i m + (K+1)^2 \omega^2 \right)^2 }.
\end{equation}
Assuming that \eqref{eq:mu_tail_bound_large_K_assumption} holds and since $\tilde \lambda_r > 0$ (it is an unstable eigenvalue), then one can readily verify that
\begin{equation} \label{eq:eig_tail-bound}
|\mu_{k,m}(\tilde \lambda)| \ge 
\begin{cases}
\tilde \lambda_r (M+1), & m>M
\\
\mu^*(\tilde \lambda), & k>K \text{ and } m \in \{0,\dots,M\}.
\end{cases}
\end{equation}
Using inequality \eqref{eq:hat z_{k,m}} and \eqref{eq:eig_tail-bound}, we obtain that
\begin{align}
\nonumber
\| A z \|_\nu & = \sum_{j=0}^\infty | (A z)_j |_\nu  
\\
\nonumber
&= \sum_{j=0}^{M} |(A z)_j|_{\nu} + \sum_{m>M} |A_{m,m} z_m|_{\nu} 
\\
\nonumber
&= \sum_{j=0}^{M} 
\left| \sum_{m=0}^M A_{j,m} z_m \right|_{\nu} 
+ \sum_{m>M} \left| \sum_{k \ge 0} \frac{1}{\mu_{k,m}(\tilde \lambda))} z_{k,m} \right| \omega_k 
\\
\nonumber
&\le \sum_{j=0}^{M} \sum_{k\ge0} \left| \sum_{m=0}^M  (A_{j,m} z_m)_k \right| \omega_k 
+ \frac{1}{\tilde \lambda_r(M+1)} \sum_{m>M} \left| \sum_{k \ge 0} z_{k,m} \right| \omega_k 
\\
\nonumber
&\le \sum_{j=0}^{M} \sum_{k=0}^K  \sum_{m=0}^M  (|A^{(K,M)}_{j,m}| \hat z_m)_k  \omega_k 
+ \sum_{k>K}  \sum_{m=0}^M  |(A_{m,m} z_m)_k|  \omega_k 
 + \frac{2}{\tilde \lambda_r(M+1)} \|  \bp*_{TF} h \|_\nu
\\
\nonumber
&\le \sum_{j=0}^{M} \sum_{k=0}^K  \sum_{m=0}^M  (|A^{(K,M)}_{j,m}| \hat z_m)_k  \omega_k 
+ \sum_{k>K}  \sum_{m=0}^M \frac{1}{|\mu_{k,m}(\tilde \lambda)|} |z_{k,m}|  \omega_k 
 + \frac{2}{\tilde \lambda_r(M+1)} \|  \bp*_{TF} h \|_\nu
 \\
 \nonumber
&\le \sum_{j=0}^{M} \sum_{k=0}^K  \sum_{m=0}^M  (|A^{(K,M)}_{j,m}| \hat z_m)_k  \omega_k 
+ \frac{1}{\mu^*(\tilde \lambda)} \|  \bp*_{TF} h \|_\nu
 + \frac{2}{\tilde \lambda_r(M+1)} \|  \bp*_{TF} h \|_\nu
 \\
&\le \sum_{j,m=0}^{M} \sum_{k=0}^K   (|A^{(K,M)}_{j,m}| \hat z_m)_k  \omega_k 
+ \frac{2\|  \bp \|_\nu}{\mu^*(\tilde \lambda)} 
 + \frac{2 \|  \bp \|_\nu}{\tilde \lambda_r(M+1)}   \bydef Z_1.
 \label{eq:Z1_explicit}
 \end{align}

\subsection{The \boldmath $Z_2$ \unboldmath bound} \label{sec:Z2}

Let $h \in X^{\nu}$ and $c \in B_{r}(\bp)$ and denote
\[
z = z(c,h) = \left( Df(c) - Df(\bp) \right) h.
\]
Recalling \eqref{eq:f=0_manifold}, as
\[
f_{k,m}(p) = 
\begin{cases}
p_{k,0} - a_k, & m=0, \quad k \ge 0
\\
p_{k,1} - b_k, & m=1, \quad k \ge 0
\\
\left( \lambda m + i k^2 \omega^2 \right) p_{k,m} - i (p*_{TF}p)_{k,m}, & m \ge 2, \quad k \ge 0,
\end{cases}
\]
then we get that 
\[
z_{k,m} = 
\begin{cases}
0, & m=0, \quad k \ge 0
\\
0, & m=1, \quad k \ge 0
\\
 - 2 i ((c-\bp)*_{TF}h)_{k,m}, & m \ge 2, \quad k \ge 0.
\end{cases}
\]
Hence,
\[
\|Az\|_{\nu} \le \|A\|_{B(X^{\nu})} \|z\|_{\nu} 
\le \|A\|_{B(X^{\nu})} \left( |2 i| \| c-\bp \|_{\nu} \| h\|_{\nu} \right) \le
2  \|A\|_{B(X^{\nu})} r.
\]
Therefore, we can set 
\begin{equation} \label{eq:Z2_explicit}
Z_2 \bydef 2  \|A\|_{B(X^{\nu})}.
\end{equation}

\subsection{Rigorous computations of unstable sets}

Consider a steady state $\tilde u \in \{u_1^i,u_1^{ii}\}$, where $u_1^i$ and $u_1^{ii}$ are presented in Theorem~\ref{thm:Equilibria} and portrayed in Figure~\ref{fig:equilibria}. Denote by $\ta$ the Fourier coefficients of the steady state $\tilde u$. Fix a Taylor projection $M \ge 2$ and a Fourier projection $K>0$, and using Newton's method compute the corresponding approximate Fourier-Taylor coefficients $\bp \in \C^{(K+1)\times (M+1)}$ of the unstable manifold attached to $\ta$. 

\begin{remark} \label{rem:flexibility_parameters_eigenpairs}
Given a steady state $\ta$ and an eigenpair $(\tilde \lambda,\tb)$ solutions of \eqref{eq:f=0_steady_state} and \eqref{eq:g=0_eigenpair}, respectively, by linearity, $(\tilde \lambda, \alpha e^{i \theta} \tb)$ is also a solution of \eqref{eq:g=0_eigenpair}, for any $\alpha \in \R$ and $\theta \in [0,2\pi)$. The flexibility in the choice of the  angle $\theta$ leads to a one-dimensional parameterized family of (strong) unstable manifolds. In the sequel, we will play with these parameters to generate the manifolds.
\end{remark}

Having obtained explicit and computable formulas for the bounds $Y_0$, $Z_0$, $Z_1$ and $Z_2$ given in \eqref{eq:Y0_explicit}, \eqref{eq:Z0_explicit}, \eqref{eq:Z1_explicit} and \eqref{eq:Z2_explicit}, respectively, and recalling \eqref{eq:general_radii_polynomial}, define the radii polynomial by 
\[
p(r) \bydef Z_2 r^2 - ( 1 - Z_1 - Z_0) r + Y_0.
\]
We wrote a MATLAB program (available at \cite{bib:codes}) which rigorously computes (i.e. controlling the floating point errors) the bounds $Y_0$, $Z_0$, $Z_1$ and $Z_2$. 
The program uses the interval arithmetic library INTLAB available at \cite{Ru99a}.
Fixing $\nu =1$ and using the MATLAB program, verify the existence of $r_p>0$ such that $p(r_p)<0$. By Theorem~\ref{thm:radii_polynomials}, there exists a unique $\tp \in B_{r_p}(\bp)$ such that $f(\tp) = 0$. By construction, the resulting function $\tilde P:\mathbb{D} \to \ell_\nu^1$ given by 
\[
\tilde P(\sigma) \bydef \sum_{m \ge 0} \tp_m \sigma^m
\]
parameterizes the unstable manifold {\em tangent} to $\tb$. Moreover, for each $\sigma \in \mathbb{D}$, $\tilde P(\sigma) \in W^u(\ta)$. Denoting by 
\[
\bar P(\sigma) \bydef \sum_{m \ge 0}^M \bp_m \sigma^m
\]
the finite dimensional approximation, we get the rigorous $C^0$ error bound
\[
\sup_{\sigma \in \mathbb{D}} | \tilde P(\sigma) - \bar P(\sigma)  |_\nu 
= \sup_{\sigma \in \mathbb{D}} \left| \sum_{m \ge 0} \tp_m \sigma^m - \sum_{m \ge 0} \bp_m \sigma^m  \right|_\nu \le \sup_{\sigma \in \mathbb{D}} \sum_{m \ge 0} |\tp_m - \bp_m|_\nu |\sigma|^m \le \| \tp - \bp \|_\nu \le r_p.
\]
We applied this approach to obtain the parameterizations of the local unstable manifolds attached to $u_1^i$, $(u_1^i)^*$, $u_1^{ii}$ and $(u_1^{ii})^*$. For each of the equilibria, we use the flexibility of the angle $\theta$ and the scaling $\alpha$ as described in Remark~\ref{rem:flexibility_parameters_eigenpairs}. We portray these parameterizations in Figure~\ref{fig:unstable_manifolds}. 

\begin{figure}[h!]
	\centering
	\includegraphics[width = .48 \textwidth]{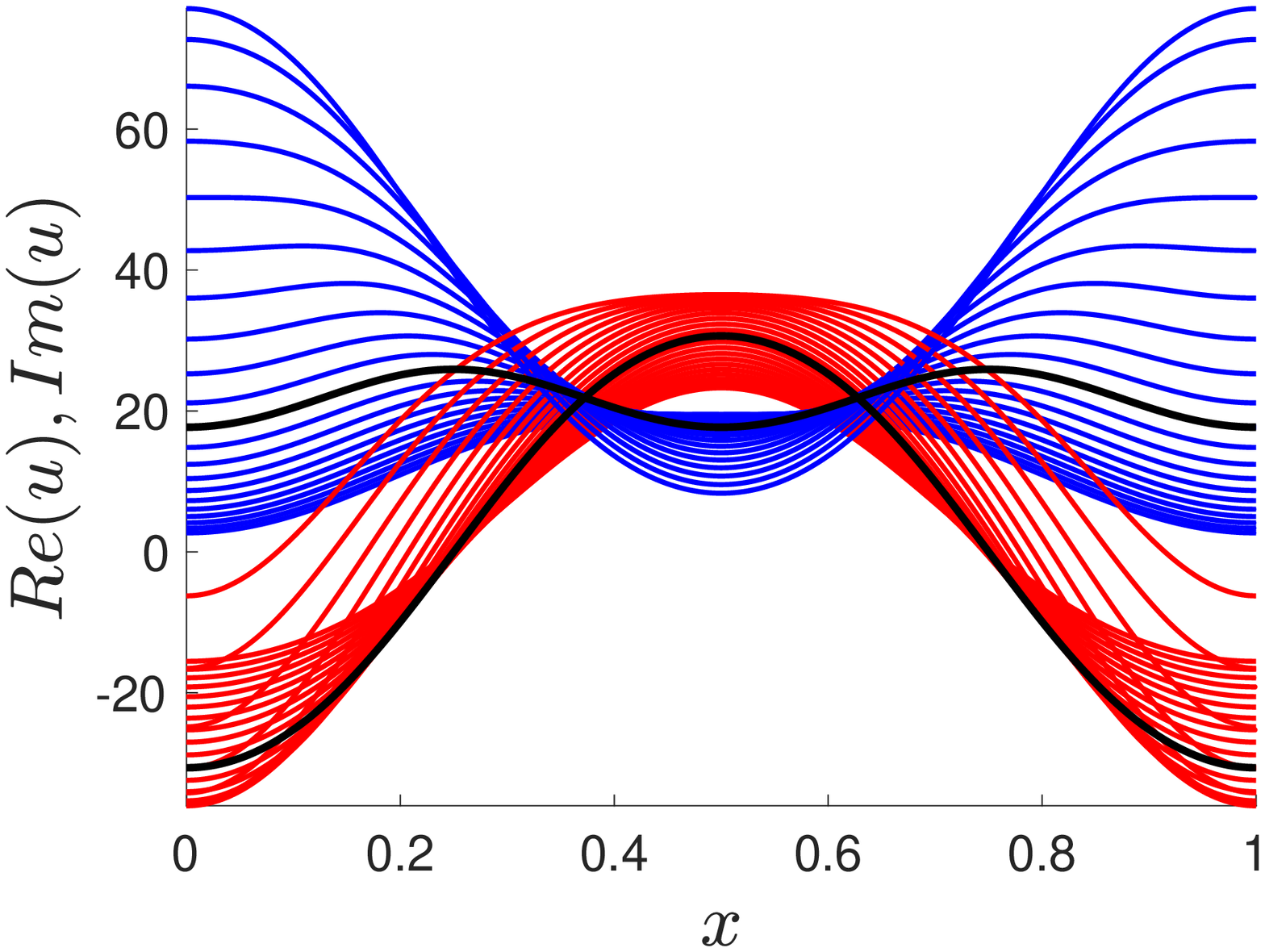}
	\includegraphics[width = .48 \textwidth]{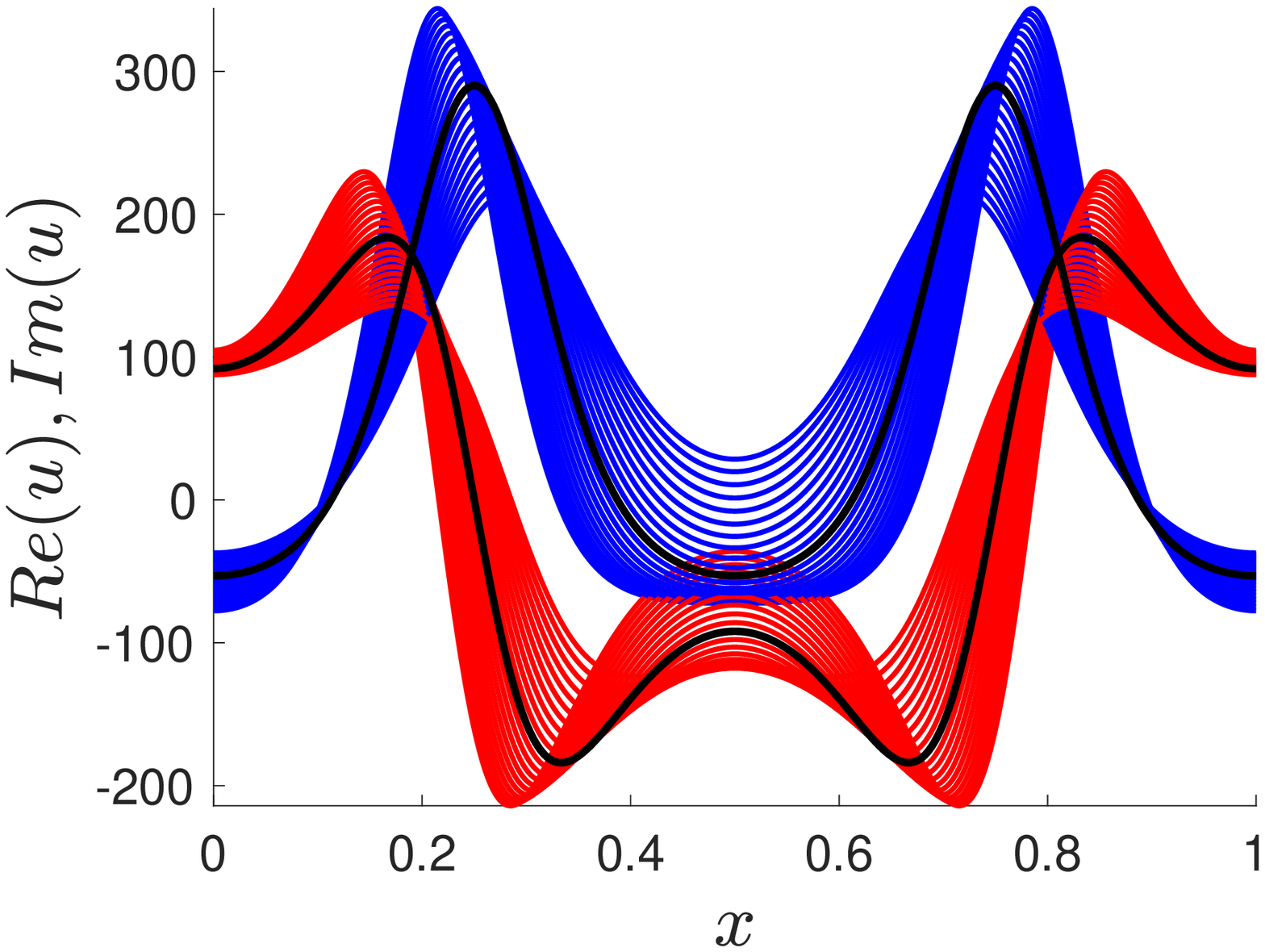}
	\includegraphics[width = .48 \textwidth]{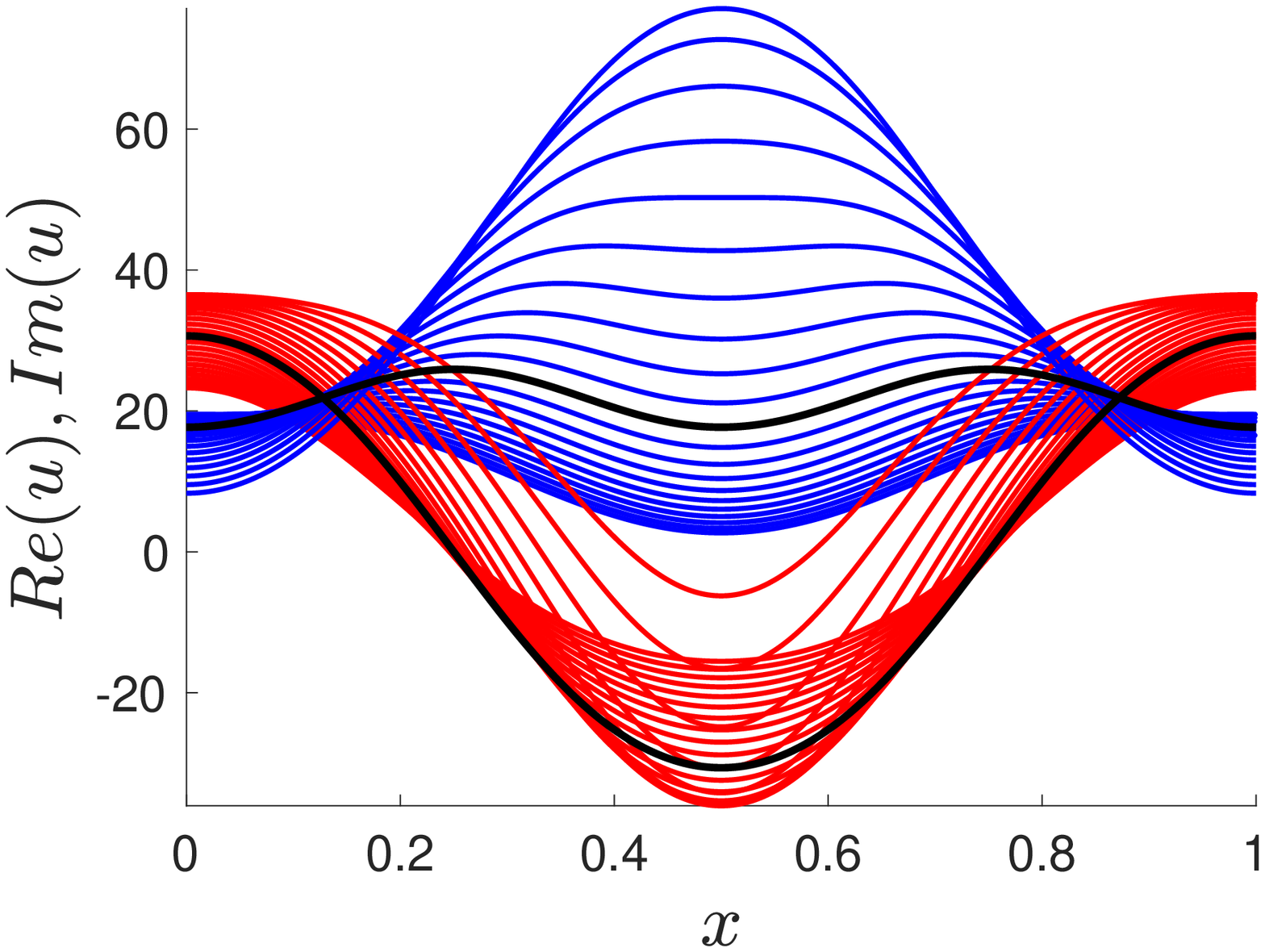}
	\includegraphics[width = .48 \textwidth]{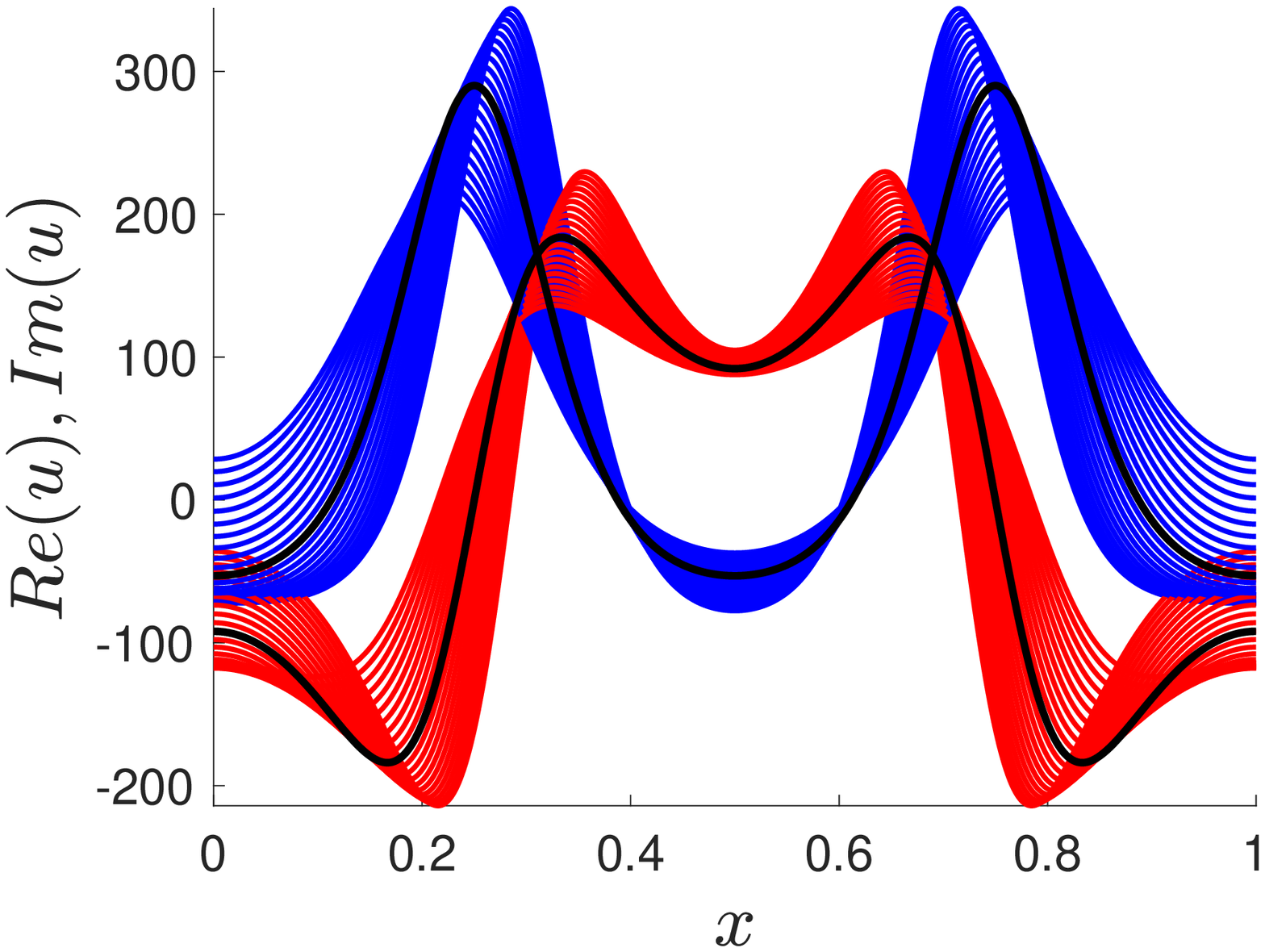}
	\caption{Profiles of the unstable manifolds of the four equilibria: $u^i_1(x)$ (top left), $u^{ii}_1(x)$ (top right), $(u^i_1(x))^*$ (bottom left), $(u^{ii}_1(x))^*$ (bottom right) the existence of which are established via CAPs. The Fourier projections are taken to be $K=27$ for $u^i_1$ and $(u^i_1(x))^*$, and $K=60$ for $u^{ii}_1$ and $(u^{ii}_1)^*$. The Taylor projections are taken to be $M=150$ for $u^i_1$ and $(u^i_1(x))^*$, and $M=60$ for $u^{ii}_1$ and $(u^{ii}_1)^*$. For the scaling of the eigenvectors $\tb$ we fixed to be $\|\tb\|_2 = 20$ for $u^i_1$ and $(u^i_1(x))^*$, and $\|\tb\|_2 = 75$ for $u^{ii}_1$ and $(u^{ii}_1)^*$.}\label{fig:unstable_manifolds}
\end{figure}

\begin{remark}[\bf Analyticity of the solutions]\label{rem:analyticity}
It is important to realize that all bounds $Y_0$, $Z_0$, $Z_1$ and $Z_2$ given in \eqref{eq:Y0_explicit}, \eqref{eq:Z0_explicit}, \eqref{eq:Z1_explicit} and \eqref{eq:Z2_explicit}, respectively, depend continuously on the parameter $\nu$. Hence, the coefficients of the radii polynomial $p(r) = p(r,\nu)$ given in \eqref{eq:general_radii_polynomial} depend continuously on $\nu \ge 1$. In the computer-assisted proofs, we fixed $\nu = 1$ and verified the existence of $r_p>0$ such that $p(r_p,\nu)<0$. By continuity, there exists $\tilde \nu>1$ such that $p(r_p,\tilde \nu)<0$, and hence, the unique solution of $f=0$ satisfies $\tp \in X^{\tilde \nu}$. This in turn implies that for each $\sigma \in \mathbb{D}$, $\tilde P(\sigma) \in \ell_{\tilde \nu}^1$ and therefore the corresponding points on the unstable manifold are analytic in space. 
\end{remark}

%
%
%
%

\section{Rigorous integrator for nonlinear Schr\"odinger equation}
\label{sec:Integrator}

As in the previous section, let $d = 1$ for the dimension of space.
In this section we provide a rigorous numerical integrator for the nonlinear Schr\"odinger equation \eqref{eq:NLS_Quad} forward in time.
This integrator will propagate the endpoint of unstable manifold that is rigorously computed by the parameterization method introduced in Section \ref{sec:UnstableManifold}.
In \cite{takayasu2019rigorous}, the authors with H.~Okamoto have introduced a rigorous integrator for a complex valued nonlinear heat equation
\[
u_t = \expig \left(u_{xx} + u^2\right),\quad \theta\in \left(-\frac{\pi}{2},\frac{\pi}{2}\right).
\]
The case of $\theta = \pi/2$ was not treated in the previous work. Here we extend the integrator in the case of $\theta = \pi/2$, that is nonlinear Schr\"odinger equation \eqref{eq:NLS_Quad}. We also show some modifications to improve the accuracy of rigorous integration of the solutions.

We take the initial data of \eqref{eq:NLS_Quad} by the Fourier series
\[
u(x,0)=\sum_{k \in \Z}\varphi_k e^{ik\omega x},\quad \varphi=(\varphi)_{k\in\mathbb{Z}}\in \ell_{\nu,1}^1.
\]
We consider the Cauchy problem of \eqref{eq:NLS_Quad} under the above initial condition.
Expanding the unknown function $u(t,x)$ by the Fourier series, as defined in \eqref{eq:Fourier_expansion}, we have the infinite system of nonlinear ODEs \eqref{eq:CGL_ODEs} with the initial data $a(0)=\varphi$.
For a fixed time $h>0$, let $J\bydef[0,h]$ be a time step.
We numerically validate the local existence of the solution in the following Banach space:
\[
X\bydef C(J;\ell_{\nu,1}^1),\quad \|a\|_X\bydef\sup_{t\in J}\|a(t)\|,\quad\|a(t)\|=\sum_{k \in \Z}|a_k(t)|\nu^{|k|}~\mbox{for a fixed}~t\in J.
\]
More precisely, letting 
$\bar{a}(t) \bydef\left(\ldots, 0,0, \bar{a}_{-K}(t), \ldots, \bar{a}_{K}(t), 0,0, \ldots\right)$
be an approximation of $a(t)$ with the Fourier projection $K>0$,
we will rigorously include the Fourier coefficients in the neighborhood of numerical solution defined by
\[
B_{J}(\bar{a}, \varrho) \bydef\left\{a \in X:\|a-\bar{a}\|_{X} \leq \varrho,~a(0)=\varphi\right\}.
\]

Let us define the \emph{Laplacian} operator $L$ acting on a bi-infinite sequence $b=(b_k)_{k\in\mathbb{Z}}$ as
\[
Lb\bydef\left(-k^2\omega^2b_k\right)_{k\in\mathbb{Z}}.
\]
and let $D(L)\subset\ell^1_{\nu,1}$ denote the domain of the operator $L$.
Defining an operator for $a\in C^1(J;D(L))$ as
\begin{align}\label{eq:def_F}
(F(a))(t)\bydef\dot{a}(t)-i\left(La(t)+a^2\left(t\right)\right),
\end{align}
we consider the nonlinear ODEs \eqref{eq:CGL_ODEs} as the zero-finding problem $F(a)=0$.
Hence, if $F(a)=0$ with the initial condition $a(0)=\varphi$ holds, such $a(t)$ solves the Cauchy problem \eqref{eq:CGL_ODEs}.
Let us define an operator $T:X\to X$ as
\begin{align}\label{eq:simp_Newton_op}
(T(a))(t)\bydef U(t,0)\varphi+i\int_0^tU(t,s)\left(a^2(s)-2\bar{a}(s)*a(s)\right)ds,\quad t\in J,
\end{align}
where $\left\{U(t,s)\right\}_{0\le s\le t\le h}$ is the evolution operator on the Banach space $\ell^1_{\nu,1}$ defined by a solution map of the following Cauchy problem:
\begin{equation}\label{eq:linearized_problem}
\dot{b}_k(t)+ik^2\omega^2b_k(t)-2i\left(\bar{a}\left(t\right)*b(t)\right)_k=0\quad(k\in\mathbb{Z})
\end{equation}
with any initial data $b(s)=\psi\in\ell^1_\nu$ ($0\le s\le t$). The evolution operator provides the solution of \eqref{eq:linearized_problem} via the relation $b(t)=U(t,s)\psi$.
To validate the existence of the evolution operator, in \cite[Theorem 3.2]{takayasu2019rigorous}, a hypothesis is shown for providing a uniform bound of the evolution operator over the simplex $\cS_h\bydef \{(t,s):0\le s\le t\le h\}$.
More precisely, a computable constant $\bm{W_h}>0$ satisfying
\begin{equation}\label{eq:W_h_constant}
\|b\|_{X}=\sup_{(t,s)\in\cS_h}\|U(t,s)\psi\|\le \bm{W_h}\|\psi\|,\quad\forall\psi\in\ell^1_{\nu,1}
\end{equation}
is obtained by decomposing the solution $b$ by the finite mode $b^{(K')}=\left(b_{k}\right)_{|k|\le K'}$ and the tail  $b^{(\infty)}=\left(b_{k}\right)_{|k|>K'}$ for $K'\in\mathbb{N}$ satisfying $K>K'$.

The local existence of the solution can be validated by the following theorem:

\begin{theorem}[\!\!{\cite[Theorem 4.1]{takayasu2019rigorous}}]\label{thm:local_inclusion}
	Given the approximate solution $\ba$ of \eqref{eq:NLS_Quad} and the initial sequence $\varphi$, assume that $\|\varphi-\ba(0)\|\le\varepsilon$ holds for $\varepsilon\ge 0$.
	Assume also that $\ba\in C^1(J;D(L))$ and any $a\in B_J\left(\ba,\varrho\right)$ satisfies
	$\left\| T(a)-\ba\right\|_X\le f_{\varepsilon}\left(\varrho\right)$,
	where 
	$f_{\varepsilon}\left(\varrho\right)\bydef \bm{W_h}\left[\varepsilon+h\left(2\varrho^2+\delta\right)\right]$.
	Here, $\bm{W_h}>0$ and $\delta> 0$ satisfy
	$\sup_{(t,s)\in\mathcal{S}_h}\left\|U(t,s)\right\|_{B (\ell^1_{\nu,1})}\le \bm{W_h}$ and
	$\left\|F(\ba)\right\|_{X}\le\delta$, respectively.
	If
	$f_{\varepsilon}\left(\varrho\right)\le\varrho$
	holds, then the Fourier coefficients $\ta$ of the solution of \eqref{eq:NLS_Quad} are rigorously included in $B_J\left(\ba,\varrho\right)$ and are unique in $B_J\left(\ba,\varrho\right)$.
\end{theorem}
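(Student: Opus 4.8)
The plan is to read the operator $T$ of \eqref{eq:simp_Newton_op} as a Duhamel/Picard iteration for \eqref{eq:NLS_Quad} relative to the reference profile $\ba$, to verify that under the stated hypotheses $T$ is a contraction on the closed set $B_J(\ba,\varrho)\subset X$, and to identify its unique fixed point with the solution of the Cauchy problem. The Banach fixed point theorem then does the work, and uniqueness of the solution inside $B_J(\ba,\varrho)$ is read off from the converse direction of the Duhamel correspondence.

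First I would check that $B_J(\ba,\varrho)$ is a nonempty closed subset of the Banach space $X=C(J;\ell^1_{\nu,1})$ on which $T$ acts into itself. Closedness is clear, since both the norm constraint $\|a-\ba\|_X\le\varrho$ and the initial-value constraint $a(0)=\varphi$ are closed conditions in $X$. For any $a\in B_J(\ba,\varrho)$ one has $T(a)(0)=U(0,0)\varphi=\varphi$ because $U(s,s)=I$, so $T(a)$ again satisfies the initial-value constraint, while the standing hypothesis gives $\|T(a)-\ba\|_X\le f_\varepsilon(\varrho)\le\varrho$; hence $T(a)\in B_J(\ba,\varrho)$. Nonemptiness follows because $U(0,0)=I$ forces $\bm{W_h}\ge 1$, so $\varepsilon\le f_\varepsilon(\varrho)\le\varrho$ and the time-constant perturbation $t\mapsto\ba(t)+(\varphi-\ba(0))$ already lies in the set. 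For the contraction estimate, take $a_1,a_2\in B_J(\ba,\varrho)$; the algebraic identity
\[
\bigl(a_1^2-2\ba* a_1\bigr)-\bigl(a_2^2-2\ba* a_2\bigr)=(a_1+a_2-2\ba)*(a_1-a_2),
\]
combined with the Banach-algebra bound $\|b*c\|\le\|b\|\,\|c\|$ on $\ell^1_{\nu,1}$, the uniform estimate $\|U(t,s)\|_{B(\ell^1_{\nu,1})}\le\bm{W_h}$ on $\cS_h$, and $\|a_1+a_2-2\ba\|\le 2\varrho$, yields
\[
\|T(a_1)(t)-T(a_2)(t)\|\le\int_0^t \bm{W_h}\,(2\varrho)\,\|a_1(s)-a_2(s)\|\,ds\le 2\bm{W_h}\varrho h\,\|a_1-a_2\|_X .
\]
The crucial observation is that $f_\varepsilon(\varrho)=\bm{W_h}\bigl[\varepsilon+h(2\varrho^2+\delta)\bigr]\le\varrho$ together with $\varepsilon\ge0$ and $\delta>0$ forces $2\bm{W_h}\varrho h<1$ strictly, so $T$ is a genuine contraction and the Banach fixed point theorem produces a unique $\ta\in B_J(\ba,\varrho)$ with $T(\ta)=\ta$.

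It then remains to identify $\ta$ with the solution of \eqref{eq:NLS_Quad}. The initial condition $\ta(0)=\varphi$ is immediate. Differentiating the fixed-point relation $\ta=T(\ta)$ in $t$ and using that the propagator solves the linearized problem \eqref{eq:linearized_problem}, i.e. $\partial_t U(t,s)\psi=i\bigl(LU(t,s)\psi+2\ba(t)* U(t,s)\psi\bigr)$ with $U(t,t)=I$, the linear contributions produced by differentiating the two propagators recombine with the $-2\ba*\ta$ term and cancel, leaving precisely $\dot{\ta}=i(L\ta+\ta^2)$, that is $F(\ta)=0$ in the sense of \eqref{eq:def_F}. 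Conversely, the variation-of-constants formula for \eqref{eq:linearized_problem} shows that any solution of the Cauchy problem lying in $B_J(\ba,\varrho)$ is a fixed point of $T$, hence coincides with $\ta$; this is the asserted uniqueness in $B_J(\ba,\varrho)$.

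The main obstacle is precisely this last identification: justifying differentiation under the integral sign and confirming that $\ta$ is an honest classical solution in $C^1(J;D(L))$. Because $L$ is unbounded and the Schr\"odinger-type flow generated by \eqref{eq:linearized_problem} enjoys no parabolic smoothing, one must construct the evolution operator $U(t,s)$ carefully on $\ell^1_{\nu,1}$ — its existence and the computable constant $\bm{W_h}$ coming from the splitting of the linearized solution into the finite block $|k|\le K'$ and its tail, as in \cite[Theorem~3.2]{takayasu2019rigorous} — and exploit the hypothesis $\ba\in C^1(J;D(L))$ to transfer enough regularity to the Duhamel term. This functional-analytic bookkeeping is exactly the content of \cite[Theorem~4.1]{takayasu2019rigorous}, which we invoke here.
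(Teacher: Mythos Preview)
Your proposal is correct and follows the same approach the paper indicates: the paper states only that ``the proof is based on Banach's fixed-point theorem for the operator $T$,'' and your argument carries this out faithfully --- self-mapping from the stated bound $\|T(a)-\ba\|_X\le f_\varepsilon(\varrho)\le\varrho$, contraction via the factorization $(a_1+a_2-2\ba)*(a_1-a_2)$ with constant $2\bm{W_h}h\varrho<1$, and identification of the fixed point with the mild/classical solution by differentiating the Duhamel formula. The one cosmetic difference is that the paper (in its proof of the closely related Theorem~\ref{thm:modified_thm}) actually \emph{derives} the self-mapping bound from the identity $T(a)-\ba=U(t,0)(\varphi-\ba(0))+\int_0^tU(t,s)\bigl[i(a-\ba)^2-F(\ba)\bigr]ds$ rather than reading it off as a hypothesis, but since the theorem statement literally lists it under ``assume,'' your reading is legitimate.
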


The proof is based on Banach's fixed-point theorem for the operator $T$ defined in \eqref{eq:simp_Newton_op}.

\begin{remark}
	The case of $\theta=\pi/2$ is not treated in \cite[Theorem 4.1]{takayasu2019rigorous} but this theorem can be extended naturally to that case. The most different part is the constant $\bm{W_h}$ always becomes greater than $1$. This is due to the lack of smoothing effect of the evolution operator (cf., e.g.,\cite{pazy1983semigroups}), which was previously generated by the analytic semigroup. This also implies that the error estimate $\varepsilon$ in the past time step is monotonically increasing and makes it difficult to rigorously integrate over a long period of time. Moreover, the smoothness of the solution depends only on that of initial data. In other words, smoothness of the initial data propagates along the time evolution.
\end{remark}

To prove the existence of heteroclinic orbits from the nonhomogeneous equilibria to the zero equilibrium, rigorous integrator using Theorem \ref{thm:local_inclusion} cannot succeed to connect the end point of unstable manifold to the stable region obtained by validating the hypothesis of Theorem \ref{thm:HomoclinicBlowup}. We therefore introduce some modifications to improve the accuracy of rigorous integrator. 

\subsection{Two improved validating theorems}
The main idea is that we separate the hypothesis of Theorem \ref{thm:local_inclusion} into the zero-mode and other modes. To show more precisely, let us define a finite dimensional (Fourier) projection $\varPi^{(0)}:\ell^1_{\nu,1} \to \ell^1_{\nu,1}$ for a given vector $\phi =\left(\phi_{k}\right)_{k \in \Z } \in \ell^1_{\nu,1}$ as
\[
\left( \varPi^{(0)} \phi \right)_k = \begin{cases} \phi_0, & k=0 \\ 0, & k\neq 0, \end{cases}
\]
for $k \in \Z$. Given $\phi \in \ell^1_{\nu,1}$, we denote $\phi^{(0)} \bydef \varPi^{(0)} \phi \in \ell^1_{\nu,1}$ and $\phi^{(\infty)} \bydef \left( \mathrm{Id} - \varPi^{(0)} \right) \phi \in \ell^1_{\nu,1}$, where $\mathrm{Id}$ is the identity operator on $\ell^1_{\nu,1}$. Thus, $\phi$ is represented by $\phi=\phi^{(0)}+\phi^{(\infty)}$. From this fact, we split the neighborhood $B_J\left(\ba,\varrho\right)$ into the following two parts:
\begin{align}
&B_{J}^{(0)}(\ba,\varrho_0)\bydef\left\{a^{(0)}\in X: \left\|a^{(0)}-\ba^{(0)}\right\|_X\le \varrho_0,~a^{(0)}(0)=\varphi^{(0)}\right\},\label{eq:B0}\\
&B_{J}^{(\infty)}(\ba,\varrho_\infty)\bydef\left\{a^{(\infty)}\in X  : \left\|a^{(\infty)}-\ba^{(\infty)}\right\|_X\le \varrho_\infty,~a^{(\infty)}(0)=\varphi^{(\infty)}\right\}.\label{eq:Binfty}
\end{align}
Using the neighborhood $B_{J}^{(0)}(\ba,\varrho_0)\times B_{J}^{(\infty)}(\ba,\varrho_\infty)$ instead of $B_J\left(\ba,\varrho\right)$, we will have a sharper estimate for the local existence result.

Our first goal here is to show how we numerically prove the existence of the evolution operator $U(t,s)$ arising from \eqref{eq:linearized_problem}, given a step size $h>0$, using the above splitting formulation of $\ell^1_{\nu,1}$. This is equivalent to compute a constant $\bm{W_h}$ satisfying \eqref{eq:W_h_constant}. We therefore give an analogue of the discussion in \cite[Section 3]{takayasu2019rigorous}, that is, we separate \eqref{eq:linearized_problem} by considering yet another Cauchy problem with respect to the sequence $c(t)=(c_k(t))_{k\in \Z }$
\begin{align}
\label{eq:linearizedeq_finite}
&\dot{c}_0(t)-2i\ba_0\left(t\right)c_0(t) =0\\[1mm]
\label{eq:linearizedeq_infinite}
&\dot{c}_k(t)+ik^2\omega^2c_k(t)-2i\left(\ba\left(t\right)*c^{(\infty)}(t)\right)_k =0,\quad(k\neq 0).
\end{align}
with initial data $c_0(s)=\psi_0$ and $c_k(s)=\psi_k$ ($k\neq 0$).
This decoupled formulation, while not being equivalent to \eqref{eq:linearized_problem}, is used to control the evolution operator associated to \eqref{eq:linearized_problem}. Denote by $C^{(0)}(t,s)$ and $C^{(\infty)}(t,s)$ the evolution operators of the finite dimensional equation \eqref{eq:linearizedeq_finite} and the infinite dimensional equation \eqref{eq:linearizedeq_infinite}, respectively. We extend the action of the operator $C^{(0)}(t,s)$ (resp. $C^{(\infty)}(t,s)$) on $\ell^1_{\nu,1}$ by introducing the operator $\bar U^{(0)}(t,s)$ (resp. $\bar U^{(\infty)}(t,s)$) as follows. Given $\phi \in \ell^1_{\nu,1}$, define $\bar U^{(0)}(t,s):\ell^1_{\nu,1}\to \ell^1_{\nu,1}$ and $\bar U^{(\infty)}(t,s):\ell^1_{\nu,1} \to \ell^1_{\nu,1}$ element-wisely as
\begin{align}
\label{eq:bU_m_definition}
\left( \bar U^{(0)}(t,s) \phi \right)_k &\bydef \begin{cases} C^{(0)}(t,s) \phi_0, & k = 0\\ 0, & k\neq 0 \end{cases}
\\
\label{eq:bU_infty_definition}
\left( \bar U^{(\infty)}(t,s) \phi \right)_k &\bydef \begin{cases} 0 , & k=0 \\ \left( C^{(\infty)}(t,s)  \phi^{(\infty)} \right)_k, & k\neq 0.   \end{cases}
\end{align}
We also define the dual space of $\ell^1_{\nu,1}$, say $\ell^\infty_{\nu^{-1}}$, as
\[
\ell^\infty_{\nu^{-1}}\bydef\left\{c=(c_k)_{k\in\Z}:\|c\|_{\infty,\nu^{-1}}\bydef \sup_{k\in \Z}|c_k|\nu^{-|k|}<\infty\right\}.
\]
and
$X_{\nu^{-1}}\bydef C(J;\ell^\infty_{\nu^{-1}})$ with the norm $\|a\|_{X_{\nu^{-1}}}\bydef\sup_{t\in J}\|a(t)\|_{\infty,\nu^{-1}}$.

\begin{theorem}\label{thm:sol_map}
	Let $(t,s)\in \mathcal{S}_h$ and $\ba$ an approximate solution of \eqref{eq:CGL_ODEs}.
	Assume that there exists a constant $W_0>0$ such that
	\begin{equation} \label{eq:bound_W_0}
	\sup_{(t,s)\in \mathcal{S}_h}\left\| \bU^{(0)}(t,s)\right\|_{B(\ell^1_{\nu,1})}\le W_0.
	\end{equation}
	Assume that $C^{(\infty)}(t,s)$ exists and that $\bU^{(\infty)}(t,s)$ defined in \eqref{eq:bU_infty_definition} satisfies
	\begin{equation} \label{eq:assumption_existence_U_infty}
	\left\| \bU^{(\infty)}(t,s)\right\|_{B (\ell^1_{\nu,1})}\le W^{(\infty)}(t,s)\bydef e^{2\int_{s}^{t}\|\ba(\tau)\| d\tau}.
	\end{equation}
	Define the constants $W_{\infty}\ge0$, $\bar{W}_{\infty}\ge0$,  $W_{\infty}^{\sup}>0$ by
	\begin{align}
	\label{eq:def_W_infty}
	W_\infty &\bydef \frac{e^{2h\|\ba\|_{X}}-1}{2\|\ba\|_{X}}\\[1mm]
	\label{eq:def_barW_infty}
	\bar{W}_{\infty} & \bydef
	\frac{W_\infty-h }{2\|\ba\|_{X}}\\[1mm]
	\label{eq:def_W_infty_sup}
	W_{\infty}^{\sup} & \bydef 	e^{2h\|\ba\|_X},
	\end{align}
	respectively.
	Define $\ba^{(\infty)}(t)\bydef (\mathrm{Id}-\varPi^{(0)})\ba(t) \in \ell^1_{\nu,1}$ for each $t\in J$.
	We also define $U^{(0)}(t,s)\bydef \varPi^{(0)}U(t,s):\ell^1_{\nu,1}\to\ell^1_{\nu,1}$ and $U^{(\infty)}(t,s)\bydef \left( {\rm Id} - \varPi^{(0)} \right) U(t,s):\ell^1_{\nu,1}\to\ell^1_{\nu,1}$ for $(t,s)\in\cS_h$.
	If 
	\begin{equation} \label{eq:kappa_condition}
	\kappa\bydef 1-4W_0\bar{W}_\infty\|\ba^{(\infty)} \|_X\|\ba^{(\infty)}\|_{X_{\nu^{-1}}}>0,
	\end{equation}
	then the evolution operator $U(t,s)$ exists and the following estimate holds 
	\begin{align}\label{eq:U_estimate}
	\sup_{(t,s)\in\cS_h}\begin{pmatrix}
	\left\|U^{(0)}(t,s)\phi\right\| \\[2mm]
	\left\|U^{(\infty)}(t,s)\phi\right\| 
	\end{pmatrix}
	\le\bm{U_h}\begin{pmatrix}
	\left\|\phi^{(0)}\right\| \\[2mm]
	\left\|\phi^{(\infty)}\right\| 
	\end{pmatrix}
	\end{align}
	for any $\phi=\phi^{(0)}+\phi^{(\infty)}\in\ell^1_{\nu,1}$, where
	\begin{equation} \label{eq:definition_of_U_h}
	\bm{U_h} \bydef\begin{pmatrix}
	W_0\kappa^{-1} & 2W_0W_\infty\|\ba^{(\infty)}\|_{X_{\nu^{-1}}}\kappa^{-1}\\
	2W_0W_\infty\|\ba^{(\infty)}\|_{X}\kappa^{-1} & W_{\infty}^{\sup}\kappa^{-1}
	\end{pmatrix}.
	\end{equation}
	Here, the inequality for a vector means each inequality holds element-wisely.
\end{theorem}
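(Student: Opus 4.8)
The plan is to view the coupled linear problem \eqref{eq:linearized_problem} as a Duhamel perturbation of the decoupled pair \eqref{eq:linearizedeq_finite}--\eqref{eq:linearizedeq_infinite} and to reconstruct $U(t,s)$ by a contraction argument whose contraction ratio turns out to be exactly $1-\kappa$. First I would observe that if $b=b^{(0)}+b^{(\infty)}$ solves \eqref{eq:linearized_problem} with $b(s)=\phi=\phi^{(0)}+\phi^{(\infty)}$, then, after splitting off the zero mode, $b$ differs from a solution of the decoupled system only by the two off--diagonal coupling terms $2i\,\varPi^{(0)}(\ba*b^{(\infty)})$ (which feeds the zero mode) and $2i\,(\mathrm{Id}-\varPi^{(0)})(\ba*b^{(0)})$ (which feeds the tail). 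Variation of constants against the decoupled evolution operators $\bU^{(0)},\bU^{(\infty)}$ then produces the coupled integral system
\begin{align*}
b^{(0)}(t) &= \bU^{(0)}(t,s)\phi^{(0)} + 2i\int_s^t \bU^{(0)}(t,\tau)\,\varPi^{(0)}\bigl(\ba(\tau)*b^{(\infty)}(\tau)\bigr)\,d\tau,\\
b^{(\infty)}(t) &= \bU^{(\infty)}(t,s)\phi^{(\infty)} + 2i\int_s^t \bU^{(\infty)}(t,\tau)\,(\mathrm{Id}-\varPi^{(0)})\bigl(\ba(\tau)*b^{(0)}(\tau)\bigr)\,d\tau,
\end{align*}
which is equivalent to \eqref{eq:linearized_problem} granted the assumed existence of $C^{(\infty)}$ (the scalar operator $C^{(0)}$ always exists).

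For the quantitative part I would use the convolution bounds supplied by the dual pairing of Lemma~\ref{l:dualbound}: since $\varPi^{(0)}(\ba*v)$ reduces to $\sum_{k'\neq 0}\ba_{k'}v_{-k'}$ when $v$ has no zero mode, one has $\|\varPi^{(0)}(\ba(\tau)*v)\|\le \|\ba^{(\infty)}\|_{X_{\nu^{-1}}}\|v\|$, whereas $\|(\mathrm{Id}-\varPi^{(0)})(\ba(\tau)*w)\|\le \|\ba^{(\infty)}\|_X\,\|w\|$ when $w$ has only a zero mode. To bound $U^{(0)}(t,s)$ I would substitute the tail equation into the zero--mode equation, obtaining a closed fixed--point equation $b^{(0)}=g_0+\cL\,b^{(0)}$ on $C([s,h];\ell^1_{\nu,1})$. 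Using $\|\bU^{(0)}(t,\tau)\|\le W_0$, $\|\bU^{(\infty)}(\tau,\sigma)\|\le e^{2\int_\sigma^\tau\|\ba(r)\|\,dr}=:W^{(\infty)}(\tau,\sigma)$, the convolution bounds above, and the nested--integral estimate $\int_s^t\int_s^\tau W^{(\infty)}(\tau,\sigma)\,d\sigma\,d\tau\le \bar W_\infty$, one gets $\|\cL\|_{B(X)}\le 4W_0\bar W_\infty\|\ba^{(\infty)}\|_X\|\ba^{(\infty)}\|_{X_{\nu^{-1}}}=1-\kappa<1$; hence $(\mathrm{Id}-\cL)^{-1}$ exists by the Neumann series with norm $\le\kappa^{-1}$, and the bound $\|g_0(t)\|\le W_0\|\phi^{(0)}\|+2W_0W_\infty\|\ba^{(\infty)}\|_{X_{\nu^{-1}}}\|\phi^{(\infty)}\|$ (which uses $\int_s^t W^{(\infty)}(\tau,s)\,d\tau\le W_\infty$) yields precisely the first row of $\bm{U_h}$. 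Symmetrically --- and this is the essential point --- to bound $U^{(\infty)}(t,s)$ one should substitute the \emph{zero--mode} equation into the \emph{tail} equation; the resulting operator $\cL''$ is again a contraction of ratio $1-\kappa$, now by the companion estimate $\int_s^t W^{(\infty)}(t,\tau)(\tau-s)\,d\tau\le \bar W_\infty$, and its source obeys $\|g_\infty(t)\|\le W_\infty^{\sup}\|\phi^{(\infty)}\|+2W_0W_\infty\|\ba^{(\infty)}\|_X\|\phi^{(0)}\|$, which gives the second row of $\bm{U_h}$.

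Finally I would set $U(t,s)\phi:=b(t)=b^{(0)}(t)+b^{(\infty)}(t)$, where $b^{(0)}$ is the unique fixed point of the first closed equation and $b^{(\infty)}$ is then defined by the tail integral equation (by uniqueness this is the same $b^{(\infty)}$ that solves the second closed equation, so both rows of $\bm{U_h}$ apply to it). Linearity in $\phi$ is immediate; the estimate \eqref{eq:U_estimate} with matrix \eqref{eq:definition_of_U_h} is exactly what was derived above, and it is uniform over $\cS_h$ because all of $W_0,W_\infty,\bar W_\infty,W_\infty^{\sup},\kappa$ are; and differentiating the Duhamel identities, using that $\bU^{(0)},\bU^{(\infty)}$ generate the decoupled flows, shows that $b$ solves \eqref{eq:linearized_problem}. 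Uniqueness of solutions of \eqref{eq:linearized_problem} then yields the cocycle identity $U(t,r)U(r,s)=U(t,s)$, so $U(t,s)$ is the sought evolution operator.

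The main obstacle is not a single estimate but the bookkeeping of constants. One must resist bounding both output blocks through one reformulation: substituting only the tail equation into the zero--mode equation and then feeding the resulting norm bound for $b^{(0)}$ back into the tail equation produces a strictly worse $(2,2)$ entry, of the form $W_\infty^{\sup}+4W_0W_\infty^2\|\ba^{(\infty)}\|_X\|\ba^{(\infty)}\|_{X_{\nu^{-1}}}\kappa^{-1}$ rather than $W_\infty^{\sup}\kappa^{-1}$, since $W_\infty^2>W_\infty^{\sup}\bar W_\infty$ in general. The clean matrix $\bm{U_h}$ only emerges when the two components are treated symmetrically, each through its own contraction, and one verifies that the two nested--integral quantities --- the iterated integral of $W^{(\infty)}$ and its $W^{(\infty)}$--weighted first moment --- both collapse to the single constant $\bar W_\infty$ of \eqref{eq:def_barW_infty}. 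A secondary care point is confirming that the decoupled reformulation, although ``not equivalent'' to \eqref{eq:linearized_problem} as an ODE system, becomes equivalent once the coupling integrals are reinstated, so that the constructed $b$ is a genuine solution.
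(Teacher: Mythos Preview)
Your proposal is correct and matches the paper's proof essentially step for step: the same Duhamel integral equations for $b^{(0)}$ and $b^{(\infty)}$, the same convolution bounds $\|\varPi^{(0)}(\ba*b^{(\infty)})\|\le\|\ba^{(\infty)}\|_{X_{\nu^{-1}}}\|b^{(\infty)}\|$ and $\|(\mathrm{Id}-\varPi^{(0)})(\ba*b^{(0)})\|\le\|\ba^{(\infty)}\|_X\|b^{(0)}\|$, the same symmetric pair of substitutions (tail into zero-mode and zero-mode into tail), and the same use of the two $\bar W_\infty$ estimates from Lemma~\ref{lem:ineqW_infty} to close each inequality with contraction factor $1-\kappa$. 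The only cosmetic difference is that the paper phrases each closure as an a priori inequality $\|b^{(j)}\|_X\le(\text{source})+(1-\kappa)\|b^{(j)}\|_X$ and rearranges, whereas you phrase it as a Neumann-series inversion of a contraction $\cL$ with $\|\cL\|\le 1-\kappa$; these are the same argument, and your formulation has the minor advantage of making the existence of $U(t,s)$ explicit rather than declared.
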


The proof is given later in Section \ref{sec:proof_thm_sol_map}.
After the proof of Theorem~\ref{thm:sol_map}, we introduce in Section~\ref{sec:W0_bounds} a rigorous computational method based on interval arithmetic to obtain the bound $W_0$ satisfying \eqref{eq:bound_W_0} associated with the finite dimensional evolution operator $C^{(0)}(t,s)$, which gives sharper estimate than our previous work \cite{takayasu2019rigorous}.


\begin{remark}
	Theorem \ref{thm:sol_map} gives a hypothesis of existence of the evolution operator $U(t,s)$ of the linearized problem \eqref{eq:linearized_problem}, which is numerically checked by interval arithmetic. Furthermore, the bound $\bm{W_h}$ satisfying \eqref{eq:W_h_constant} is given by $\bm{W_h}=\|\bm{U_h}\|_1$, where $\|\cdot\|_1$ denotes the 1-norm of matrices.
\end{remark}

Second goal is to introduce a modified version of Theorem \ref{thm:local_inclusion} for numerically proving the local existence of solution in  $B_{J}^{(0)}(\ba,\varrho_0)\times B_{J}^{(\infty)}(\ba,\varrho_\infty)$ as the following theorem:

\begin{theorem}\label{thm:modified_thm}
	Given an initial sequence $\varphi\in\ell^1_{\nu,1}$,  assume that the initial error is split into the zero-mode and the other modes such that
	\[
	\left|\varphi_0-\ba_0(0)\right|\le\varepsilon_0,\quad\|\varphi^{(\infty)}-\ba^{(\infty)}(0)\|\le\varepsilon_\infty.
	\]
	Assume also that $\ba\in C^1(J;D(L))$ and any $a\in B_{J}^{(0)}(\ba,\varrho_0)\times B_{J}^{(\infty)}(\ba,\varrho_\infty)$ for some $\varrho_0$, $\varrho_\infty>0$ satisfy
	\begin{align*}
	\left\|\varPi^{(0)}\left(T(a)-\ba\right)\right\|_X\le f_{\varepsilon}^{(0)}\left(\varrho_0,\varrho_\infty\right),\quad
	\left\|(\mathrm{Id}-\varPi^{(0)})\left(T(a)-\ba\right)\right\|_X\le f_{\varepsilon}^{(\infty)}\left(\varrho_0,\varrho_\infty\right),
	\end{align*}
	where $f_{\varepsilon}^{(0)}$ and $f_{\varepsilon}^{(\infty)}$ are defined by
	\begin{align*}
	\begin{pmatrix}
	f_{\varepsilon}^{(0)}\left(\varrho_0,\varrho_\infty\right)\\
	f_{\varepsilon}^{(\infty)}\left(\varrho_0,\varrho_\infty\right)
	\end{pmatrix}
	\bydef \bm{U_h}\begin{pmatrix}
	\varepsilon_0+h\left(2\left(\varrho_0^2+\varrho_\infty^2\right)+\delta_0\right)\\
	\varepsilon_\infty+h\left(2\left(\varrho_0+\varrho_\infty\right)^2+\delta_\infty\right)
	\end{pmatrix}.
	\end{align*}
	Here $\bm{U_h}$ is defined in \eqref{eq:definition_of_U_h} and $\delta_{j}$ satisfies $\left\|F^{(j)}(\ba)\right\|_X\le\delta_j$ $(j=0,\infty)$.
	If
	\[
	f_{\varepsilon}^{(0)}\left(\varrho_0,\varrho_\infty\right)\le\varrho_0,\quad f_{\varepsilon}^{(\infty)}\left(\varrho_0,\varrho_\infty\right)\le\varrho_\infty
	\]
	hold, then the Fourier coefficients $(\ta^{(0)},\ta^{(\infty)})$ of the solution of \eqref{eq:NLS} are rigorously included in $B_{J}^{(0)}(\ba,\varrho_0)\times B_{J}^{(\infty)}(\ba,\varrho_\infty)$ and are unique in $B_{J}^{(0)}(\ba,\varrho_0)\times B_{J}^{(\infty)}(\ba,\varrho_\infty)$.
\end{theorem}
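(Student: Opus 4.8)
The plan is to run a Banach fixed-point argument for the simplified-Newton operator $T$ of \eqref{eq:simp_Newton_op} on the product neighborhood $B_{J}^{(0)}(\ba,\varrho_0)\times B_{J}^{(\infty)}(\ba,\varrho_\infty)$, following the proof of Theorem~\ref{thm:local_inclusion} almost verbatim but now carrying the zero-mode and the higher modes separately and replacing the scalar bound $\bm{W_h}$ by the matrix bound $\bm{U_h}$ produced by Theorem~\ref{thm:sol_map} (whose hypotheses are in force here, so that $U(t,s)$ exists and \eqref{eq:U_estimate} holds). First I would record the algebraic identity that drives the estimate: writing a candidate $a=\ba+e$ and expanding, one has $a^2-2\ba*a=-\ba^2+e*e$, and since $\ba$ has defect $F(\ba)=\dot{\ba}-i(L\ba+\ba^2)$, Duhamel's formula for the linearized evolution $U(t,s)$ gives $\ba(t)=U(t,0)\ba(0)+\int_0^t U(t,s)\bigl(-i\ba^2(s)+F(\ba)(s)\bigr)ds$. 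Substituting this into \eqref{eq:simp_Newton_op} collapses the expression to
\[
T(a)(t)-\ba(t)=U(t,0)\bigl(\varphi-\ba(0)\bigr)-\int_0^t U(t,s)F(\ba)(s)\,ds+i\int_0^t U(t,s)(e*e)(s)\,ds,
\]
in which every inhomogeneous term is either a controlled small quantity ($\varphi-\ba(0)$ has split norms $\le\varepsilon_0,\varepsilon_\infty$; $F(\ba)$ has split norms $\le\delta_0,\delta_\infty$) or is quadratic in $e$. Since $(T(a))(0)=U(0,0)\varphi=\varphi$ and $t\mapsto T(a)(t)$ is continuous, $T$ automatically preserves the affine constraints defining $B_J^{(0)}$ and $B_J^{(\infty)}$.

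Next I would apply $\varPi^{(0)}$ and $\mathrm{Id}-\varPi^{(0)}$ to this identity, take the $X$-norm of each component, and feed the resulting pair through the estimate \eqref{eq:U_estimate}. The initial term contributes $(\varepsilon_0,\varepsilon_\infty)$; the defect term contributes $h(\delta_0,\delta_\infty)$, using $\|F^{(j)}(\ba)\|_X\le\delta_j$ and that the time step has length $\le h$; and the quadratic term contributes $h$ times a bound on $\bigl(\|(e*e)^{(0)}(s)\|,\|(e*e)^{(\infty)}(s)\|\bigr)$, obtained from the Banach-algebra inequality $\|e*e\|\le\|e\|^2$, the splitting $\|e(s)\|\le\varrho_0+\varrho_\infty$, and elementary inequalities such as $(\varrho_0+\varrho_\infty)^2\le 2(\varrho_0^2+\varrho_\infty^2)$, producing exactly the inputs $2(\varrho_0^2+\varrho_\infty^2)$ and $2(\varrho_0+\varrho_\infty)^2$ appearing in $f_{\varepsilon}^{(0)}$ and $f_{\varepsilon}^{(\infty)}$. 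Left-multiplying the input vector by $\bm{U_h}$ then yields $\|\varPi^{(0)}(T(a)-\ba)\|_X\le f_{\varepsilon}^{(0)}(\varrho_0,\varrho_\infty)$ and $\|(\mathrm{Id}-\varPi^{(0)})(T(a)-\ba)\|_X\le f_{\varepsilon}^{(\infty)}(\varrho_0,\varrho_\infty)$, so the hypotheses $f_{\varepsilon}^{(0)}\le\varrho_0$, $f_{\varepsilon}^{(\infty)}\le\varrho_\infty$ give the self-mapping property of $T$ on $B_J^{(0)}(\ba,\varrho_0)\times B_J^{(\infty)}(\ba,\varrho_\infty)$.

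For the contraction I would use that for $a=\ba+e$, $\tilde a=\ba+\tilde e$ the $\ba$-terms cancel identically, leaving $T(a)(t)-T(\tilde a)(t)=i\int_0^t U(t,s)\bigl((e-\tilde e)*(e+\tilde e)\bigr)(s)\,ds$, and then rerun the split/$\bm{U_h}$ estimate with $\|e+\tilde e\|\le 2(\varrho_0+\varrho_\infty)$: this bounds the Lipschitz constant of $T$ on the product ball in terms of $h$, $\varrho_0+\varrho_\infty$ and the entries of $\bm{U_h}$, and — exactly as for Theorem~\ref{thm:local_inclusion}, where the factor-$2$ choice in the quadratic inputs was made precisely so that the self-mapping inequalities force the Lipschitz constant below $1$ (strictly, unless $\ba$ is already an exact solution) — the hypotheses $f_{\varepsilon}^{(0)}\le\varrho_0$, $f_{\varepsilon}^{(\infty)}\le\varrho_\infty$ yield a genuine contraction. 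Banach's fixed-point theorem then provides a unique fixed point $\ta=(\ta^{(0)},\ta^{(\infty)})$ of $T$ in $B_J^{(0)}(\ba,\varrho_0)\times B_J^{(\infty)}(\ba,\varrho_\infty)$; by the equivalence (as in \cite{takayasu2019rigorous}) between fixed points of $T$ satisfying $a(0)=\varphi$ and mild solutions of \eqref{eq:CGL_ODEs} with that initial data, together with the fact that smoothness of $\varphi$ propagates (there is no parabolic smoothing here, but none is needed), $\ta$ is the asserted solution of \eqref{eq:NLS_Quad}. The main obstacle is the contraction bookkeeping in the two-component setting: one must verify that the off-diagonal coupling in $\bm{U_h}$ does not spoil the contraction, which is exactly why the quadratic inputs in $f_{\varepsilon}^{(0)},f_{\varepsilon}^{(\infty)}$ are taken conservatively, so that the self-mapping radii carry enough slack.
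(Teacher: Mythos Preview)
Your proposal is correct and follows the same approach as the paper: the same Duhamel identity for $T(a)-\ba$, the same componentwise application of the matrix bound $\bm{U_h}$ from Theorem~\ref{thm:sol_map} to obtain the self-mapping, and the contraction deduced from the (strict) self-mapping inequalities. The one point you flag as ``the main obstacle'' and leave implicit is made explicit in the paper by equipping the product ball with the weighted max metric $\mathbf{d}(a_1,a_2)=\max\bigl\{\|\varPi^{(0)}(a_1-a_2)\|_X/\varrho_0,\ \|(\mathrm{Id}-\varPi^{(0)})(a_1-a_2)\|_X/\varrho_\infty\bigr\}$, under which the Lipschitz matrix $\bm{M_\varrho}$ has operator norm $\|\bm{M_\varrho}w\|_w<1$ directly from $\bm{M_\varrho}(\varrho_0,\varrho_\infty)^T+\gamma\le(\varrho_0,\varrho_\infty)^T$ with $\gamma>0$.
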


We will give the proof in Section \ref{sec:proof_modified_thm}.

\begin{remark}
	The initial sequence is now chosen from $\ell^1_{\nu,1}$. The solution is then in the class of $C^1(J;\ell^1_{\nu,1})$ thanks to the bootstrapping arguments with respect to time variables. Furthermore, if one wants to have a classical solution of the nonlinear Schr\"odinger equation \eqref{eq:NLS_Quad}, the initial sequence should be chosen from $\ell^1_{\nu,1}$ with a strictly positive weight $\nu>1$. This is due to the $C^0$-property of the evolution operator (cf.~\cite{pazy1983semigroups}). However, as mentioned in Remark \ref{rem:analyticity}, the continuity with respect to the parameter $\nu$ yields that the solution is analytic in space.
\end{remark}

\subsubsection{Proof of Theorem \ref{thm:sol_map}}\label{sec:proof_thm_sol_map}

The proof of Theorem~\ref{thm:sol_map} uses the following elementary result:

\begin{lemma} \label{lem:ineqW_infty}
	Consider the constants $W_{\infty}\ge0$, $\bar{W}_{\infty}\ge0$ and  $W_{\infty}^{\sup}>0$ as defined in \eqref{eq:def_W_infty}, \eqref{eq:def_barW_infty} and \eqref{eq:def_W_infty_sup}, respectively. 
	Then $W^{(\infty)}$, defined in \eqref{eq:assumption_existence_U_infty}, obeys the following inequalities:

	\begin{align}
	\label{eq:ineqW_infty_sup}
	\sup_{(t,s)\in \mathcal{S}_h}	W^{(\infty)}(t,s)  & \le W_{\infty}^{\sup}
	\\
	\label{eq:ineqW_infty}
	\sup _{(t,s)\in \mathcal{S}_h} \int_{s}^{t} W^{(\infty)}(\tau, s) d \tau,\quad\sup _{(t,s)\in \mathcal{S}_h} \int_{s}^{t} W^{(\infty)}(t, \tau) d \tau &\le W_\infty
	\\
	\label{eq:ineqbarW_infty}
	\sup _{(t,s)\in \mathcal{S}_h}\int_{s}^{t} \int_{s}^{\tau} W^{(\infty)}(\tau, \sigma) d \sigma d \tau,\quad \sup _{(t,s)\in \mathcal{S}_h}\int_{s}^{t} W^{(\infty)}(t, \tau)(\tau -s) d \tau & \le  \bar{W}_{\infty}.
	\end{align}
\end{lemma}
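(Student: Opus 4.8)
The plan is to reduce all three inequalities to the single pointwise bound $W^{(\infty)}(t,s)\le e^{\beta(t-s)}$, where $\beta\bydef 2\|\ba\|_X$. This is immediate: since $\|\ba(\tau)\|\le\|\ba\|_X$ for every $\tau\in J$, we have $2\int_s^t\|\ba(\tau)\|\,d\tau\le\beta(t-s)$, and $W^{(\infty)}(t,s)=e^{2\int_s^t\|\ba(\tau)\|d\tau}$ by \eqref{eq:assumption_existence_U_infty}. With this in hand, \eqref{eq:ineqW_infty_sup} is just $e^{\beta(t-s)}\le e^{\beta h}=W_\infty^{\sup}$, using $0\le t-s\le h$ on $\mathcal{S}_h$ together with the definition \eqref{eq:def_W_infty_sup}.

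For \eqref{eq:ineqW_infty} I would integrate the pointwise bound. The first integral obeys $\int_s^t W^{(\infty)}(\tau,s)\,d\tau\le\int_s^t e^{\beta(\tau-s)}\,d\tau=\frac{e^{\beta(t-s)}-1}{\beta}$; for the second, the substitution $v=t-\tau$ gives $\int_s^t W^{(\infty)}(t,\tau)\,d\tau\le\int_0^{t-s}e^{\beta v}\,dv=\frac{e^{\beta(t-s)}-1}{\beta}$. Since $x\mapsto(e^{\beta x}-1)/\beta$ is nondecreasing on $[0,\infty)$ (its derivative is $e^{\beta x}\ge0$), both quantities are bounded by the value at $x=h$, namely $W_\infty$, cf.\ \eqref{eq:def_W_infty}.

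For \eqref{eq:ineqbarW_infty}, the doubly iterated integral is handled by feeding the estimate just obtained into the inner variable: $\int_s^\tau W^{(\infty)}(\tau,\sigma)\,d\sigma\le\frac{e^{\beta(\tau-s)}-1}{\beta}$, hence $\int_s^t\!\!\int_s^\tau W^{(\infty)}(\tau,\sigma)\,d\sigma\,d\tau\le\int_s^t\frac{e^{\beta(\tau-s)}-1}{\beta}\,d\tau=\phi(t-s)$, where $\phi(x)\bydef\frac1\beta\bigl(\frac{e^{\beta x}-1}{\beta}-x\bigr)$. For the weighted integral, the substitution $w=t-\tau$ turns it into $\int_0^{t-s}e^{\beta w}\bigl((t-s)-w\bigr)\,dw$; writing $(t-s)-w=\int_w^{t-s}d\sigma$ and swapping the order of integration shows this equals $\int_0^{t-s}\frac{e^{\beta\sigma}-1}{\beta}\,d\sigma=\phi(t-s)$ as well. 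Since $\phi'(x)=\frac1\beta(e^{\beta x}-1)\ge0$, the function $\phi$ is nondecreasing on $[0,\infty)$, so $\phi(t-s)\le\phi(h)=\frac{W_\infty-h}{\beta}=\bar W_\infty$ by \eqref{eq:def_barW_infty}.

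The only genuine bookkeeping point — and the closest thing to an obstacle — is recognizing that the weighted one-dimensional integral collapses to the same elementary function $\phi$ as the doubly iterated one, and that both $\phi$ and $(e^{\beta x}-1)/\beta$ are monotone on $[0,h]$, so the supremum over $\mathcal{S}_h$ is attained at $t-s=h$. Finally I would address the degenerate case $\|\ba\|_X=0$, i.e.\ $\beta=0$: then $W^{(\infty)}\equiv1$ and, taking $\beta\to0^+$ in \eqref{eq:def_W_infty}, \eqref{eq:def_barW_infty} and \eqref{eq:def_W_infty_sup}, one gets $W_\infty=h$, $W_\infty^{\sup}=1$, $\bar W_\infty=h^2/2$, so the three claims read $1\le1$, $h\le h$, and $(t-s)^2/2\le h^2/2$; alternatively this case may simply be excluded, since $\ba$ is a nontrivial approximate solution.
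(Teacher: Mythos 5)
Your proof is correct and follows essentially the same route as the paper's: bound $W^{(\infty)}(t,s)\le e^{\beta(t-s)}$ with $\beta=2\|\ba\|_X$, plug this into each integral, evaluate, and take the supremum at $t-s=h$ using monotonicity. The only cosmetic difference is that for the weighted integral $\int_s^t W^{(\infty)}(t,\tau)(\tau-s)\,d\tau$ you swap the order of integration rather than differentiating the explicit antiderivative as the paper does, and you note the degenerate case $\|\ba\|_X=0$ which the paper leaves implicit.
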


\begin{proof}
	First, note that from \eqref{eq:assumption_existence_U_infty}
	\begin{align*}
	\sup_{(t,s)\in \mathcal{S}_h}	W^{(\infty)}(t,s) 
	= \sup_{(t,s)\in \mathcal{S}_h}e^{2\int_{s}^{t}\|\ba(\tau)\|d \tau}
	\le e^{2h\|\ba\|_{X}} = W_{\infty}^{\sup}.
	\end{align*}
	Second, note that from \eqref{eq:assumption_existence_U_infty}
	\begin{align*}
	\sup _{(t,s)\in \mathcal{S}_h} \int_{s}^{t} W^{(\infty)}(\tau, s) d \tau
	&=\sup _{(t,s)\in \mathcal{S}_h}\left(\int_{s}^{t} e^{2\int_{s}^{\tau}\|\ba(\sigma)\| d\sigma}d\tau\right)\\
	&\le\sup _{(t,s)\in \mathcal{S}_h}\left(\int_{s}^{t} e^{2\|\ba\|_X(\tau-s)}d\tau\right)\\
	&=\sup _{(t,s)\in \mathcal{S}_h}\left(\frac{e^{2 \|\ba\|_{X}(t-s)}-1}{2\|\ba\|_{X}}\right)\\
	&\le \frac{e^{2h\|\ba\|_{X}}-1}{2\|\ba\|_{X}} = W_\infty
	\end{align*}
	and that
	\begin{align*}
	\sup _{(t,s)\in \mathcal{S}_h} \int_{s}^{t} W^{(\infty)}(t, \tau) d \tau
	&=\sup _{(t,s)\in \mathcal{S}_h}\left(\int_{s}^{t} e^{2\int_{\tau}^{t}\|\ba(\sigma)\| d\sigma}d\tau\right)\\
	&\le\sup _{(t,s)\in \mathcal{S}_h}\left(\int_{s}^{t} e^{2\|\ba\|_X(t-\tau)}d\tau\right)\\
	&=\sup _{(t,s)\in \mathcal{S}_h}\left(\frac{e^{2 \|\ba\|_{X}(t-s)}-1}{2\|\ba\|_{X}}\right)\\
	&\le \frac{e^{2 h\|\ba\|_{X}}-1}{2\|\ba\|_{X}}=W_\infty.
	\end{align*}
	Third, 
	\begin{align*}
	\int_{s}^{t} \int_{s}^{\tau} W^{(\infty)}(\tau, \sigma) d \sigma d \tau
	&\le \int_{s}^{t} \frac{e^{2 \|\ba\|_{X}(\tau-s)}-1}{2\|\ba\|_{X}}d \tau\\
	&=\frac{1}{2\|\ba\|_{X}}\left[\frac{e^{2 \|\ba\|_{X}(\tau-s)}}{2\|\ba\|_{X}}-\tau\right]_{\tau=s}^{\tau=t}\\
	&=\frac{1}{2\|\ba\|_{X}}\left[\frac{e^{2 \|\ba\|_{X}(t-s)}-1}{2\|\ba\|_{X}}-(t-s)\right]
	\end{align*}
	and it follows that
	\[
	\sup _{(t,s)\in \mathcal{S}_h}\int_{s}^{t} \int_{s}^{\tau} W^{(\infty)}(\tau, \sigma) d \sigma d \tau\le \frac{1}{2\|\ba\|_{X}}\left(\frac{e^{2h \|\ba\|_{X}}-1}{2\|\ba\|_{X}}-h\right) = \bar{W}_{\infty}.
	\]
	Finally,
	\begin{align*}
	\int_{s}^{t} W^{(\infty)}(t,\tau) (\tau -s) d \tau
	&\le \int_{s}^{t} e^{2 \|\ba\|_{X}(t-\tau)} (\tau -s)d \tau\\
	&=\frac{-1}{2\|\ba\|_{X}}\left[\frac{e^{2 \|\ba\|_{X}(t-\tau)}}{2\|\ba\|_{X}}+e^{2 \|\ba\|_{X}(t-\tau)}(\tau -s)\right]_{\tau=s}^{\tau=t}\\
	&=\frac{1}{2\|\ba\|_{X}}\left[\frac{e^{2 \|\ba\|_{X}(t-s)}-1}{2\|\ba\|_{X}}-(t-s)\right]
	\end{align*}
	and hence, it follows that
	\[
	\sup _{(t,s)\in \mathcal{S}_h}\int_{s}^{t} W^{(\infty)}(t,\tau)(\tau -s) d \tau\le \frac{1}{2\|\ba\|_{X}}\left(\frac{e^{2h \|\ba\|_{X}}-1}{2\|\ba\|_{X}}-h\right) = \bar{W}_{\infty}.\qedhere
	\]
\end{proof}

\begin{proof}[Proof of Theorem~\ref{thm:sol_map}]
	First note that for $k=0$, the system of differential equations \eqref{eq:linearized_problem} is described by the following non-homogeneous equation:
	\begin{equation} \label{eq:non-homogeneous-finite-system}
	\dot{b}_0(t)-2i\ba_0\left(t\right)b_0(t) =2i\left(\ba\left(t\right)*b^{(\infty)}(t) \right)_0
	\end{equation}
	with the initial value $b_0(s)=\phi_0$.	
	Consider the homogeneous equation \eqref{eq:linearizedeq_finite} and denote by $C^{(0)}(t,s)$ the solution of the variational problem (also called integrating factor).
	Let $\bU^{(0)}(t,s)$ be the extension of the action of $C^{(0)}(t,s)$ on $\ell^1_{\nu,1}$.
	By variation of constants \eqref{eq:non-homogeneous-finite-system} is transformed into the integral equation using the evolution operator $\bU^{(0)}(t,s):\ell^1_{\nu,1} \to \ell^1_{\nu,1}$	
	\begin{equation} \label{eq:b^{m}_integral_equation}
	b^{(0)}(t)=\bU^{(0)}(t, s) \phi^{(0)} + 2 i\int_{s}^{t} \bU^{(0)}(t, \tau)\,\varPi^{(0)}\left(\ba(\tau) * b^{(\infty)}(\tau)\right) d \tau.
	\end{equation}
	Here, for the case of $k=0$,
	\[
	\left(\ba * b^{(\infty)}\right)_{0} = \sum_{\substack{k_{1}+k_{2}=0\\\left|k_{1}\right| \leq K, k_{2}\neq 0}} \ba_{k_{1}} b_{k_{2}} = \sum_{m\neq 0,~|m|\le K} \ba_{m} b_{-m}
	\]
	holds. Taking the absolute value, we have
	\begin{equation}\label{eq:ba*binf_0}
	\left\|\varPi^{(0)}\left(\ba * b^{(\infty)}\right)\right\|=\left|\sum_{m\neq 0,~|m|\le K} \ba_{m} b_{-m}\right|\le \max_{m\neq 0}\frac{|\ba_m|}{\nu^{|m|}}\sum_{m\neq 0,~|m|\le K}|b_m|\nu^{|m|}\le \|\ba^{(\infty)}\|_{\infty,\nu^{-1}}\|b^{(\infty)}\|.
	\end{equation}
	Combining \eqref{eq:bound_W_0} and \eqref{eq:b^{m}_integral_equation}, and using the inequality \eqref{eq:ba*binf_0}, it follows that
	\begin{equation}\label{eq:zero-mode}
	\|b^{(0)}(t)\|  \leq W_0\|\phi^{(0)}\| +2 W_0\int_{s}^{t}\| \ba^{(\infty)}(\tau)\|_{\infty,\nu^{-1}} \|b^{(\infty)}(\tau)\|  d \tau.
	\end{equation}

	Next, for the case of $k\neq 0$, we rewrite the system of differential equations \eqref{eq:linearized_problem} as
	\begin{equation} \label{eq:non-homogeneous-infinite-system}
	\dot{b}_k(t)+ik^2\omega^2b_k(t)-2i\left(\ba\left(t\right)*b^{(\infty)} \right)_k= 2i\left(\ba\left(t\right)*b^{(0)} \right)_k\quad(k\neq 0)
	\end{equation}
	with the initial sequence $b_k(s)=\phi_k$ for $k\neq 0$. 
	Define $\bU^{(\infty)}(t,s)$ as in \eqref{eq:bU_infty_definition}. 
	Using that operator, the system \eqref{eq:non-homogeneous-infinite-system} is described as
	\begin{equation} \label{eq:b^{infty}_integral_equation}
	b^{(\infty)}(t)=\bU^{(\infty)}(t, s) \phi^{(\infty)} + 2 i \int_{s}^{t} \bU^{(\infty)}(t, \tau) \left({\rm Id} -\varPi^{(0)}\right)\left(\ba(\tau)*b^{(0)}(\tau)\right) d \tau.
	\end{equation}
	For $k\neq 0$, we have
	\begin{equation}\label{eq:ba*b0_inf}
	\left\|\left({\rm Id} -\varPi^{(0)}\right)\left(\ba * b^{(0)}\right)\right\| = \sum_{k\neq 0}|\ba_{k} b_{0}|\nu^{|k|}\le \|\ba^{(\infty)}\||b_0| = \|\ba^{(\infty)}\|\|b^{(0)}\|.
	\end{equation}
	Combining \eqref{eq:assumption_existence_U_infty}, \eqref{eq:b^{infty}_integral_equation} and using the inequality \eqref{eq:ba*b0_inf}, it follows that
	\begin{equation}\label{eq:tail-mode}
	\|b^{(\infty)}(t)\| \le W^{(\infty)}(t,s)\|\phi^{(\infty)}\| +2\int_{s}^{t} W^{(\infty)}(t, \tau)\|\ba^{(\infty)}(\tau)\| \|b^{(0)}(\tau)\|  d \tau.
	\end{equation}
	Plugging \eqref{eq:tail-mode} into \eqref{eq:zero-mode}, and using the inequalities \eqref{eq:ineqW_infty} and \eqref{eq:ineqbarW_infty} from Lemma~\ref{lem:ineqW_infty},
	we have
	\begin{align}\nonumber
	\|b^{(0)}(t)\| 
	&\le W_0\|\phi^{(0)}\| +2 W_0\int_{s}^{t}\|\ba^{(\infty)}(\tau)\|_{\infty,\nu^{-1}} \Bigg\{
	W^{(\infty)}(\tau, s)\|\phi^{(\infty)}\| \\\nonumber
	&\hphantom{=}\quad +2 \int_{s}^{\tau} W^{(\infty)}(\tau, \sigma)\|\ba^{(\infty)}(\sigma)\| \|b^{(0)}(\sigma)\| d\sigma
	\Bigg\}d \tau\\\nonumber
	&=W_0\|\phi^{(0)}\| +\left(2 W_0\int_{s}^{t}\|\ba^{(\infty)}(\tau)\|_{\infty,\nu^{-1}}  W^{(\infty)}(\tau, s) d \tau\right)\|\phi^{(\infty)}\| \\\nonumber
	&\hphantom{=}\quad +4 W_0\int_{s}^{t}\|\ba^{(\infty)}(\tau)\|_{\infty,\nu^{-1}}\left(\int_{s}^{\tau}W^{(\infty)}(\tau, \sigma) \|\ba^{(\infty)}(\sigma)\|  \|b^{(0)}(\sigma)\|  d \sigma\right)d \tau\\
	&\le W_0\|\phi^{(0)}\| +2 W_0W_\infty\|\ba^{(\infty)}\|_{X_{\nu^{-1}}}\|\phi^{(\infty)}\| 
	+4 W_0\bar{W}_{\infty}\|\ba^{(\infty)}\|_{X}\|\ba^{(\infty)}\|_{X_{\nu^{-1}}}
	\|b^{(0)}\|_{X}.
	\label{eq:zero-mode_estimate}
	\end{align}
	
	By assumption \eqref{eq:kappa_condition}, $\kappa = 1-4W_0\bar{W}_\infty\|\ba^{(\infty)} \|_X\|\ba^{(\infty)}\|_{X_{\nu^{-1}}}>0$ and using \eqref{eq:zero-mode_estimate} yields that
	\begin{equation}\label{eq:b_m_norm}
	\|b^{(0)}\|_X\le \frac{W_0\|\phi^{(0)}\| +2 W_0W_\infty\|\ba^{(\infty)}\|_{X_{\nu^{-1}}} \|\phi^{(\infty)}\| }{\kappa},
	\end{equation}
	which guarantees the existence of the solution of the zero mode of \eqref{eq:linearized_problem}.
	Conversely, plugging \eqref{eq:zero-mode} into \eqref{eq:tail-mode}, and using the inequalities \eqref{eq:ineqW_infty} and \eqref{eq:ineqbarW_infty} from Lemma~\ref{lem:ineqW_infty}, we have
	\begin{align}\label{eq:tail-mode_estimate}\nonumber
	\|b^{(\infty)}(t)\|
	&\le W^{(\infty)}(t,s)\|\phi^{(\infty)}\| +2\int_{s}^{t} W^{(\infty)}(t, \tau)\|\ba^{(\infty)}(\tau)\| \Bigg\{ W_0\|\phi^{(0)}\|\\\nonumber
	&\hphantom{\le}\quad +2 W_0\int_{s}^{\tau}\| \ba^{(\infty)}(\sigma)\|_{\infty,\nu^{-1}} \|b^{(\infty)}(\sigma)\|  d \sigma \Bigg\}  d \tau\\\nonumber
	&= W^{(\infty)}(t,s)\|\phi^{(\infty)}\| +\left(2W_0\int_{s}^{t} W^{(\infty)}(t, \tau) \|\ba^{(\infty)}(\tau)\|  d\tau \right)\|\phi^{(0)}\|\\\nonumber
	&\hphantom{=}\quad + 4W_0\int_{s}^{t} W^{(\infty)}(t, \tau)\|\ba^{(\infty)}(\tau)\|\left(\int_{s}^{\tau}\| \ba^{(\infty)}(\sigma)\|_{\infty,\nu^{-1}} \|b^{(\infty)}(\sigma)\|  d \sigma\right) d\tau\\
	& \le W^{(\infty)}(t,s)\|\phi^{(\infty)}\| +2W_0W_\infty\|\ba^{(\infty)}\|_X \|\phi^{(0)}\|+4 W_0\bar{W}_{\infty}\|\ba^{(\infty)}\|_{X}\|\ba^{(\infty)}\|_{X_{\nu^{-1}}}
	\|b^{(\infty)}\|_{X}.
	\end{align}
	By assumption \eqref{eq:kappa_condition}, $\kappa = 1-4W_0\bar{W}_\infty\|\ba^{(\infty)} \|_X\|\ba^{(\infty)}\|_{X_{\nu^{-1}}}>0$ and using the inequality \eqref{eq:ineqW_infty_sup} in Lemma~\ref{lem:ineqW_infty}, the tail \eqref{eq:tail-mode_estimate} follows that
	\begin{align}
	\|b^{(\infty)}\|_{X} &\le \frac{W_{\infty}^{\sup}\|\phi^{(\infty)}\| + 2W_0W_\infty\|\ba^{(\infty)} \|_X\|\phi^{(0)}\|}{\kappa}.\label{eq:b_inf_norm}
	\end{align}
	Then it is shown that there exists the evolution operator $U(t,s)$ of \eqref{eq:linearized_problem}.
	Finally, for any initial data $\phi=\phi^{(0)}+\phi^{(\infty)}\in\ell^1_{\nu,1}$, \eqref{eq:b_m_norm} and \eqref{eq:b_inf_norm} yield
	\begin{align*}
	\sup_{(t,s)\in\cS_h}\begin{pmatrix}
	\left\|U^{(0)}(t,s)\phi\right\| \\[2mm]
	\left\|U^{(\infty)}(t,s)\phi\right\| 
	\end{pmatrix}
	&=\begin{pmatrix}
	\|b^{(0)}\|_{X}\\[2mm]\|b^{(\infty)}\|_{X}
	\end{pmatrix}\\
	&\le\begin{pmatrix}
	W_0\kappa^{-1} & 2W_0W_\infty\|\ba^{(\infty)}\|_{X_{\nu^{-1}}}\kappa^{-1}\\
	2W_0W_\infty\|\ba^{(\infty)} \|_X\kappa^{-1} & W_{\infty}^{\sup}\kappa^{-1}
	\end{pmatrix}\begin{pmatrix}
	\|\phi^{(0)}\| \\\|\phi^{(\infty)}\| 
	\end{pmatrix}\\
	&= \bm{U_h}\begin{pmatrix}
	\|\phi^{(0)}\| \\\|\phi^{(\infty)}\| 
	\end{pmatrix}. \qedhere
	\end{align*}
\end{proof}

\subsubsection{Proof of Theorem \ref{thm:modified_thm}}\label{sec:proof_modified_thm}

In this part, we will show the proof of Theorem \ref{thm:modified_thm}, which is essentially the same as \cite[Theorem 4.1]{takayasu2019rigorous}. The difference is the  choice of the neighborhood $B_{J}^{(0)}(\ba,\varrho_0)\times B_{J}^{(\infty)}(\ba,\varrho_\infty)$ defined in \eqref{eq:B0} and \eqref{eq:Binfty}.

\begin{proof}[Proof of Theorem \ref{thm:modified_thm}]
	We will prove the operator $T$ defined in \eqref{eq:simp_Newton_op} becomes a contraction mapping on $B_{J}^{(0)}(\ba,\varrho_0)\times B_{J}^{(\infty)}(\ba,\varrho_\infty)$. Firstly, recall $U^{(0)}(t,s)= \varPi^{(0)}U(t,s):\ell^1_{\nu,1}\to\ell^1_{\nu,1}$ and $U^{(\infty)}(t,s)= \left( {\rm Id} - \varPi^{(0)} \right) U(t,s):\ell^1_{\nu,1}\to\ell^1_{\nu,1}$ for each $(t,s)\in \cS_h$ defined in Theorem \ref{thm:sol_map}.
	For any $a\in B_{J}^{(0)}(\ba,\varrho_0)\times B_{J}^{(\infty)}(\ba,\varrho_\infty)$, we have from \eqref{eq:def_F}, \eqref{eq:simp_Newton_op}, and properties of the evolution operator (cf., e.g., \cite{pazy1983semigroups})
	\begin{align*}
	&T(a)-\ba\\
	&= U(t,0)\varphi+i\int_0^tU(t,s)\left(a^2(s)-2\bar{a}(s)*a(s)\right)ds - \ba\\
	&=U(t,0)\varphi+i\int_0^tU(t,s)\left(a^2(s)-2\bar{a}(s)*a(s)\right)ds - U(t,t)\ba\\
	&=U(t,0)(\varphi-\ba(0))+i\int_0^tU(t,s)\left(a^2(s)-2\bar{a}(s)*a(s)\right)ds - \int_0^t\frac{d}{ds}\left(U(t,s)\ba(s)\right)ds\\
	&=U(t,0)(\varphi-\ba(0))+i\int_0^tU(t,s)\left(a^2(s)-2\bar{a}(s)*a(s)\right)ds - \int_0^t\left(\frac{d}{ds}U(t,s)\ba(s)+U(t,s)\frac{d}{ds}\ba(s)\right)ds\\
	&=U(t,0)(\varphi-\ba(0))+\int_0^tU(t,s)\left(ia^2(s)-2i\bar{a}(s)*a(s)+iL\ba(s)+2i\ba(s)*\ba(s)-\frac{d}{ds}\ba(s)\right)ds\\
	&=U(t,0)(\varphi-\ba(0))+\int_0^tU(t,s)\left[ia^2(s)-i\ba^2(s)-2i\bar{a}(s)*(a(s)-\ba(s))-(F(\ba))(s)\right]ds\\
	&=U(t,0)(\varphi-\ba(0))+\int_0^tU(t,s)\left[i(a(s)-\ba(s))*(a(s)-\ba(s))-(F(\ba))(s)\right]ds.
	\end{align*}
	Let $\chi\bydef a-\ba$. It follows from \eqref{eq:U_estimate} that
	\begin{align}\label{eq:Ta-ta_norm_divided}
	\begin{pmatrix}
	\left\|\varPi^{(0)}(T(a)-\ba)\right\|_X\\[2mm]
	\left\|\left( {\rm Id} - \varPi^{(0)} \right)(T(a)-\ba)\right\|_X
	\end{pmatrix}&\le\sup_{t\in J}
	\begin{pmatrix}
	\left\| U^{(0)}(t,0)\chi(0)\right\| +\int_0^t\left\|U^{(0)}(t,s)\xi(s)\right\| ds\\[2mm]
	\left\| U^{(\infty)}(t,0)\chi(0)\right\| +\int_0^t\left\|U^{(\infty)}(t,s)\xi(s)\right\| ds
	\end{pmatrix}\nonumber\\
	&\le \bm{U_h}\begin{pmatrix}
	\|\chi^{(0)}(0)\| \\[2mm]
	\|\chi^{(\infty)}(0)\| 
	\end{pmatrix} + \bm{U_h}\begin{pmatrix}
	h\sup_{s\in J}\|\xi^{(0)}(s)\| \\[2mm]
	h\sup_{s\in J}\|\xi^{(\infty)}(s)\| 
	\end{pmatrix},
	\end{align}
	where $\xi(s)\bydef i\chi^2(s)-\left(F(\ba)\right)(s)$.
	From the assumption of this theorem,
	$\|\chi^{(0)}(0)\|=\left|\varphi_0-\ba_0(0)\right| \le\varepsilon_0$ and $\|\chi^{(\infty)}(0)\|=\|\varphi^{(\infty)}-\ba^{(\infty)}(0)\|  \le\varepsilon_\infty$
	hold. Furthermore, from the definition of $\xi(s)$, we have using the property of Banach algebra for $\ell^1_{\nu,1}$
	\begin{align*}
	\|\xi^{(0)}(s)\| &\le 2\left|(\chi(s)*\chi(s))_0\right|+\left\|\left(F^{(0)}(\ba)\right)(s)\right\| \le 2\left(\|\chi^{(0)}(s)\| ^2+\|\chi^{(\infty)}(s)\| ^2\right)+\delta_0\\
	\|\xi^{(\infty)}(s)\| &\le 2\sum_{k\neq 0}\left|\left(\chi(s)*\chi(s)\right)_k\right|\nu^{|k|}+\left\|\left(F^{(\infty)}(\tilde{a})\right)(s)\right\| \le 2\|\chi(s)\| ^2+\delta_\infty.
	\end{align*}
	Since $a\in B_{J}^{(0)}(\ba,\varrho_0)\times B_{J}^{(\infty)}(\ba,\varrho_\infty)$, $\|\chi^{(0)}\|_{X}\le \varrho_0$ and $\|\chi^{(\infty)}\|_{X}\le \varrho_\infty$ hold. It follows that $\|\chi\|_{X}\le\|\chi^{(0)}\|_{X}+\|\chi^{(\infty)}\|_{X}\le \varrho_0+\varrho_\infty$. Using \eqref{eq:U_estimate}, \eqref{eq:Ta-ta_norm_divided} is bounded by
	\begin{align}\label{eq:onto_divided}
	\begin{pmatrix}
	\left\|\varPi^{(0)}(T(a)-\ba)\right\|_X\\[2mm]
	\left\|\left( {\rm Id} - \varPi^{(0)} \right)(T(a)-\ba)\right\|_X
	\end{pmatrix}
	\le\bm{U_h}\begin{pmatrix}
	\varepsilon_0+h\left(2\left(\varrho_0^2+\varrho_\infty^2\right)+\delta_0\right)\\[2mm]
	\varepsilon_\infty+h\left(2\left(\varrho_0+\varrho_\infty\right)^2+\delta_\infty\right)
	\end{pmatrix}
	=\begin{pmatrix}
	f_{\varepsilon}^{(0)}\left(\varrho_0,\varrho_\infty\right)\\[2mm]
	f_{\varepsilon}^{(\infty)}\left(\varrho_0,\varrho_\infty\right)
	\end{pmatrix}.
	\end{align}
	From the assumption $f_{\varepsilon}^{(0)}\left(\varrho_0,\varrho_\infty\right)\le\varrho_0$ and $f_{\varepsilon}^{(\infty)}\left(\varrho_0,\varrho_\infty\right)\le\varrho_\infty$, \eqref{eq:onto_divided} yields that $T(a)\in B_{J}^{(0)}(\ba,\varrho_0)\times B_{J}^{(\infty)}(\ba,\varrho_\infty)$ holds for any $a\in B_{J}^{(0)}(\ba,\varrho_0)\times B_{J}^{(\infty)}(\ba,\varrho_\infty)$.

	Secondly, we will show the contraction property of $T$.
	For $a_1$, $a_2\in B_{J}^{(0)}(\ba,\varrho_0)\times B_{J}^{(\infty)}(\ba,\varrho_\infty)$, we define the distance of $B_{J}^{(0)}(\ba,\varrho_0)\times B_{J}^{(\infty)}(\ba,\varrho_\infty)$ as
	\begin{align}\label{eq:def_distance}
	\mathbf{d}(a_1,a_2)\bydef \max\left\{\frac{\left\|\varPi^{(0)}(a_1-a_2)\right\|_X}{\varrho_0},\frac{\left\|\left( {\rm Id} - \varPi^{(0)} \right)(a_1-a_2)\right\|_X}{\varrho_\infty}\right\},
	\end{align}
	where $\varrho_0$ and $\varrho_\infty$ are strictly positive.
	This is associated with the following norm of vectors:
	\begin{align}\label{eq:def_sm_norm}
	\|x\|_{w}\bydef\max_{j=1,2}\frac{|x_j|}{w_j},\quad x=(x_1,x_2)^T
	\end{align}
	with the strictly positive scaling vector $w = (w_1,w_2)^T>0$.
	The analogous discussion above yields from \eqref{eq:simp_Newton_op} and \eqref{eq:U_estimate}
	\begin{align}\label{eq:Ta-Tb_norm_divided}
	\begin{pmatrix}
	\left\|\varPi^{(0)}(T(a_1)-T(a_2))\right\|_X \\[2mm]
	\left\|\left( {\rm Id} - \varPi^{(0)} \right)(T(a_1)-T(a_2))\right\|_X
	\end{pmatrix}\le\sup_{t\in J}
	\begin{pmatrix}
	\int_0^t\left\|U^{(0)}(t,s)\tilde{\xi}(s)\right\| ds\\[2mm]
	\int_0^t\left\|U^{(\infty)}(t,s)\tilde{\xi}(s)\right\| ds
	\end{pmatrix}\le \bm{U_h}\begin{pmatrix}
	h\sup_{s\in J}\|\tilde{\xi}^{(0)}(s)\| \\[2mm]
	h\sup_{s\in J}\|\tilde{\xi}^{(\infty)}(s)\| 
	\end{pmatrix},
	\end{align}
	where 
	\begin{align*}
	\tilde{\xi}(s)&\bydef2i\left(\int_0^1\left[\eta (a_1-\ba)+(1-\eta)(a_2-\ba)\right]d\eta*\zeta\right)
	\end{align*}
	and $\zeta\bydef a_1-a_2$.
	Since $B_{J}^{(0)}(\ba,\varrho_0)\times B_{J}^{(\infty)}(\ba,\varrho_\infty)$ is convex set, $\eta (a_1-\ba)+(1-\eta)(a_2-\ba)\in B_{J}^{(0)}(0,\varrho_0)\times B_{J}^{(\infty)}(0,\varrho_\infty)$ holds for any $\eta\in(0,1)$.
	From the definition of $\tilde{\xi}$, it follows that
	\begin{align*}
	\|\tilde{\xi}^{(0)}(s)\| &\le 2\left(\varrho_0\|\zeta^{(0)}(s)\| +\varrho_\infty\|\zeta^{(\infty)}(s)\| \right)\\
	\|\tilde{\xi}^{(\infty)}(s)\| &\le 2\left(\varrho_0+\varrho_\infty\right)\left(\|\zeta^{(0)}(s)\| +\|\zeta^{(\infty)}(s)\| \right).
	\end{align*}
	From \eqref{eq:Ta-Tb_norm_divided}, we have
	\begin{align}\label{eq:Ta-Tb_norm}
	\begin{pmatrix}
	\left\|\varPi^{(0)}(T(a_1)-T(a_2))\right\|_X \\[2mm]
	\left\|\left( {\rm Id} - \varPi^{(0)} \right)(T(a_1)-T(a_2))\right\|_X
	\end{pmatrix}
	\le \bm{M_\varrho}
	\begin{pmatrix}
	\|\zeta^{(0)}\|_{X}\\[2mm]
	\|\zeta^{(\infty)}\|_{X}
	\end{pmatrix},
	\end{align}
	where $\bm{M_\varrho}$ is a strictly positive $2$ by $2$ matrix defined by
	\[
	\bm{M_\varrho}\bydef2h\bm{U_h}\begin{pmatrix}\varrho_0&\varrho_\infty\\\varrho_0+\varrho_\infty&\varrho_0+\varrho_\infty
	\end{pmatrix}.
	\]
	Using \eqref{eq:def_distance} and \eqref{eq:Ta-Tb_norm}, the distance between $T(a_1)$ and $T(a_2)$ is bounded by
	\begin{align}\label{eq:contraction_divided}
	\mathbf{d}\left(T(a_1),T(a_2)\right)\le\|\mathbf{M}_\varrho\|\mathbf{d}(a_1,a_2),
	\end{align}
	where
	$\left\|\bm{M_\varrho}\right\|\bydef\max_{\|x\|_{w}=1}\|\bm{M_\varrho} x\|_{w}=\|\bm{M_\varrho}w\|_{w}$ with $w=(\varrho_0,\varrho_\infty)^T$, which is the matrix norm arising from the scaling maximum norm defined by \eqref{eq:def_sm_norm}. On the other hand, from \eqref{eq:onto_divided}, we remark that $f_{\varepsilon}^{(i)}$ ($i=0,\infty$) are represented by
	\begin{align*}
	\begin{pmatrix}
	f_{\varepsilon}^{(0)}\left(\varrho_0,\varrho_\infty\right)\\[2mm]
	f_{\varepsilon}^{(\infty)}\left(\varrho_0,\varrho_\infty\right)
	\end{pmatrix}=\bm{M_\varrho}\begin{pmatrix}
	\varrho_0\\\varrho_\infty
	\end{pmatrix}+\gamma,\quad
	\gamma\bydef\bm{U_h}\begin{pmatrix}
	\varepsilon_0+h\delta_0\\
	\varepsilon_\infty+h\delta_\infty
	\end{pmatrix}>0.
	\end{align*}
	It follows from the assumption of this theorem
	\[
	\bm{M_\varrho}\begin{pmatrix}
	\varrho_0\\\varrho_\infty
	\end{pmatrix}<\bm{M_\varrho}\begin{pmatrix}
	\varrho_0\\\varrho_\infty
	\end{pmatrix}+\gamma\le\begin{pmatrix}
	\varrho_0\\\varrho_\infty
	\end{pmatrix}.
	\]
	Hence, $\|\bm{M_\varrho}\|=\|\bm{M_\varrho} w\|_{w}<\|w\|_w=1$ holds.
	Finally, from \eqref{eq:contraction_divided}, the operator $T$ defined in \eqref{eq:simp_Newton_op} becomes the contraction mapping on $B_{J}^{(0)}(\ba,\varrho_0)\times B_{J}^{(\infty)}(\ba,\varrho_\infty)$.
\end{proof}

\subsection{\boldmath$W_0$ bounds via interval inclusion of range of \boldmath$\bar{U}^{(0)}(t,s)$\unboldmath}\label{sec:W0_bounds}

The remaining tasks of this section consist of obtaining the bound $W_0$ satisfying \eqref{eq:bound_W_0}, which is associated with $\bU^{(0)}(t,s)$ defined in \eqref{eq:bU_m_definition}. From the definition, we have
\begin{align}\label{eq:supbU0}
\sup_{(t,s)\in \mathcal{S}_h}\left\| \bU^{(0)}(t,s)\right\|_{B(\ell^1_{\nu,1})}=\sup_{(t,s)\in \mathcal{S}_h}\left| C^{(0)}(t,s)\right|=\sup_{(t,s)\in \mathcal{S}_h}\left| \Phi(t)\Psi(s)\right|,
\end{align}
where $C^{(0)}(t,s)$ is a fundamental solution of \eqref{eq:linearizedeq_finite} and $\Phi(t)\in \C$ is the principal fundamental solution, which solves
\begin{align}\label{eq:Phi(t)}
\frac{d}{dt}\Phi(t)-2i\ba_0(t)\Phi(t)=0,\quad \Phi(0)=1,\quad t\in J=[0,h].
\end{align}
In addition, $\Psi(s)=\Phi(s)^{-1}$ is the solution of the adjoint problem of \eqref{eq:Phi(t)}
\[
\frac{d}{ds}{\Psi}(s)+2i\Psi(s)\ba_0(s)=0,\quad \Phi(0)=1,\quad s\in J.
\]
\begin{definition}
	In the rest of this section, we define the approximate solution $\ba(t)=(\ba_{k})_{|k|\le K}$ as
	\[
		\ba_{k}(t) \bydef \sum_{n=0}^{N-1}\ba_{n,k}T_n(t),
	\]
	where the Chebyshev polynomials $T_n: J \to \R$ $(n \ge 0)$ are orthogonal polynomials defined by $T_0(t)\bydef 1$, $T_1(t)\bydef\xi(t)$ and $T_{n+1}(t)\bydef2\xi(t) T_n(t) - T_{n-1}(t)$ for $n \ge 1$ with the rescaling $\xi(t) \bydef 2t/h -1$.
\end{definition}

Using the approach of \cite{MR3148084} and \cite[Section 3.2]{takayasu2019rigorous}, we rigorously compute the Chebyshev series expansion of the fundamental solutions $\Phi(t)$ and $\Psi(s)$, which are denoted by
\[
\Phi(t)=\sum_{n=0}^\infty c^{(\phi)}_n T_n(t),\quad \Psi(s)=\sum_{n=0}^\infty c^{(\psi)}_n T_n(s).
\]
The fundamental solution is represented by
\begin{align}\label{eq:PhiPsi}
C^{(0)}(t,s)=\Phi(t)\Psi(s)=\left(\sum_{n = 0}^\infty c^{(\phi)}_nT_n(t)\right)\left(\sum_{n = 0}^\infty c^{(\psi)}_nT_n(s)\right)
\end{align}

To obtain the bound $W_0$, we take  supremum of $|\Phi(t)\Psi(s)|$ over the simplex $(t,s)\in\cS_h$ using interval arithmetic. Firstly,  let $N\in\N$ be a truncate number of Chebyshev polynomials to approximate $\Phi(t)$ and $\Psi(s)$. The approach of \cite{MR3148084} provides the truncated coefficients $\bar{c}^{(\phi)} \bydef (\bar{c}^{(\phi)}_n)_{n< N}$ and $\bar{c}^{(\psi)} \bydef (\bar{c}^{(\psi)}_n)_{n< N}$ with rigorous error bounds $r_\Phi\ge 0$ (resp. $r_\Psi\ge 0$) such that
\[
\left|c^{(\phi)}-\bar{c}^{(\phi)}\right|_\nu \le r_\Phi\quad\left(\mbox{resp.}\quad	\left|c^{(\psi)}-\bar{c}^{(\psi)}\right|_\nu \le r_\Psi\right),
\]
where $c^{(\phi)}\bydef(c^{(\phi)}_n)_{n\ge 0}$ and $c^{(\psi)}\bydef(c^{(\psi)}_n)_{n\ge 0}$ are the Chebyshev coefficients of $\Phi(t)$ and $\Psi(t)$ respectively. Moreover, $|\cdot|_\nu$ is defined in \eqref{eq:ell_nu_one}.
From the property of Chebyshev polynomials $\sup_{t\in J}|T_n(t)|\le 1$, it follows for a fixed $t\in J$ that
\begin{align}
\Phi(t) &= \sum_{n = 0}^\infty c^{(\phi)}_nT_n(t)\\
&=  \sum_{n = 0}^{N-1} \bar{c}^{(\phi)}_nT_n(t) +  \sum_{n = 0}^{N-1} (c^{(\phi)}_n-\bar{c}^{(\phi)}_n)T_n(t) +  \sum_{n \ge N} c^{(\phi)}_nT_n(t)\\
&\in \sum_{n = 0}^{N-1} \bar{c}^{(\phi)}_nT_n(t) +  \left|c^{(\phi)}-\bar{c}^{(\phi)}\right|_\nu\cdot\langle 0,1 \rangle\\
&\in \sum_{n = 0}^{N-1} \bar{c}^{(\phi)}_nT_n(t) + \langle 0,r_{\Phi}\rangle,\label{eq:Phi_interval}
\end{align}
where $\langle c,r\rangle$ denote a complex interval centered at $c\in\C$ with the radius $r>0$.
Similarly, rigorous inclusion of $\Psi(s)$ using interval arithmetic is given by
\begin{align}\label{eq:Psi_interval}
\Psi(s) &\in \sum_{n = 0}^{N-1} \bar{c}^{(\psi)}_nT_n(s) + \langle 0,r_{\Psi}\rangle,\quad s\in J.
\end{align}

Secondly, for $P\in\N$, we divide the time step $J=[0,h]$ into $NP+1$ intervals\footnote{In the actual implementation, we fix $P=64$. It is possible to take more partitions, but in such a case the computation time will be slower.} by the formula $\bm{I}_j\bydef[t_j,t_{j+1}]$ ($j = 0,1,\dots,NP$), where $t_j\bydef(1+\cos\theta_j)h/2$ and $\theta_j = \pi(1-j/(NP+1))$. Let $T_n(\bm{I}_j)$ be interval extension such that
\begin{align}
T_n(\bm{I}_j)\supseteq \left\{T_n(t)=\cos\left(n\arccos\left(\xi(t)\right)\right):t\in\bm{I}_j\right\},\quad j=0,1,\dots,NP.
\end{align}

Thirdly, from \eqref{eq:supbU0}, \eqref{eq:PhiPsi}, \eqref{eq:Phi_interval}, and \eqref{eq:Psi_interval} we have the bound $W_0$ using interval arithmetic
\begin{align}
\sup_{(t,s)\in \mathcal{S}_h}\left| \Phi(t)\Psi(s)\right|&\le \sup_{(t,s)\in \mathcal{S}_h}\mathop{\textrm{mag}}\left(\left( \sum_{n = 0}^{N-1} \bar{c}^{(\phi)}_nT_n(t) + \langle 0,r_{\Phi}\rangle\right)\left(\sum_{n = 0}^{N-1} \bar{c}^{(\psi)}_nT_n(s) + \langle 0,r_{\Psi}\rangle\right)\right)\\
&\le \max_{0\le l\le j\le NP}\mathop{\textrm{mag}}\left(\left( \sum_{n = 0}^{N-1} \bar{c}^{(\phi)}_nT_n(\bm{I}_j) + \langle 0,r_{\Phi}\rangle\right)\left(\sum_{n = 0}^{N-1} \bar{c}^{(\psi)}_nT_n(\bm{I}_l) + \langle 0,r_{\Psi}\rangle\right)\right)\bydef W_0,
\end{align}
where $\mathop{\textrm{mag}}(\bm{I})$ denotes the magnitude of an interval $\bm{I}$ defined by $\mathop{\textrm{mag}}(\bm{I})\bydef\max\{|t|:t\in\bm{I}\}>0$.

\subsection{Time stepping scheme over multiple time intervals}
To sum up this section, we show a time stepping scheme of rigorous integrator to extend the local inclusion of solution of \eqref{eq:NLS_Quad} over multiple time intervals. Let $0=t_0<t_1<\dots$ be grid points of the time variable. We call $J_i \bydef [t_{i-1}, t_i]$ the $i$\,th time step. In addition, let us define $t_i \bydef ih_i$ ($i = 1, 2,\dots$) with the stepsize $h_i$ of $J_i$, which is adaptively changed. 

Firstly, we assume that the solution $a(t)$ of \eqref{eq:CGL_ODEs} is rigorously included in $B_{J_1}^{(0)}\left(\ba^{J_1}, \varrho_{0}\right) \times B_{J_1}^{(\infty)}\left(\ba^{J_1}, \varrho_{\infty}\right)$ defined in \eqref{eq:B0} and \eqref{eq:Binfty}. Secondly, to consider the next time step $J_2$, we set the time step $h_2$ and the approximate solution $\ba^{J_2}$ to satisfy the sufficient condition \eqref{eq:kappa_condition} of Theorem \ref{thm:sol_map} and obtain the matrix $\bm{U_h}$ defined by \eqref{eq:definition_of_U_h}.
Thirdly, the initial sequence is updated by a sequence at the endpoint of $J_1$, i.e., $\varphi = a(t_1)$. Replacing $J=J_2$, we apply Theorem \ref{thm:modified_thm} for the initial-boundary value problem on $J_2$. The error estimate of the zero-mode between the initial sequence and its approximate solution is bounded by 
\begin{align}
|\varphi_0-\ba^{J_2}_0(t_1)|\le |\varphi_0-\ba^{J_1}_0(t_1)|+|\ba^{J_1}_0(t_1)-\ba^{J_2}_0(t_1)|
\le \varrho_{0} +|\ba^{J_1}_0(t_1)-\ba^{J_2}_0(t_1)| = \varepsilon_0.
\end{align}
We note that $|\ba^{J_1}_0(t_1)-\ba^{J_2}_0(t_1)|$ is a tiny numerical error. For example, if we adopt Chebyshev polynomials to approximate the time variables, such a error becomes almost $10^{-14}$ but not zero. Similarly, the error estimate of the other modes is given by
\begin{align}
\sum_{k\neq 0}|\varphi_k-\ba^{J_2}_k(t_1)|\nu^{|k|}&\le \sum_{k\neq 0}|\varphi_k-\ba^{J_1}_k(t_1)|\nu^{|k|}+\sum_{k\neq 0}|\ba^{J_1}_k(t_1)-\ba^{J_2}_k(t_1)|\nu^{|k|}\\
&\le \varrho_{\infty} +\sum_{k\neq 0}|\ba^{J_1}_k(t_1)-\ba^{J_2}_k(t_1)|\nu^{|k|} = \varepsilon_\infty.
\end{align}
Fourthly, we validate the sufficient condition of Theorem \ref{thm:modified_thm} and obtain the next $\varrho_{0}$ and $\varrho_{\infty}$ of rigorous inclusion $B_{J_2}^{(0)}\left(\ba^{J_2}, \varrho_{0}\right) \times B_{J_2}^{(\infty)}\left(\ba^{J_2}, \varrho_{\infty}\right)$. Finally, continuing to the second part, we recursively repeat this process several times. Using this time stepping scheme we can extend the local inclusion of solution until either the sufficient condition of Theorem \ref{thm:local_inclusion} or that of Theorem \ref{thm:modified_thm} is no longer satisfied.

\begin{remark}
	When we numerically compute an approximate solution using Chebyshev polynomials for the time variables, the defect bound $\left\|F^{(j)}(\ba)\right\|_X\le\delta_j$ $(j=0,\infty)$ is given by the same way introduced in our previous paper. Let us refer to \cite[Section 4.2]{takayasu2019rigorous} for computing the bound $\delta_j$.
\end{remark}

\section{Proof of Theorem \ref{thm:Heteroclinics}}
\label{sec:FinalTheorem}

As established by Theorem \ref{thm:Equilibria}, there exist two spectrally unstable equilibria  $u_1^i$ and $u_1^{ii}$ to \eqref{eq:NLS_Quad} which generate infinite families of spectrally  unstable equilibria. 
Let $\tilde{u}$ denote either of the equilibria  $u_1^i$ or $u_1^{ii}$, and let $ \tilde{u}^*$ denote its complex conjugate. 
To prove Theorem \ref{thm:Heteroclinics}, we construct two heteroclinic orbits $ u_{a}(t)$  and $ u_{a^*}(t)$  such that 
\begin{align*}
\lim_{t \to -\infty} u_{a}(t) &= \tilde{u} &
\lim_{t \to +\infty} u_{a}(t) &= 0
\\ 
\lim_{t \to -\infty} u_{a^*}(t) &= \tilde{u}^*
&
\lim_{t \to +\infty} u_{a^*}(t) &=  0.
\end{align*}
One may check that 	if  $ u(t,x)$ is a solution to $u_t=i(u_{xx} +u^2)$, 
then $w(t,x)\bydef u(-t,x)^*$ is also a solution. 
Hence, $u_{b}(t) \bydef (u_{a^*}(-t))^*$  and $u_{b^*}(t) \bydef (u_{a}(-t))^*$ are connecting orbits satisfying 
\begin{align*}
\lim_{t \to -\infty} u_{b}(t) &= 0 &
\lim_{t \to +\infty} u_{b}(t) &= \tilde{u}
\\ 
\lim_{t \to -\infty} u_{b^*}(t) &= 0
&
\lim_{t \to +\infty} u_{b^*}(t) &=  \tilde{u}^*.
\end{align*}
By the rescaling \eqref{eq:RescalingSolutions}, there exist similar heteroclinic solutions if $\tilde{u}_n$ was some rescaling of $ \tilde{u}$.  

Thus, it suffices to 
prove  the existence of heteroclinic solutions $ u_{a}(t)$  and $ u_{a^*}(t)$, whose difference is the starting equilibrium $\tilde{u}$ or $ \tilde{u}^*$. 
We establish the existence of each heteroclinic constructively by a computer-assisted proof consisting of the following three steps:
\begin{description}
	\item[Step 1.] Set an equilibrium $\tilde{u}$ of \eqref{eq:NLS_Quad}. Validating the Fourier coefficients of the equilibrium $\ta$ satisfying \eqref{eq:f=0_steady_state} and an eigenpair $(\tilde{\lambda},\tb)$ satisfying \eqref{eq:g=0_eigenpair} with respect to the unstable direction attached to the equilibrium (this is done using the approach presented in Appendix~\ref{sec:eigenpairs}), we rigorously construct a part of unstable manifold $P(\sigma)$ using the Parameterization Method introduced in Section \ref{sec:UnstableManifold}.
	\item[Step 2.] From the endpoint of the unstable manifold denoted by $P(1)$ or $P(-1)$, our rigorous integrator provided in Section \ref{sec:Integrator} propagates the rigorous inclusion forward in time by using the time stepping scheme. At the end of each time step, we check whether the solution of \eqref{eq:NLS_Quad} is in the stable region by checking the hypothesis of Theorem \ref{thm:HomoclinicBlowup}.
	\item[Step 3.]  If the hypothesis of Theorem \ref{thm:HomoclinicBlowup} is obtained at a certain time (after several time stepping processes), then whole orbit connects from the equilibrium $\ta$ to the zero function, which completes the proof.
\end{description}
We refer to a conceptual picture (Figure \ref{fig:proof}) to summarize the entire proof.
\begin{figure}[ht]
	\centering
	\includegraphics[width = .9 \textwidth]{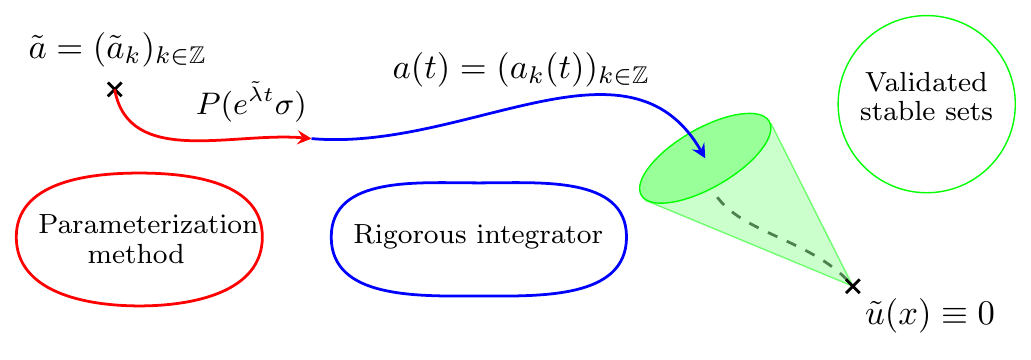}
	\caption{The entire proof consists of three steps: Parameterization method in Section \ref{sec:UnstableManifold}, rigorous integrator in Section \ref{sec:Integrator}, and validation of global existence in Section \ref{sec:CenterManifold}.}\label{fig:proof}
\end{figure}


All computations are carried out on Microsoft Windows 10 Pro, Intel(R) Core(TM) i9-10900K CPU@3.70 GHz, and MATLAB 2020b with INTLAB - INTerval LABoratory \cite{Ru99a} version 11 and Chebfun - numerical computing with functions \cite{MR2767023} version 5.7.0. All codes used to produce the proof in this section are freely available from \cite{bib:codes}.

%
%


We show the proof of a heteroclinic connection from $u^{i}_1(x)$ given by Theorem \ref{thm:Equilibria} to the zero equilibrium, whose time evolution is displayed in Figure \ref{fig:CO_from_1}.

\begin{figure}[htbp]
	\centering
	\includegraphics[width = .92 \textwidth]{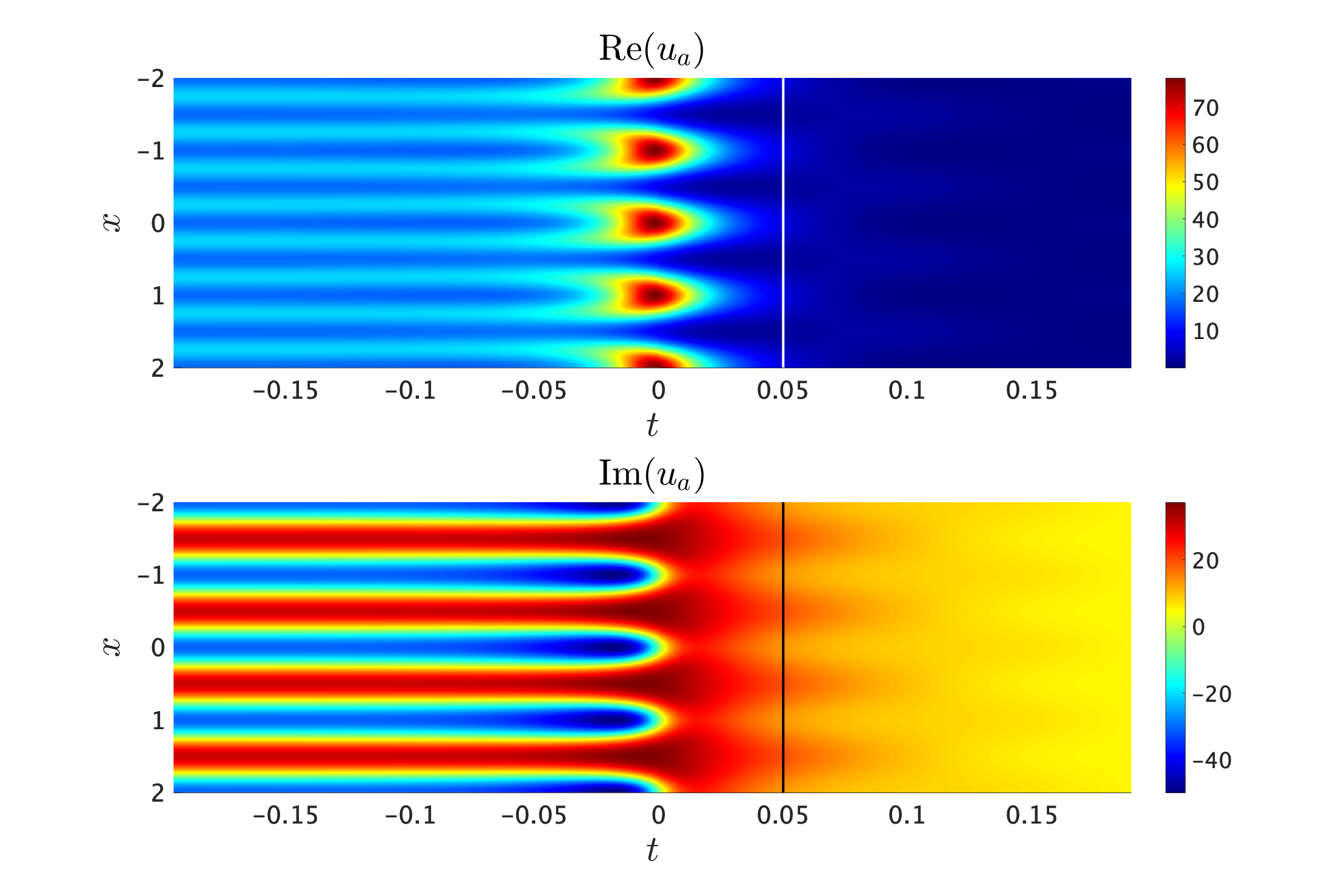}\\[-4pt]
	($a$) The heteroclinic solution $u_a$: connection from $u^i_1(x)$ to $0$.
	\includegraphics[width = .92 \textwidth]{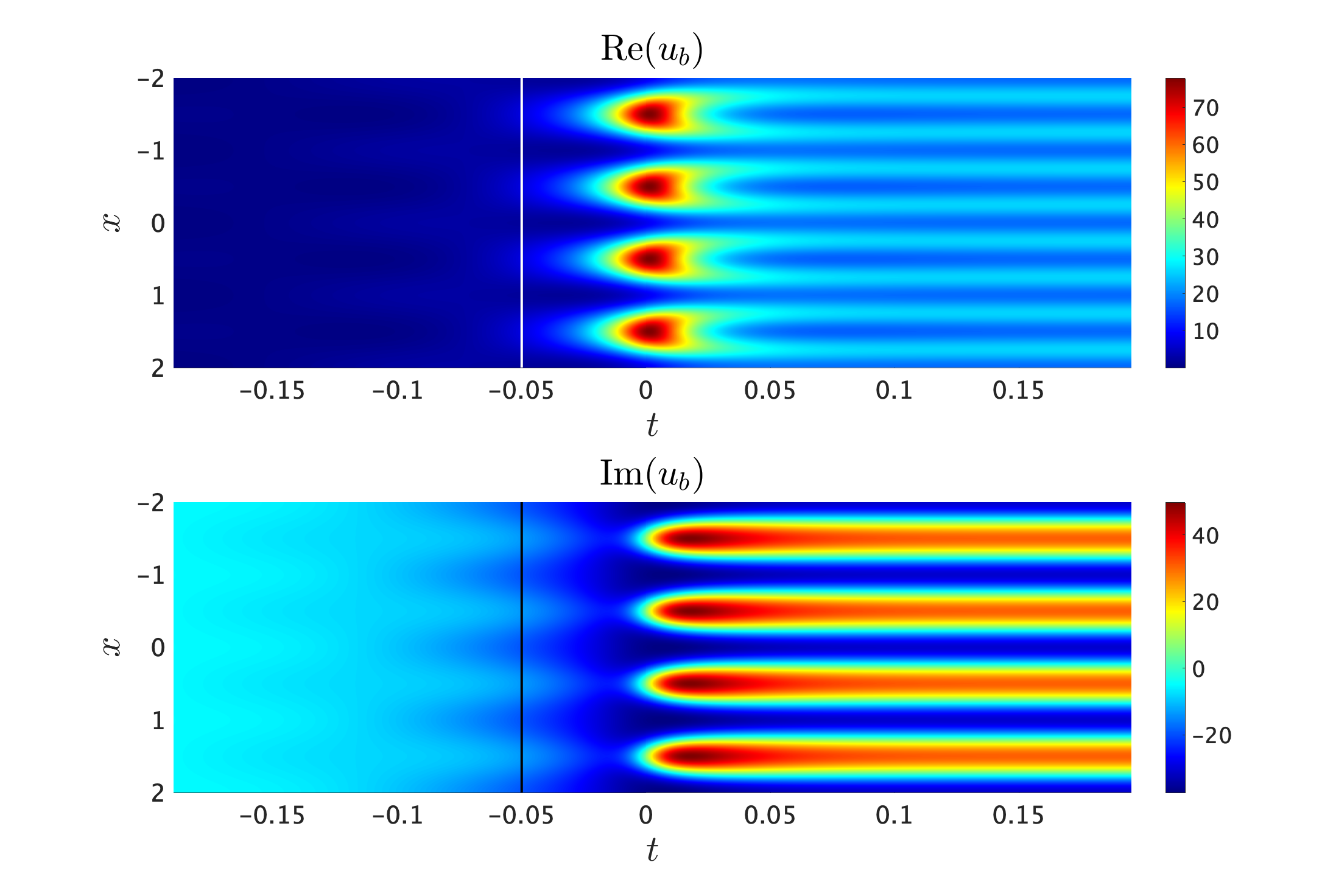}\\[-4pt]
	($b$) The heteroclinic solution $u_b$: connection from 0 to $u^i_1(x)$.
	\caption{Heteroclinic orbits of \eqref{eq:NLS_Quad} between $u^i_1(x)$ and $0$. ($a$)~We parameterized the unstable manifold attached to $u^i_1(x)$ for negative $t$ via rigorous numerics. From $t=0$ to the white/black line $t_{\mathrm{end}}=0.05$, rigorous integrator propagates the whole orbit including $P(1)$ to the stable region. At this line the proof is completed. ($b$) We plot the result of the computer-assisted proof of connection from $(u^i_1(x))^*$ to $0$ by the form $(u_{a^*}(-t))^*$, which corresponds to the connection from 0 to $u^i_1(x)$.}\label{fig:CO_from_1}
\end{figure}

\paragraph{Step 1.} Let us consider the equilibrium $u^{i}_1(x)$ (see Figure \ref{fig:equilibria}) of \eqref{eq:NLS_Quad} using Fourier series. The approach for the zero finding eigenvalue problem presented in Section \ref{sec:eigenpairs} provides the rigorous steady state $\ta$ and the eigenpair $(\tilde{\lambda},\tb)$ such that
\[
	|\ta -\ba|_\nu\le r_0,\quad |\tilde{\lambda}-\bar{\lambda}|\le r_0,\quad |\tb -\bb|_\nu\le r_0
\]
with $r_0=1.61\times 10^{-12}$, where $|\cdot|_\nu$ is defined in \eqref{eq:ell_nu_one} ($\nu=1$ in the rest of proofs). 
The execution time was about 0.15 sec. From these data we parameterize a subset of the unstable manifold of $\ta$ introduced in Section \ref{sec:UnstableManifold}. The parameterization method yields a solution $\tp=(\tp_{k,m})_{k,m\ge 0}\in X^{\nu}$ of $f(\tp)=0$ defined in \eqref{eq:f=0_manifold}, which denotes Taylor-Fourier coefficients of the solution $P(\sigma)$ in \eqref{eq:P_power_series}. The result of rigorous error bound is
\[
	\|\tp-\bp\|_{\nu} \le r_p,\quad r_p=5.91\times 10^{-10},
\]
where  $\|\cdot\|_\nu$ is defined in \eqref{eq:X_nu}.
From Lemma \ref{lem:parameterization} we have the end point of the unstable manifold satisfying $a(0) = P(1)$.
It follows that, at the end point of the unstable manifold, we have a rigorous inclusion such that
\[
	\|a(0)-\bar{\varphi}\|\le r_p,\quad\bar{\varphi}=\left(\sum_{m=0}^{M}\bp_{|k|,m}\right)_{|k|\le K}
\]
where $\|\cdot\|$ is given in Definition \ref{def:ell_nu^1} and $(K,M)=(27,150)$. In total the execution time for the parameterization method was about  89.4 sec.

\paragraph{Step 2.} 
Next, rigorous integrator provided in Section \ref{sec:Integrator} starts from the rigorous inclusion of the endpoint of unstable manifold.  Set the initial sequence $\varphi=P(1)$ and the step size $h = 2.5\times 10^{-3}$ equidistantly. We have the following initial error estimate, which is used in Theorem \ref{thm:modified_thm}:
\begin{align}\label{eq:varepsilon_0}
	|\varphi_0-\ba_0(0)|\le |\varphi_0-\bar{\varphi}_0|+|\bar{\varphi}_0-\ba_0(0)| \le r_p +|\bar{\varphi}_0-\ba_0(0)| = \varepsilon_0
\end{align}
\begin{align}\label{eq:varepsilon_inf}
	\|\varphi^{(\infty)}-\ba^{(\infty)}(0)\|\le \|\varphi^{(\infty)}-\bar{\varphi}^{(\infty)}\|+\|\bar{\varphi}^{(\infty)}-\ba^{(\infty)}(0)\| \le r_p+\|\bar{\varphi}^{(\infty)}-\ba^{(\infty)}(0)\|  =\varepsilon_\infty.
\end{align}
After 20 time stepping (at $t_{\mathrm{end}}=0.05$) our integrator yields the rigorous inclusion of solution trajectory with $\varrho_0 = 1.43\times 10^{-8}$ and $\varrho_\infty = 1.97\times 10^{-7}$. Then the hypothesis of Theorem \ref{thm:HomoclinicBlowup} holds. The number of Chebyshev polynomials for the time variable is $13$ and the computational time for rigorous integration was almost 19.5 sec.
\paragraph{Step 3.}
While Theorem \ref{prop:PowerGlobalExistence} may suffice to verify the solution converges to zero, we instead directly check the hypothesis of Theorem \ref{thm:HomoclinicBlowup} which offers a sharper result. Let us define $ z_0 \in \C$, and $\phi, \bar{\phi} \in \ell_{\nu}^1$ by    
\begin{align}
z_0 &\bydef 
\ba_0(t_{\mathrm{end}}) ,
&
\bar{\phi}  &\bydef  \bar{a}^{(\infty)}(t_{\mathrm{end}}),
&
(	\phi)_k &\bydef 
\begin{cases}
a_0(t_{\mathrm{end}}) - z_0  & \mbox{if } k =0 \\
a_k(t_{\mathrm{end}})  & \mbox{else }.
\end{cases}
\end{align}
Recall that $ \iota_0: \C \hookrightarrow  \ell_\nu^1$ denotes the inclusion into the 0\textsuperscript{th} Fourier mode.
Hence $a(t_{\mathrm{end}})  = \iota_0 \circ z_0 + \phi$ 
  and we note that $\| \phi\| \leq \| \bar{\phi}\| + \varrho_{0} + \varrho_{\infty}$. 
 We may then define $ \rho_0 \bydef | z_0| $ and $\rho_1 \bydef (\| \bar{\phi}\| + \varrho_{0} + \varrho_{\infty}) / \rho_0^2$. 
 Hence $ \| \phi \| \leq \rho_1 | z_0|^2$, and if $Im(z_0) \geq 0$ then $ \phi + \iota_0 \circ z_0\in \cB(\rho_0,\rho_1)$. 
By Theorem \ref{thm:HomoclinicBlowup} and Remark \ref{rem:StableSetP2}, if there exists some $ r >0$ such that 
 $	\rho_1  \exp \left\{  \tfrac{\pi}{2} \rho_0 r \right\} < r$  then $ \lim_{t\to +\infty} a(t) = 0$.  

For this example, this is achieved with values  $\rho_0=17.14$,  $\rho_1=0.014$ and  $r=0.036$.  
The execution time of Step 3 is less than 0.1 sec. Finally the proof for the heteroclinic solution from $u^{i}_1(x)$ to $ u \equiv 0$ is completed. \qed
\newline

In a similar manner, we construct a computer-assisted proof of a heteroclinic orbit from $ (u_1^i(x))^*$ to zero. 
We are also able to prove the existence of other heteroclinic orbits from $u_i^1$ to zero by starting with other points on its unstable manifold. 
In Figure \ref{fig:CO_from_minus_1} is displayed another connecting orbit with the end point of unstable manifold by the form $P(-1)$ in \eqref{eq:P_power_series}. 
This proof is achieved in the same manner, albeit with different  computational parameters. The most notable difference with this second computer-assisted proof being the number of time-steps the rigorous integrator had to take -- 20 time-steps for the former and 2523 for the latter --  before we could verify the solution was in the stable set of the zero-equilibrium. 
Indeed, as one can see from Figure \ref{fig:CO_from_minus_1}, a longer time is required for the non-zero Fourier modes to decay in comparison to Figure \ref{fig:CO_from_1}.

As the final proof, we show the existence of another connecting orbit, which is from $u^{ii}_1(x)$ to the zero equilibrium.
We display the time evolution of the solution in Figure \ref{fig:CO_pt2_from_1}\,($a$). We note that the amplitude of this equilibrium is 10 times larger than $u^{i}_1(x)$, and the connection from such a large equilibrium to zero function is quite nontrivial. 
The proof of this connecting orbit is much difficult than previous two proofs as we had to contend with significantly larger validated error bounds. 
In a similar fashion we are able to produce a computer-assisted proof of a heteroclinic orbit from $(u_2^i(x))^*$ to the zero solution in Figure \ref{fig:CO_pt2_from_1}\,($b$). 
 A summary of the major computational parameters used in our computer-assisted proofs are given in Appendix \ref{sec:ComputationalTable}.

\begin{figure}[htbp]
	\centering
	\includegraphics[width = .92 \textwidth]{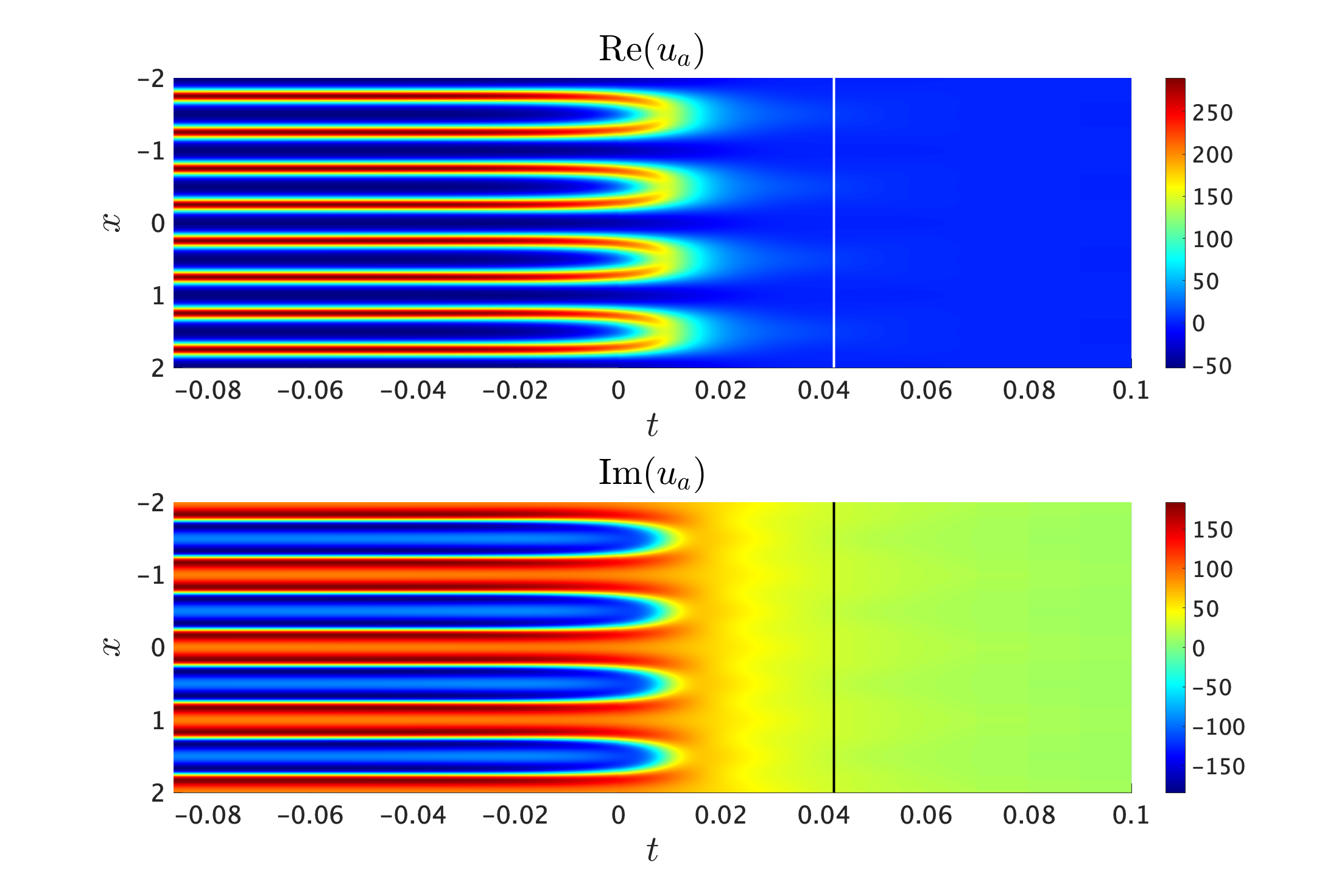}\\[-4pt]
	($a$) The heteroclinic solution $u_a$: connection from $u^i_1(x)$ to $0$.
	\includegraphics[width = .92 \textwidth]{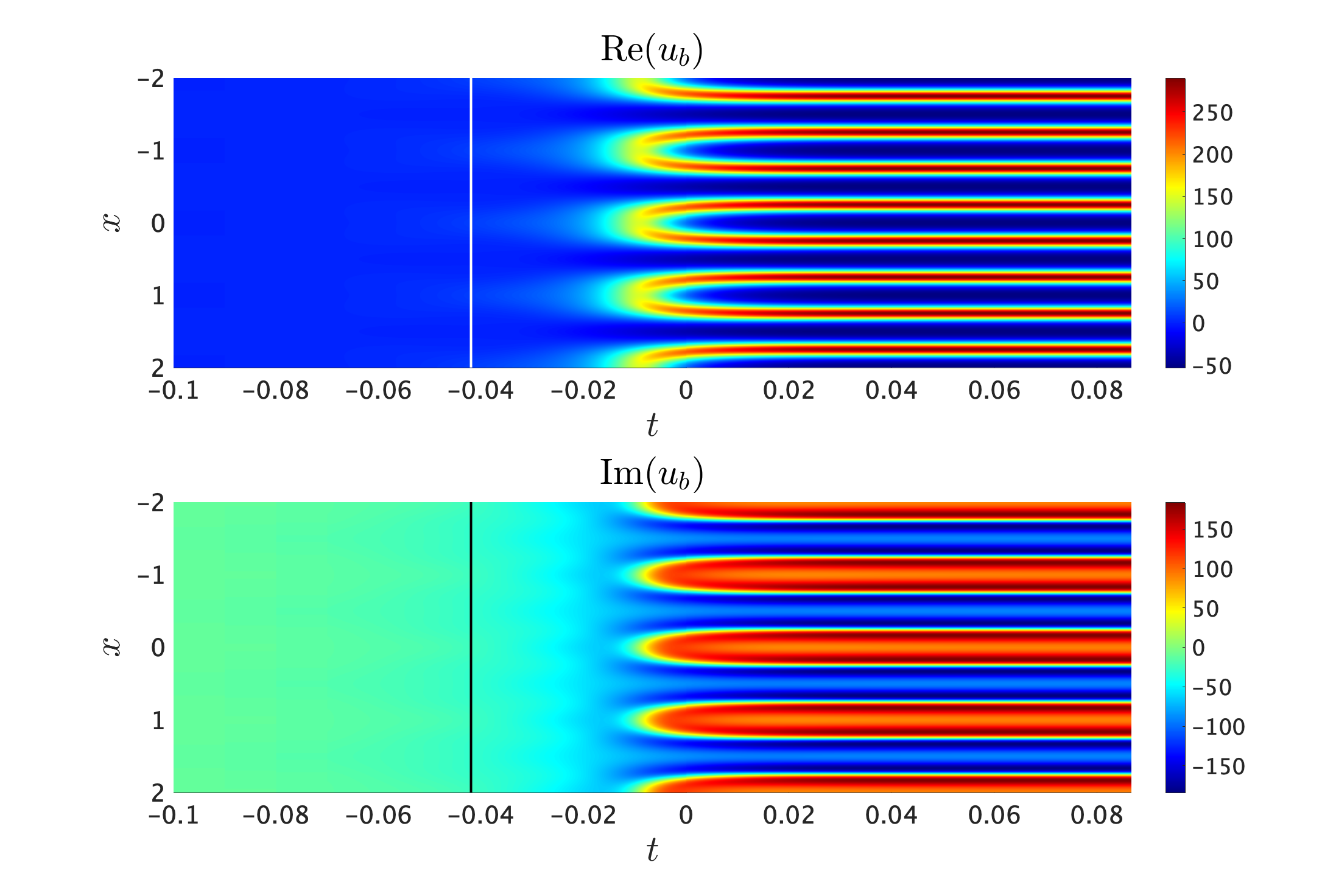}\\[-4pt]
	($b$) The heteroclinic solution $u_b$: connection from 0 to $u^i_1(x)$.
	\caption{Heteroclinic orbits of \eqref{eq:NLS_Quad} between $u^{ii}_1(x)$ to $0$. Proofs of these orbits are achieved in the same manner. Note that the amplitude of this orbit is much bigger than the previous ones. Such connection is quite nontrivial to prove. }\label{fig:CO_pt2_from_1}
\end{figure}

%
%
%
%
%



\section{Open Questions} 
\label{sec:OpenQuestions}

Through rigorous mathematical analysis, we have established the existence of rich dynamical behavior in nonconservative nonlinear Schr\"odinger equations. 
However all the phenomena we suspect to occur exceeds that which we can prove rigorously. 
Still there are nascent patterns we have noticed and merit further investigation, yet we have not the space in a single paper to fully expound.  
Below we have compiled a list of open questions concerning the equation \eqref{eq:NLS_Quad}. 
Similar questions, of course, may be asked for other equations of the type \eqref{eq:NLS}.

\begin{enumerate}
	\item As was conjectured in \cite{Cho2016}, do solutions to \eqref{eq:NLS_Quad} with  real initial data exist for all time?  We note that Theorem \ref{prop:PowerGlobalExistence} treats the case of arbitrarily large, yet linearly close to constant, real initial data. 
	
	\item In our numerical simulations, as with those reported in \cite{Cho2016}, it appears that real initial data limits in forward and backward time to the zero equilibrium. 
	Could this be proven? 
	Alternatively, there could exist real initial data which limits to a non-trivial equilibrium or some other invariant set. 	Could the existence of such a solution be proven?

		\item A differential equation is generally considered to be integrable if it has a maximal set of conserved quantities, and if one may write down solutions explicitly, for example by quadrature or the inverse scattering method. 
	In Theorem \ref{thm:OpenHomoclinics} we showed that  the equation \eqref{eq:NLS} has an open set of homoclinic orbits limiting to zero, and by Theorem \ref{thm:NoConservedQuantities} this forces the non-existence of any real analytic conserved quantities. 
	This property is also shared by the dynamics $\dot{z} = i z^2$ in the spatially homogeneous subsystem of \eqref{eq:NLS_Quad}, see Figure \ref{fig:HomogeneousDynamics}. 
	Nevertheless, as one may see from  \eqref{eq:PpowerSolution}, we are able to give explicit solutions to this ordinary differential equation, and moreover it enjoys a singular conserved quantity  $ V(z) = \frac{1}{z} + \frac{1}{z^*} $.  
	The reader is referred  to \cite{braddell2019invitation} for related work on singular symplectic structures. 
	
	From our attempts, we have been unable to explicitly construct any analogous type of singular conserved quantity preserved by the PDE \eqref{eq:NLS_Quad}. 
	Do there exist any such singular or meromorphic conserved quantities? 
	
	\item The manner in which we found the equilibria in Theorem \ref{thm:Equilibria} was essentially by randomly choosing some sequence of Fourier coefficients, and then applying Newton's method. 
	Is there a systematic classification of all the equilibria to \eqref{eq:NLS_Quad}?

	\item Numerical evidence suggests that the linearization about the non-trivial equilibria have an infinite number of eigenvalues on the negative imaginary axis, and only finitely many eigenvalues have non-zero real part.  Can this be proven? 
	
	\item Numerical evidence suggests that under the rescaling $\tilde{u} \mapsto n^2 \tilde{u}(nx)$, the linearization about the equilibria has an increasing number of  eigenvalues with non-zero real part, and an increasing number of eigenvalues have a positive imaginary part. Can this be explained and proven? 
	
	\item Do there exist periodic solutions to \eqref{eq:NLS_Quad}? Current work is underway by the first author studying periodic solutions near the zero equilibria. The existence of (presumably) imaginary eigenvalues to the linearization about the nontrivial equilibria suggest there exists periodic solutions nearby.

	\item If the linearization about the nontrivial equilibria has two imaginary eigenvalues which are  mutually irrational, that would suggest the existence of invariant tori. Can one prove this and/or the existence of quasi-periodic orbits?

	\item Suppose there exist periodic or quasi-periodic orbits nearby the non-trivial equilibria. Can the heteroclinic orbits from Theorem \ref{thm:Heteroclinics} be perturbed to construct  heteroclinic orbits between the zero equilibria and these (quasi)periodic orbits?

	\item Theorem \ref{thm:Heteroclinics} establishes the existence of a heteroclinic loop between the zero equilibria  and each of the non-trivial equilibria in Theorem \ref{thm:Equilibria}. 
	Do homoclinic orbits from the zero equilibrium exist as a small perturbation away from this heteroclinic loop? Moreover, if there exist periodic orbits or quasi-periodic orbits near the non-trivial equilibria, can one construct a heteroclinic loop between these orbits and the zero equilibria? Can these heteroclinic loops be perturbed to construct a homoclinic orbit? 

	\item The equilibria $u^i$ has  a one complex dimensional strong unstable manifold $P( \sigma)$, and we have proven for some specific values of $ \sigma_0 = r_0 e^{i \theta_0}$ that the trajectory $ P(\sigma_0)$ limits to the zero equilibrium in forward time. 
	Numerical evidence suggests that there is a unique angle $\theta_{B} \in [0,2 \pi]$,  for which the solution with initial data  $ P( r_0 e^{i \theta_{B} })$  will blow up in finite time. This blowup profile is similar  to that in Figure 	\ref{fig:CO_from_1}. 
	For all other angles $ \theta \neq \theta_B\mod 2\pi$, the solution with initial data  $ P( r_0 e^{i \theta })$  appears to converge to $0$. This also seems to be the case at the other equilibrium $u^{ii}$.   Can this be proven?

	\item It could be possible that there exists a connecting orbit between the non-trivial equilibria, or between other invariant sets not including the zero equilibrium. 
	Could the existence or  non-existence of such a trajectory be proven? 
	
	\item It could be possible that there exists an unstable,  chaotic invariant set in this equation.  
	Could the existence or  non-existence of such a set be proven?

\end{enumerate}


\section*{Acknowledgements} 
The authors would like to thank  A. Delshams, E. Miranda, J.D. Mireles James, T. Wanner,  and C.E. Wayne for informative discussions.  
The second author was supported by an NSERC Discovery Grant and an NSERC Accelerator Supplement.
The third author was supported by JSPS KAKENHI Grant Numbers JP18K13453, JP20H01820.

\appendix
\section{Appendix}

\subsection{Proof of Theorem \ref{thm:Equilibria}: The zero finding eigenvalue problem} \label{sec:eigenpairs}

To prove the theorem, it suffices to prove the existence of distinct, fundamental  equilibria, and their associated eigenvalue-eigenvector pair. 
The rest follows by rescaling. 
Looking for steady states to the NLS equation leads to 
\begin{equation}\label{eq:f=0_v0}
i \left( -k^2\omega^2a_k+\left(a^2 \right)_k\right) = 0,
\end{equation}
with $\omega \bydef 2 \pi$. An eigenvalue-eigenvector couple $(\lambda,b)$ associated to a steady-state $a$ satisfies 
\begin{equation}\label{eq:g=0_v0}
i  \left( -k^2\omega^2 b_k + 2 \left(a*b \right)_k\right) - \lambda b_k = 0.
\end{equation}

Note that the factor $i $ can be removed when solving \eqref{eq:f=0_v0} since $i  \ne 0$. Denoting $\mu_k \bydef -k^2\omega^2$, this leads to 
\begin{equation}\label{eq:f=0}
(f_1(a))_k \bydef \mu_k a_k+\left(a^2 \right)_k  = 0.
\end{equation}
and
\begin{equation}\label{eq:g=0}
(f_2(\lambda,a,b))_k \bydef  i  \left(\mu_k b_k + 2 \left(a*b \right)_k \right) - \lambda b_k = 0.
\end{equation}

Now, since eigenvectors come in continuous families (any rescaling also gives an eigenvector), we isolate them by imposing a linear phase condition of the form $\eta(b) = \ell(b)-1=0 \in \C$, which typically fixes a component of $b$.

We impose the conditions $a_{-k}=a_k \in \C$ for the steady states, which correspond to Neumann-0 boundary conditions. This implies that we impose the same condition on the eigenvectors $b$, that is $b_{-k}=b_k \in \C$. This symmetry condition on $a,b$ implies that $(f_1(a))_{-k}=(f_1(a))_{-k}$ and $(f_2(\lambda,a,b))_{-k}=(f_2(\lambda,a,b))_{k}$. Hence, we only need to solve $(f_1)_k=0$ and $(f_2)_k=0$ for $k \ge 0$. Hence, let $a \bydef (a_k)_{k\ge0}$, $b \bydef (b_k)_{k\ge0}$, $f_1(a) \bydef ((f_1(a))_k)_{k\ge0}$ and $f_2(\lambda,a,b) \bydef ((f_2(\lambda,a,b))_k)_{k\ge0}$. Using this notation, denote $x=(\lambda,a,b)$ and define the nonlinear operator
\begin{equation} \label{eq:F=0_eigs}
F(x) \bydef
\begin{pmatrix}
\eta(b)
\\
f_1(a)
\\
f_2(\lambda,a,b)
\end{pmatrix}.
\end{equation}

Given a weight $\nu \ge 1$, recall the Banach space $\ell_\nu^1$ given in \eqref{eq:ell_nu_one}, and define $X \bydef \C \times \ell_\nu^1 \times \ell_\nu^1$,
%
%
with the induced norm, given $x=(\lambda,a,b) \in X$,
\begin{equation} \label{eq:normX}
\| x \|_X \bydef \max\{ |\lambda|, |a|_\nu,|b|_{\nu} \}.
\end{equation}
The problem of simultaneously looking for a steady state $a$ and an eigenpair $(\lambda,b)$ therefore reduces to finding $x \in X$ such that $F(x)=0$, where the map $F$ is defined component-wise in \eqref{eq:F=0_eigs}. Solving the problem $F=0$ in $X$ is done using computer-assisted proofs via the Newton-Kantorovich type Theorem~\ref{thm:radii_polynomials}.

%
%
%

Proving the existence of a solution of $F=0$ using Theorem~\ref{thm:radii_polynomials} is often called the {\em radii polynomial approach}. This approach begins by computing an approximate solution $\bx$ of $F=0$. This first requires considering a finite dimensional projection. Fixing a Fourier projection size $m \in \N$, denote a finite dimensional projection of $x \in X$ by $x^{(m)} = \big( \lambda,(a_k)_{k=0}^{m-1} ,(b_k)_{k=0}^{m-1} \big) \in \C^{2m+1}$. The finite dimensional projection of $F$ is then given by $F^{(m)}=(\eta-1,f_1^{(m)},f_2^{(m)}):\C^{2m+1} \to \C^{2m+1}$ defined by 
\begin{equation} \label{eq:projection_F}
F^{(m)}(\lambda,a^{(m)},b^{(m)}) \bydef 
\begin{bmatrix}
\eta(b^{(m)})
\\
\left( (f_1(a^{(m)}))_k \right)_{0\leq k<m}
\\
\left( (f_2(\lambda,a^{(m)},b^{(m)}))_k \right)_{0\leq k<m}
\end{bmatrix}.
\end{equation}
%
Assume that a solution $\bx^{(m)}$ such that $F^{(m)}(\bx^{(m)}) \approx 0$ has been computed (e.g.~using Newton's method).
Denote $\ba = \left( \ba_0 ,\dots,\ba_{m-1},0,0,0,\dots \right)$ the vector which consists of embedding $\ba^{(m)} \in \C^m$ in the infinite dimensional space $\ell_\nu^1$ by {\em padding} the tail by infinitely many zeroes. We treat $\bb$ similarly. Denote $\bx = (\bar \lambda, \ba,\bb)$, and for the sake of simplicity of the presentation, we use the same notation $\bx$ to denote $\bx \in X$ and $\bx^{(m)} \in \C^{2m+1}$. Denote by $DF^{(m)}(\bx)$ the Jacobian of $F^{(m)}$ at $\bx$, and let us write it as
\[
DF^{(m)}(\bx)=
\begin{pmatrix}
0 & 0 & D_b \eta \\
0 & D_{a} f_1^{(m)}(\ba) & 0 \\
D_{\lambda} f_2^{(m)}(\bar \lambda, \ba,\bb) & D_{a} f_2^{(m)}(\bar \lambda, \ba,\bb) & D_{b} f_2^{(m)}(\bar \lambda, \ba,\bb)
\end{pmatrix} \in M_{2m+1}(\C).
\]
From now on, we denote $a_1=a$, $a_2=b$, and $x = (\lambda,a_1,a_2) \in X$.

The next step is to construct the linear operator $A^\dag$ (an approximate derivative of the derivative $DF(\bx)$), and the linear operator $A$ (an approximate inverse of $DF(\bx)$). Let
\begin{equation} \label{eq:dagA_eigenpair}
A^\dagger=
\begin{pmatrix}
A_{0,0}^\dagger &  A_{0,1}^\dagger & A_{0,2}^\dagger\\
A_{1,0}^\dagger & A_{1,1}^\dagger & A_{1,2}^\dagger\\
A_{2,0}^\dagger & A_{2,1}^\dagger & A_{2,2}^\dagger
\end{pmatrix} ,
\end{equation}
whose action on an element $h=(h_0,h_1,h_2) \in X$ is defined by $(A^\dagger h)_i = A_{i,0}^\dagger h_0 + A_{i,1}^\dagger h_1 + A_{i,2}^\dagger h_2$, for $i=0,1,2$. For $i=1,2$, the action of $A_{i,j}^\dagger$ is defined as
\begin{align*}
(A_{i,1}^\dagger h_1)_k &= 
\begin{cases}
\bigl(D_{a_1} f_i^{(m)}(\bx) h_1^{(m)} \bigr)_k &\quad\text{for }   0 \leq k < m ,  \\
\delta_{i,2} \mu_k (h_1)_k  &\quad\text{for }  k \ge m,
\end{cases}
\\
(A_{i,2}^\dagger h_2)_k &= 
\begin{cases}
\bigl(D_{a_2} f_i^{(m)}(\bx) h_2^{(m)} \bigr)_k &\quad\text{for }   0 \leq k < m,   \\
\delta_{i,1} i  \mu_k (h_2)_k  &\quad\text{for }  k \ge m,
\end{cases}
\end{align*}
where $\delta_{i,j}$ is the Kronecker $\delta$. 
Consider now a matrix $A^{(m)} \in M_{2m+1}(\C)$ computed so that $A^{(m)} \approx {DF^{(m)}(\bx)}^{-1}$. We decompose it into nine blocks:
\[
A^{(m)}=
\begin{pmatrix}
A_{0,0}^{(m)} & A_{0,1}^{(m)} & A_{0,2}^{(m)}\\
A_{1,0}^{(m)} & A_{1,1}^{(m)} & A_{1,2}^{(m)}\\
A_{2,0}^{(m)} & A_{2,1}^{(m)} & A_{2,2}^{(m)}
\end{pmatrix}.
\]
This allows defining the linear operator $A$ as
\begin{equation} \label{eq:A_eigenpair}
A=
\begin{pmatrix}
A_{0,0} & A_{0,1} & A_{0,2}\\
A_{1,0} & A_{1,1} & A_{1,2}\\
A_{2,0} & A_{2,1} & A_{2,2}
\end{pmatrix} ,
\end{equation}
whose action on an element $h=(h_0,h_1,h_2) \in X$ is defined by $(Ah)_i = A_{i,0} h_0 +A_{i,1} h_1 + A_{i,2} h_2$, for $i=0,1,2$. 
For $i,j=1,2$, $A_{i,j}$  is defined as
 \begin{align*}
 (A_{i,1} h_1)_k &=
 \begin{cases}
 \left(A_{i,1}^{(m)} h_1^{(m)} \right)_k & \text{for }   0 \leq k < m    \\
\delta_{i,2} \frac{1}{\mu_k} (h_1)_k  & \text{for }  k \ge m 
 \end{cases}
 \\
 (A_{i,2} h_2)_k &=
 \begin{cases}
 \left( A_{i,2}^{(m)}  h_2^{(m)} \right)_k & \text{for }   0 \leq k < m   \\
 \delta_{i,1} \frac{1}{i  \mu_k} (h_2)_k  & \text{for }  k  \ge m.
 \end{cases}
 \end{align*}
Having obtained an approximate solution $\ba$ and the linear operators $\dagA$ and $A$, the next step is to construct the bounds $Y_0$, $Z_0$, $Z_1$ and $Z_2(r)$ satisfying \eqref{eq:general_Y_0}, \eqref{eq:general_Z_0}, \eqref{eq:general_Z_1} and \eqref{eq:general_Z_2}, respectively.  

Since the bounds $Y_0$ and $Z_0$ are standard, we only present the derivation of the bounds $Z_1$ and $Z_2$.

\subsubsection{The \boldmath $Z_1$ \unboldmath bound} \label{sec:Z1_Appendix}

Recall that we look for the bound 
$
\| A[DF(\bx) - A^{\dagger} ] \|_{B(X)} \le Z_1
$.
Given $h=(h_0,h_1,h_2) \in X$ with $\|h\|_X \le 1$, set
\[
z \bydef [DF(\bx) -  A^{\dagger} ] h.
\]
Denote $z=(z_0,z_1,z_2)$. Recalling \eqref{eq:f=0} and \eqref{eq:g=0}, then 
$f_1(a) = \mu a + a^2$ and $f_2(\lambda,a,b) =  i  \left(\mu b + 2 a*b  \right) - \lambda b$, where $\mu$ is a diagonal operator with diagonal entries $\mu_k$, $k \ge 0$.
Since in $z$ some of the terms involving $((h_1)_k)_{k=0}^{m-1}$ will cancel, it is useful to introduce $\widehat h_1$ as follows:
\[
 (\widehat h_1)_k \bydef \begin{cases} 0 & \text{if } k<m,\\
  (h_1)_k & \text{if } k \geq m. \end{cases}
\]
Then, $z_0=0$, and 
 \begin{align*}
 (z_1)_k &=
 \begin{cases}
 \displaystyle 2 (\ba*\widehat h_1)_k & \text{for } k=0,\dots,m-1 \\
 \displaystyle 2 (\ba* h_1)_k & \text{for } k \ge m \\
 \end{cases}
 \\
 (z_2)_k &=
 \begin{cases}
 	2 i  \left[ (\ba*\widehat h_2)_k + (\bb*\widehat h_1)_k \right] & \text{for }k=0,\dots,m-1\\
 	2 i  \left[ (\ba* h_2)_k + (\bb*h_1)_k \right] - \bar \lambda (h_2)_k & \text{for }k \ge m.
 \end{cases}
 \end{align*}
By Corollary~\ref{cor:psi_k}, for $k =0,\dots,m-1$, $| (z_1)_k| \le 2 \Psi_k(\ba)$ and
$| (z_2)_k| \le 2 \left( \Psi_k(\ba) + \Psi_k(\bb)  \right)$. Hence,
\begin{align*}
|(Az)_0 | & \le Z_1^{(0)} \bydef 2 |A^{(m)}_{0,1}| \Psi^{(m)}(\ba) + 2 |A^{(m)}_{0,2}| \left( \Psi^{(m)}(\ba) + \Psi^{(m)}(\bb) \right).
\end{align*}
Moreover, for $\ell=1,2$
\begin{align*}
|(Az)_\ell |_{\nu} & \le \sum_{j=1}^2 | A_{\ell,j} z_j |_{\nu}  
\\& 
=  \sum_{k=0}^{m-1} \bigl| \bigl(A^{(m)}_{\ell,1} z_1^{(m)} \bigr)_k \bigr| \omega_k
+  \sum_{k \ge m}  \frac{\delta_{\ell,1}}{|\mu_k|} | (z_1)_k | \omega_k
\sum_{k=0}^{m-1} \bigl| \bigl(A^{(m)}_{\ell,2} z_2^{(m)} \bigr)_k \bigr| \omega_k
+  \sum_{k \ge m}  \frac{\delta_{\ell,2}}{|i  \mu_k|} | (z_2)_k | \omega_k \\
& \le 2 \sum_{k=0}^{m-1} \bigl| \bigl( |A^{(m)}_{\ell,1}| \Psi^{(m)}(\ba) \bigr)_k \bigr| \omega_k 
+ \frac{\delta_{\ell,1}}{4 m^2 \pi^2} \left(  \sum_{k \ge m} | (z_1)_k | \omega_k  \right) \\
& \quad + 2 \sum_{k=0}^{m-1} \bigl| \bigl( |A^{(m)}_{\ell,2}| (\Psi^{(m)}(\ba)+\Psi^{(m)}(\bb)) \bigr)_k \bigr| \omega_k + \frac{\delta_{\ell,2}}{4 m^2 \pi^2} \left(  \sum_{k \ge m} | (z_2)_k | \omega_k  \right) \\
& \le 2 \sum_{k=0}^{m-1} \bigl| \bigl( |A^{(m)}_{\ell,1}| \Psi^{(m)}(\ba) \bigr)_k \bigr| \omega_k 
+ \frac{\delta_{\ell,1}}{4 m^2 \pi^2} (2 |\ba * h_1 |_{\nu}) \\
& \quad + 2 \sum_{k=0}^{m-1} \bigl| \bigl( |A^{(m)}_{\ell,2}| (\Psi^{(m)}(\ba)+\Psi^{(m)}(\bb)) \bigr)_k \bigr| \omega_k + \frac{\delta_{\ell,2}}{4 m^2 \pi^2} \left( 2 | \ba* h_2|_{\nu} + 2 | \bb*h_1|_{\nu} + |\bar \lambda| |h_2|_{\nu} \right) \\
&\le 2 \sum_{k=0}^{m-1} \bigl| \bigl( |A^{(m)}_{\ell,1}| \Psi^{(m)}(\ba) \bigr)_k \bigr| \omega_k 
+ 2 \sum_{k=0}^{m-1} \bigl| \bigl( |A^{(m)}_{\ell,2}| (\Psi^{(m)}(\ba)+\Psi^{(m)}(\bb)) \bigr)_k \bigr| \omega_k 
\\
& \quad + 
 \frac{\delta_{\ell,1}}{4 m^2 \pi^2}  | \ba|_{\nu}+
\frac{\delta_{\ell,2}}{4 m^2 \pi^2} \left( 2 | \ba|_{\nu} + 2 | \bb|_{\nu} + |\bar \lambda| \right) \,\bydef Z_1^{(\ell)}.
\end{align*}
We thus define 
\begin{equation} \label{eq:Z1_equilibria}
Z_1 \bydef \max\left(Z_1^{(0)},Z_1^{(1)},Z_1^{(2)} \right).
\end{equation}

\subsubsection{The \boldmath $Z_2$ \unboldmath bound}

Let $r>0$ and $c=(c_0,c_1,c_2) \in B_r(\ba)$, that is 
\[
\| c- \bx \|_{X} = \max(|c_0-\bar \lambda|, |c_1- \ba |_{\nu}, |c_2- \bb |_{\nu} ) \le r. 
\]
Given $\| h \|_X \leq 1$, note that 
\begin{align*}
[Df_1(c)-Df_1(\bx)] h  & = 2 (c_1-\ba) h_1
\\
[Df_2(c)-Df_2(\bx)] h & = i  \left( 2 (c_1-\ba) h_2 + 2 (c_2-\bb) h_1 \right) - (c_0 - \bar \lambda)h_2 - (c_2 - \bb) h_0
\end{align*}
so that 
\begin{align*}
\| [Df_1(c)-Df_1(\bx)] h\|_{1,\nu}  & \le 2 r
\\
\| [Df_2(c)-Df_2(\bx)] h\|_{1,\nu} & \le 6 r.
\end{align*}
Hence, 
\begin{align*}
\| A [DF(c) - DF(\ba)]\|_{B(X)} &= \sup_{\| h \|_X \leq 1} \| A [DF(c) - DF(\ba)] h \|_{X} \\
& \le \| A \|_{B(X)} \max \{ 2r, 6r \} = \left( 6  \| A \|_{B(X)} \right) r
\end{align*}
Then, we set
\begin{align} \label{eq:Z2_equilibria}
Z_2 &\bydef 6  \| A \|_{B(X)},
\end{align}
where an upper bound for $\| A \|_{B(X)}$ can be computed using formula \eqref{eq:norm_B_ell_nu_1} from Corollary~\ref{cor:OperatorNorm} and using a similar approach than the one of Lemma~\ref{lem:normA}.

Having explicit and computable formulas for the bounds $Y_0$ (standard), $Z_0$ (standard), $Z_1$ (given in \eqref{eq:Z1_equilibria}) and $Z_2$ (given in \eqref{eq:Z2_equilibria}), and recalling \eqref{eq:general_radii_polynomial}, define the radii polynomial by 
\[
p(r) \bydef Z_2 r^2 - ( 1 - Z_1 - Z_0) r + Y_0.
\]
We wrote a MATLAB program (available at \cite{bib:codes}) which rigorously computes (i.e. controlling the floating point errors) the bounds $Y_0$, $Z_0$, $Z_1$ and $Z_2$. 
The program uses the interval arithmetic library INTLAB available at \cite{Ru99a}.
Fixing $\nu =1$ and using the MATLAB program, verify the existence of $r_0>0$ such that $p(r_0)<0$. By Theorem~\ref{thm:radii_polynomials}, there exists a unique $\tx = (\tilde \lambda, \ta,\tb) \in B_{r_0}(\bx)$ such that $F(\tx) = 0$. This approach provides a rigorous steady state $\ta$ and an eigenpair $(\tilde{\lambda},\tb)$ such that
\[
	|\ta -\ba|_\nu\le r_0,\quad |\tilde{\lambda}-\bar{\lambda}|\le r_0,\quad |\tb -\bb|_\nu\le r_0.
\]

\section{List of computational parameters}
\label{sec:ComputationalTable}

 We present in Table \ref{tab:Table} the computational parameters used to produce computer assisted proofs of all the heteroclinic orbits described in this paper. 
The variable names, as described in Section \ref{sec:FinalTheorem}, are as follows: 
    \textbf{Step 1.} 
The parameter $r_0$ denotes accuracy of our approximate steady state and eigenpair; 
the parameter $r_p$ denotes accuracy of our unstable manifold approximation.
\textbf{Step 2.} 
The parameter ``\# of times steps'' denotes number of time steps the rigorous integrator took in solving the solution from time $0$ until time  $t_{\mathrm{end}}$;  
The parameters $\varrho_0$ and $\varrho_\infty$ denote the \emph{a posteriori} error of the final solution in the zero and nonzero modes respectively.  
\textbf{Step 3.} 
The parameters $\rho_0$, $\rho_1$, and $r$ denote values for which the hypothesis of Theorem \ref{thm:HomoclinicBlowup}  may be satisfied and prove the endpoint of the rigorous integrator will converge to the zero equilibrium. 

\begin{table}[ht]
\centering
	\caption{Computational results for proving the existence of heteroclinics.}
	\resizebox{0.9\textwidth}{!}{
		\begin{tabular}{cccccccccc}
			\hline
			Orbit&$r_0$&$r_p$& \# of time steps & $t_{\mathrm{end}}$ &$\varrho_{0}$&$\varrho_{\infty}$ & $\rho_0$&$\rho_1$&$r$\\
			\hline
			$u^{i}_1\to P(1) \to 0$ & 1.61e-12 & 5.91e-10 & 20 & 5e-2 & 1.43e{-8} & 1.97e{-7} & 17.14 & 1.37e-2 & 3.56e-2 \\
			$(u^{i}_1)^*\to P(1) \to 0$ & 5.54e-12 & 8.88e-10& 20 & 5e-2 & 2.24e-8 & 3.08e-7 & 17.14 &  1.37e-2 & 3.56e-2 \\
			\hline
			$u^{i}_1\to P(-1) \to 0$ & 1.61e-12 & 5.91e-10 & 2523 & 9.09e-1 & 7.65e{-6} & 1.21e{-4} & 1.05 & 2.24e-1 & 5.96e-1 \\
			$(u^{i}_1)^*\to P(-1) \to 0$ &  5.54e-12 & 8.87e-10 & 2526 & 9.09e-1 & 1.16e-5 & 1.81e-4& 1.05 & 2.24e-1 & 6.03e-1 \\
			\hline
			$u^{ii}_1\to P(1) \to 0$ & 7.18e-10 & 2.60e-7 & 1329 & 4.21e-2& 1.32e{-2} & 2.35e{-1} & 27.01 & 8.67e-3 & 2.29e-2 \\
			$(u^{ii}_1)^*\to P(1) \to 0$ & 7.15e-10 & 2.59e-7 & 1329 & 4.21e-2 &  1.31e{-2}& 2.34e{-1} &  27.01 & 8.67e-3 & 2.28e-2\\
			\hline
		\end{tabular}}
\label{tab:Table}
\end{table}

\bibliography{Bib_NLS,Bib_complexblowup}
\bibliographystyle{abbrv}

\end{document}